\definecolor{myblue}{rgb}{0,0,0.6}     %
\definecolor{dhcol}{rgb}{0,0.5,0}
\definecolor{sccol}{rgb}{0,0,0.5}
\newcommand{\SDw}{$\sS\sD$-{\rm w}}
\newcommand{\SNw}{$\sS\sN$-{\rm w}}
\newcommand{\SDcl}{$\sS\sD$-{\rm cl}}
\newcommand{\SNcl}{$\sS\sN$-{\rm cl}}
\newcommand{\Dst}{$\sD$-{\rm st}}
\newcommand{\Nst}{$\sN$-{\rm st}}
\newcommand{\SDws}{$\sS\sD$-{\rm w} }
\newcommand{\SNws}{$\sS\sN$-{\rm w} }
\newcommand{\SDcls}{$\sS\sD$-{\rm cl} }
\newcommand{\SNcls}{$\sS\sN$-{\rm cl} }
\newcommand{\Dsts}{$\sD$-{\rm st} }
\newcommand{\Nsts}{$\sN$-{\rm st} }
\begin{document}
\newcommand{\rf}[1]{(\ref{#1})}
\newcommand{\mmbox}[1]{\fbox{\ensuremath{\displaystyle{ #1 }}}}	% my box to highlight main formulas
%-------------------------------------------------------------------------------------------------------------------------------
\newcommand{\hs}[1]{\hspace{#1mm}}
\newcommand{\vs}[1]{\vspace{#1mm}}
%-------------------------------------------------------------------------------------------------------------------------------
\newcommand{\ri}{{\mathrm{i}}}
\newcommand{\re}{{\mathrm{e}}}
\newcommand{\rd}{\mathrm{d}}
%-------------------------------------------------------------------------------------------------------------------------------
\newcommand{\R}{\mathbb{R}}
\newcommand{\Q}{\mathbb{Q}}
\newcommand{\N}{\mathbb{N}}
\newcommand{\Z}{\mathbb{Z}}
\newcommand{\C}{\mathbb{C}}
\newcommand{\K}{{\mathbb{K}}}
%-------------------------------------------------------------------------------------------------------------------------------
\newcommand{\cA}{\mathcal{A}}
\newcommand{\cB}{\mathcal{B}}
\newcommand{\cC}{\mathcal{C}}
\newcommand{\cS}{\mathcal{S}}
\newcommand{\cD}{\mathcal{D}}
\newcommand{\cH}{\mathcal{H}}
\newcommand{\cI}{\mathcal{I}}
\newcommand{\cItilde}{\tilde{\mathcal{I}}}
\newcommand{\cIhat}{\hat{\mathcal{I}}}
\newcommand{\cIcheck}{\check{\mathcal{I}}}
\newcommand{\cIstar}{{\mathcal{I}^*}}
\newcommand{\cJ}{\mathcal{J}}
\newcommand{\cM}{\mathcal{M}}
\newcommand{\cP}{\mathcal{P}}
\newcommand{\cV}{{\mathcal V}}
\newcommand{\cW}{{\mathcal W}}
\newcommand{\scrD}{\mathscr{D}}
\newcommand{\scrS}{\mathscr{S}}
\newcommand{\scrJ}{\mathscr{J}}
\newcommand{\sD}{\mathsf{D}}
\newcommand{\sN}{\mathsf{N}}
\newcommand{\sS}{\mathsf{S}}
 \newcommand{\sT}{\mathsf{T}}
 \newcommand{\sH}{\mathsf{H}}
 \newcommand{\sI}{\mathsf{I}}
%-------------------------------------------------------------------------------------------------------------------------------
\newcommand{\bs}[1]{\mathbf{#1}}
\newcommand{\bb}{\mathbf{b}}
\newcommand{\bd}{\mathbf{d}}
\newcommand{\bn}{\mathbf{n}}
\newcommand{\bp}{\mathbf{p}}
\newcommand{\bP}{\mathbf{P}}
\newcommand{\bv}{\mathbf{v}}
\newcommand{\bx}{\mathbf{x}}
\newcommand{\by}{\mathbf{y}}
\newcommand{\bz}{{\mathbf{z}}}
\newcommand{\bxi}{\boldsymbol{\xi}}
\newcommand{\boldeta}{\boldsymbol{\eta}}	%Added by Dave
\newcommand{\ts}{\tilde{s}}
\newcommand{\tGamma}{{\tilde{\Gamma}}}
 \newcommand{\tbx}{\tilde{\bx}}
 \newcommand{\tbd}{\tilde{\bd}}
 \newcommand{\txi}{\xi}
%-------------------------------------------------------------------------------------------------------------------------------
\newcommand{\done}[2]{\dfrac{d {#1}}{d {#2}}}
\newcommand{\donet}[2]{\frac{d {#1}}{d {#2}}}
\newcommand{\pdone}[2]{\dfrac{\partial {#1}}{\partial {#2}}}
\newcommand{\pdonet}[2]{\frac{\partial {#1}}{\partial {#2}}}
\newcommand{\pdonetext}[2]{\partial {#1}/\partial {#2}}
\newcommand{\pdtwo}[2]{\dfrac{\partial^2 {#1}}{\partial {#2}^2}}
\newcommand{\pdtwot}[2]{\frac{\partial^2 {#1}}{\partial {#2}^2}}
\newcommand{\pdtwomix}[3]{\dfrac{\partial^2 {#1}}{\partial {#2}\partial {#3}}}
\newcommand{\pdtwomixt}[3]{\frac{\partial^2 {#1}}{\partial {#2}\partial {#3}}}
\newcommand{\bnabla}{\boldsymbol{\nabla}}
\newcommand{\dive}{\boldsymbol{\nabla}\cdot}
\newcommand{\curl}{\boldsymbol{\nabla}\times}
\newcommand{\Phixy}{\Phi(\bx,\by)}
\newcommand{\PhiOxy}{\Phi_0(\bx,\by)}
\newcommand{\dxPhixy}{\pdone{\Phi}{n(\bx)}(\bx,\by)}
\newcommand{\dyPhixy}{\pdone{\Phi}{n(\by)}(\bx,\by)}
\newcommand{\dxPhiOxy}{\pdone{\Phi_0}{n(\bx)}(\bx,\by)}
\newcommand{\dyPhiOxy}{\pdone{\Phi_0}{n(\by)}(\bx,\by)}
%-------------------------------------------------------------------------------------------------------------------------------
\newcommand{\eps}{\varepsilon}
%-------------------------------------------------------------------------------------------------------------------------------
\newcommand{\real}[1]{{\rm Re}\left[#1\right]} %Changed by Steve from \re to avoid conflict with \re below
\newcommand{\im}[1]{{\rm Im}\left[#1\right]}
\newcommand{\ol}[1]{\overline{#1}}
\newcommand{\ord}[1]{\mathcal{O}\left(#1\right)}
\newcommand{\oord}[1]{o\left(#1\right)}
\newcommand{\Ord}[1]{\Theta\left(#1\right)}
%-------------------------------------------------------------------------------------------------------------------------------
\newcommand{\hsnorm}[1]{||#1||_{H^{s}(\bs{R})}}
\newcommand{\hnorm}[1]{||#1||_{\tilde{H}^{-1/2}((0,1))}}
\newcommand{\norm}[2]{\left\|#1\right\|_{#2}}
\newcommand{\normt}[2]{\|#1\|_{#2}}
\newcommand{\on}[1]{\Vert{#1} \Vert_{1}}
\newcommand{\tn}[1]{\Vert{#1} \Vert_{2}}
%\newcommand{\norm}[2]{\left\|#1\right\|_{#2}}
%-------------------------------------------------------------------------------------------------------------------------------
\newcommand{\xt}{\mathbf{x},t}
\newcommand{\PhiF}{\Phi_{\rm freq}}
\newcommand{\cone}{{c_{j}^\pm}}
\newcommand{\ctwo}{{c_{2,j}^\pm}}
\newcommand{\cthree}{{c_{3,j}^\pm}}
%-------------------------------------------------------------------------------------------------------------------------------
\newtheorem{thm}{Theorem}[section]
\newtheorem{lem}[thm]{Lemma}
\newtheorem{defn}[thm]{Definition}
\newtheorem{prop}[thm]{Proposition}
\newtheorem{cor}[thm]{Corollary}
\newtheorem{rem}[thm]{Remark}
\newtheorem{conj}[thm]{Conjecture}
\newtheorem{ass}[thm]{Assumption}
\newtheorem{example}[thm]{Example} %Added by Steve
%-------------------------------------------------------------------------------------------------------------------------------
\newcommand{\tH}{\widetilde{H}}
\newcommand{\Hze}{H_{\rm ze}} 	% Zero extension space
\newcommand{\uze}{u_{\rm ze}}		% Zero extension of a function u
\newcommand{\dimH}{{\rm dim_H}}
\newcommand{\dimB}{{\rm dim_B}}
\newcommand{\IntClosOm}{\mathrm{int}(\overline{\Omega})}
\newcommand{\IntClosOmOne}{\mathrm{int}(\overline{\Omega_1})}
\newcommand{\IntClosOmTwo}{\mathrm{int}(\overline{\Omega_2})}
\newcommand{\Ccomp}{C^{\rm comp}}
\newcommand{\tCcomp}{\tilde{C}^{\rm comp}}
\newcommand{\uC}{\underline{C}}
\newcommand{\utC}{\underline{\tilde{C}}}
\newcommand{\oC}{\overline{C}}
\newcommand{\otC}{\overline{\tilde{C}}}
\newcommand{\capcomp}{{\rm cap}^{\rm comp}}
\newcommand{\Capcomp}{{\rm Cap}^{\rm comp}}
\newcommand{\tcapcomp}{\widetilde{{\rm cap}}^{\rm comp}}
\newcommand{\tCapcomp}{\widetilde{{\rm Cap}}^{\rm comp}}
\newcommand{\hcapcomp}{\widehat{{\rm cap}}^{\rm comp}}
\newcommand{\hCapcomp}{\widehat{{\rm Cap}}^{\rm comp}}
\newcommand{\tcap}{\widetilde{{\rm cap}}}
\newcommand{\tCap}{\widetilde{{\rm Cap}}}
\newcommand{\ccap}{{\rm cap}}
\newcommand{\ucap}{\underline{\rm cap}}
\newcommand{\uCap}{\underline{\rm Cap}}
\newcommand{\cCap}{{\rm Cap}}
\newcommand{\ocap}{\overline{\rm cap}}
\newcommand{\oCap}{\overline{\rm Cap}}
\DeclareRobustCommand
{\mathringbig}[1]{\accentset{\smash{\raisebox{-0.1ex}{$\scriptstyle\circ$}}}{#1}\rule{0pt}{2.3ex}}
\newcommand{\cirH}{\mathringbig{H}}
\newcommand{\cirHs}{\mathringbig{H}{}^s}
\newcommand{\cirHt}{\mathringbig{H}{}^t}
\newcommand{\cirHm}{\mathringbig{H}{}^m}
\newcommand{\cirHzero}{\mathringbig{H}{}^0}
\newcommand{\deO}{{\partial\Omega}}
\newcommand{\OO}{{(\Omega)}}
\newcommand{\Rn}{{(\R^n)}}
\newcommand{\Id}{{\mathrm{Id}}}
\newcommand{\gap}{\mathrm{Gap}}
\newcommand{\ggap}{\mathrm{gap}}
\newcommand{\isom}{{\xrightarrow{\sim}}}
\newcommand{\half}{{1/2}}
\newcommand{\mhalf}{{-1/2}}
\newcommand{\inter}{{\mathrm{int}}}

\newcommand{\Hsp}{H^{s,p}}
\newcommand{\Htq}{H^{t,q}}
\newcommand{\tHsp}{{{\widetilde H}^{s,p}}}
\newcommand{\SP}{\ensuremath{(s,p)}}
\newcommand{\Xsp}{X^{s,p}}

\newcommand{\dd}{{d}}\newcommand{\pp}{{p_*}}

\newcommand{\Rnn}{\R^{n_1+n_2}}
\newcommand{\Tr}{{\mathrm{Tr}}}
\newcommand{\sO}{\mathsf{O}}
\newcommand{\sC}{\mathsf{C}}
\newcommand{\mS}{\Gamma}
\newcommand{\omS}{{\overline{\mS}}}
\newcommand{\sumpm}[1]{\{\!\!\{#1\}\!\!\}}
\title{Well-posed PDE and integral equation formulations for scattering by fractal screens}
\author{Simon N.\ Chandler-Wilde\footnotemark[1] and David P.\ Hewett\footnotemark[2]}
\renewcommand{\thefootnote}{\fnsymbol{footnote}}
\footnotetext[1]{Department of Mathematics and Statistics, University of Reading, Whiteknights PO Box 220, Reading RG6 6AX, UK. Email: \texttt{s.n.chandler-wilde@reading.ac.uk}. This work was supported by EPSRC grant EP/F067798/1.}
\footnotetext[2]{Department of Mathematics, University College London, Gower Street, London
WC1E 6BT, UK. Email: \texttt{d.hewett@ucl.ac.uk}. This work was supported by EPSRC grant EP/P511262/1.}

\maketitle
\renewcommand{\thefootnote}{\arabic{footnote}}
\begin{abstract}
We consider time-harmonic acoustic scattering by planar sound-soft (Dirichlet) and sound-hard (Neumann) screens embedded in $\R^n$ for $n=2$ or $3$. In contrast to previous studies in which the screen is assumed to be a bounded Lipschitz (or smoother) relatively open subset of the plane, we consider screens occupying arbitrary bounded subsets. Thus our study includes cases where the screen is a relatively open set with a fractal boundary, and cases where the screen is fractal with empty interior. We elucidate for which screen geometries the classical formulations of screen scattering are well-posed, showing that the classical formulation for sound-hard scattering is not well-posed if the screen boundary has Hausdorff dimension greater than $n-2$. Our main contribution is to propose novel well-posed boundary integral equation and boundary value problem formulations, valid for arbitrary bounded screens. In fact, we show that for sufficiently irregular screens there exist whole families of well-posed formulations, with infinitely many distinct solutions, the distinct formulations distinguished by the sense in which the boundary conditions are understood. To select the physically correct solution we propose limiting geometry principles, taking the limit of solutions for a sequence of more regular screens converging to the screen we are interested in, this a natural procedure for those fractal screens for which there exists a standard sequence of prefractal approximations. We present examples exhibiting interesting physical behaviours, including penetration of waves through screens with ``holes'' in them, where the ``holes'' have no interior points, so that the screen and its closure scatter differently. Our results depend on subtle and interesting properties of fractional Sobolev spaces on non-Lipschitz sets.
\end{abstract}

\noindent
{\it Mathematics subject classification (2010):} Primary 78A45; Secondary 65J05, 45B05, 28A80.\\
{\it Keywords:} Helmholtz equation, Reduced Wave Equation, Fractal, Boundary Integral Equation

\section{Introduction}
\label{sec:Intro}
This paper is concerned with the mathematical analysis of time-harmonic acoustic scattering problems modelled by the Helmholtz equation
\begin{equation} \label {eq:he}
\Delta u + k^2 u = 0,
\end{equation}
(or its inhomogeneous variant \eqref{eq:he_inhom} below) where $k>0$ is the {\em wavenumber}.
Our focus is on scattering by thin planar screens in $\R^n$ ($n=2$ or $3$), so that the domain in which \rf{eq:he} holds is $D:=\R^n\setminus \omS$, where $\mS$, the {\em screen}, is a bounded subset of the hyperplane $\Gamma_\infty:=\{\bx=(x_1,...,x_n)\in \R^n:x_n=0\}$, and the compact set $\omS$ is its closure.  As usual, the complex-valued function $u$ is to be interpreted physically as either the (total) complex acoustic pressure field or the velocity potential, and we write $u$ as $u=u^i+u^s$, where $u^i$ is a given incident field %
and $u^s:= u-u^i$ is the {\em scattered field} which is to be determined and is assumed to satisfy \eqref{eq:he} and the standard Sommerfeld radiation condition (equation \eqref{src} below). We suppose that either {\em sound-soft} or {\em sound-hard} boundary conditions hold, respectively that either
\begin{equation} \label{eq:bcs}
u = 0 \mbox{ or } \frac{\partial u}{\partial \bn} = 0
\end{equation}
on the screen in some appropriate sense, where $\bn$ is the unit normal pointing in the $x_n$ direction.

These are long-standing scattering problems, their mathematical study dating back at least to \cite[p.\ 139]{Rayleigh}, and it is well-known (e.g., \cite{NeittaanmakiRoach:87,wilcox:75}, and see \S\ref{subsec:bvps} for more detail) that, for arbitrary bounded $\mS\subset\Gamma_\infty$, these problems are well-posed (and the solutions depend only on the closure $\omS$) if the boundary conditions are understood in the standard weak senses that $u\in W^{1,\mathrm{loc}}_0(D)$ in the sound-soft case, that $u\in W^{1,\mathrm{loc}}(D)$ and
\begin{equation} \label{eq:wnc}
\int_D \left( v\Delta u + \nabla v\cdot \nabla u\right)\, \rd x = 0, \quad \mbox{for all } v\in W^{1,{\mathrm{comp}}}(D),
\end{equation}
in the sound-hard case.  We spell out these weak formulations more fully in Definitions \ref{def:sSsDw} and \ref{def:sSsNw} below using standard Sobolev space notations defined in \S\ref{sec:SobolevSpaces}.

In the well-studied case where $\mS$ is a relatively open\footnote{
For brevity we shall henceforth omit the word ``relatively'' when discussing relatively open subsets of $\Gamma_\infty$.
}
subset of $\Gamma_\infty$ that is Lipschitz or smoother, the alternative, classical formulation, dating back to the late 40s \cite{Meixner:49,Bouwkamp:54}, imposes the boundary conditions \eqref{eq:bcs} in a classical sense, and additionally  imposes ``edge conditions'' requiring locally finite energy, that $u$ and $\nabla u$ are square integrable in some neighbourhood of $\partial \Gamma$ (see Definition \ref{def:sSsD-cl} below). Equivalently, one can formulate boundary value problems (BVPs) for $u^s$ in a Sobolev space setting, seeking $u^s\in W^{1,\mathrm{loc}}(D)$ satisfying \eqref{eq:he} and the radiation condition, and imposing the boundary conditions \eqref{eq:bcs} in a trace sense, requiring that the Dirichlet or Neumann traces on $\Gamma_\infty$, $\gamma^\pm u^s$ and $\partial_\bn^\pm u^s$, satisfy $(\gamma^\pm u^s)|_\Gamma = g_\sD\in H^{1/2}(\Gamma)$ in the sound-soft case, $(\partial_\bn^\pm u^s)|_\Gamma = g_\sN\in H^{-1/2}(\Gamma)$ in the sound-hard case, where $g_\sD := -(\gamma^\pm u^i)|_\Gamma$ and $g_\sN := -(\partial_\bn^\pm u^i)|_\Gamma$ (see, e.g., \cite{Ste:87} and Definition \ref{def:sD-st} for details). Finally, it is well-known \cite{Ste:87,WeSt:90,Ha-Du:90,Ha-Du:92,Co:04} that for Lipschitz $\Gamma$ one can reformulate these BVPs as the boundary integral equations (BIEs)
\begin{equation} \label{eq:bie}
S[\partial_\bn u^s] = -g_\sD, \quad T[u] = g_\sN,
\end{equation}
in the sound-soft and sound-hard cases, respectively. In these equations the unknowns are the jumps across the screen in $u$ and its normal derivative, $[u]\in \tH^{1/2}(\mS)$ and $[\partial_\bn u^s]\in \tH^{-1/2}(\mS)$, and the isomorphisms $S:\tH^{-1/2}(\mS)\to H^{1/2}(\mS)$ and $T:\tH^{1/2}(\mS)\to H^{-1/2}(\mS)$ are the (acoustic) single-layer and hypersingular boundary integral operators (BIOs), respectively. Here $\tH^s(\Gamma)\subset H^s(\Gamma_\infty)$, for $s\in \R$, denotes the closure in $H^s(\Gamma_\infty)$ of $C_0^\infty(\Gamma)$. As is pointed out in \cite{CoercScreen2}, the BIEs \eqref{eq:bie} are well-posed ($S$ and $T$ are isomorphisms) for arbitrary open $\mS$.

The scattering problems we study may be long-standing, but there remain many open questions concerning the correct choice, well-posedness, and equivalence of mathematical formulations when $\Gamma$ is not a Lipschitz open set, and many interesting features arise in this case. Amongst the new results in this paper we will see that:
\begin{enumerate}
\item if the screen is sufficiently irregular, uniqueness fails for the classical and Sobolev space BVP formulations;
\item the BIEs \eqref{eq:bie} are well-posed for every open $\Gamma$ but their solutions differ from those of the weak BVP if $\tH^{\pm 1/2}(\Gamma)\subsetneqq H^{\pm 1/2}_{\overline \Gamma}$ (here $H^s_F$, for closed $F\subset \Gamma_\infty$, is the set of those $\phi\in H^s(\Gamma_\infty)$ supported in $F$);
\item whenever $\tH^{\pm 1/2}(\Gamma^\circ)\subsetneqq H^{\pm 1/2}_{\overline \Gamma}$ (where $\Gamma^\circ$ is the relative interior in $\Gamma_\infty$ of $\Gamma$) there exists, in fact, an infinite family of well-posed BVP and equivalent BIE formulations, which have infinitely many distinct solutions for generic boundary data, and distinct solution choices are appropriate in different physical limits.
\end{enumerate}

A main aim of this paper is to derive the correct mathematical formulations for screens that are fractal or have fractal boundary, and to understand the convergence of solutions as sequences of prefractals (as in Figures \ref{fig:Sierpinksi}, \ref{fig:CantorDust}, and \ref{fig:koch}) converge to a fractal limit. One motivation for such a study is that
fractal screen problems are of relevance to a number of areas of current engineering research, for example in the design of antennas for electromagnetic wave transmission/reception (see e.g.\ \cite{PBaRomPouCar:98,SriRanKri:11}), and in piezoelectric ultrasound transducers (see e.g.\ \cite{MuWa:11,Mu:11}).
The attraction of using fractal structures (in practice, high order prefractal approximations to fractal structures) for such applications is in their potential for wideband performance. Indeed, a key property of fractals is that they possess structure on every length scale, and the idea is to exploit this to achieve efficient transmission/reception of waves over a broad range of frequencies simultaneously. (In this direction, much earlier, Berry \cite{Berry79} urged the study of waves diffracted by fractal structures (termed {\em diffractals}), as a situation where distinctive high frequency asymptotics can be expected.)
Although this is a mature engineering technology (at least for electromagnetic antennas), as far as we are aware no analytical framework is currently available for such problems. %
Understanding well-posedness and convergence of prefractal solutions for the simpler acoustic case considered in the current paper can be regarded as a significant first step towards these applications. %

 Regarding related mathematical work, there is a substantial literature studying trace spaces on fractal boundaries (see \cite{JoWa84} and the references therein). This is an important ingredient in formulating and analysing BVPs, and this theory has been applied to the study of elliptic PDEs in domains with fractal boundaries for example in \cite{JoWa97}. However, a key assumption in these results is that the boundary satisfies a so-called {\em Markov inequality} \cite{JoWa84,JoWa97}. This assumption on the boundary $\partial \Omega$ of a domain $\Omega \subset \R^n$ (in the language of numerical analysis, a requirement that a type of inverse estimate holds for polynomials defined on $\partial \Omega$) requires a certain isotropy of $\partial \Omega$, and  does not hold if $\partial \Omega$ is an $(n-1)$-dimensional manifold (or part of such a manifold as in this paper). Further, this theory applies specifically to the case where $\partial \Omega$ is a $d$-set in the sense of \cite{JoWa84,JoWa97}. (Roughly speaking this is a requirement that $\partial \Omega$ has finite $d$-dimensional Hausdorff measure, uniformly across $\partial \Omega$. An example to which this theory applies is the boundary of the Koch snowflake (Figure \ref{fig:koch}), a $d$-set with $d=\log_3 4$.)

Similar constraints on $\partial \Omega$ apply to other studies of BVPs in domains with fractal boundary, for example recent work on regularity of PDE solutions in Koch snowflake domains and their prefractal approximations \cite{CapitanelliVivaldi15} and, closer to the specific problems we tackle in this paper, work on high frequency scattering by fractals \cite{SleemanHua92,SleemanHua01}. In these latter papers Sleeman and Hua address what we term above the weak scattering problems (with the Dirichlet condition understood as $u\in W^{1,\mathrm{loc}}_0(D)$, the Neumann condition as \eqref{eq:wnc}) in the case when the domain $D:=\R^n\setminus\overline{\Omega}$ and $\Omega$ is a bounded open set whose boundary has fractal dimension in the range $(n-1,n)$. They study the high frequency asymptotics of the so-called {\em scattering phase}, this closely related to the asymptotics of the eigenvalue counting function for the interior set $\Omega$, which has been widely studied both theoretically and computationally \cite{Lapidus91,SleemanHua95,LevitinVassiliev96,Neuberger06}, following the 1980 conjecture of Berry \cite{Berry79a,Berry80} on the dependence of these asymptotics on the fractal dimension of $\partial \Omega$.

  The particular case where $\Omega\subset \R^2$ is a so-called ramified domain with self-similar fractal boundary $\partial \Omega$ of fractal dimension greater than one has been extensively studied by Achdou and collaborators, including work on the formulation and numerical analysis of (interior) Poisson and Helmholtz problems with Dirichlet and Neumann boundary conditions \cite{Achdou06a,Achdou06,Achdou07}, numerical studies of the Berry conjecture \cite{Achdou07}, characterisation of trace spaces \cite{Achdou10}, study of convergence of prefractal to fractal problems \cite{Achdou06,Achdou06a,Achdou16} (as in our \S\ref{sec:domdep}), and  study of transmission problems \cite{Achdou16} (see also \cite{Bagnerini13}).

 This current paper deviates from the above-cited literature in a number of significant respects. Firstly, the above works all treat boundaries with fractal dimension in the range $(n-1,n)$. In contrast, the boundary of our domain $D$ is $\overline{\Gamma}$, a bounded subset of an $(n-1)$-dimensional manifold. If fractal, $\Gamma$ has Hausdorff dimension in the range $(0,n-1)$. Secondly, for us a deep study of trace spaces seems superfluous: our analysis only requires standard traces from the upper and lower half-spaces onto the plane $\Gamma_\infty$ containing the screen $\Gamma$. Thirdly, in this current paper a major theme is to explore the multiplicity of distinct formulations and solutions, and to point out the physical relevance of distinct solutions as limits of problems on more regular domains. Nothing of this flavour arises in the above literature. Finally, we note that, in contrast to the above studies, a significant focus in this paper is on BIEs on rough domains (including on fractals), and our proof of well-posedness of our novel BVP formulations is via analysis of their BIE equivalents.

Another motivation for this study %
is simply that BIE formulations are powerful and well-studied  for problems of acoustic scattering by screens, both theoretically (e.g.\ \cite{StWe84,Ste:87,WeSt:90,Ha-Du:90,Ha-Du:92,CoercScreen2}) and as a computational tool in applications (e.g.\ \cite{DLaWroPowMan:98,DavisChew08}), so that it is of intrinsic interest to extend this methodology to deal with general, not just Lipschitz or smoother, screens.
Our analysis of BIEs in \S\ref{subsec:bies} follows the spirit of previous studies (e.g.\ \cite{StWe84,Ste:87,WeSt:90}), in which  to determine solvability one needs to understand the BIOs in \eqref{eq:bie} as mappings between fractional Sobolev spaces defined on the screen.
While the mapping properties of the BIOs are well understood for Lipschitz screens, they have not been studied for less regular screens. In remedying  this we draw heavily on our own studies of Sobolev spaces on rough domains presented recently in \cite{ChaHewMoi:13,InterpolationCWHM,HewMoi:15,CoercScreen2}.

Our assumption that the screen is planar, rather than a subset of a more general $(n-1)$-dimensional submanifold, which we anticipate could be removed with non-trivial further work,
is made so as to simplify things in two respects.
Firstly, it means that Sobolev spaces on the screen can be defined concretely in terms of Fourier transforms on the hyperplane $\Gamma_\infty$, without the need for coordinate charts.
Secondly, and more importantly, it allows one to prove that the BIOs are \emph{coercive} operators on the relevant spaces, as has been shown recently (with wavenumber-explicit continuity and coercivity estimates) in \cite{CoercScreen2}, building on previous work in \cite{Ha-Du:90,Ha-Du:92,Co:04}.
As far as possible, anticipating extensions to non-planar screens, we will seek to argue without making use of this coercivity, but we do use results from \cite{HewMoi:15} that assume coercivity to analyse dependence on the boundary in \S\ref{sec:domdep}.

The structure of the remainder of this paper is as follows. In \S\ref{sec:SobolevSpaces} we summarise results on Sobolev spaces that we use throughout the paper, paying attention to the important distinctions between different Sobolev space definitions that arise for non-Lipschitz domains.
We also introduce the trace operators, layer potentials and novel BIOs that appear in our new BIE formulations, paying careful attention to the subtleties introduced when integration is over a screen that is not Lipschitz (indeed which may have zero surface measure).

Section \ref{sec:form} is the heart of the paper. We introduce first in \S\ref{subsec:bvps} the standard weak and classical formulations, the novelty that we study their interrelation for general (rather than Lipschitz) screens. In \S\ref{subsec:bvpsnovel} and \S\ref{subsec:bies}, this a key contribution of the paper, we introduce new infinite families of BVP and BIE formulations distinguished by the sense in which the boundary condition is to be enforced, prove their well-posedness, and study their relationship to the standard weak and classical formulations. If the screen is sufficiently regular these formulations collapse to single formulations, equivalent to the standard BVPs and BIEs, but generically these formulations have infinitely
many distinct solutions.

When $\Gamma$ has empty interior the incident field may not `see' the screen; the scattered field may be zero.  Section \ref{sec:zero} studies when this does and does not happen: a key consideration is whether a set $\mathfrak{S}\subset \Gamma_\infty$ is or is not $\pm 1/2$-null (a set $\mathfrak{S}\subset \Gamma_\infty$ is $s$-null if there are no $\phi\in H^s(\Gamma^\infty)$ supported in $\mathfrak{S}$), which we study using results from \cite{HewMoi:15}. In \S\ref{subsec:embarr} we establish the size (cardinality) of our sets of novel formulations, and prove that distinct formulations have distinct solutions, at least for plane wave incidence and almost all incident directions. In \S\ref{subsec:spacesequal} we investigate, mainly using recent results from \cite{ChaHewMoi:13}, a key criterion in answering many of our questions, namely: when is $\tH^{\pm 1/2}(\mS^\circ)= H^{\pm 1/2}_\omS$? In \S\ref{subsec:nonuni} we elucidate precisely for which screens $\mS$ the classical formulations of Definitions \ref{def:sSsD-cl} and \ref{def:sD-st} are or are not well-posed. In \S\ref{sec:domdep} we study dependence of the screen scattering problems on $\Gamma$, establishing continuous dependence results for weak notions of set convergence, and use these results to select, from the infinite set of solutions arising from the formulations of \S\ref{subsec:bvpsnovel} and \S\ref{subsec:bies}, physically relevant solutions by studying a general screen as the limit of a sequence of more regular screens. Finally, in \S\ref{sec:fract} we illustrate the results of the previous sections by a number of concrete examples, mainly examples where $\mS$ is fractal or has fractal boundary.

We remark that some of the results on the BVP and BIE formulations in this paper appeared previously in the conference papers \cite{HeCh:13,Ch:13} and the unpublished report \cite{CoercScreen}, and that elements of some of the results of \S\ref{sec:domdep} and part of Example \ref{ex:cantor} appeared recently in \cite{ChaHewMoi:13} (though the results in \cite{ChaHewMoi:13} are for $\Im(k)>0$ rather than $k$ real).

\section{Preliminaries} \label{sec:SobolevSpaces}

We first set some basic notation.
For any subset $E\subset\R^n$ we denote the complement of $E$ by $E^c:=\R^n\setminus E$, the closure of $E$ by $\overline{E}$, and the interior of $E$ by $E^\circ$. We denote by $\dimH(E)$ the Hausdorff dimension of $E$ (cf.\ e.g.\ \cite[\S5.1]{AdHe}).
For subsets $E_1,E_2\subset\R^n$ we denote by $E_1\ominus E_2$ the symmetric difference $E_1\ominus E_2:=(E_1\setminus E_2)\cup( E_2\setminus E_1)$.
We say that a non-empty open set $\Omega\subset \R^n$ is $C^0$ (respectively %
Lipschitz) if its boundary $\partial\Omega$ can at each point be locally represented as the graph (suitably rotated) of a $C^0$ (respectively %
Lipschitz) function from $\R^{n-1}$ to $\R$, with $\Omega$ lying only on one side of $\partial\Omega$.
For a more detailed definition see, e.g., \cite[1.2.1.1]{Gri}.
We note that for $n=1$ there is no distinction between these definitions: we interpret them both to mean that $\Omega$ is a countable union of open intervals whose closures are disjoint and whose endpoints have no limit points. We will use, for $r>0$ and $\bx\in \R^n$, the notations $B_r(\bx):= \{\by\in \R^n: |\by-\bx|< r\}$ and $B_r:= B_r(\boldsymbol{0})$.

For $n\in \N$, let $\scrD(\R^n):=C^\infty_0(\R^n)$.
For a non-empty open set $\Omega\subset \R^n$, let $\scrD(\Omega):=\{u\in\scrD(\R^n):\supp{u}\subset\Omega\}$, and let $\scrD^*(\Omega)$ denote the associated space of distributions (continuous \emph{antilinear} functionals on $\scrD(\Omega)$).
For $s\in \R$ let $H^s(\R^n)$ denote the Sobolev space of those tempered distributions
whose Fourier transforms are locally integrable and satisfy
\mbox{$\|u\|_{H^s(\R^n)}^2:=\int_{\R^n}(1+|\bxi|^2)^{s}\,|\hat{u}(\bxi)|^2\,\rd \bxi < \infty.$}
$\scrD(\R^n)$
is dense in $H^s(\R^n)$; indeed \cite[Lemma 3.24]{McLean}, for all $v\in H^s(\R^n)$ and $\epsilon>0$ there exists $u\in \scrD(\R^n)$ such that
\begin{equation} \label{eq:approx}
\|v-u\|_{H^s(\R^n)} < \epsilon \quad \mbox{and} \quad \supp{u} \subset \{\bx\in \R^n:|\bx-\by|<\epsilon \mbox{ and } \by\in \supp{v}\},
\end{equation}
where $\supp{v}$ denotes the support of the distribution $v$.
It is also standard that $H^{-s}(\R^n)$ provides a natural unitary realisation of $(H^s(\R^n))^*$, the dual space of bounded antilinear functionals on $H^s(\R^n)$,
with the duality pairing, for $u\in H^{-s}(\R^n)$, $v\in H^s(\R^n)$,
\begin{align}
\label{DualDef}
\left\langle u, v \right\rangle_{H^{-s}(\R^n)\times H^{s}(\R^n)}:= \int_{\R^n}\hat{u}(\bxi) \overline{\hat{v}(\bxi)}\,\rd \bxi=\int_{\R^n}u \overline{v}\,\rd \bx,
\end{align}
the second equality holding by Plancherel's theorem,
whenever $u$ is locally integrable and $v\in \scrD(\R^n)$ (or vice versa).

Given a closed set $F\subset \R^n$, we define, for $s\in \R$,
\begin{equation} \label{HsTdef}
H_F^s := \{u\in H^s(\R^n): \supp{u} \subset F\},
\end{equation}
a closed subspace of $H^s(\R^n)$.
Some of our later results will depend on whether or not $H_F^s$ is trivial (i.e.\ contains only the zero distribution), for a given $F$ and $s$. Following \cite{HewMoi:15}, for $s\in \R$ we will say that a set $E\subset\R^n$ is {\em $s$-null} if there are no non-zero elements of $H^{s}(\R^n)$ supported entirely in $E$. In this terminology, for a closed set $F$, $H_F^s=\{0\}$ if and only if $F$ is $s$-null.

Given a non-empty open set $\Omega\subset\R^n$, there are a number of ways to define Sobolev spaces on $\Omega$.
First, we have $H^s_{\overline{\Omega}}$, defined as in \rf{HsTdef}.
Next, we consider the closure of $\scrD(\Omega)$ in $H^s(\R^n)$, which we denote by
\begin{equation}
\label{eq:HstildeDef}
 \tH^s(\Omega):=\overline{\scrD(\Omega)}^{H^s(\R^n)}.
\end{equation}
By definition, $\widetilde H^s(\Omega)$ is, like $H^s_{\overline{\Omega}}$, a closed subspace of $H^s(\R^n)$, and it is easy to see that $\tH^s(\Omega)\subset H^s_{\overline\Omega}$ for all $s\in\R$. When $\Omega$ is sufficiently regular (for example, if $\Omega$ is $C^0$ - see \cite[Theorem 3.29]{McLean}) it holds that $\tH^s(\Omega)=H^s_{\overline\Omega}$; however, for general $\Omega$ the two spaces can be different.
We discuss this key issue in \S\ref{subsec:spacesequal}, using results from \cite{ChaHewMoi:13}.

Next, let
$H^s(\Omega):=\{u\in \scrD^*(\Omega): u=U|_\Omega \textrm{ for some }U\in H^s(\R^n)\}$,
where $U|_\Omega$ denotes the restriction of the distribution $U$ to $\Omega$ (see e.g.\ \cite{McLean}),
with norm%
\begin{align*}
\|u\|_{H^{s}(\Omega)}:=\inf_{\substack{U\in H^s(\R^n)\\ U|_{\Omega}=u}}\normt{U}{H^{s}(\R^n)}.
\end{align*}
Where $^\perp$ denotes the orthogonal complement in $H^s(\R^n)$ and $P:H^s(\R^n)\to (H^s_{\Omega^c})^\perp$ is orthogonal projection,  it holds that $U-PU\in H^s_{\Omega^c}$ for $U\in H^s(\R^n)$, so that
\begin{equation} \label{comm}
U|_\Omega = (PU)|_\Omega \mbox{ and } \normt{PU}{H^{s}(\R^n)}\leq \normt{U}{H^{s}(\R^n)}, \quad U\in H^s(\R^n).
\end{equation}
Thus for $U\in H^s(\R^n)$ we have $\|U|_\Omega\|_{H^{s}(\Omega)} =\normt{E(U|_\Omega)}{H^{s}(\R^n)}$, where
\begin{equation} \label{exts}
E(U|_\Omega) := PU
\end{equation}
is the extension of $U|_\Omega$ from $\Omega$ to $\R^n$ with minimum norm, so that the restriction operator $|_\Omega :(H^s_{\Omega^c})^\perp\to H^s(\Omega)$ is a unitary isomorphism, with inverse $E:H^s(\Omega)\to (H^s_{\Omega^c})^\perp$.
Hence $H^s(\Omega)$ can be identified with a closed subspace of $H^s(\R^n)$, namely $(H^s_{\Omega^c})^\perp$.
We also remark that $\scrD(\overline\Omega):=\{u\in C^\infty(\Omega):u=U|_\Omega \textrm{ for some }U\in\scrD(\R^n)\}$ is a dense subset of $H^s(\Omega)$. %

Central to our analysis will be the fact that for any closed subspace $V\subset H^s(\R^n)$ the dual space $V^*$ can be unitarily realised as a subspace of $H^{-s}(\R^n)$, with duality pairing inherited from $H^{-s}(\R^n)\times H^{s}(\R^n)$. Explicitly (for more details see \cite{ChaHewMoi:13}), let $\mathcal{I}:H^{-s}(\R^n)\to (H^s(\R^n))^*$ be the unitary isomorphism implied by the duality pairing \eqref{DualDef}, i.e.\ $\mathcal{I}u(v) := \left\langle u, v \right\rangle_{H^{-s}(\R^n)\times H^{s}(\R^n)}$, let $R:H^s(\R^n)\to (H^s(\R^n))^*$ denote the standard Riesz isomorphism, and let $j:H^s(\R^n)\to H^{-s}(\R^n)$ be the unitary isomorphism defined by $j:= \mathcal{I}^{-1}R$ ($j$ can be expressed explicitly as a Bessel potential operator, see \cite[Lemma 3.2 and its proof]{ChaHewMoi:13}). Then
$
V^*\cong j(V) = (V^a)^\perp$,
where
\begin{equation} \label{ann}
V^a:=\{u\in H^{-s}(\R^n) : \left\langle u, v \right\rangle_{H^{-s}(\R^n)\times H^{s}(\R^n)}=0 \,\, \mbox{for all } v\in V\}
\end{equation}
is the annihilator of $V$ in $H^{-s}(\R^n)$, $^\perp$ denotes the orthogonal complement in $H^{-s}(\R^n)$, and the duality pairing is
\begin{equation} \label{dualgen}
 \left\langle u, v \right\rangle_{(V^a)^\perp\times V} := \left\langle u, v \right\rangle_{H^{-s}(\R^n)\times H^{s}(\R^n)}, \qquad u\in (V^a)^\perp, v\in V.
 \end{equation}

Explicitly \cite[Lemma 3.2]{ChaHewMoi:13}, if $V = H^s_F$ for $F\subset\R^n$ closed, then
\begin{align} \label{annclosed}
V^a = \tH^{-s}(F^c), \quad \mbox{so that} \quad
(H^s_F)^*&\cong (\tH^{-s}(F^c))^\perp,
\end{align}
with the duality pairing \eqref{dualgen}.
Similarly, if $V= \tH^s(\Omega)$ for $\Omega\subset\R^n$ open,
\begin{align} \label{annopen}
V^a = H^{-s}_{\Omega^c} \quad \mbox{and} \quad
(\tH^s(\Omega))^*\cong (H^{-s}_{\Omega^c})^\perp \subset (\tH^{-s}(\overline{\Omega}^c))^\perp \cong (H^s_{\overline{\Omega}})^*,
\end{align}
again with the duality pairing \eqref{dualgen}.
We note that, since $|_\Omega :(H^{-s}_{\Omega^c})^\perp\to H^{-s}(\Omega)$ is a unitary isomorphism (as noted above), the first realisation in \rf{annopen} can be replaced by the more familiar unitary realisation %
\begin{align}
\label{isdual}
(\tH^s(\Omega))^*  \cong H^{-s}(\Omega) \quad \mbox{with} \quad
\langle u,v \rangle_{H^{-s}(\Omega)\times \tH^s(\Omega)}:=\langle U,v \rangle_{H^{-s}(\R^n)\times H^s(\R^n)},
\end{align}
where $U\in H^{-s}(\R^n)$ is any extension of $u\in H^{-s}(\Omega)$ with $U|_\Omega=u$.

Sobolev spaces can also be defined as subspaces of $L^2(\R^n)$ satisfying constraints on weak derivatives. In particular, given a non-empty open $\Omega\subset \R^n$, let
$$
W^1(\Omega) := \{u\in L^2(\Omega): \nabla u \in L^2(\Omega)\}, \quad \|u\|_{W^1(\Omega)} := \left(\|u\|_{L^2(\Omega)}^2+ \|\nabla u\|_{L^2(\Omega)}^2\right)^{1/2},
$$
where $\nabla u$ is the gradient in a distributional sense.
$W^1(\R^n)=H^1(\R^n)$; in fact $W^1(\Omega)= H^1(\Omega)$ (with equivalence of norms) whenever $\Omega$ is a Lipschitz open set \cite[Theorem 3.30]{McLean}, in which case $\scrD(\overline \Omega)$ is dense in $W^1(\Omega)$.
Similarly, we define
$$
W^1(\Omega;\Delta) := \{u\in W^1(\Omega): \Delta u \in L^2(\Omega)\},\; \|u\|_{W^1(\Omega;\Delta)} := \left(\|u\|_{W^1(\Omega)}^2+ \|\Delta u\|_{L^2(\Omega)}^2\right)^{1/2},
$$
where $\Delta u$ is the Laplacian in a distributional sense, and note that, when $\Omega$ is Lipschitz, $\scrD(\overline{\Omega})$ is also dense in $W^1(\Omega;\Delta)$ \cite[Lemma 1.5.3.9]{Gri}. We define, for $s\in \R$,
\begin{equation}\label{eq:W10}
H^s_0(\Omega):=
\overline{\scrD(\Omega)\big|_\Omega}^{H^s(\Omega)}, \quad W^1_0(\Omega):=
\overline{\scrD(\Omega)\big|_\Omega}^{W^1(\Omega)},
\end{equation}
and note that, for every open set $\Omega$, $W^1_0(\Omega)=H^1_0(\Omega)$, since the $W^1(\Omega)$ norm is equivalent to the $H^1(\R^n)$ norm on $\scrD(\Omega)$. We will also use the notation $W^{1,\mathrm{comp}}(\Omega) := W^1(\Omega)\cap L^2_{\mathrm{comp}}(\Omega)$, where
$L^2_{\mathrm{comp}}(\Omega)\subset L^2(\Omega)$ is the set of restrictions to $\Omega$ of those $u\in L^2(\R^n)$ that are compactly supported.

For wave scattering problems, in which functions decay only slowly at infinity, it is convenient to define also
\begin{eqnarray*}
W^{1,\mathrm{loc}}(\Omega) &:=& \{u\in L^2_{\mathrm{loc}}(\Omega): \nabla u \in L^2_{\mathrm{loc}}(\Omega)\},\\
 W^{1,\mathrm{loc}}(\Omega;\Delta) &:=& \{u\in W^{1,\mathrm{loc}}(\Omega): \Delta u \in L^2_{\mathrm{loc}}(\Omega)\},
\end{eqnarray*}
and
$$
W^{1,\mathrm{loc}}_0(\Omega) := \{u\in W^{1,\mathrm{loc}}(\Omega):\chi|_\Omega u\in W^1_0(\Omega), \mbox{ for every } \chi \in \scrD(\R^n)\},
$$
where $L^2_{\mathrm{loc}}(\Omega)$ is the set of locally integrable functions $u$ on $\Omega$ for which $\int_G|u(\bx)|^2 \rd \bx < \infty$ for every bounded measurable $G\subset \Omega$. Analogously, for $s\geq 0$,
$$
H^{s,\mathrm{loc}}(\Omega) := \{u\in L^2_{\mathrm{loc}}(\Omega): \chi|_\Omega u\in H^s(\Omega), \mbox{ for every } \chi \in \scrD(\R^n)\}.
$$
Clearly $H^{0,\mathrm{loc}}(\Omega)=L^2_{\mathrm{loc}}(\Omega)$ and $H^{1,\mathrm{loc}}(\R^n)=W^{1,\mathrm{loc}}(\R^n)$. It holds moreover (since $n=2$ or $3$) that
\begin{equation} \label{surp}
\quad W^{1,\mathrm{loc}}(\R^n;\Delta) = H^{2,\mathrm{loc}}(\R^n)\subset C(\R^n),
\end{equation}
since if $\chi\in \scrD(\R^n)$ and $u\in W^{1,\mathrm{loc}}(\R^n;\Delta)$, then $v:=\chi u\in H^1(\R^n)$ and $\Delta v\in L^2(\R^n)$, from which it follows, by elliptic regularity (e.g., \cite{GilbargTrudinger}), that $v\in H^2(\R^n)$, so that $v\in C(\R^n)$ by the Sobolev imbedding theorem (e.g., \cite[Theorem 3.26]{McLean}).

\subsection{Function spaces on $\Gamma_\infty$ and trace operators}
\label{subsec:FuncSpacesTrace}

Recall that the propagation domain in which the scattered field is assumed to satisfy \rf{eq:he} is $D:=\R^n\setminus \omS$ ($n=2$ or $3$), where $\mS$ (the screen) is a bounded subset of the hyperplane $\Gamma_\infty:=\{\bx=(x_1,...,x_n)\in \R^n:x_n=0\}$.

To define Sobolev spaces on $\Gamma_\infty$ we make the natural association of $\Gamma_\infty$ with $\R^{n-1}$, and set $H^s(\Gamma_\infty) := H^s(\R^{n-1})$, for $s\in\R$.
For an arbitrary subset $E\subset\Gamma_\infty$ we set $\widetilde{E}:=\{\widetilde \bx\in\R^{n-1}: (\widetilde \bx,0)\in E\}\subset \R^{n-1}$. Then for a closed subset $F\subset\Gamma_\infty$ we define $H^s_F:=H^s_{\widetilde{F}}$, and for an open subset $\Omega\subset \Gamma_\infty$ we set $\tH^{s}(\Omega):=\tH^{s}(\widetilde{\Omega})$, $H^{s}_{\overline{\Omega}}:=H^{s}_{\overline{\widetilde{\Omega}}}$, $H^{s}(\Omega):=H^{s}(\widetilde{\Omega})$, and $H^{s}_0(\Omega):=H^{s}_0(\widetilde{\Omega})$. The spaces $C^\infty(\Gamma_\infty)$, $\scrD(\Gamma_\infty)$, $\scrD(\overline\Omega)$ and $\scrD(\Omega)$ are defined analogously.
Letting $U^+:=\{\bx\in\R^n:x_n>0\}$ and $U^-:=\R^n\setminus \overline{U^+}$ denote the upper and lower half-spaces, respectively, %
we define trace operators  $\gamma^\pm:\scrD(\overline{U^\pm})\to \scrD(\Gamma_\infty)$ by $\gamma^\pm u := u|_{\Gamma_\infty}$, which extend to bounded linear operators $\gamma^\pm:W^1(U^\pm)\to H^{1/2}(\Gamma_\infty)$.
Similarly, we define normal derivative operators $\partial_{\bn}^\pm:\scrD(\overline{U^\pm})\to \scrD(\Gamma_\infty)$ by
$\partial_{\bn}^\pm u = \pdonetext{u}{x_n}|_{\Gamma_\infty}$ (so the normal points into $U^+$), which
extend  (see, e.g.,\ \cite{ChGrLaSp:11})
to bounded linear operators $\partial_\bn^\pm:W^1(U^\pm;\Delta)\to H^{-1/2}(\Gamma_\infty)=(H^{1/2}(\Gamma_\infty))^*$, satisfying  Green's first identity, that
\begin{equation} \label{green1st}
\langle \partial_\bn^\pm u, \gamma^\pm v\rangle_{H^{-1/2}(\Gamma_\infty)\times H^{1/2}(\Gamma_\infty)} = \mp \int_{U^\pm} \left( \nabla u \cdot \nabla \bar v + \bar v \Delta u \right) \rd \bx,
\end{equation}
for $u\in W^1(U^\pm;\Delta)$ and $v\in W^{1}(U^\pm)$. Of note is the fact that
\begin{eqnarray}
\label{W1DCharac}
W^1(D) &=& \{u\in L^2(D): u|_{U^\pm}\in W^1(U^\pm) \textrm{ and } \gamma^+u=\gamma^-u \textrm{ on } \Gamma_\infty\setminus \omS\},\\ \label{W1DDCharac}
W^1(D;\Delta) &=& \{u\in W^1(D): u|_{U^\pm}\in W^1(U^\pm;\Delta) \textrm{ and } \partial_\bn^+u=\partial_\bn^-u \textrm{ on } \Gamma_\infty\setminus \omS\}, \hspace*{5ex}
\end{eqnarray}
with \eqref{W1DDCharac} a consequence of \eqref{green1st} and \eqref{W1DCharac}.

For compact $K\subset \Gamma_\infty$, let
$$
\scrD_{1,K}:=\{\phi \in \scrD(\R^n): \phi=1 \mbox{ in some neighbourhood of }K\}.
$$
For
$u\in W^{1,\rm loc}(D;\Delta)$,
we define,
where $\chi$ is any element of $\scrD_{1,\omS}$, the jumps
\begin{align} \label{jumpu}
[u]:=\gamma^+(\chi u)-\gamma^-(\chi u)\in H^{1/2}_{\omS} \; \mbox{ and }\;
\left[\partial_\bn u\right]:=\partial^+_\bn(\chi u)-\partial^-_\bn(\chi u)\in H^{-1/2}_{\omS}.
\end{align}
These definitions are independent of the choice of $\chi\in\scrD_{1,\omS}$, and the fact that $[u]$ and $[\partial_\bn u]$ are supported in $\omS$ follows from \eqref{W1DCharac} and \eqref{W1DDCharac}.

It is convenient, for closed $T\subset \omS$, also to use the notation
\begin{equation} \label{eq:smooth}
C^\infty_T(\overline D) := \left\{v\in C^\infty(D): v|_{U^+} = w^+\mbox{ and } v|_{U^-} = w^-, \mbox{ for some } w^\pm\in C^\infty(\overline{U_\pm}\setminus T)\right\},
\end{equation}
where $C^\infty(\overline{U_\pm}\setminus T)$ denotes the set of all $u\in C^\infty(U^\pm)$ for which the partial derivatives of $u$ of all orders have continuous extensions from $U^\pm$ to $\overline{U^\pm}\setminus T$.

\subsection{Layer potentials and boundary integral operators}
\label{sec:PotsBIOs}
Let $\Phi(\bx,\by)$ denote the fundamental solution of the Helmholtz equation such that
$v:=\Phi(\cdot,\by)$ satisfies the Sommerfeld radiation condition, that
 \begin{align} \label{src}
 \frac{\partial v(\bx)}{\partial r} -\ri k v(\bx) = o\left(r^{(1-n)/2}\right),
 \end{align}
 as $r:= |\bx|\to \infty$, uniformly in $\widehat \bx:= \bx/|\bx|$. Explicitly,
\begin{align}
\label{FundSoln}
\Phi(\bx,\by) :=
\begin{cases}
\dfrac{\re^{\ri k|\bx-\by|}}{4\pi |\bx-\by|}, & n=3,\\[3mm]
\dfrac{\ri }{4}H_0^{(1)}(k|\bx-\by|), & n=2,
\end{cases}
\qquad \bx,\by \in \R^n.
\end{align}

We define the single and double layer potentials,
\begin{align*}
\label{}
\cS:H^{-1/2}_{\omS}\to C^2(D)\cap W^{1,\rm loc}(\R^n) \mbox{ and }
\cD:H^{1/2}_{\omS}\to C^2(D)\cap W^{1,\rm loc}(D),
\end{align*}
respectively, by (note that both sign choices in $\gamma^\pm$ and $\partial^\pm_\bn$ give the same result)
\begin{align*}
\label{}
\cS\phi (\bx)&:=\left\langle \gamma^\pm(\chi\Phi(\bx,\cdot)),\overline{\phi}\right\rangle_{H^{1/2}(\Gamma_\infty)\times H^{-1/2}(\Gamma_\infty)}, \qquad \bx\in D,\, \phi\in H^{-1/2}_{\omS},\\
\cD\psi (\bx)&:=\left\langle \psi, \overline{\partial^\pm_\bn(\chi\Phi(\bx,\cdot))}\right\rangle_{H^{1/2}(\Gamma_\infty)\times H^{-1/2}(\Gamma_\infty)}, \qquad \bx\in D,\, \psi\in H^{1/2}_{\omS},
\end{align*}
where $\chi$ is any element of $\scrD_{1,\omS}$ with $\bx\not\in \supp{\chi}$. Explicitly, by \eqref{DualDef},
\begin{align}
\label{SkPotIntRep}
\cS\phi (\bx) &= \int_\omS \Phi(\bx,\by) \phi(\by)\, \rd s(\by), \qquad \bx\in D,\\
\label{DkPotIntRep}
\cD\psi (\bx) &= \int_\omS \pdone{\Phi(\bx,\by)}{\bn(\by)} \psi(\by)\, \rd s(\by), \qquad \bx\in D,
\end{align}
but with the first of these equations holding only when $\phi\in H^{-1/2}_\omS$ is locally integrable, in which case $\phi\in L^1\left(\omS\right)$. Equation \eqref{DkPotIntRep} holds for all $\psi\in H^{1/2}_\omS$ since $H^{1/2}_\omS\subset L^1(\omS)$.

The following properties of $\cS$ and $\cD$ are standard when $\mS$ is a bounded Lipschitz open subset of $\Gamma_\infty$.
The extension to general bounded $\mS\subset\Gamma_\infty$ follows immediately, noting that layer potentials on $\omS$ can be thought of as layer potentials on any larger bounded open set $\Omega\supset \omS$;
for more details see \cite[Theorem 3.1]{CoercScreen}.
\begin{thm}
\label{LayerPotRegThm}
(i) For any $\phi\in H^{-1/2}_{\omS}$ and $\psi\in H^{1/2}_{\omS}$ the potentials $\cS\phi $ and $\cD\psi $ are infinitely differentiable in $D$, and satisfy the Helmholtz equation \rf{eq:he} in $D$ and the Sommerfeld radiation condition \eqref{src};

(ii) for any $\chi\in \scrD(\R^n)$ the following mappings are bounded:
\begin{align*}
\label{}
\chi\cS:H^{-1/2}_{\omS}\to W^1(\R^n),\qquad
\chi \cD:H^{1/2}_{\omS}\to  W^1(D);
\end{align*}
(iii) the following jump relations hold for all $\phi\in H^{-1/2}_{\omS}$, $\psi\in H^{1/2}_{\omS}$, and $\chi\in \scrD_{1,\omS}$:
\begin{align}
\label{JumpRelns1}
[\cS\phi]&=0,\\% \qquad
\label{JumpRelns2}
\partial^\pm_\bn(\chi \cS\phi) &= \mp\phi/2, \quad \mbox{so that } \left[\partial_\bn \cS\phi\right]=-\phi, \\%\qquad
\label{JumpRelns3}
\gamma^\pm(\chi \cD\psi) &=\pm \psi/2, \quad \mbox{so that }[\cD\psi]=\psi,\\ %
\label{JumpRelns4}
\left[\partial_\bn \cD\psi \right]&=0.
\end{align}
\end{thm}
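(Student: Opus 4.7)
My plan is to reduce everything to the well-established theory of layer potentials on a bounded Lipschitz open subset of $\Gamma_\infty$, as the excerpt itself suggests via the reference to \cite[Theorem 3.1]{CoercScreen}. Since $\omS$ is bounded in $\Gamma_\infty$, I can choose a bounded $C^\infty$ open set $\Omega\subset\Gamma_\infty$ with $\omS\subset\Omega$ (for example, the intersection with $\Gamma_\infty$ of a large open ball in $\R^n$). Because $\Omega$ is $C^0$, \cite[Theorem 3.29]{McLean} gives $H^{\pm 1/2}_{\overline{\Omega}}=\tH^{\pm 1/2}(\Omega)$, so $H^{\pm 1/2}_{\omS}$ embeds isometrically as a closed subspace of $\tH^{\pm 1/2}(\Omega)$. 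For $\bx$ outside $\overline{\Omega}$, a density argument using $\scrD(\Omega)\subset\tH^{\pm 1/2}(\Omega)$ shows that the duality-pairing definitions of $\cS\phi(\bx)$ and $\cD\psi(\bx)$ collapse to the classical single- and double-layer potentials $\cS_\Omega\phi(\bx)$ and $\cD_\Omega\psi(\bx)$ on the Lipschitz screen $\Omega$, for which analogues of (i)--(iii) are standard; see e.g.\ \cite{Ste:87,Ha-Du:90,Ha-Du:92,CoercScreen2}.

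For (i), the classical theory immediately provides smoothness, the Helmholtz equation and the Sommerfeld radiation condition on $\R^n\setminus\overline{\Omega}$. What remains to be handled is the potentially larger set $D=\R^n\setminus\omS$, which can contain points of $\overline{\Omega}\setminus\omS\subset\Gamma_\infty$. I will fill this gap by working directly from the duality-pairing definitions: given $\bx_0\in D$, choose $\chi\in\scrD_{1,\omS}$ with $\bx_0$ disjoint from the support of $\chi$. Then $\bx\mapsto\gamma^\pm(\chi\Phi(\bx,\cdot))$ is a $C^\infty$ map from a neighbourhood of $\bx_0$ into $H^{1/2}(\Gamma_\infty)$, since every $\bx$-derivative of $\Phi(\bx,\by)$ is jointly smooth in $(\bx,\by)$ on the relevant compact set. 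Hence $\cS\phi$ (and similarly $\cD\psi$) is $C^\infty$ near $\bx_0$, and differentiating under the pairing together with $(\Delta_\bx+k^2)\Phi(\bx,\by)=0$ away from the diagonal yields the Helmholtz equation throughout $D$. The Sommerfeld condition is inherited unchanged since $\omS\subset\overline{\Omega}$ is bounded.

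For (ii), boundedness of $\chi\cS_\Omega:\tH^{-1/2}(\Omega)\to W^1(\R^n)$ and $\chi\cD_\Omega:\tH^{1/2}(\Omega)\to W^1(\R^n\setminus\overline{\Omega})$ is classical, so the first mapping property follows by restriction to the closed subspace $H^{-1/2}_{\omS}\subset\tH^{-1/2}(\Omega)$. For the second, I use that $\overline{\Omega}\setminus\omS\subset\Gamma_\infty$ has $n$-dimensional Lebesgue measure zero, so that restriction and extension by zero across this null set identify $W^1(\R^n\setminus\overline{\Omega})$ and $W^1(D)$ canonically (an $L^2$ function is unchanged upon removal of a null set, and its distributional gradient on $D$ coincides with that on $\R^n\setminus\overline{\Omega}$ for the same reason).

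For (iii), the jump relations are standard on the Lipschitz screen $\Omega$, and transfer directly to our setting once the densities in $H^{\pm 1/2}_{\omS}$ are viewed as elements of $\tH^{\pm 1/2}(\Omega)$; both the traces $\gamma^\pm(\chi\cdot)$ and the conormal derivatives $\partial^\pm_\bn(\chi\cdot)$ agree for $\cS\phi$ and $\cS_\Omega\phi$ (and similarly for $\cD$), by the $L^2$-identification of (ii). The main obstacle at each step is to verify the equivalence of the duality-pairing definitions of $\cS$ and $\cD$ on $H^{\pm 1/2}_{\omS}$ with the classical Lipschitz-screen layer potentials $\cS_\Omega$, $\cD_\Omega$; once that identification is established, (i)--(iii) all reduce to their Lipschitz counterparts.
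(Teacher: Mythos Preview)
Your overall strategy---embed $H^{\pm 1/2}_{\omS}$ as a closed subspace of $\tH^{\pm 1/2}(\Omega)$ for a larger Lipschitz (indeed smooth) open $\Omega\subset\Gamma_\infty$ and then invoke the classical Lipschitz-screen theory---is exactly the approach the paper takes (the paper simply asserts that the extension ``follows immediately'' from this observation and refers to \cite[Theorem 3.1]{CoercScreen} for details). Your treatment of (i) via differentiation under the duality pairing, and of (iii) via the Lipschitz jump relations, is correct.

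However, your argument for (ii) in the double-layer case contains a genuine error. You claim that because $\overline{\Omega}\setminus\omS$ has $n$-dimensional Lebesgue measure zero, restriction and extension by zero ``identify $W^1(\R^n\setminus\overline{\Omega})$ and $W^1(D)$ canonically'', and that the distributional gradient on $D$ coincides with that on $\R^n\setminus\overline{\Omega}$ ``for the same reason''. This is false: removing a set of measure zero does not in general preserve membership in $W^1$, precisely because distributional derivatives test against functions whose supports may cross the removed set. A function in $W^1(\R^n\setminus\overline{\Omega})$ can have a nonzero trace jump across $\overline{\Omega}\setminus\omS\subset\Gamma_\infty$, in which case its extension to $D$ acquires a singular distributional derivative there and fails to lie in $W^1(D)$; cf.\ the characterisation \eqref{W1DCharac}. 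The restriction map $W^1(D)\to W^1(\R^n\setminus\overline{\Omega})$ is an isometric injection, but it is not onto.

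The fix is to use the jump relation you already have from (iii): for $\psi\in H^{1/2}_{\omS}\subset\tH^{1/2}(\Omega)$ the Lipschitz theory gives $\gamma^+(\chi\cD_\Omega\psi)-\gamma^-(\chi\cD_\Omega\psi)=\psi$ as an element of $H^{1/2}(\Gamma_\infty)$, and since $\psi$ is supported in $\omS$ this jump vanishes on $\Gamma_\infty\setminus\omS$. Then \eqref{W1DCharac} shows $\chi\cD\psi\in W^1(D)$, with norm controlled by the $W^1(U^+)\oplus W^1(U^-)$ norm, which in turn is bounded by the Lipschitz-screen estimate. So you should reorder: establish the trace jump relation first, then deduce the $W^1(D)$ mapping property for $\cD$.
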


We will obtain BIE formulations for our scattering problems that  can be expressed with the help of single-layer and hypersingular operators, $S_\infty$ and $T_\infty$ respectively, defined as mappings from $\scrD(\Gamma_\infty)$ to $C^\infty(\Gamma_\infty)$ by the standard formulae
\begin{align}
\label{STDef}
S_\infty\phi (\bx) = \int_{\Gamma_\infty} \Phi(\bx,\by) \phi(\by)\, \rd s(\by), \quad
T_\infty\psi (\bx) = \pdone{}{\bn(\bx)}\int_{\Gamma_\infty} \pdone{\Phi(\bx,\by)}{\bn(\by)} \psi(\by)\, \rd s(\by),
\end{align}
for $\bx\in \Gamma_\infty$. Fix $\chi\in\scrD_{1,\omS}$, in which case $\chi=1$ in some bounded open neighbourhood $\Gamma_\dag$ of $\omS$. It is standard (e.g.\ \cite{McLean}) that, if $\phi,\psi\in \scrD(\Gamma_\infty)$, then
\begin{align} \label{STrep}
S_\infty\phi =\gamma^\pm(\chi\cS\phi) \mbox{ and } T_\infty\psi =\partial^\pm_\bn(\chi\cD\psi)\quad \mbox{in }\Gamma_\dag,
\end{align}
 so that we can define mappings $S_\dag$ and $T_\dag$ from $\scrD(\Gamma_\dag)$ to $\scrD(\overline{\Gamma_\dag})$ by
\begin{equation} \label{Sstardefs}
S_\dag\phi := S_\infty\phi|_{\Gamma_\dag} = \gamma^\pm(\chi\cS\phi)|_{\Gamma_\dag}, \quad T_\dag\psi := T_\infty\psi|_{\Gamma_\dag}= \partial^\pm_\bn(\chi\cD\psi)|_{\Gamma_\dag},
\end{equation}
for $\phi, \psi\in \scrD(\Gamma_\dag)$. It is clear from the mapping properties of the trace operators, those of $\cS$ and $\cD$ in Theorem \ref{LayerPotRegThm}(ii), and the density of $\scrD(\Gamma_\dag)$ in $\tH^s(\Gamma_\dag)$, for $s\in\R$, that the representations \eqref{Sstardefs} extend the definitions of $S_\dag$ and $T_\dag$ to bounded operators $S_\dag:\tH^{-1/2}(\Gamma_\dag)\to H^{1/2}(\Gamma_\dag)$ and $T_\dag: \tH^{1/2}(\Gamma_\dag)\to H^{-1/2}(\Gamma_\dag)$. (These mapping properties are well-known in the case that $\Gamma_\dag$ is Lipschitz or smoother - see e.g.\ \cite{Ha-Du:90,Ha-Du:92}, where $\Gamma$ is assumed $C^\infty$.)

Recall that $H^{\pm1/2}(\Gamma_\dag)$ can be identified with the dual space $\left(\tH^{\mp 1/2}(\Gamma_\dag)\right)^*$ via the unitary mapping implied by the duality pairing \eqref{isdual}.
As noted in \S\ref{sec:SobolevSpaces}, an alternative natural unitary realisation of $\left(\tH^{\mp 1/2}(\Gamma_\dag)\right)^*$, via the duality pairing \eqref{dualgen}, is  $\left(H^{\pm 1/2}_{\Gamma_\dag^c}\right)^\perp\subset H^{\pm 1/2}(\Gamma_\infty)$. We can define versions of $S_\dag$ and $T_\dag$, $S_\ddag:\tH^{-1/2}(\Gamma_\dag)\to \left(H^{ 1/2}_{\Gamma_\dag^c}\right)^\perp$ and $T_\ddag:\tH^{1/2}(\Gamma_\dag) \to \left(H^{- 1/2}_{\Gamma_\dag^c}\right)^\perp$, which map to these alternate realisations of the dual spaces,  by
\begin{equation} \label{Sstardefs2}
S_\ddag\phi := P_{+}\gamma^\pm(\chi\cS\phi), \quad T_\ddag\psi := P_{-}\partial^\pm_\bn(\chi\cD\psi),
\end{equation}
for $\phi\in \tH^{-1/2}(\Gamma_\dag)$, $\psi\in \tH^{+1/2}(\Gamma_\dag)$, where $P_{\pm}$ denotes orthogonal projection onto $\left(H^{\pm 1/2}_{\Gamma_\dag^c}\right)^\perp$ in $H^{\pm 1/2}(\Gamma_\infty)$. Since $\varphi|_{\Gamma_\dag} =0$ for $\varphi\in H^{\pm 1/2}_{\Gamma_\dag^c}$, we see from \eqref{Sstardefs} and \eqref{Sstardefs2} that
\begin{equation} \label{SddSd}
S_\dag = |_{\Gamma_\dag} S_\ddag, \quad T_\dag = |_{\Gamma_\dag} T_\ddag,
\end{equation}
with the mapping $|_{\Gamma_\dag}:\left(H^{\pm 1/2}_{\Gamma_\dag^c}\right)^\perp\to H^{\pm 1/2}(\Gamma_\dag)$ a unitary isomorphism, as noted in \S\ref{sec:SobolevSpaces}, whose inverse \eqref{exts} takes $\varphi\in H^{\pm 1/2}(\Gamma_\dag)$ to its unique extension in $H^{\pm 1/2}(\Gamma_\infty)$ with minimum norm.

Thus $S_\dag \phi$ is simply the restriction of $S_\ddag \phi$ to $\Gamma_\dag$, and $S_\ddag \phi$ the minimum norm extension of $S_\dag \phi$; and the same relationship holds between $T_\dag \phi$ and $T_\ddag \phi$. Moreover, it is immediate from \eqref{STDef}, \eqref{STrep}, \eqref{Sstardefs}, and \eqref{SddSd}, that, for $\phi,\psi\in \scrD(\Gamma_\dag)$ and $\bx \in \Gamma_\dag$,
\begin{eqnarray}
\label{SdagDef}
S_\dag\phi (\bx) &=&  S_\ddag\phi(\bx) = \int_{\Gamma_\dag} \Phi(\bx,\by) \phi(\by)\, \rd s(\by),\\ \label{TdagDef}
T_\dag\psi (\bx) &=& T_\ddag\psi(\bx) =\pdone{}{\bn(\bx)}\int_{\Gamma_\dag} \pdone{\Phi(\bx,\by)}{\bn(\by)} \psi(\by)\, \rd s(\by).
\end{eqnarray}

To write down weak forms of BIEs, we introduce sesquilinear forms associated to these BIOs, defined by
\begin{align} \label{eq:sesD}
a_S(\phi,\psi) := \left\langle S_\dag \phi, \psi\right\rangle_{H^{1/2}(\Gamma_\dag)\times \tH^{-1/2}(\Gamma_\dag)} = \left\langle S_\ddag \phi, \psi\right\rangle_{H^{1/2}(\Gamma_\infty)\times H^{-1/2}(\Gamma_\infty)},
\end{align}
for $\phi,\psi\in \tH^{-1/2}(\Gamma_\dag)$, and
\begin{align}\label{eq:sesN}
a_T(\phi,\psi) := \left\langle T_\dag \phi, \psi\right\rangle_{H^{-1/2}(\Gamma_\dag)\times \tH^{1/2}(\Gamma_\dag)} = \left\langle T_\ddag \phi, \psi\right\rangle_{H^{-1/2}(\Gamma_\infty)\times H^{1/2}(\Gamma_\infty)},
\end{align}
for $\phi,\psi\in \tH^{1/2}(\Gamma_\dag)$. Explicitly, for $\phi,\psi\in \scrD(\Gamma_\dag)$ (which is dense in $\tH^{\pm 1/2}(\Gamma_\dag)$), it follows from \eqref{DualDef}, \eqref{STDef}, \eqref{STrep}, and \eqref{Sstardefs}, that
\begin{equation} \label{eq:sesD2}
a_S(\phi,\psi) = \int_{\Gamma_\dag}S_\infty \phi\, \bar\psi \, \rd s(\bx) \quad \mbox{and} \quad
a_T(\phi,\psi) = \int_{\Gamma_\dag}T_\infty \phi\, \bar\psi \, \rd s(\bx),
\end{equation}
with the actions of $S_\infty$ and $T_\infty$ given by \eqref{STDef}.
These sesequilinear forms are continuous and coercive, in the sense of the following theorem, taken from \cite{CoercScreen2} (and see  \cite{Ha-Du:92,Co:04}). We remark that coercivity in this sense is unusual for BIOs for scattering problems. More usual -- and in fact this would be enough for most of our later analysis -- is that the BIO is a compact perturbation of a coercive operator (where by a coercive operator we mean one whose associated sesquilinear form is coercive). We note that \cite{CoercScreen2} gives explicit expressions for the constants in this theorem, as functions of the dimension $n$ and $kL$,  where $L:=\mathrm{diam}(\Gamma_\dag)$.

\begin{thm}
\label{thm:ContCoerc1}
The sesquilinear forms $a_S$ and $a_T$ are {\em continuous} and {\em coercive}, i.e., there exist constants $c_S,C_S,c_T,C_T>0$ such that
\begin{align}
\label{}
&|a_S(\phi,\psi)| \leq C_S \|\phi\|_{\tH^{-1/2}(\Gamma_\dag)}\, \|\psi\|_{\tH^{-1/2}(\Gamma_\dag)},\quad
|a_S(\phi,\phi)|\geq c_S \|\phi\|^2_{\tH^{-1/2}(\Gamma_\dag)},
\end{align}
for all $\phi, \psi\in \tH^{-1/2}(\Gamma_\dag)$, and
\begin{align}
&|a_T(\phi,\psi)| \leq C_T \|\phi\|_{\tH^{1/2}(\Gamma_\dag)}\, \|\psi\|_{\tH^{1/2}(\Gamma_\dag)},\quad
|a_T(\phi,\phi)|\geq c_T \|\phi\|^2_{\tH^{1/2}(\Gamma_\dag)},
\end{align}
for all $\phi, \psi\in \tH^{1/2}(\Gamma_\dag)$.
\end{thm}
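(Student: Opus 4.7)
The plan is to handle continuity and coercivity separately; continuity is immediate from what has already been established, while coercivity is the substantive content.

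\textbf{Continuity.} From the discussion following \eqref{Sstardefs}, the operators $S_\dag:\tilde H^{-1/2}(\Gamma_\dag)\to H^{1/2}(\Gamma_\dag)$ and $T_\dag:\tilde H^{1/2}(\Gamma_\dag)\to H^{-1/2}(\Gamma_\dag)$ are bounded. Applying the duality pairing \eqref{isdual} in the definitions \eqref{eq:sesD}, \eqref{eq:sesN}, followed by Cauchy--Schwarz, gives
\[|a_S(\phi,\psi)|\le \|S_\dag\phi\|_{H^{1/2}(\Gamma_\dag)}\,\|\psi\|_{\tilde H^{-1/2}(\Gamma_\dag)}\le \|S_\dag\|\,\|\phi\|_{\tilde H^{-1/2}(\Gamma_\dag)}\,\|\psi\|_{\tilde H^{-1/2}(\Gamma_\dag)},\]
and likewise for $a_T$, yielding the continuity constants $C_S,C_T$.

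\textbf{Coercivity: Fourier multiplier reduction.} For $\phi,\psi\in\scrD(\Gamma_\dag)$ (dense in $\tilde H^{\mp 1/2}(\Gamma_\dag)$), writing $\Phi$ as a Fourier integral over the transverse direction and applying Plancherel on $\Gamma_\infty\cong\R^{n-1}$ yields the multiplier identities
\[a_S(\phi,\phi)=\frac{1}{2}\int_{\R^{n-1}}\frac{|\hat\phi(\boldsymbol\xi)|^2}{Z(\boldsymbol\xi)}\,\rd\boldsymbol\xi,\qquad a_T(\psi,\psi)=\frac{1}{2}\int_{\R^{n-1}}Z(\boldsymbol\xi)\,|\hat\psi(\boldsymbol\xi)|^2\,\rd\boldsymbol\xi,\]
where $Z(\boldsymbol\xi):=\sqrt{|\boldsymbol\xi|^2-k^2}$ with the branch $\mathrm{Im}\,Z\ge 0$ fixed by the Sommerfeld radiation condition (so $Z$ is real positive for $|\boldsymbol\xi|>k$ and purely imaginary with positive imaginary part for $|\boldsymbol\xi|<k$). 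Thus each integrand splits into a real-positive part supported in the evanescent region $\{|\boldsymbol\xi|>k\}$ and a purely imaginary part supported in the propagating region $\{|\boldsymbol\xi|<k\}$. The elementary inequality $|a+\ri b|\ge(|a|+|b|)/\sqrt 2$ then gives
\[|a_T(\psi,\psi)|\ge\frac{1}{2\sqrt 2}\int_{\R^{n-1}}|Z(\boldsymbol\xi)|\,|\hat\psi(\boldsymbol\xi)|^2\,\rd\boldsymbol\xi,\]
and analogously with $|Z|^{-1}$ for $a_S(\phi,\phi)$.

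\textbf{The main obstacle: the sphere $|\boldsymbol\xi|=k$.} It remains to compare the weight $|Z(\boldsymbol\xi)|=\bigl||\boldsymbol\xi|^2-k^2\bigr|^{1/2}$ with the $\tilde H^{\pm 1/2}$-weight $(1+|\boldsymbol\xi|^2)^{\pm 1/2}$. Away from the sphere $|\boldsymbol\xi|=k$ the two weights are equivalent, but $|Z|$ vanishes on this sphere whereas $(1+|\boldsymbol\xi|^2)^{1/2}$ is bounded below, so naive pointwise comparison fails. This is the principal difficulty, and the source of the $kL$-dependence of the coercivity constants (where $L:=\mathrm{diam}(\Gamma_\dag)$). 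The remedy exploits compact support: since $\phi,\psi$ are supported in a set of diameter $L$, by Paley--Wiener $\hat\phi,\hat\psi$ extend to entire functions of exponential type $\lesssim L$, so $|\hat\phi|^2,|\hat\psi|^2$ cannot vary appreciably on scales finer than $1/L$. Averaging over an $O(1/L)$-thick shell around $|\boldsymbol\xi|=k$ then bounds the shell contribution to the $\tilde H^{\pm 1/2}$-norm by the corresponding integral with weight $|Z|^{\pm 1}$, up to a factor depending only on $kL$. Combined with the pointwise equivalence away from the sphere, this yields the required positive $c_S,c_T$, and density of $\scrD(\Gamma_\dag)$ extends all four bounds to the full spaces $\tilde H^{\mp 1/2}(\Gamma_\dag)$.
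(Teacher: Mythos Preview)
The paper does not prove this theorem itself; it is stated as ``taken from \cite{CoercScreen2}'' (with pointers also to \cite{Ha-Du:92,Co:04}), and the paper remarks only that the cited reference gives explicit $kL$-dependent constants.  So there is no in-paper proof to compare against.

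Your sketch is essentially the standard Fourier-multiplier argument and is, in outline, correct; it is presumably close in spirit to what appears in the cited reference. The continuity step is immediate as you say, and the multiplier identities for $a_S$ and $a_T$ together with the real/imaginary-part split are right.  One small over-statement: for $a_S$ the pointwise inequality $|Z(\boldsymbol\xi)|^{-1}\ge c(k)(1+|\boldsymbol\xi|^2)^{-1/2}$ actually holds for all $\boldsymbol\xi$ (since $||\boldsymbol\xi|^2-k^2|\le \max(1,k^2)(1+|\boldsymbol\xi|^2)$), so compact support is not needed merely to obtain \emph{some} $c_S>0$; the Paley--Wiener/shell-averaging argument is genuinely required only for $a_T$, where $|Z|$ vanishes on the sphere while $(1+|\boldsymbol\xi|^2)^{1/2}$ does not.  (Compact support does become relevant for $a_S$ as well if one wants the explicit $kL$-dependence alluded to in the paper's remark, rather than a $k$-dependent constant.)  The last step---turning the heuristic ``$\hat\psi$ cannot vary on scales finer than $1/L$'' into a rigorous inequality controlling the shell contribution---is the part you have left most sketchy, and is where the real work lies; the cited paper \cite{CoercScreen2} is where that calculation is carried out.
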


Since $H^{\pm 1/2}(\Gamma_\dag)$ is a {\em unitary} realisation of $(\tH^{\mp 1/2}(\Gamma_\dag))^*$ through the duality pairing \eqref{isdual}, the upper bounds in this theorem are equivalent to the bounds
\begin{equation} \label{norms}
\|S_\dag\| = \|S_\ddag\| \leq C_S, \quad  \|T_\dag\| = \|T_\ddag\| \leq C_S.
\end{equation}
Further, by Lax-Milgram, the above theorem implies that these operators are invertible, with
\begin{equation} \label{invnorms}
\|S^{-1}_\dag\| = \|S^{-1}_\ddag\| \leq c^{-1}_S, \quad  \|T^{-1}_\dag\| = \|T^{-1}_\ddag\| \leq c^{-1}_T.
\end{equation}

Our BIEs will be expressed in terms of single-layer and hypersingular operators associated to the screen $\mS$, defined analogously to \eqref{Sstardefs2}. Specifically, let $V^{\pm}$ denote any closed subspace of $H^{\pm 1/2}_\omS$ (so that $V^\pm\subset \tH^{\pm 1/2}(\Gamma_\dag)$ by \eqref{eq:approx}), and let $V^\mp_*:= ((V^\pm)^a)^\perp$ denote the natural unitary realisation of
$\left(V^{\pm}\right)^*$ implied by the duality pairing \eqref{dualgen}, where $(V^\pm)^a$ denotes the annihilator of $V^\pm$ in $H^{\mp 1/2}(\Gamma_\infty)$, defined by \eqref{ann}. We note in particular that if $V^\pm = \tH^{\pm1/2}(\Omega)$, for some open $\Omega\subset \omS$, or if $V^\pm = H^{\pm 1/2}_F$, for some closed $F\subset \omS$, then $V^\mp_*$ are given explicitly by \eqref{annopen} and \eqref{annclosed}, as
 \begin{equation} \label{eq:duals}
 V^\mp_* = \left(H^{\mp 1/2}_{\Omega^c}\right)^\perp\quad \mbox{ and } \quad V^\mp_* = \left(\tH^{\mp 1/2}(F^c)\right)^\perp,
\end{equation}
respectively.
Let $P_{V^\pm_*}$ denote orthogonal projection onto $V^\pm_*$ in $H^{\pm 1/2}(\Gamma_\infty)$. Then the operators in our BIE formulations will be the single-layer and hypersingular operators, $S:V^-\to V^+_*$ and $T:V^+\to V^-_*$, defined by
\begin{align} \label{eq:BIOs}
S\phi :=P_{V^{+}_*}\gamma^\pm(\chi\cS\phi), \qquad \phi\in V^{-}, \quad \quad
T\psi :=P_{V^{-}_*}\partial^\pm_\bn(\chi\cD\psi), \qquad \psi\in V^{+}.
\end{align}
These definitions are independent of the choice of the $\pm$ sign in the trace operators, by \eqref{JumpRelns1} and \eqref{JumpRelns4}, and are independent of the choice of $\chi\in\scrD_{1,\omS}$. (If $\chi_1,\chi_2\in \scrD_{1,\omS}$, then $\gamma^\pm((\chi_1-\chi_2)\cS\phi)\in (V^\pm)^a$, so that $P_{V^{+}_*}\gamma^\pm((\chi_1-\chi_2)\cS\phi)=0$; similarly, $P_{V^{-}_*}\partial^\pm_\bn((\chi_1-\chi_2)\cD\psi)=0$.)

Since $V^\pm$ are closed subspaces of $\tH^{\pm 1/2}(\Gamma_\dag)$, it follows from \eqref{Sstardefs2} and \eqref{eq:BIOs} that
\begin{equation} \label{STdag}
S = P_{V^{+}_*}S_\ddag|_{V^-}, \quad T = P_{V^{-}_*}T_\ddag|_{V^+},
\end{equation}
so that, since $\|P_{V^\pm_*}\|=1$, \eqref{norms} implies that
\begin{equation} \label{norms2}
\|S\| \leq C_S, \quad  \|T\| \leq C_T.
\end{equation}
Further, it is immediate from the definitions of $V^\pm_*$ and $P_{V^\pm_*}$ that
\begin{equation} \label{rest1}
\langle S\phi,\psi\rangle_{V^+_*\times V^-} = \langle S\phi,\psi\rangle_{H^{1/2}(\Gamma_\infty)\times H^{-1/2}(\Gamma_\infty)} = a_S(\phi,\psi),
\quad \phi,\psi\in V^-,
\end{equation}
\begin{equation} \label{rest2}
\langle T\phi,\psi\rangle_{V^-_*\times V^+} = \langle T\phi,\psi\rangle_{H^{-1/2}(\Gamma_\infty)\times H^{1/2}(\Gamma_\infty)} = a_T(\phi,\psi),
\quad \phi,\psi\in V^+.
\end{equation}
In other words, the sesquilinear forms corresponding to $S$ and $T$ are just the restrictions of $a_S$ and $a_T$ to the subspaces $V^-$ and $V^+$, respectively. Thus these sesquilinear forms are coercive with the same constants, and $S$ and $T$ are invertible by Lax-Milgram with
\begin{equation} \label{invert}
\|S^{-1}\| \leq c_S^{-1}, \quad  \|T^{-1}\| \leq c_T^{-1}.
\end{equation}

\section{Formulating screen scattering problems} \label{sec:form}

\subsection{Standard BVP formulations and their interrelation}  \label{subsec:bvps}

In this section we study the standard formulations for screen scattering from the literature.
We will see that these formulations are equivalent for screens that occupy open sets in $\Gamma_\infty$ with Lipschitz boundaries (this is well-known), but that some of these standard formulations (Problems \SDcl, \SNcl, \Dst, and \Nsts below) fail to have unique solutions in less regular cases (this is explored in \S\ref{subsec:nonuni} below). In the next section, \S\ref{subsec:bvpsnovel}, we will introduce new families of formulations that are well-posed in all cases. %

We begin by stating precisely the standard weak formulations of the sound-soft (Dirichlet) and sound-hard (Neumann) scattering problems that we have referred to in the introduction. In each case the problem is to find the scattered field $u^s$, or equivalently the total field $u=u^i+u^s$, given the incident field $u^i$.
\begin{defn}[Problem \SDw] \label{def:sSsDw}
Given $u^i\in W^{1,\rm loc}(\R^n;\Delta)$, find $u\in W^{1,\rm loc}(D;\Delta)\cap W_0^{1,\rm loc}(D)$ such that $u^s:= u-u^i$ satisfies the Helmholtz equation \rf{eq:he} in $D$ and the Sommerfeld radiation condition \eqref{src}.
\end{defn}

\begin{defn}[Problem \SNw] \label{def:sSsNw}
Given $u^i\in W^{1,\rm loc}(\R^n;\Delta)$, find $u\in W^{1,\rm loc}(D;\Delta)$ such that $u^s:= u-u^i$ satisfies the Helmholtz equation \rf{eq:he} in $D$ and the Sommerfeld radiation condition \eqref{src}, and such that $u$ satisfies the weak sound-hard boundary condition \eqref{eq:wnc}.
\end{defn}

\begin{rem} \label{rem:local} It is easy to see that the solutions $u^s$ to Problems \SDws and \SNws depend only on $u^i$ in a neighbourhood of $\omS$. Precisely, if $u^s$ is a solution to \SDws (\SNw), then $u^s$ is also a solution to \SDws (\SNw) with $u^i$ replaced with $u^i_\sharp$, provided $u^i=u^i_\sharp$ in some neighbourhood of $\omS$. In particular, without changing the set of solutions $u^s$, we can replace $u^i$ by $u^i_\sharp:= \chi u^i$, for any $\chi\in \scrD_{1,\omS}$, in which case $u^i_\sharp\in W^{1}(\R^n;\Delta)$ and is compactly supported, and $u_\sharp := u^i_\sharp + u^s\in W^{1,\rm loc}(D;\Delta)$ satisfies \eqref{eq:he} outside the support of $\chi$ and the Sommerfeld radiation condition \eqref{src}.
\end{rem}

\begin{rem} \label{rem:equiv} An alternative way of formulating the scattering problem \SDws is to start with a given $f\in L^2(D)=L^2(\R^n)$ that has bounded support and seek $u\in W^{1,\rm loc}(D;\Delta)\cap W_0^{1,\rm loc}(D)$ which satisfies
\begin{equation} \label{eq:he_inhom}
\Delta u + k^2 u = f
\end{equation}
in $D$ and the Sommerfeld radiation condition \eqref{src}. This is equivalent to \SDws in the sense that if $u$ satisfies this formulation and we define $u^i\in W^{1,\rm loc}(\R^n;\Delta)$ to be the unique solution of $\Delta u + k^2 u = f$ in $\R^n$ which satisfies \eqref{src}, explicitly
\begin{equation} \label{eq:ui}
u^i(x) = -\int_{\R^n}\Phi(\bx,\by) f(\by) \rd \by, \quad \bx\in \R^n,
\end{equation}
then $u$ and $u^s:= u-u^i$ satisfy \SDw. Conversely, if $u^i\in W^{1,\rm loc}(\R^n;\Delta)$ is given and $u^s$ satisfies \SDw, then, by Remark \ref{rem:local}, $u^s$ also satisfies \SDws with $u^i$ replaced by $u^i_\sharp := \chi u^i$, for any $\chi\in \scrD_{1,\omS}$, and defining $u:= u^i_\sharp + u^s$, $u$ satisfies \eqref{src} and $\Delta u + k^2 u = f:= (\Delta + k^2)u^i_\sharp$, which is in $L^2(D)$ and has bounded support. Identical remarks apply regarding the alternative formulation of \SNw.
\end{rem}

The following well-posedness is classical, and can be established by combining the equivalence of formulations in Remark \ref{rem:equiv} with results in \cite[Corollary 4.5]{wilcox:75} for \SNw, in \cite{NeittaanmakiRoach:87} for \SDw. In each case uniqueness follows from Green's first theorem and a result of Rellich (cf.\ proof of Lemma \ref{lem:uni} below), and existence from uniqueness and local compactness and limiting absorption arguments. These require in the Neumann case that the domain $D$ satisfies a local compactness condition, which it does as $D$ satisfies Wilcox's finite tiling property - see \cite[Theorem 4.3 and p.62]{wilcox:75}.

\begin{thm} \label{thm:bvp_wwp} Problems \SDws and \SNws have exactly one solution for every $u^i\in W^{1,\mathrm{loc}}(D;\Delta)$.
\end{thm}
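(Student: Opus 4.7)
The plan is to handle uniqueness and existence separately, reducing in each case to the classical programme of \cite{wilcox:75,NeittaanmakiRoach:87}.

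For uniqueness, I would assume $u^i = 0$ and show $u \equiv 0$ by a Rellich-plus-unique-continuation argument. Fix $R$ large enough that $\omS \subset B_R$ and a cutoff $\chi \in \scrD(B_{2R})$ equal to one on $B_R$. The first step is to derive the ``screen-free'' energy identity
\begin{equation*}
\int_D \bigl((\chi u)\,\overline{\Delta u} + \nabla (\chi u)\cdot \overline{\nabla u}\bigr)\,\rd\bx = 0.
\end{equation*}
In the Dirichlet case (\SDw) this follows by approximating $\chi u \in W_0^1(D)$ by test functions $\phi_n \in \scrD(D)$: each $\phi_n$ has compact support away from $\omS$, so integration by parts gives the corresponding identity with $\phi_n$ in place of $\chi u$, and one passes to the limit in $n$ using $u\in W^{1,\mathrm{loc}}(D;\Delta)$. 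In the Neumann case (\SNw), the identity is simply \eqref{eq:wnc} with $v = \chi u$. Substituting $\Delta u = -k^2 u$ and taking imaginary parts, combined with the Sommerfeld radiation condition \eqref{src}, then yields $\liminf_{R\to\infty}\int_{\partial B_R}|u|^2\,\rd s = 0$. Rellich's lemma together with unique continuation (valid across the smooth manifold $\Gamma_\infty\setminus \omS$) then forces $u \equiv 0$ on the connected open set $D$.

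For existence, I would reformulate via Remark \ref{rem:equiv} as the search for $u$ satisfying \eqref{eq:he_inhom} for some compactly supported $f \in L^2(\R^n)$, and apply a limiting absorption argument. For $\epsilon > 0$, replacing $k$ by $k_\epsilon := k + \ri\epsilon$ renders the natural sesquilinear form on $W_0^1(D)$ (Dirichlet case) or $W^1(D)$ (Neumann case) continuous and coercive, so Lax-Milgram produces a unique $u_\epsilon$ decaying exponentially at infinity. The critical step is a uniform bound $\|u_\epsilon\|_{W^1(D\cap B_R)}\leq C$ as $\epsilon \downarrow 0$. I would prove this by contradiction: if no such bound held, normalising would, via local compactness, produce a weakly convergent subsequence whose limit is a nontrivial solution of the homogeneous problem, contradicting the uniqueness just established. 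A weak subsequential limit of $u_\epsilon$ then provides the required solution of \eqref{eq:he_inhom}, with the Sommerfeld radiation condition inherited in the limit via the standard representation of $u^s$ as a volume potential generated by $f$.

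The only genuinely delicate point is the local compactness of $W^1(D\cap B_R)\hookrightarrow L^2(D\cap B_R)$, which is not automatic for general (possibly fractal) $\omS$. This is supplied by Wilcox's finite tiling property \cite[Theorem 4.3 and p.\,62]{wilcox:75}, which applies here precisely because $\omS$ is contained in a single hyperplane, so that $D$ may be decomposed into the two Lipschitz half-spaces $U^\pm$ modulo a set of $n$-dimensional measure zero. This is the one step where the planarity of the screen, as opposed to mere boundedness, is genuinely used at this level of generality; it is also the reason the proof goes through uniformly for \emph{all} bounded $\mS$, independently of the regularity of $\omS$ or of the Hausdorff dimension of its boundary.
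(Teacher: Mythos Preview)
Your proposal is correct and follows essentially the same programme the paper sketches: uniqueness via Green's identity and Rellich (cf.\ the paper's proof of Theorem~\ref{lem:uni}), and existence via the reformulation in Remark~\ref{rem:equiv} together with limiting absorption and local compactness, the latter supplied by Wilcox's finite tiling property \cite[Theorem~4.3 and p.\,62]{wilcox:75}. The only minor refinement worth noting is that the paper singles out the Neumann case as the one where local compactness is genuinely delicate; in the Dirichlet case $W^1_0(D)$ embeds isometrically in $H^1(\R^n)$ via extension by zero, so local compactness is automatic and the finite tiling property is not needed there.
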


The following lemma uses the notations introduced above \eqref{eq:BIOs}, so that $V^\pm$ is any closed subspace of $H^{\pm 1/2}_\omS$ and $P_{V^{\mp}_*}$ orthogonal projection onto the realisation $V^{\mp}_*\subset H^{\mp 1/2}(\Gamma_\infty)$ of its dual space. This lemma will allow us to make connections between \SDw, \SNws and the other formulations we introduce below.

\begin{lem} \label{lem:equiv} If $u$ satisfies \SDws or \SNws then $u\in C(D)$, $u^s\in C^\infty(D)$, $[u]\in H^{1/2}_\omS$, $[\partial_\bn u]\in H^{-1/2}_\omS$, and
\begin{equation} \label{eq:jumpszero}
[u^s]=[u] = 0 \mbox{ if $u$ satisfies \SDw, while } [\partial_\bn u^s]=[\partial_\bn u] = 0 \mbox{ if $u$ satisfies \SNw}.
\end{equation}
Further, for all $\chi\in \scrD_{1,\omS}$,
\begin{equation} \label{eq:C1}
P_{V_*^{+}}\gamma^\pm(\chi u)=0 \mbox{ with } V^- = H^{-1/2}_\omS, \mbox{ which implies that } \gamma^\pm(\chi u)|_{\omS^\circ} = 0,
\end{equation}
 if $u$ satisfies \SDw, while
\begin{equation} \label{eq:C2}
P_{V_*^{-}}\partial_\bn^\pm(\chi u)=0 \mbox{ with }V^+=H^{1/2}_\omS, \mbox{ which implies that } \partial_\bn^\pm(\chi u)|_{\omS^\circ} = 0,
\end{equation}
 if $u$ satisfies \SNw.
\end{lem}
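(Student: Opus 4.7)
The regularity claims and the equalities $[u^s]=[u]$, $[\partial_\bn u^s]=[\partial_\bn u]$ come essentially for free: $u^s$ satisfies \eqref{eq:he} in $D$ so $u^s\in C^\infty(D)$ by interior elliptic regularity, $u^i\in W^{1,\mathrm{loc}}(\R^n;\Delta)\subset C(\R^n)$ by \eqref{surp} so $u\in C(D)$, the memberships $[u]\in H^{1/2}_\omS$ and $[\partial_\bn u]\in H^{-1/2}_\omS$ follow from \eqref{jumpu}, and since $u^i\in H^{2,\mathrm{loc}}(\R^n)$ has matching two-sided traces, $[u^i]=[\partial_\bn u^i]=0$, whence $[u^s]=[u]$ and $[\partial_\bn u^s]=[\partial_\bn u]$.

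The Dirichlet case reduces to a single approximation. Fix $\chi\in\scrD_{1,\omS}$: since $\chi u\in W^1_0(D)$ there is a sequence $\phi_n\in\scrD(D)$ with $\phi_n|_D\to\chi u$ in $W^1(D)$. Each $\phi_n$ is smooth on $\R^n$ with compact support in $D=\R^n\setminus\omS$, so $\gamma^+\phi_n=\gamma^-\phi_n=\phi_n|_{\Gamma_\infty}\in\scrD(\omS^c)$. Passing to the limit via continuity of the trace operators $\gamma^\pm:W^1(U^\pm)\to H^{1/2}(\Gamma_\infty)$ yields $\gamma^+(\chi u)=\gamma^-(\chi u)\in\tH^{1/2}(\omS^c)$, which simultaneously gives $[u]=0$ and, via \eqref{annclosed} applied with $V^-=H^{-1/2}_\omS$ (so that $V^+_*=(\tH^{1/2}(\omS^c))^\perp$), the projection identity $P_{V^+_*}\gamma^\pm(\chi u)=0$. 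The implication $\gamma^\pm(\chi u)|_{\omS^\circ}=0$ follows by continuity of restriction $H^{1/2}(\Gamma_\infty)\to H^{1/2}(\omS^\circ)$, since each approximant $\phi_n|_{\Gamma_\infty}\in\scrD(\omS^c)$ vanishes on $\omS^\circ\subset\omS$.

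For the Neumann case, both parts flow from Green's first identity \eqref{green1st} and the weak condition \eqref{eq:wnc}, via the product-rule identity
\[
\nabla(\chi u)\cdot\nabla\bar v+\bar v\Delta(\chi u)=\chi\bar v\Delta u+\nabla(\chi\bar v)\cdot\nabla u+\mathrm{div}(u\bar v\nabla\chi).
\]
To show $[\partial_\bn u]=0$: take $v\in\scrD(\R^n)$, apply \eqref{green1st} to $\chi u\in W^1(U^\pm;\Delta)$ on each of $U^\pm$, and subtract using $\gamma^+v=\gamma^-v$ to obtain $\langle[\partial_\bn u],\gamma v\rangle=-\int_D$ of the LHS. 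The first two RHS terms integrate to zero by \eqref{eq:wnc} with test function $\chi\bar v\in W^{1,\mathrm{comp}}(D)$; the divergence integrates to zero by the divergence theorem, since $u\bar v\nabla\chi$ has compact support in $D$ (because $\nabla\chi$ vanishes on a neighbourhood of $\omS$). Density of $\gamma\scrD(\R^n)$ in $H^{1/2}(\Gamma_\infty)$ then gives $[\partial_\bn u]=0$.

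For the Neumann projection, which by \eqref{annclosed} amounts to $\partial_\bn^\pm(\chi u)\in\tH^{-1/2}(\omS^c)=(H^{1/2}_\omS)^a$: given $\psi\in H^{1/2}_\omS$, I choose a compactly supported lift $w\in W^1(U^+)$ with $\gamma^+w=\psi$ and extend by zero to $U^-$ to form $W$. Since both one-sided traces of $W$ are zero on $\omS^c$ (where $\psi$ vanishes), \eqref{W1DCharac} yields $W|_D\in W^{1,\mathrm{comp}}(D)$. Running the same product-rule identity but integrating over $U^+$ only expresses $\langle\partial_\bn^+(\chi u),\psi\rangle$ as the sum of a term that vanishes by \eqref{eq:wnc} (with test function $\chi\bar W$) and a boundary integral on $\Gamma_\infty$ of the form $\int_{\Gamma_\infty}\gamma^+(u\bar w)\,\partial_n\chi\,\rd s$ (up to sign) arising from the divergence theorem on $U^+$. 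This boundary term vanishes by complementary supports: $\partial_n\chi\equiv 0$ on a neighbourhood of $\omS$, while $\gamma^+w=\psi=0$ off $\omS$. Hence $\langle\partial_\bn^\pm(\chi u),\psi\rangle=0$ for all $\psi\in H^{1/2}_\omS$, giving the projection identity, and the restriction to $\omS^\circ$ follows as in the Dirichlet case. The main obstacle is this last step: the one-sided Green's-identity bookkeeping (using that $u^i$ is only $H^{2,\mathrm{loc}}$, not smooth), and the verification that the divergence-theorem boundary term genuinely vanishes via the complementary support properties of $\partial_n\chi$ and $\psi$.
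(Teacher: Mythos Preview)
Your proof is correct, and for the regularity claims and the Dirichlet case it matches the paper's argument essentially line for line. The difference is in the Neumann case. You work with a fixed $\chi\in\scrD_{1,\omS}$ and handle the mismatch between $\chi u$ and $u$ via the product-rule identity, then argue that the divergence term $\mathrm{div}(u\bar v\nabla\chi)$ integrates to zero (over all of $D$ for $[\partial_\bn u]=0$, and over $U^+$ via a boundary integral with complementary supports for the projection). As you rightly flag, the projection step needs care: the vector field $u\bar w\nabla\chi$ is a product of two $W^1$ functions with a smooth factor, and justifying the divergence theorem on $U^+$ together with the trace identity $\gamma^+(u\bar w)=\gamma^+(u)\gamma^+(\bar w)$ requires observing that in fact $u\in H^{2,\mathrm{loc}}\subset C^0$ on the support of $\nabla\chi$ (which is true, since $u^s$ is smooth in $D$ and $u^i\in H^{2,\mathrm{loc}}(\R^n)$ by \eqref{surp}).

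The paper sidesteps all of this by exploiting the freedom in $\chi$. Given the test function $v$ (smooth for $[\partial_\bn u]=0$, or the one-sided $W^{1,\mathrm{comp}}(D)$ lift of $\phi\in H^{1/2}_\omS$ for the projection), it simply chooses $\chi\in\scrD_{1,\omS}$ with $\chi=1$ on a neighbourhood of the support of $v$. Then $\chi u=u$ wherever the integrand is nonzero, so Green's identity \eqref{green1st} applied to $\chi u$ collapses immediately to $-\int_D(\nabla u\cdot\nabla v+v\Delta u)\,\rd x$, which vanishes by \eqref{eq:wnc}. No product rule, no divergence theorem on a half-space, no regularity bookkeeping; the independence of the projection from the particular $\chi\in\scrD_{1,\omS}$ is then a separate easy observation. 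Your route has the minor advantage of working with one fixed $\chi$ throughout, but at the cost of the extra analysis you correctly identify as the main obstacle.
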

\begin{proof} If $u$ satisfies \SDws or \SNws then $u^s\in C^\infty(D)$ by \eqref{eq:he} and standard elliptic regularity (e.g., \cite{GilbargTrudinger}).  Further it follows from \eqref{surp} that $u^i\in C(\R^n)$, so that $u\in C(D)$.
That $[u]\in H^{1/2}_\omS$ and $[\partial_\bn u]\in H^{-1/2}_\omS$ follows since $u\in W^{1,\rm loc}(D;\Delta)$, as noted below \eqref{jumpu}. Note also that $[u^i]=0$ and $[\partial_\bn u^i]=0$, as $u^i\in W^{1,\rm loc}(\R^n;\Delta)$, so that $[u^s]=[u]$ and $[\partial_\bn u^s]=[\partial_\bn u]$.

If $u$ satisfies \SDw, then also $[u]=0$ by density as $[v]=0$ if $v\in \scrD(D)$. Further, if $v\in \scrD(D)$, then $\gamma^\pm(\chi v)$ is in the annihilator of $H^{-1/2}_\omS$, i.e.\ $P_{V_*^{+}}\gamma^\pm(\chi v)=0$ with $V^- = H^{-1/2}_\omS$, and the same holds for $u$ by density. As $\tH^{-1/2}(\omS^\circ)\subset H^{-1/2}_\omS$, this implies $P_{V_*^{+}}\gamma^\pm(\chi v)=0$ with $V^- = \tH^{-1/2}(\omS^\circ)$ and $V_*^+ = (H^{1/2}_{(\omS^\circ)^c})^\perp$, which is equivalent, recalling \eqref{comm} and that $|_{\omS^\circ}:V_*^+\to H^{1/2}(\omS^\circ)$ is an isomorphism, to $\gamma^\pm(\chi u)|_{\omS^\circ} = 0$.

 Suppose now that $u$ satisfies \SNw. To show $[\partial_\bn u]=0$ it is enough, given the density \eqref{eq:approx}, to show that $\langle [\partial_\bn u],\phi\rangle_{H^{-1/2}(\Gamma_\infty)\times H^{1/2}(\Gamma_\infty)}=0$ for all $\phi\in \scrD(\Gamma_\infty)$.
But if  $\phi\in \scrD(\Gamma_\infty)$, choosing $v\in W^{1,\mathrm{comp}}(\R^n)\subset W^{1,\mathrm{comp}}(D)$ so that $\gamma^\pm \bar v = \phi$, and $\chi\in \scrD_{1,\omS}$ such that $\chi=1$ in a neighbourhood of the support of $v$, it follows from \eqref{green1st} that
\begin{eqnarray*}
\langle [\partial_\bn u],\phi\rangle_{H^{-1/2}(\Gamma_\infty)\times H^{1/2}(\Gamma_\infty)} &=& -\int_{D} \left( \nabla (\chi u) \cdot \nabla v + v \Delta (\chi u) \right) \rd x\\ &=& -\int_{D} \left( \nabla u \cdot \nabla v + v \Delta  u \right) \rd x = 0.
\end{eqnarray*}
Arguing similarly, given $\phi\in H_\omS^{1/2}$ it is clear from \eqref{W1DCharac} that one can choose $v\in W^{1,\mathrm{comp}}(D)$ so that $\gamma^+ \bar v = \phi$ and $v=0$ in $U_-$, and deduce that
$$
\langle \partial^+_\bn (\chi u),\phi\rangle_{H^{-1/2}(\Gamma_\infty)\times H^{1/2}(\Gamma_\infty)} = -\int_{D} \left( \nabla u \cdot \nabla v + v \Delta  u \right) \rd x = 0,
$$
so that $\partial^+_\bn(\chi u)$, and also $\partial^-_\bn(\chi u)$ as $[\partial_\bn u]=0$, is in the annihilator of $H^{1/2}_\omS$, i.e.\ $P_{V_*^{-}}\partial_\bn^\pm(\chi v)=0$ with $V^+ = H^{1/2}_\omS$. This implies, arguing as above for the Dirichlet case, that $\partial^\pm_\bn(\chi u)|_{\omS^\circ}=0$.
\end{proof}

The next lemma is immediate from standard elliptic regularity results up to the boundary (e.g., \cite{GilbargTrudinger}).

\begin{lem} \label{lem:reggen}
Suppose that $v\in W^{1,\rm loc}(D;\Delta)$ satisfies the Helmholtz equation \eqref{eq:he} in $D$ and that, for some $\chi\in \scrD_{1,\omS}$, either $\gamma^\pm(\chi v)|_{\mS^\circ}\in C^\infty(\mS^\circ)$ or $\partial_{\bn}^\pm(\chi v)\in C^\infty(\mS^\circ)$. Then $v$ is smooth up to the boundary away from $\partial \mS$, i.e., $v\in C^\infty_{\partial \mS}(\overline D)$ in the notation of \eqref{eq:smooth}.
\end{lem}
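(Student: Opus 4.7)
The overall strategy is a standard localisation followed by a black-box application of classical elliptic boundary regularity on a smooth model domain. Interior smoothness is immediate: since the Helmholtz equation has smooth (constant) coefficients and is elliptic, $v\in C^\infty(D)$. What remains is to upgrade this to smoothness up to the flat screen piece $\mS^\circ$ from each side $U^\pm$, with no claim being made near $\partial\mS$.

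\emph{Localisation.} Fix an arbitrary $\bx_0\in\mS^\circ$. Since $\mS^\circ$ is relatively open in $\Gamma_\infty$, I would choose $r>0$ small enough that $B_{2r}(\bx_0)\cap\Gamma_\infty\subset\mS^\circ$; shrinking $r$ further if needed, I may also arrange $\chi\equiv 1$ on $\overline{B_{2r}(\bx_0)}$, using $\chi\in\scrD_{1,\omS}$. Set $V^\pm:=B_r(\bx_0)\cap U^\pm$, each a smooth half-ball whose flat boundary face $F:=B_r(\bx_0)\cap\Gamma_\infty$ is contained in $\mS^\circ$. On $V^\pm$ the function $v$ lies in $W^1(V^\pm;\Delta)$ and solves $\Delta v+k^2 v=0$. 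Because $\chi v\equiv v$ on a neighbourhood of $\overline{V^\pm}$, the hypothesis delivers either $\gamma^\pm v|_F=\gamma^\pm(\chi v)|_F\in C^\infty(F)$ in the Dirichlet case, or $\partial^\pm_\bn v|_F=\partial^\pm_\bn(\chi v)|_F\in C^\infty(F)$ in the Neumann case.

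\emph{Invoking boundary regularity.} The boundary $\partial V^\pm$ is smooth everywhere except along the ``equator'' $\partial B_r(\bx_0)\cap\Gamma_\infty$, which is bounded away from $\bx_0$. Hence on each $V^\pm$ we have a solution of a smooth constant-coefficient elliptic equation with smooth (Dirichlet or Neumann) data on a full neighbourhood of $\bx_0$ in the smooth boundary piece $F$. Applying the standard elliptic regularity up to a smooth portion of the boundary (e.g.\ Gilbarg--Trudinger Theorems 6.19 and 8.13 in the Dirichlet case, together with the analogous Agmon--Douglis--Nirenberg theorem in the Neumann case; see also \cite[Theorem 4.18]{McLean}), $v$ extends smoothly to $F$ in a neighbourhood of $\bx_0$ from each side, say on $\overline{V^\pm}\cap B_{r/2}(\bx_0)$. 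Since $\bx_0\in\mS^\circ$ is arbitrary, combining with interior smoothness gives $v|_{U^\pm}\in C^\infty(\overline{U^\pm}\setminus\partial\mS)$, which is exactly the assertion $v\in C^\infty_{\partial\mS}(\overline D)$.

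\emph{Expected obstacle.} The argument is essentially bookkeeping and no step is genuinely hard; the only point to get right is the choice of $r$, which must guarantee simultaneously that the flat boundary patch $F$ sits inside $\mS^\circ$ (so the data hypothesis bites) and that $\chi\equiv 1$ nearby (so the hypothesised trace regularity transfers from $\chi v$ to $v$ itself). Once this is arranged, the irregular set $\partial\mS$ is excluded from the local picture, and the classical regularity theorems apply verbatim; none of the paper's Sobolev-space-on-rough-sets or BIE/coercivity machinery is needed.
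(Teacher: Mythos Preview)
Your proposal is correct and is precisely the argument the paper has in mind: the paper's own ``proof'' is just the one-line remark that the lemma ``is immediate from standard elliptic regularity results up to the boundary (e.g., \cite{GilbargTrudinger})'', and you have spelled out the intended localisation and bootstrap in detail. No alternative route is taken.
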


From Lemma \ref{lem:reggen} %
and Lemma \ref{lem:equiv} we have the following corollary.

\begin{cor} \label{cor:reg} If $u$ satisfies \SDws or \SNws and $\Delta u^i + k^2 u^i = 0$ in a neighbourhood of $\omS$ (so that $u^i$ is $C^\infty$ in a neighbourhood of $\omS$), then $u^s\in C^\infty_{\partial \omS}(\overline D)$.
\end{cor}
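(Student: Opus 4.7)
The plan is to apply Lemma \ref{lem:reggen} directly to $u^s$, using Lemma \ref{lem:equiv} to verify its hypothesis. We have $u^s = u - u^i \in W^{1,\rm loc}(D;\Delta)$ (since $u \in W^{1,\rm loc}(D;\Delta)$ and $u^i \in W^{1,\rm loc}(\R^n;\Delta)$), and $u^s$ satisfies the Helmholtz equation \eqref{eq:he} in $D$ by the very definition of \SDws or \SNws. So the only thing left to check is the boundary smoothness condition in Lemma \ref{lem:reggen} for $u^s$.

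First, fix any $\chi \in \scrD_{1,\omS}$ whose support is contained in the neighbourhood of $\omS$ on which $u^i$ is $C^\infty$ (such a $\chi$ exists by shrinking the cut-off). Then $\chi u^i$ is $C^\infty$ on all of $\R^n$, so both $\gamma^\pm(\chi u^i)|_{\omS^\circ}$ and $\partial_{\bn}^\pm(\chi u^i)|_{\omS^\circ}$ lie in $C^\infty(\omS^\circ)$.

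Next, suppose $u$ solves \SDw. By \eqref{eq:C1} of Lemma \ref{lem:equiv}, $\gamma^\pm(\chi u)|_{\omS^\circ} = 0$, so
\[
\gamma^\pm(\chi u^s)\big|_{\omS^\circ} = -\gamma^\pm(\chi u^i)\big|_{\omS^\circ} \in C^\infty(\omS^\circ).
\]
Symmetrically, if $u$ solves \SNw, then \eqref{eq:C2} gives $\partial_\bn^\pm(\chi u)|_{\omS^\circ} = 0$, hence
\[
\partial_\bn^\pm(\chi u^s)\big|_{\omS^\circ} = -\partial_\bn^\pm(\chi u^i)\big|_{\omS^\circ} \in C^\infty(\omS^\circ).
\]
In either case the hypothesis of Lemma \ref{lem:reggen} is met by $u^s$, and we conclude $u^s \in C^\infty_{\partial \omS}(\overline D)$.

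There is no real obstacle here: the corollary is a straightforward assembly of Lemma \ref{lem:equiv} (which supplies vanishing of the relevant trace of $u$ on the interior $\omS^\circ$) with interior-of-the-screen elliptic regularity (Lemma \ref{lem:reggen}). The only minor point to keep straight is that the vanishing trace statements in Lemma \ref{lem:equiv} are for $u$ itself, so one must subtract off $\chi u^i$ to transfer the information to $u^s$, using smoothness of $u^i$ near $\omS$; that the complement of $\omS^\circ$ in $\omS$ is contained in $\partial \omS$ is what allows the conclusion to be phrased in terms of smoothness up to the boundary away from $\partial \omS$.
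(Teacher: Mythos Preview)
Your proof is correct and follows exactly the approach the paper intends: the paper's own ``proof'' is merely the sentence ``From Lemma~\ref{lem:reggen} and Lemma~\ref{lem:equiv} we have the following corollary,'' and you have simply spelled out the details---verifying the hypotheses of Lemma~\ref{lem:reggen} for $u^s$ by combining the vanishing traces of $u$ on $\omS^\circ$ (from Lemma~\ref{lem:equiv}) with the smoothness of $\chi u^i$ near $\omS$. The only implicit step worth noting is that you are applying Lemma~\ref{lem:reggen} with the screen taken to be $\omS$ (rather than $\mS$), which is legitimate since $D=\R^n\setminus\omS$ is unchanged and yields the conclusion with $\partial\omS$ in place of $\partial\mS$.
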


\begin{rem} Related to Corollary \ref{cor:reg}, common choices for the incident field in Problems \SDws and \SNws are the plane wave
\begin{equation} \label{eq:pw}
u^i(\bx) = \exp(\ri k\bd \cdot \bx), \quad \bx \in \R^n,
\end{equation}
for some unit vector $\bd\in \R^n$, and the incident field \eqref{eq:ui}, for some $f\in L^2(D)$ compactly supported in $D$, both satisfying $(\Delta + k^2)u^i=0$ in a neighbourhood of $\omS$.  In particular if, for some $\by\in D$, $c\in \C$, and $\varepsilon < \mathrm{dist}(\by,\omS)$, we define $f(\bx) := c$ for $|\by-\bx| < \varepsilon$, $f(\bx):=0$, otherwise, then \eqref{eq:ui} implies
\begin{equation} \label{eq:cw}
u^i(\bx) = C\Phi(\bx,\by), \quad \mbox{for }|\bx-\by|\geq \varepsilon,
\end{equation}
where $C$ depends on $c$, $\varepsilon$, and $k$, this an incident cylindrical (spherical) wave for $n=2$ ($n=3$).
\end{rem}

\SDws and \SNws are formulations of screen scattering with the boundary conditions understood in weak, generalised senses. It is also possible to impose the boundary conditions in a classical sense, if $u^i$ is sufficiently smooth near $\mS$. The following is the obvious generalisation to an arbitrary screen $\mS$ of early BVP formulations for diffraction by screens (see \cite{Bouwkamp:54} and the references therein), in which there is a (usually implicit) assumption of smoothness of the solution up to the boundary away from the screen edge, and an assumption of finite energy density (in other words finite $W^1$ norm of $u^s$) in some neighbourhood of the screen boundary, this the Meixner \cite{Meixner:49} {\em edge condition}.

\begin{defn}[Problems \SDcls and \SNcl] \label{def:sSsD-cl}
Given $u^i\in W^{1,\rm loc}(\R^n;\Delta)$ such that $\Delta u^i + k^2 u^i = 0$ in a neighbourhood of $\omS$, find $u^s\in C^\infty_{\partial \mS}(\overline D)$ that satisfies the Helmholtz equation \rf{eq:he} in $D$, the Sommerfeld radiation condition \eqref{src}, the boundary condition for $\bx\in \mS^\circ$ that
\begin{equation} \label{eq:bc}
\begin{array}{cc}
  \displaystyle{\lim_{\by\in D,\;\by \to \bx} u^s(\by)}=-u^i(\bx) & \mbox{for Problem \SDcl}, \\
  \\
     \displaystyle{\lim_{\by\in D,\;\by \to \bx}\frac{\partial u^s(\by)}{\partial \bn}}=-\displaystyle{\frac{\partial u^i(\bx)}{\partial \bn}} & \mbox{for Problem \SNcl},
\end{array}
\end{equation}
and the edge condition that $\int_N (|\nabla u^s|^2 + |u^s|^2)\rd \bx <\infty$, where $N:= \{\bx\in D: \mathrm{dist}(\bx,\partial \mS) < \epsilon\}$, for some $\epsilon >0$.
\end{defn}

Problems \SDcls and \SNcls are phrased as Dirichlet and Neumann BVPs, respectively, with boundary data in \eqref{eq:bc} in terms of $u^i$. We can also study Dirichlet and Neumann BVPs with more general boundary data. The following is a standard Sobolev space formulation (e.g., \cite{Ste:87}).

\begin{defn}[Problems \Dsts and \Nst] \label{def:sD-st}
Given $\widetilde g_\sD\in H^{1/2}(\mS^\circ)$ and $\widetilde g_\sN\in H^{-1/2}(\mS^\circ)$, find $v\in C^2\left(D\right)\cap  W^{1,\mathrm{loc}}(D)$ satisfying the Helmholtz equation \rf{eq:he} in $D$, the Sommerfeld radiation condition \eqref{src}, and the boundary conditions,  for some $\chi\in \scrD_{1,\omS}$, that
\begin{equation} \label{eq:bcst}
\begin{array}{cc}
  \displaystyle{\gamma^\pm(\chi v)|_{\mS^\circ}=\widetilde g_{\sD}},\;\; & \mbox{for Problem \Dst}, \\
  \\
     \displaystyle{\partial_{\bn}^\pm(\chi v)|_{\mS^\circ}=\widetilde g_{\sN}},\;\; & \mbox{for Problem \Nst}.
\end{array}
\end{equation}
\end{defn}

The following lemma is immediate from Lemma \ref{lem:reggen}, noting that if $u\in W^{1,{\rm loc}}(D;\Delta)$ and $\chi u\in C^\infty_{\partial \mS}(\overline D)$, for some $\chi\in \scrD_{1,\omS}$, then the Dirichlet (Neumann) boundary condition holds in \eqref{eq:bc} if and only if $\gamma^\pm(\chi u)|_{\mS^\circ}=0$ ($\partial^\pm_\bn(\chi u)|_{\mS^\circ}=0$).

\begin{lem} \label{lem:equivclst} Suppose that $u^i$ satisfies the conditions of Problems \SDcls  and \SNcl.  Then $u^s$ satisfies \SDcls (\SNcl) if and only if $u^s$ satisfies \Dsts (\Nst) with $\widetilde g_{\sD} := -u^i|_{\mS^\circ}$ ($\widetilde g_\sN := -\frac{\partial u^i}{\partial \bn}|_{\mS^\circ}$).
\end{lem}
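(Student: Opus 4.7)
The lemma is essentially a translation between the ``classical'' imposition of the boundary condition (pointwise limits from $D$ onto $\mS^\circ$) and the ``Sobolev'' imposition (trace of $\chi u$ pulled to $\mS^\circ$). I will prove both directions separately by verifying each bulleted requirement in the respective definition, relying on the observation made just before the lemma: once we know $u^s\in C^\infty_{\partial\mS}(\overline D)$, the traces $\gamma^\pm(\chi u^s)|_{\mS^\circ}$ and $\partial_\bn^\pm(\chi u^s)|_{\mS^\circ}$ coincide with the classical boundary values appearing in \eqref{eq:bc}. Throughout, fix $\chi\in\scrD_{1,\omS}$.

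\textbf{($\Rightarrow$).} Assume $u^s$ solves \SDcls (the \SNcls case is identical with $\gamma^\pm$ replaced by $\partial_\bn^\pm$). Then $u^s\in C^\infty(D)$ satisfies \eqref{eq:he} and \eqref{src} by definition, so only $u^s\in W^{1,\mathrm{loc}}(D)$ and the trace condition in \eqref{eq:bcst} need to be checked. Away from $\partial\mS$, $u^s$ is smooth up to $\Gamma_\infty$ by membership of $C^\infty_{\partial\mS}(\overline D)$, hence locally in $W^1$; near $\partial\mS$ the edge condition supplies exactly $\int_N(|\nabla u^s|^2+|u^s|^2)\,\rd\bx<\infty$ on a neighbourhood $N$ of $\partial\mS$, and since $\partial\mS$ is compact these two bounds glue to give $u^s\in W^{1,\mathrm{loc}}(D)$. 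For the boundary condition, smoothness of $\chi u^s$ up to $\Gamma_\infty$ on $\mS^\circ$ means the Sobolev trace $\gamma^\pm(\chi u^s)$ agrees on $\mS^\circ$ with the pointwise boundary limit of $\chi u^s$, which by \eqref{eq:bc} equals $-u^i$ on $\mS^\circ$ (using $\chi=1$ near $\omS$). This is precisely \eqref{eq:bcst} with $\widetilde g_\sD=-u^i|_{\mS^\circ}$.

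\textbf{($\Leftarrow$).} Suppose $u^s$ solves \Dsts with $\widetilde g_\sD=-u^i|_{\mS^\circ}$. Since $u^i$ is $C^\infty$ in a neighbourhood of $\omS$, we have $\gamma^\pm(\chi u^s)|_{\mS^\circ}=-u^i|_{\mS^\circ}\in C^\infty(\mS^\circ)$. Because $u^s\in W^{1,\mathrm{loc}}(D)$ satisfies Helmholtz, it lies in $W^{1,\mathrm{loc}}(D;\Delta)$, and Lemma \ref{lem:reggen} then yields $u^s\in C^\infty_{\partial\mS}(\overline D)$. The classical Dirichlet condition \eqref{eq:bc} is now equivalent to $\gamma^\pm(\chi u^s)|_{\mS^\circ}=-u^i|_{\mS^\circ}$ by the observation immediately preceding the lemma, so it holds. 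Finally, the edge condition follows immediately from $u^s\in W^{1,\mathrm{loc}}(D)$ since $\partial\mS$ is compact and $N=\{\bx\in D:\mathrm{dist}(\bx,\partial\mS)<\eps\}\subset D$ is bounded. The Neumann case is verbatim the same with $\gamma^\pm$ replaced by $\partial_\bn^\pm$ and the hypothesis $\partial_\bn^\pm(\chi u^s)|_{\mS^\circ}\in C^\infty(\mS^\circ)$ triggering the Neumann branch of Lemma \ref{lem:reggen}.

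\textbf{Main obstacle.} There is no deep difficulty; the only delicate point is confirming that, for a function smooth up to the boundary away from $\partial\mS$, the Sobolev trace on $\mS^\circ$ coincides with the classical boundary limit (and likewise for normal derivatives). This is the content of the parenthetical remark stated just before the lemma and used in both directions; invoking it, together with Lemma \ref{lem:reggen} for the reverse implication, reduces the proof to checking membership in $W^{1,\mathrm{loc}}(D)$ and the edge condition, each of which is a direct consequence of the hypotheses on either side of the equivalence.
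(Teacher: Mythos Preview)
Your proof is correct and follows precisely the approach indicated in the paper, which states the lemma is ``immediate from Lemma~\ref{lem:reggen}'' together with the observation (stated just before the lemma) that for $u\in W^{1,\mathrm{loc}}(D;\Delta)$ with $\chi u\in C^\infty_{\partial\mS}(\overline D)$ the classical boundary limits in \eqref{eq:bc} coincide with the Sobolev traces on $\mS^\circ$. You have simply filled in the details the paper leaves implicit: gluing smoothness away from $\partial\mS$ with the edge condition to obtain $W^{1,\mathrm{loc}}(D)$ in the forward direction, and invoking Lemma~\ref{lem:reggen} and then reading off the edge condition from $W^{1,\mathrm{loc}}(D)$ in the reverse direction.
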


\begin{cor}\label{lem:uniequiv} Problem \SDcls (\SNcl) has at most one solution if and only if the same holds for \Dsts (\Nst).
\end{cor}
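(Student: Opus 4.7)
The corollary will follow from Lemma~\ref{lem:equivclst} once one recognizes that uniqueness for any of these linear boundary value problems reduces to the triviality of the corresponding homogeneous problem, and that Lemma~\ref{lem:equivclst} applied with $u^i \equiv 0$ identifies the two homogeneous problems with each other.

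First I would exploit linearity. Each of \SDcl, \SNcl, \Dst, \Nst is linear in its data: if $u^s_1, u^s_2$ both solve \SDcl with the same $u^i$, their difference $w := u^s_1 - u^s_2$ lies in $C^\infty_{\partial\mS}(\overline D)$, satisfies \eqref{eq:he}, the Sommerfeld radiation condition, and the Meixner edge condition, and has vanishing boundary limit on $\mS^\circ$; hence $w$ solves \SDcl with $u^i$ replaced by $0$. The analogous statements hold for \SNcl, \Dst, \Nst. Therefore uniqueness for each problem is equivalent to its homogeneous version (zero data) admitting only the trivial solution.

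Next I would observe that $u^i \equiv 0$ is an admissible incident field for \SDcl and \SNcl, since trivially $0 \in W^{1,\mathrm{loc}}(\R^n;\Delta)$ and $(\Delta+k^2)0 = 0$ in any neighbourhood of $\omS$. Applying Lemma~\ref{lem:equivclst} with this choice, one obtains directly that $u^s$ solves the homogeneous \SDcl if and only if $u^s$ solves \Dst with $\widetilde g_\sD = 0$, and likewise that $u^s$ solves the homogeneous \SNcl if and only if $u^s$ solves \Nst with $\widetilde g_\sN = 0$. Combined with the previous paragraph, this is exactly the statement of the corollary.

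There is no real obstacle here — the corollary is essentially a bookkeeping consequence of Lemma~\ref{lem:equivclst} at zero data. The only point worth a brief check is the compatibility of the two function-space settings: a solution of the homogeneous \SDcl lies in $C^\infty_{\partial \mS}(\overline D) \subset C^2(D) \cap W^{1,\mathrm{loc}}(D)$, so is admissible for \Dst; conversely Lemma~\ref{lem:reggen} upgrades any solution of \Dst with zero data to $C^\infty_{\partial\mS}(\overline D)$, and its Meixner edge condition on the bounded set $N$ is automatic from $W^{1,\mathrm{loc}}(D)$-membership. This compatibility is implicit in Lemma~\ref{lem:equivclst} itself, so no extra argument is needed.
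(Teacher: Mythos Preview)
Your proposal is correct and matches the paper's intended argument: the corollary is stated without proof immediately after Lemma~\ref{lem:equivclst}, and your reduction to the homogeneous case via linearity, together with the application of that lemma at $u^i\equiv 0$, is precisely the natural (and only) way to read it as a corollary.
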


The following lemma is immediate from Lemmas \ref{lem:equiv} and \ref{lem:equivclst}. The corollary follows from Lemma \ref{lem:wcl} and Theorem \ref{thm:bvp_wwp}.

\begin{lem} \label{lem:wcl} Suppose that $u^i$ satisfies the conditions of Problems \SDcls  and \SNcl. If $u^s$ satisfies \SDws (\SNw), then $u^s$ satisfies \Dsts (\Nst) with $\widetilde g_{\sD} := -u^i|_{\mS^\circ}$ ($\widetilde g_\sN := -\frac{\partial u^i}{\partial \bn}|_{\mS^\circ}$), and satisfies \SDcls (\SNcl).
\end{lem}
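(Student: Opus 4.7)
The plan is to reduce both conclusions to the previous lemmas and Corollary \ref{cor:reg}, which have already done the essential work; the remaining task is careful bookkeeping of the incident versus scattered field. I treat the sound-soft case in detail; the sound-hard case will run in parallel with $\gamma^\pm$ replaced by $\partial_\bn^\pm$ throughout.

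To establish \Dst, first note that $u^s = u - u^i$ lies in $W^{1,\mathrm{loc}}(D)$ (as both $u$ and $u^i$ do), satisfies the Helmholtz equation in $D$ and the Sommerfeld radiation condition by the definition of \SDw, and is $C^\infty(D)\subset C^2(D)$ by standard elliptic regularity. Fix $\chi\in\scrD_{1,\omS}$ and let $U$ be an open neighbourhood of $\omS$ on which both $\chi\equiv 1$ and $u^i$ is $C^\infty$. Lemma \ref{lem:equiv} applied to $u$ yields $\gamma^\pm(\chi u)|_{\omS^\circ}=0$; on the other hand $\chi u^i$ is smooth across $\Gamma_\infty\cap U$, so $\gamma^\pm(\chi u^i)|_{\omS^\circ} = u^i|_{\omS^\circ}$. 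Subtracting gives $\gamma^\pm(\chi u^s)|_{\omS^\circ} = -u^i|_{\omS^\circ} = \widetilde g_\sD$, which is the required boundary condition; all other requirements of \Dsts have now been verified.

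To establish \SDcl, I would first apply Corollary \ref{cor:reg} to obtain $u^s\in C^\infty_{\partial\omS}(\overline D)$, so that the limits in \eqref{eq:bc} exist and coincide with $\gamma^\pm u^s$ on $\omS^\circ$. The ``if and only if'' in Lemma \ref{lem:equivclst}, combined with the trace identity from the preceding paragraph, then yields the classical boundary condition \eqref{eq:bc}. For the edge condition, observe that $\partial\omS$ is compact so the collar $N$ is a bounded measurable subset of $D$; consequently $u\in W^{1,\mathrm{loc}}(D)$ gives $\int_N(|u|^2+|\nabla u|^2)\,\rd\bx<\infty$, and, choosing $\epsilon$ small enough that $u^i$ is smooth on $\overline N$ (possible by the hypothesis of \SDcl), the same finiteness holds for $u^s = u - u^i$.

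The Neumann case is entirely analogous: one uses the Neumann clause of Lemma \ref{lem:equiv}, the Neumann direction of Lemma \ref{lem:equivclst}, and the identity $\partial_\bn^\pm(\chi u^i)|_{\omS^\circ}=(\partial u^i/\partial\bn)|_{\omS^\circ}$, which again relies on smoothness of $u^i$ near $\omS$. No serious obstacle is anticipated; the only conceptual point is that the smoothness hypothesis on $u^i$ in \SDcl is precisely what is needed to identify the one-sided traces of $\chi u^i$ with the pointwise values of $u^i$ and $\partial u^i/\partial\bn$ on $\omS^\circ$, converting the vanishing weak traces of $u$ supplied by Lemma \ref{lem:equiv} into the prescribed data for $u^s$.
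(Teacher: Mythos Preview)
Your proof is correct and follows essentially the same route as the paper, which simply states that the result is immediate from Lemmas~\ref{lem:equiv} and~\ref{lem:equivclst}. Two minor remarks: (i) your separate verification of the edge condition and the appeal to Corollary~\ref{cor:reg} are redundant, since once you have established \Dsts the ``if and only if'' of Lemma~\ref{lem:equivclst} delivers \SDcls in full, including regularity up to the boundary and the edge condition; (ii) the target formulations \Dsts and \SDcls are stated on $\mS^\circ$ and $\partial\mS$, not $\omS^\circ$ and $\partial\omS$, but since $\mS^\circ\subset\omS^\circ$ and $\partial\omS\subset\partial\mS$ your slightly stronger statements restrict to what is needed.
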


\begin{cor} \label{cor:atleastS} Problems \SDcls and \SNcls have at least one solution.
\end{cor}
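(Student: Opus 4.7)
The plan is to obtain the existence of a classical solution by constructing it from the corresponding weak solution, exploiting the chain of equivalences already established just above the corollary statement.

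First, I would fix an incident field $u^i$ satisfying the hypotheses of Problems \SDcls and \SNcls, i.e.\ $u^i\in W^{1,\mathrm{loc}}(\R^n;\Delta)$ with $\Delta u^i + k^2 u^i = 0$ in a neighbourhood of $\omS$. Since in particular $u^i\in W^{1,\mathrm{loc}}(\R^n;\Delta)$, Theorem \ref{thm:bvp_wwp} applies and produces a (unique) solution $u$ of Problem \SDws (respectively \SNw). Setting $u^s := u - u^i$, I then invoke Lemma \ref{lem:wcl}, whose hypotheses on $u^i$ are exactly the ones we have assumed. That lemma directly yields that $u^s$ solves Problem \SDcls (respectively \SNcl), giving the desired existence.

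There is essentially no obstacle here: the corollary is purely a bookkeeping statement combining Theorem \ref{thm:bvp_wwp} (well-posedness of the weak problems) with Lemma \ref{lem:wcl} (the weak solution also solves the classical problem, under the stronger hypotheses on $u^i$). All the substantive work — the existence of the weak solution and verification of the regularity, boundary conditions, and edge condition required by Definition \ref{def:sSsD-cl} — has already been carried out in those earlier results, in particular via Corollary \ref{cor:reg} which upgrades the regularity to $u^s \in C^\infty_{\partial\omS}(\overline D)$, and via Lemma \ref{lem:equiv} which supplies the pointwise boundary conditions on $\mS^\circ$.
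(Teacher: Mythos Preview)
Your proof is correct and follows exactly the paper's own reasoning: the corollary is stated immediately after Lemma~\ref{lem:wcl} and the paper notes that it follows from Lemma~\ref{lem:wcl} together with Theorem~\ref{thm:bvp_wwp}, which is precisely the combination you invoke. Your additional remarks about Corollary~\ref{cor:reg} and Lemma~\ref{lem:equiv} are accurate but unnecessary, since those ingredients are already packaged inside Lemma~\ref{lem:wcl}.
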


If $\mS$ is an open set with sufficiently regular boundary, we will see in \S\ref{subsec:nonuni} that \SDcls and \SNcls are equivalent to \SDws and \SNw, so that they both have exactly one solution.  But we will show that uniqueness does not hold for \SDcls and \SNcls (nor, by Corollary \ref{lem:uniequiv}, for \Dsts and \Nst) for general $\Gamma$, unless further constraints are imposed. In particular, uniqueness can fail if $\mS^\circ$ is empty, when \eqref{eq:bc} is empty. But we will also see (Theorem \ref{thm:wpst}) that uniqueness fails when $\mS$ is an open set if $\partial \mS$ is sufficiently wild.

\subsection{Novel families of BVPs for screen scattering} \label{subsec:bvpsnovel}

We now introduce some new families of BVP formulations for screen scattering. Our new formulations are defined in terms of the notations introduced above \eqref{eq:BIOs}, so that $V^\pm$ is any closed subspace of $H^{\pm 1/2}_\omS$ and $P_{V^{\mp}_*}$ orthogonal projection onto the realisation $V^{\mp}_*\subset H^{\mp 1/2}(\Gamma_\infty)$ of its dual space.
For any such spaces $V^\pm$ we define the following BVPs and associated scattering problems. In all these definitions the choice of $\chi\in \scrD_{1,\omS}$ is arbitrary.

\begin{defn}[Problem $\sD(V^-)$]
Given $g_{\sD}\in V^{+}_*$, find $v\in C^2\left(D\right)\cap  W^{1,\mathrm{loc}}(D)$ satisfying the Helmholtz equation \rf{eq:he} in $D$, the Sommerfeld radiation condition \eqref{src}, and the boundary conditions
\begin{align}
\label{a3}P_{V_*^{+}}\gamma^\pm(\chi v)&=g_{\sD}, \qquad \mbox{ for some }\chi\in \scrD_{1,\omS},\\
\label{a1}[v]&=0,\\
\label{a2}\left[\partial_\bn v\right] &\in V^{-}.
\end{align}
\end{defn}

\begin{defn}[Problem $\sN(V^+)$]
Given $g_{\sN}\in V^{-}_*$, find $v\in C^2\left(D\right)\cap  W^{1,\mathrm{loc}}(D)$ satisfying the Helmholtz equation \rf{eq:he} in $D$, the Sommerfeld radiation condition \eqref{src}, and the boundary conditions
\begin{align}
\label{b3}P_{V_*^{-}}\partial_{\bn}^\pm(\chi v)&=g_{\sN}, \qquad \mbox{ for some }\chi\in \scrD_{1,\omS},\\
\label{b1}\left[\partial_\bn v\right]&=0,\\
\label{b2}[v] &\in V^{+}.
\end{align}
\end{defn}

\begin{defn}[Problem $\sS\sD(V^-)$]
Given $u^i\in W^{1,\rm loc}(\R^n;\Delta)$ find $u^s$ satisfying Problem $\sD(V^-)$ with
\begin{equation} \label{gd}
g_{\sD} := -P_{V_*^{+}}\gamma^\pm(\chi u^i), \qquad \mbox{ for some }\chi\in \scrD_{1,\omS}.
\end{equation}
\end{defn}

\begin{defn}[Problem $\sS\sN(V^+)$]
Given $u^i\in W^{1,\rm loc}(\R^n;\Delta)$ find $u^s$ satisfying Problem $\sN(V^+)$ with
\begin{equation} \label{gn}
g_{\sN} :=-P_{V_*^{-}}\partial_\bn^\pm(\chi u^i), \qquad \mbox{ for some }\chi\in \scrD_{1,\omS}.
\end{equation}
\end{defn}

\begin{rem} In general, conditions \eqref{a1}, \eqref{a2}, \eqref{b1}, and \eqref{b2} in the above formulations are essential to ensure uniqueness. But, as we will see later in Theorem \ref{thm:superlf} (and see Remark \ref{rem:acsuperfl}), if $\mS$ is sufficiently regular and $V^\pm$ is constrained by \eqref{eq:selection} below, these conditions are superfluous.
\end{rem}

Each of $\sS\sD(V^-)$ and $\sS\sN(V^+)$ is a family of formulations indexed by the subspace $V^\pm$.  In the case that $V^\pm=H^{\pm 1/2}_\omS$ we will show (Corollaries \ref{DirEquivThmm2} and \ref{NeuEquivThmm2}) that $\sS\sD(V^-)$ and $\sS\sN(V^+)$ are equivalent to \SDws and \SNw, respectively. But we will also show (Theorems \ref{DirEquivThmm} and \ref{NeuEquivThmm} and Corollaries \ref{DirEquivThmm2} and \ref{NeuEquivThmm2}) that these problems are well-posed for every choice of $V^\pm$; that (Theorems \ref{thm:S61} and \ref{thm:s62}) distinct choices of $V^\pm$ lead to  distinct solutions; and  that (Remark \ref{rem:useful}) many of these solutions can be interpreted as valid physical solutions for distinct screens $\mS$ with the same closure.

But many choices of $V^\pm$ do not correspond to physical scattering problems, for example  choices where $V^\pm$ is finite-dimensional (though these formulations may be relevant as numerical approximations). Thus, in much (but not all) of our discussion below, we will constrain $V^\pm$ to satisfy a {\em physical selection principle},
\begin{equation} \label{eq:selection}
\tH^{\pm 1/2}(\mS^\circ) \subset V^\pm \subset H^{\pm 1/2}_\omS,
\end{equation}
intended to ensure that $\sD(V^-)$ and $\sN(V^+)$ are physically reasonable.
In \eqref{eq:selection} we understand $\tH^{\pm 1/2}(\mS^\circ)$ to mean $\{0\}$ if $\mS^\circ=\emptyset$ (in which case \eqref{eq:selection} provides no constraint).
The point of \eqref{eq:selection} is the following lemma.

\begin{lem} \label{lem:imp} If \eqref{eq:selection} holds and $v\in W^{1,\mathrm{loc}}(D;\Delta)$ then \eqref{a3} and \eqref{b3} imply that
\begin{equation} \label{eq:standard}
\gamma^\pm(\chi v)|_{\mS^\circ}=g_{\sD}|_{\mS^\circ} \quad \mbox{ and } \quad \partial_{\bn}^\pm(\chi v)|_{\mS^\circ}=g_{\sN}|_{\mS^\circ},
\end{equation}
respectively. Indeed, if $V^\pm = \tH^{\pm 1/2}(\mS^\circ)$, \eqref{a3} and \eqref{b3} are equivalent to these equations.
\end{lem}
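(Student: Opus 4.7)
The plan is to unfold the definitions of the dual realisation $V_*^\mp$ and of the projection $P_{V_*^\mp}$, and to exploit the hypothesis \eqref{eq:selection} via the annihilator identity in \eqref{annopen}. Throughout, note that $v\in W^{1,\rm loc}(D;\Delta)$ ensures, as recorded just above \eqref{jumpu}, that $\gamma^\pm(\chi v)\in H^{1/2}(\Gamma_\infty)$ and $\partial_{\bn}^\pm(\chi v)\in H^{-1/2}(\Gamma_\infty)$, so that \eqref{a3} and \eqref{b3} make sense as equalities in $V_*^+\subset H^{1/2}(\Gamma_\infty)$ and $V_*^-\subset H^{-1/2}(\Gamma_\infty)$ respectively.

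For the Dirichlet implication, I would rewrite \eqref{a3} as $\gamma^\pm(\chi v)-g_\sD=(I-P_{V_*^+})\gamma^\pm(\chi v)$, whose right-hand side lies in the orthogonal complement of $V_*^+$ in $H^{1/2}(\Gamma_\infty)$. Since $V_*^+=((V^-)^a)^\perp$ and $(V^-)^a$ is closed in $H^{1/2}(\Gamma_\infty)$ (as an intersection of kernels of the continuous functionals $u\mapsto\langle u,w\rangle$ for $w\in V^-$), the double-perp identity gives $(V_*^+)^\perp=(V^-)^a$. The inclusion $\tH^{-1/2}(\mS^\circ)\subset V^-$ from \eqref{eq:selection} reverses under taking annihilators, yielding $(V^-)^a\subset(\tH^{-1/2}(\mS^\circ))^a=H^{1/2}_{(\mS^\circ)^c}$ by \eqref{annopen}. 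Since every element of $H^{1/2}_{(\mS^\circ)^c}$ is supported in the complement of $\mS^\circ$ and hence restricts to zero there, I conclude $\gamma^\pm(\chi v)|_{\mS^\circ}=g_\sD|_{\mS^\circ}$. The Neumann implication from \eqref{b3} is then identical, with the $\pm 1/2$ superscripts swapped and $\partial_\bn^\pm$ in place of $\gamma^\pm$.

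For the equivalence when $V^\pm=\tH^{\pm 1/2}(\mS^\circ)$, \eqref{eq:duals} identifies $V_*^\mp=(H^{\mp 1/2}_{(\mS^\circ)^c})^\perp$, which the discussion around \eqref{comm}--\eqref{exts} shows is unitarily isomorphic to $H^{\mp 1/2}(\mS^\circ)$ via the restriction map $|_{\mS^\circ}$. For any $\varphi\in H^{\pm 1/2}(\Gamma_\infty)$, the difference $\varphi-P_{V_*^\mp}\varphi$ lies in $(V_*^\mp)^\perp=H^{\mp 1/2}_{(\mS^\circ)^c}$ and therefore restricts to zero on $\mS^\circ$, so $(P_{V_*^\mp}\varphi)|_{\mS^\circ}=\varphi|_{\mS^\circ}$. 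Applying $|_{\mS^\circ}$ to both sides of \eqref{a3} and \eqref{b3} thus reproduces \eqref{eq:standard}, and the injectivity of $|_{\mS^\circ}$ on $V_*^\mp$ makes the implication reversible, giving equivalence. The whole argument is essentially careful bookkeeping with annihilators and orthogonal complements; the one subtlety to keep straight, and what I expect to be the main source of potential confusion, is that $P_{V_*^\mp}$ is orthogonal projection in the ambient space $H^{\mp 1/2}(\Gamma_\infty)$ rather than in $V_*^\mp$, so that its complement lands precisely in $(V^\pm)^a$, and that the double-perp step requires $(V^\pm)^a$ to be closed, which is exactly the continuity property noted above.
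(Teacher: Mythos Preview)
Your proof is correct and follows essentially the same route as the paper's. The paper's argument is more terse: it observes that \eqref{a3} together with $g_\sD\in V_*^+$ gives $P_{V_*^+}(\gamma^\pm(\chi v)-g_\sD)=0$, then uses the inclusion $\tH^{-1/2}(\mS^\circ)\subset V^-$ to pass to the same equation with $V^-$ replaced by $\tH^{-1/2}(\mS^\circ)$, and finally invokes the argument from the proof of Lemma~\ref{lem:equiv} (where the identification of $(H^{\mp 1/2}_{(\mS^\circ)^c})^\perp$ with $H^{\mp 1/2}(\mS^\circ)$ via $|_{\mS^\circ}$ is used) to obtain \eqref{eq:standard}. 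You have simply unpacked these steps explicitly via the annihilator identities \eqref{ann}--\eqref{annopen} and the double-perp, which is exactly what is going on under the hood.

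One minor slip: in your final paragraph you write ``For any $\varphi\in H^{\pm 1/2}(\Gamma_\infty)$'', but since $V_*^\mp\subset H^{\mp 1/2}(\Gamma_\infty)$ this should read $\varphi\in H^{\mp 1/2}(\Gamma_\infty)$. This is purely notational and does not affect the argument.
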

\begin{proof} If \eqref{eq:selection} holds then \eqref{a3} and \eqref{b3} imply that $P_{V_*^{+}}(\gamma^\pm(\chi v)-g_{\sD})=0$ and $P_{V_*^{-}}(\partial_{\bn}^\pm(\chi v)-g_{\sN})=0$, respectively, with $V^\pm=\tH^{\pm 1/2}(\mS^\circ)$. This is equivalent to \eqref{eq:standard} (cf.\ proof of Lemma \ref{lem:equiv}).
\end{proof}

\begin{rem} \label{rem:acsuperfl} Lemma \ref{lem:imp} implies that, if $V^- = \tH^{-1/2}(\mS^\circ)$, then $\sD(V^-)$ is the standard formulation \Dsts augmented by the additional constraints \eqref{a1} and \eqref{a2}. Similarly, if $V^+ = \tH^{1/2}(\mS^\circ)$, then $\sN(V^+)$ is the standard formulation \Nsts augmented by the additional constraints \eqref{b1} and \eqref{b2}. We will show in Theorem \ref{thm:superlf} that, if $\mS$ is sufficiently regular, these additional constraints are superfluous, which in turn will imply (Theorem \ref{thm:wpst}) that the standard formulations \Dsts and \Nsts are then well-posed.
\end{rem}

\begin{cor} \label{cor:imp} If \eqref{eq:selection} holds and $v$ satisfies $\sD(V^-)$ ($\sN(V^+)$), then $v$ satisfies \Dsts with $\widetilde g_\sD = g_\sD|_{\mS^\circ}$ (\Nsts  with $\widetilde g_\sN = g_\sN|_{\mS^\circ}$).
\end{cor}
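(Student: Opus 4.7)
The corollary is essentially a bookkeeping exercise to check that a solution of $\sD(V^-)$ satisfies every requirement listed in Problem \Dsts (and analogously for the Neumann case), with the boundary data identified as the restriction of $g_\sD$ (resp.\ $g_\sN$) to $\mS^\circ$. The only nontrivial ingredient is the translation of the projected boundary condition \eqref{a3} (resp.\ \eqref{b3}) into the classical restriction condition \eqref{eq:bcst}, and this is precisely the content of Lemma \ref{lem:imp}.

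First, I would verify the ambient regularity needed to invoke Lemma \ref{lem:imp}. By hypothesis $v\in C^2(D)\cap W^{1,\mathrm{loc}}(D)$ satisfies $(\Delta+k^2)v=0$ in $D$, so $\Delta v = -k^2 v \in L^2_{\mathrm{loc}}(D)$ and hence $v\in W^{1,\mathrm{loc}}(D;\Delta)$. Thus the hypotheses of Lemma \ref{lem:imp} are met, and from \eqref{a3} (resp.\ \eqref{b3}) together with \eqref{eq:selection} we obtain
\[
\gamma^\pm(\chi v)|_{\mS^\circ} = g_\sD|_{\mS^\circ}, \qquad \bigl(\text{resp.\ } \partial_\bn^\pm(\chi v)|_{\mS^\circ} = g_\sN|_{\mS^\circ}\bigr).
\]

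Next, I would check that the candidate boundary data have the right regularity for Problem \Dsts (resp.\ \Nst). Since $g_\sD \in V^+_* \subset H^{1/2}(\Gamma_\infty)$, the restriction $\widetilde g_\sD := g_\sD|_{\mS^\circ}$ lies in $H^{1/2}(\mS^\circ)$ by the definition of $H^s(\Omega)$ in Section \ref{sec:SobolevSpaces}; analogously $\widetilde g_\sN := g_\sN|_{\mS^\circ} \in H^{-1/2}(\mS^\circ)$. With these identifications, the display above is exactly the boundary condition \eqref{eq:bcst} of Problem \Dsts (resp.\ \Nst).

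Finally, all remaining requirements of \Dsts and \Nsts (namely $v\in C^2(D)\cap W^{1,\mathrm{loc}}(D)$, the Helmholtz equation in $D$, and the Sommerfeld radiation condition \eqref{src}) are imposed verbatim in $\sD(V^-)$ and $\sN(V^+)$, so they transfer automatically. This completes the plan; there is no real obstacle, since the auxiliary conditions \eqref{a1}--\eqref{a2} and \eqref{b1}--\eqref{b2} play no role in deducing \Dsts/\Nsts (they only become essential later, for uniqueness of the novel formulations themselves).
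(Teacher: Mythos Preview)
Your proof is correct and matches the paper's approach: the corollary is stated without proof in the paper, as it is meant to follow immediately from Lemma~\ref{lem:imp} together with the observation that the remaining conditions of \Dsts/\Nsts are already part of the definition of $\sD(V^-)$/$\sN(V^+)$. Your verification that $v\in W^{1,\mathrm{loc}}(D;\Delta)$ (needed to invoke Lemma~\ref{lem:imp}) and that $g_\sD|_{\mS^\circ}\in H^{1/2}(\mS^\circ)$, $g_\sN|_{\mS^\circ}\in H^{-1/2}(\mS^\circ)$ simply makes explicit what the paper leaves tacit.
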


\begin{rem} \label{rem:bcs} Arguing as in the proof of Lemma \ref{lem:imp}, we have also that if \eqref{eq:selection} holds and $g_\sD$ and $g_\sN$ are given as in Problems $\sS\sD(V^-)$ and $\sS\sN(V^+)$, then, for every $\chi\in \scrD_{1,\omS}$,
$$
g_\sD|_{\mS^\circ} = -\gamma^\pm(\chi u^i)|_{\mS^\circ} \quad \mbox{and} \quad
g_{\sN}|_{\mS^\circ} =-\partial_\bn^\pm(\chi u^i)|_{\mS^\circ}.
$$
\end{rem}

Corollary \ref{cor:imp}, Remark \ref{rem:bcs}, and Lemma \ref{lem:equivclst} imply:

\begin{cor} \label{lem:satcl} If \eqref{eq:selection} holds, $u^s$ satisfies $\sS\sD(V^-)$ ($\sS\sN(V^+)$), and $(\Delta + k^2)u^i=0$ in a neighbourhood of $\omS$, then $u^s$ satisfies \SDcls (\SNcl).
\end{cor}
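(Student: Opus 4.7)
\medskip

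\noindent\textbf{Proof plan.} The plan is to chain together the three results cited just before the statement: Corollary~\ref{cor:imp} translates the novel formulation into the standard Sobolev BVP, Remark~\ref{rem:bcs} identifies the boundary data with the classical trace of $u^i$, and Lemma~\ref{lem:equivclst} then converts a solution of the standard BVP into one of the classical problem.

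\medskip

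\noindent I would argue the Dirichlet case first; the Neumann case is verbatim with the obvious changes. Suppose $u^s$ solves $\sS\sD(V^-)$; by definition this means $u^s$ solves $\sD(V^-)$ with right-hand side $g_\sD = -P_{V^+_*}\gamma^\pm(\chi u^i)$ for some (equivalently any) $\chi \in \scrD_{1,\omS}$. Since the selection principle \eqref{eq:selection} is assumed to hold, Corollary~\ref{cor:imp} applies and shows that $u^s$ satisfies Problem \Dsts with datum
\[
\widetilde g_\sD \;=\; g_\sD\big|_{\mS^\circ} \;=\; -P_{V^+_*}\gamma^\pm(\chi u^i)\big|_{\mS^\circ}.
\]
By Remark~\ref{rem:bcs} (which itself uses \eqref{eq:selection}), the right-hand side simplifies to $-\gamma^\pm(\chi u^i)|_{\mS^\circ}$, independently of the choice of $\chi$.

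\medskip

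\noindent Next I would invoke the additional hypothesis that $(\Delta + k^2)u^i = 0$ in a neighbourhood of $\omS$: by elliptic regularity this forces $u^i$ to be $C^\infty$ near $\omS$, so $\gamma^\pm(\chi u^i)|_{\mS^\circ}$ coincides with the classical restriction $u^i|_{\mS^\circ}$. Thus $u^s$ solves \Dsts with $\widetilde g_\sD = -u^i|_{\mS^\circ}$. Since $u^i$ now satisfies all the hypotheses imposed in Definition~\ref{def:sSsD-cl}, Lemma~\ref{lem:equivclst} applies and gives that $u^s$ solves \SDcl. Exactly the same chain, with $\gamma^\pm$ replaced by $\partial^\pm_{\bn}$ and $V^-, V^+_*$ replaced by $V^+, V^-_*$, yields that a solution of $\sS\sN(V^+)$ is a solution of \SNcl.

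\medskip

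\noindent The argument is essentially a bookkeeping exercise; I do not foresee any real obstacle. The only subtle point is ensuring that Remark~\ref{rem:bcs} is applicable, i.e.\ that \eqref{eq:selection} is in force — which is part of the hypothesis — and that $u^s$ lies in $W^{1,\mathrm{loc}}(D;\Delta)$ so that the trace $\gamma^\pm(\chi u^s)$ makes sense; but this regularity is built into the definition of Problem $\sD(V^-)$ together with the interior ellipticity given by the Helmholtz equation.
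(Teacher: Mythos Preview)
Your proof is correct and is exactly the argument the paper intends: the paper's own justification is simply the sentence ``Corollary~\ref{cor:imp}, Remark~\ref{rem:bcs}, and Lemma~\ref{lem:equivclst} imply:'', and you have spelled out that chain in full detail, including the identification of $\gamma^\pm(\chi u^i)|_{\mS^\circ}$ with the classical restriction $u^i|_{\mS^\circ}$ via smoothness of $u^i$ near $\omS$.
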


The following result is one half of a proof of well-posedness of $\sD(V^-)$ and $\sN(V^+)$ that we will complete in Theorems \ref{DirEquivThmm} and \ref{NeuEquivThmm} below.

\begin{thm} \label{lem:uni} Problems $\sD(V^-)$ and $\sN(V^+)$ (and hence also
$\sS\sD(V^-)$ and $\sS\sN(V^+)$) have at most one solution.
\end{thm}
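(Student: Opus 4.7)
The plan is to show, for Problem $\sD(V^-)$, that the difference $w = v_1 - v_2$ of two solutions vanishes in $D$, by combining a suitable Green's representation for screen problems with the invertibility of the single-layer operator $S:V^-\to V^+_*$ that follows from the coercivity result Theorem \ref{thm:ContCoerc1} and Lax--Milgram (as recorded in \eqref{invert}). The argument for $\sN(V^+)$ will be dual, using the double layer and $T$ in place of $\cS$ and $S$.

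First I note that $w$ satisfies the Helmholtz equation in $D$, the Sommerfeld radiation condition, and the homogeneous boundary data $P_{V^+_*}\gamma^\pm(\chi w)=0$, $[w]=0$, $[\partial_\bn w]\in V^-$. Because $[w]=0$ it follows from \eqref{W1DCharac} that $w\in W^{1,\mathrm{loc}}(\R^n)$. The key intermediate step is the Green-type representation
\begin{equation*}
w = -\cS[\partial_\bn w]\quad \text{in } D.
\end{equation*}
This I would establish by setting $\tilde w:=w+\cS[\partial_\bn w]$ and using the jump relations \eqref{JumpRelns1}--\eqref{JumpRelns4} to verify that $[\tilde w]=0$ and $[\partial_\bn \tilde w]=0$; hence $\tilde w$ extends to a function in $W^{1,\mathrm{loc}}(\R^n;\Delta)$ solving $(\Delta+k^2)\tilde w=0$ in all of $\R^n$ and satisfying the Sommerfeld radiation condition, so $\tilde w\equiv 0$ by the standard Rellich--Kato uniqueness result for Helmholtz on $\R^n$.

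With the representation in hand, applying $P_{V^+_*}\gamma^\pm(\chi\,\cdot\,)$ to both sides and invoking the definition \eqref{eq:BIOs} of the BIO $S$ gives
\begin{equation*}
0 \;=\; P_{V^+_*}\gamma^\pm(\chi w)\;=\;-\,P_{V^+_*}\gamma^\pm(\chi\cS[\partial_\bn w])\;=\;-S[\partial_\bn w].
\end{equation*}
Since $[\partial_\bn w]\in V^-$ and $S:V^-\to V^+_*$ is an isomorphism by \eqref{invert}, we deduce $[\partial_\bn w]=0$, and therefore $w=0$ throughout $D$. The case of $\sN(V^+)$ is parallel: from $[\partial_\bn w]=0$ and analogous jump computations one obtains $w=\cD[w]$ in $D$ with $[w]\in V^+$; then the Neumann boundary condition reduces to $T[w]=0$, and invertibility of $T:V^+\to V^-_*$ forces $[w]=0$ and hence $w=0$. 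Uniqueness for $\sS\sD(V^-)$ and $\sS\sN(V^+)$ then follows since these are special cases with prescribed right-hand sides \eqref{gd}--\eqref{gn}.

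The main obstacle is the rigorous justification of the screen Green's representation $w=-\cS[\partial_\bn w]+\cD[w]$ when $\omS$ may be a highly irregular (even fractal) bounded subset of $\Gamma_\infty$. The jump-relation bookkeeping is clean once one has the mapping properties in Theorem \ref{LayerPotRegThm}, but one must ensure that $\tilde w$ really is a tempered solution of Helmholtz on all of $\R^n$ satisfying the Sommerfeld condition, so that standard entire-solution uniqueness applies; this reduces, via Theorem \ref{LayerPotRegThm}(i)--(ii), to checking that the layer potentials inherit the radiation condition from $\Phi$ and have the regularity guaranteed by $[w]\in H^{1/2}_\omS$ and $[\partial_\bn w]\in H^{-1/2}_\omS$. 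Everything else is a direct application of coercivity.
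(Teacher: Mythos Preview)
Your argument is correct, but it takes a genuinely different route from the paper. You first establish the Green's representation $w=-\cS[\partial_\bn w]$ (respectively $w=\cD[w]$) for the difference of two solutions, then feed this into the boundary condition to obtain $S[\partial_\bn w]=0$ (respectively $T[w]=0$), and conclude via the invertibility of $S$ and $T$ recorded in \eqref{invert}. The paper instead proves uniqueness directly at the PDE level: applying Green's first identity \eqref{green1st} with $\chi_* v$ and $\chi v$, and using that $[\partial_\bn v]\in V^-$ while $\gamma^\pm(\chi v)\in (V^+_*)^\perp=(V^-)^a$ so the boundary pairing vanishes, one obtains $\Im\int_{\partial B_R}\bar v\,\partial_r v\,\rd s=0$ for all large $R$; combined with the radiation condition this gives $\int_{\partial B_R}|v|^2\rd s\to 0$, and Rellich's lemma finishes.

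The practical difference is that the paper's energy/Rellich argument does \emph{not} use the coercivity of $a_S$ or $a_T$ (Theorem~\ref{thm:ContCoerc1}), whereas your proof relies on it through \eqref{invert}. The authors flag this as intentional in \S\ref{sec:Intro}: coercivity is specific to planar screens, and they aim to keep the uniqueness proof independent of it with a view to non-planar generalizations (where one may only have Fredholmness, or less). Your route is cleaner given the BIE machinery already in place, and it has the pleasant feature that uniqueness and existence (Theorem~\ref{DirEquivThmm}) are then two sides of the same coin; but it is less portable. Your ``main obstacle'' (the representation formula for irregular $\omS$) is in fact handled cleanly by the jump relations of Theorem~\ref{LayerPotRegThm} together with \eqref{W1DCharac}--\eqref{W1DDCharac}, exactly as you outline.
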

\begin{proof} Suppose that $v$ satisfies $\sD(V^-)$ with $g_\sD=0$, and choose real-valued $\chi_*,\chi\in \scrD_{1,\omS}$ such that $\chi_*=1$ in a neighbourhood of the support of $\chi$. Then, by Green's first theorem \eqref{green1st} applied with $u$ and $v$ replaced by $\chi_* v$ and $\chi v$, respectively, and using \eqref{a1} and \eqref{eq:he},
$$
\langle [\partial_\bn v], \gamma^\pm (\chi v)\rangle_{H^{-1/2}(\Gamma_\infty)\times H^{1/2}(\Gamma_\infty)} = - \int_{D} \left( \nabla v \cdot \nabla (\chi \bar v) - k^2 \chi |v|^2 \right) \rd \bx.
$$
The duality pairing on the left hand side vanishes, since $[\partial_\bn v]\in V^-$ and $P_{V_*^{+}}\gamma^\pm(\chi v)=0$, so that $\gamma^\pm(\chi v)\in (V_*^+)^\perp$, i.e.\ is in the annihilator of $V^-$. Thus
\begin{equation} \label{eq:uni}
\Im \int_{D} \bar v\nabla v \cdot \nabla \chi \rd \bx = 0.
\end{equation}
Arguing similarly, but applying Green's first theorem (e.g.\ \cite[(3.4)]{CoKr:83}) in the bounded domain $B_R$
to $(1-\chi_\sharp) u$ and  $(1-\chi)u$ (both in $C^2(\R^n)$), where $\chi_\sharp\in \scrD_{1,\omS}$ is chosen so that $\chi=1$ in a neighbourhood of the support of $\chi_\sharp$ and $R$ is large enough so that the support of $\chi$ is in $B_R$, we see that
$$
\int_{\partial B_R} \bar v \frac{\partial v}{\partial r}\, \rd s = \int_{B_R} \left( \nabla v \cdot \nabla ((1-\chi) \bar v) - k^2 (1-\chi) |v|^2 \right) \rd \bx.
$$
Taking imaginary parts and using \eqref{eq:uni} we see that
$
\Im \int_{\partial B_R} \bar v \frac{\partial v}{\partial r}\, \rd s = 0
$
for all sufficiently large $R$. But this, together with the radiation condition \eqref{src}, implies that $\int_{\partial B_R} |v|^2 \rd s \to  0$
as $R\to \infty$, which implies by the Rellich lemma (e.g., \cite[Lemma 3.11]{CoKr:83}) that $v=0$ in $D$.

An almost identical argument proves uniqueness for $\sN(V^+)$.
\end{proof}

The following corollary is immediate from Lemma \ref{lem:equiv}.

\begin{cor} If  $u^s$ satisfies \SDw, then $u^s$ satisfies $\sS\sD(V^-)$ with $V^-=H^{-1/2}_\omS$. Similarly, if  $u^s$ satisfies \SNw, then $u^s$ satisfies $\sS\sN(V^+)$ with $V^+=H^{1/2}_\omS$.
\end{cor}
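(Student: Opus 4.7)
The plan is to verify each of the listed conditions in $\sD(V^-)$ (respectively $\sN(V^+)$) directly from what Lemma~\ref{lem:equiv} already gives us, together with the hypothesis that $u^s=u-u^i$ with $u$ a solution of \SDws (respectively \SNw). The regularity, PDE, and radiation conditions for $u^s$ in $\sD(V^-)$ and $\sN(V^+)$ are immediate from the corresponding conditions in the weak formulations: since $u\in W^{1,\mathrm{loc}}(D;\Delta)$ and $u^i\in W^{1,\mathrm{loc}}(\R^n;\Delta)$, we have $u^s\in W^{1,\mathrm{loc}}(D;\Delta)$ (so in particular $u^s\in C^2(D)\cap W^{1,\mathrm{loc}}(D)$ by elliptic regularity and \eqref{surp}), and $u^s$ solves the Helmholtz equation in $D$ and satisfies the Sommerfeld radiation condition by definition of \SDws/\SNws.

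Next I would check the jump conditions. For the Dirichlet case, Lemma~\ref{lem:equiv} gives $[u^s]=[u]=0$, establishing \eqref{a1}; and it also tells us that $[\partial_\bn u^s]\in H^{-1/2}_{\overline\mS}=V^-$, which is exactly \eqref{a2}. The Neumann case is symmetric: $[\partial_\bn u^s]=[\partial_\bn u]=0$ gives \eqref{b1}, while $[u^s]\in H^{1/2}_{\overline\mS}=V^+$ gives \eqref{b2}.

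Finally, I would verify the projected boundary condition. Fix $\chi\in\scrD_{1,\omS}$. In the Dirichlet case, Lemma~\ref{lem:equiv} gives $P_{V^+_*}\gamma^\pm(\chi u)=0$ with $V^-=H^{-1/2}_{\overline\mS}$; writing $u=u^i+u^s$ and using linearity of $\gamma^\pm$ and $P_{V^+_*}$,
\begin{equation*}
P_{V^+_*}\gamma^\pm(\chi u^s)=-P_{V^+_*}\gamma^\pm(\chi u^i)=g_\sD,
\end{equation*}
which is precisely \eqref{a3} with $g_\sD$ as prescribed in \eqref{gd}. The analogous computation for the Neumann case, using $P_{V^-_*}\partial_\bn^\pm(\chi u)=0$ with $V^+=H^{1/2}_{\overline\mS}$, yields $P_{V^-_*}\partial_\bn^\pm(\chi u^s)=g_\sN$ as in \eqref{gn}, verifying \eqref{b3}. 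Hence $u^s$ solves $\sS\sD(V^-)$ (respectively $\sS\sN(V^+)$) with the indicated choice of $V^\pm$.

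Nothing here is a genuine obstacle; the proof is purely a matter of unpacking definitions and applying Lemma~\ref{lem:equiv}, and the only point requiring mild care is confirming that $\chi\in\scrD_{1,\omS}$ may be chosen arbitrarily (the data $g_\sD,g_\sN$ produced from $u^i$ are independent of this choice because $\gamma^\pm$ and $\partial_\bn^\pm$ applied to $(\chi_1-\chi_2)u^i$ land in the respective annihilators $(V^\pm_*)^\perp$, so the projections are unchanged, precisely as noted below \eqref{eq:BIOs}).
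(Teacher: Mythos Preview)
Your proof is correct and takes essentially the same approach as the paper, which simply states that the corollary is ``immediate from Lemma~\ref{lem:equiv}''; you have just unpacked in detail what that immediacy means by checking each condition of $\sD(V^-)$ and $\sN(V^+)$ against the conclusions of Lemma~\ref{lem:equiv}.
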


Combining this result with Theorems \ref{thm:bvp_wwp} and \ref{lem:uni}, we obtain:

\begin{thm} \label{thm:Dexist} For $V^-=H^{-1/2}_\omS$, Problem $\sS\sD(V^-)$ has exactly one solution  which is the unique solution of \SDw. Similarly, for $V^+=H^{1/2}_\omS$, Problem $\sS\sN(V^+)$ has exactly one solution  which is the unique solution of \SNw.
\end{thm}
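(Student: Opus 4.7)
The plan is to assemble the theorem directly from three results already in hand: Theorem \ref{thm:bvp_wwp} (well-posedness of \SDw and \SNw), the immediately preceding corollary (every solution of \SDw (\SNw) is a solution of $\sS\sD(V^-)$ ($\sS\sN(V^+)$) for the particular choice $V^- = H^{-1/2}_\omS$ ($V^+ = H^{1/2}_\omS$)), and Theorem \ref{lem:uni} (uniqueness for $\sS\sD(V^-)$ and $\sS\sN(V^+)$ for arbitrary admissible $V^\pm$). The structure is a one-line existence argument followed by a one-line uniqueness argument, performed twice (once for Dirichlet, once for Neumann).

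For the Dirichlet case with $V^- = H^{-1/2}_\omS$, I would first invoke Theorem \ref{thm:bvp_wwp} to obtain the unique solution $u$ of \SDw, and set $u^s := u - u^i$. The preceding corollary then immediately yields that $u^s$ is a solution of $\sS\sD(V^-)$, giving existence. Uniqueness for $\sS\sD(V^-)$ is then supplied directly by Theorem \ref{lem:uni}. Hence $\sS\sD(V^-)$ has exactly one solution, and it is, by construction, the unique solution of \SDw. The Neumann statement is proved in exactly the same way, substituting \SNw and $V^+ = H^{1/2}_\omS$ for \SDw and $V^- = H^{-1/2}_\omS$ throughout.

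There is no real obstacle: the theorem is a packaging result and the only subtlety worth flagging is that the matching between formulations is asymmetric. The weak-to-novel direction requires the preceding corollary (whose proof rested on Lemma \ref{lem:equiv}, in particular the facts $[u] = 0$ with $P_{V^+_*}\gamma^\pm(\chi u) = 0$ for $V^- = H^{-1/2}_\omS$ in the Dirichlet case, and the analogous jump and projection conditions in the Neumann case). Conversely, the novel-to-weak direction (that the unique solution of $\sS\sD(V^-)$ is actually in $W^{1,\mathrm{loc}}_0(D)$, and similarly for the Neumann weak formulation) is not needed here and is not asserted; it would be a separate equivalence statement, presumably handled later in the paper via Corollaries \ref{DirEquivThmm2} and \ref{NeuEquivThmm2} as mentioned in the discussion following the novel formulations.
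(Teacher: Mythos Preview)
Your proposal is correct and matches the paper's approach exactly: the paper states that the theorem is obtained by ``combining this result [the preceding corollary] with Theorems \ref{thm:bvp_wwp} and \ref{lem:uni}'', which is precisely the three-ingredient argument you describe. Your additional remark that the converse direction (novel-to-weak) is not asserted here and is deferred is also accurate.
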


\subsection{BIEs, well-posedness, and equivalence of formulations} \label{subsec:bies}
We now study the reformulation as BIEs of the various BVPs we have introduced above. We will also use these BIEs to complete proofs of well-posedness and to complete our study of the connections between the various formulations.

The operators in these BIEs will be the single layer and hypersingular operators, $S:V^-\to V^+_*$ and $T:V^+\to V^-_*$, that we introduced in \eqref{eq:BIOs}, where, as above,  $V^\pm$ is some closed subspace of $H^{\pm 1/2}_\omS\subset H^\pm(\Gamma_\infty)$, and $V^\mp_*\subset H^{\mp 1/2}(\Gamma_\infty)$ the natural realisation of its dual space. These BIOs may seem exotic, especially in cases where $\omS$ has empty interior or even zero Lebesgue measure, but we emphasise that these BIOs are nothing but restrictions to subspaces of operators that are completely familiar in screen scattering problems. Explicitly, from \eqref{SddSd} and \eqref{STdag},
$$
S = P_{V_*^+}E^+S_\dag|_{V^-} \quad \mbox{and} \quad T = P_{V_*^-}E^-T_\dag|_{V^+},
$$
where $S_\dag:\tH^{-1/2}(\Gamma_\dag)\to H^{1/2}(\Gamma_\dag)$ and $T_\dag:\tH^{1/2}(\Gamma_\dag)\to H^{-1/2}(\Gamma_\dag)$ are the familiar operators defined by \eqref{Sstardefs}, \eqref{SdagDef} and \eqref{TdagDef}, and $E^\pm:H^{\pm 1/2}(\Gamma_\dag)\to H^{\pm 1/2}(\R^n)$ are the minimum norm extension operators introduced in \eqref{exts}.

We first reformulate as BIEs the Dirichlet and Neumann BVPs $\sD(V^-)$ and $\sN(V^+)$, and prove well-posedness of these BVPs via well-posedness of the BIEs. We omit the proof of Theorem \ref{NeuEquivThmm} which is almost identical to that of Theorem \ref{DirEquivThmm}. Recall that the sesquilinear forms $a_S$ and $a_T$ are defined in \eqref{eq:sesD}, \eqref{eq:sesN}, and \eqref{eq:sesD2}.
\begin{thm}
\label{DirEquivThmm}
Problem $\sD(V^-)$ has a unique solution, which satisfies
\begin{align}
\label{eqn:SLPRep}
v(\bx )= -\cS\left[\partial_\bn v\right](\bx), \qquad\bx\in D,
\end{align}
with $[\partial_\bn v]\in V^{-}$ the unique solution of the BIE
\begin{align}
\label{BIE_sl}
S\left[\partial_\bn v\right] = -g_{\sD}.
\end{align}
Further, \eqref{BIE_sl} is equivalent to the variational problem: find $[\partial_\bn v]\in V^{-}$ such that
\begin{align}
\label{BIE_slw}
a_S\left([\partial_\bn v],\psi\right) = -\langle g_{\sD},\psi\rangle_{H^{1/2}(\Gamma_\infty)\times H^{-1/2}(\Gamma_\infty)}, \quad \mbox{for all }\psi \in V^{-}.
\end{align}
For every $V^-$ satisfying \eqref{eq:selection}, the solution of $\sD(V^-)$ is a solution of \Dsts with $\widetilde g_\sD= g_\sD|_{\mS^\circ}$.
\end{thm}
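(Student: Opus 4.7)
The strategy is to invert $S$ on $V^-$ via Lax--Milgram, build $v$ from the resulting density by the single-layer potential, and use the BVP uniqueness result (Theorem~\ref{lem:uni}) to force the representation \eqref{eqn:SLPRep}. The equivalence of the operator form \eqref{BIE_sl} and the variational form \eqref{BIE_slw} is immediate from \eqref{rest1}, which identifies $a_S(\phi,\psi)$ with $\langle S\phi,\psi\rangle_{V^+_*\times V^-}$, together with the fact that $V^+_*$ is a unitary realisation of $(V^-)^*$ with pairing inherited from $H^{1/2}(\Gamma_\infty)\times H^{-1/2}(\Gamma_\infty)$.

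For unique solvability of the BIE I would appeal to Theorem~\ref{thm:ContCoerc1}: the form $a_S$ is continuous and coercive on $\tH^{-1/2}(\Gamma_\dag)\times\tH^{-1/2}(\Gamma_\dag)$, and since $V^-\subset \tH^{-1/2}(\Gamma_\dag)$ is a closed subspace the restriction inherits both properties with the same constants, so $S:V^-\to V^+_*$ is invertible as recorded in \eqref{invert}. Boundedness of the antilinear functional $\psi\mapsto-\langle g_\sD,\psi\rangle$ on $V^-$ follows from $g_\sD\in V^+_*$, so Lax--Milgram produces a unique $\phi\in V^-$ with $S\phi=-g_\sD$. I then set $v:=-\cS\phi$; Theorem~\ref{LayerPotRegThm}(i),(ii) places $v$ in $C^2(D)\cap W^{1,\mathrm{loc}}(D)$ and makes it satisfy the Helmholtz equation and the Sommerfeld radiation condition. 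The jump relations \eqref{JumpRelns1}--\eqref{JumpRelns2} give $[v]=0$ and $[\partial_\bn v]=\phi\in V^-$, verifying \eqref{a1} and \eqref{a2}, while the definition \eqref{eq:BIOs} of $S$ yields
\begin{equation*}
P_{V^+_*}\gamma^\pm(\chi v)=-P_{V^+_*}\gamma^\pm(\chi\cS\phi)=-S\phi=g_\sD,
\end{equation*}
which is \eqref{a3} (independently of the choice of $\chi\in\scrD_{1,\omS}$). Hence $v$ solves $\sD(V^-)$.

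Theorem~\ref{lem:uni} now pins down $v$ as the unique solution of $\sD(V^-)$; applied to any other candidate solution it forces that candidate to coincide with $v=-\cS\phi$, which simultaneously establishes the representation \eqref{eqn:SLPRep} and identifies $[\partial_\bn v]=\phi$ as the unique solution of \eqref{BIE_sl}. The closing statement about problem \Dsts is exactly Corollary~\ref{cor:imp}. The only delicate point I expect is bookkeeping in the various dual realisations: one must check that $P_{V^+_*}\gamma^\pm(\chi v)$ equals $g_\sD$ on the nose rather than merely modulo $(V^+_*)^\perp$, but this is automatic since both $S\phi$ and $g_\sD$ belong to $V^+_*$ by construction. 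No serious analytic obstacle arises; all substantive inputs (coercivity of $a_S$, the jump relations, the mapping properties of $\cS$, and BVP uniqueness) have already been proved upstream, and the task is to glue them together in the correct spaces.
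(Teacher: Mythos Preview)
Your proof is correct and follows essentially the same approach as the paper's own proof: uniqueness from Theorem~\ref{lem:uni}, existence by inverting $S$ via coercivity (Lax--Milgram, as recorded in \eqref{invert}) and defining $v:=-\cS\phi$ with the properties checked using Theorem~\ref{LayerPotRegThm} and \eqref{eq:BIOs}, the BIE/variational equivalence from \eqref{rest1}, and the final sentence from Corollary~\ref{cor:imp}. You have simply spelled out the verification steps in more detail than the paper does.
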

\begin{proof} We have seen in Theorem \ref{lem:uni} that $\sD(V^-)$ has at most one solution. Further, we have observed above \eqref{invert} that $S:V^-\to V^+_*$ is coercive and so invertible. Defining $\phi := -S^{-1}g_\sD\in V^-$ and $v:= -\cS\phi$, it is immediate from Theorem \ref{LayerPotRegThm} and \eqref{eq:BIOs} that $v$ satisfies $\sD(V^-)$ with $[\partial_\bn v]=\phi$. The equivalence of \eqref{BIE_sl} and \eqref{BIE_slw} is clear from \eqref{rest1} and the fact that $V^+_*$ is a realisation of the dual space of $V^-$ via the duality pairing on the left-hand side of \eqref{rest1}. The last sentence follows from Corollary \ref{cor:imp}.
\end{proof}

\begin{thm}
\label{NeuEquivThmm}
Problem $\sN(V^+)$ has a unique solution, which satisfies %
\begin{align}
\label{eqn:DLPRep}
 v(\bx ) =  \cD[u](\bx), \qquad\bx\in D,
\end{align}
with $[v]\in V^{+}$ the unique solution of the BIE
\begin{align}
\label{BIE_hyp}
T[u] = g_{\sN}.
\end{align}
Further, \eqref{BIE_hyp} is equivalent to the variational problem: find $[v]\in V^{+}$ such that
\begin{align}
\label{BIE_hypw}
a_T\left([v],\psi\right) = \langle g_{\sN},\psi\rangle_{H^{-1/2}(\Gamma_\infty)\times H^{1/2}(\Gamma_\infty)}, \quad \mbox{for all }\psi \in V^{+}.
\end{align}
For every $V^+$ satisfying \eqref{eq:selection}, the solution of $\sN(V^+)$ is a solution of \Nsts with $\widetilde g_\sN= g_\sN|_{\mS^\circ}$.
\end{thm}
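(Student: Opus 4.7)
The plan is to mimic the proof of Theorem \ref{DirEquivThmm} almost verbatim, exchanging the roles of the single-layer and hypersingular operators and the roles of $V^-$ and $V^+$. Uniqueness is already in hand: Theorem \ref{lem:uni} tells us $\sN(V^+)$ has at most one solution, so only existence and the claimed representation need to be produced.

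For existence, the key input is that the hypersingular operator $T:V^+\to V^-_*$ defined in \eqref{eq:BIOs} is coercive. Indeed, by \eqref{rest2} its sesquilinear form is the restriction of $a_T$ to $V^+\times V^+$, and by Theorem \ref{thm:ContCoerc1} $a_T$ is continuous and coercive on $\tH^{1/2}(\Gamma_\dag)\times\tH^{1/2}(\Gamma_\dag)\supset V^+\times V^+$; the explicit bound \eqref{invert} then gives invertibility via Lax--Milgram. So I would define
\begin{equation*}
\psi := T^{-1}g_\sN \in V^+, \qquad v := \cD\psi,
\end{equation*}
and verify that $v$ solves $\sN(V^+)$. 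Using Theorem \ref{LayerPotRegThm}(i), $v\in C^2(D)$ and satisfies \eqref{eq:he} and \eqref{src}; by Theorem \ref{LayerPotRegThm}(ii), $v\in W^{1,\mathrm{loc}}(D)$. The jump relations \eqref{JumpRelns3} and \eqref{JumpRelns4} give $[v]=\psi\in V^+$ (so \eqref{b2} holds) and $[\partial_\bn v]=0$ (so \eqref{b1} holds). Finally, the definition \eqref{eq:BIOs} of $T$ gives
\begin{equation*}
P_{V^-_*}\partial_\bn^\pm(\chi v) = P_{V^-_*}\partial_\bn^\pm(\chi\cD\psi)=T\psi=g_\sN,
\end{equation*}
which is \eqref{b3}. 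By the uniqueness of the solution and the uniqueness of $\psi=T^{-1}g_\sN$, the BIE \eqref{BIE_hyp} has a unique solution $[v]\in V^+$, and the representation \eqref{eqn:DLPRep} follows since the displayed construction was forced.

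The equivalence of \eqref{BIE_hyp} with the variational problem \eqref{BIE_hypw} is formal: by \eqref{rest2}, testing $T[v]=g_\sN$ against arbitrary $\psi\in V^+$ under the duality pairing $\langle\cdot,\cdot\rangle_{V^-_*\times V^+}$ gives \eqref{BIE_hypw}, and because $V^-_*$ is (by definition) the unitary realisation of $(V^+)^*$ induced by $\langle\cdot,\cdot\rangle_{H^{-1/2}(\Gamma_\infty)\times H^{1/2}(\Gamma_\infty)}$, no element of $V^-_*$ can vanish against all test functions in $V^+$, so the variational identity recovers \eqref{BIE_hyp}. The last sentence of the theorem is then an immediate application of Corollary \ref{cor:imp}.

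No step looks like a genuine obstacle: the only non-routine ingredient is the coercivity of $T$ on the restricted space $V^+$, and this has already been packaged in \eqref{rest2}--\eqref{invert}. The mild subtlety worth spelling out is the independence of \eqref{b3} from the choice of $\chi\in\scrD_{1,\omS}$, which is inherited from the corresponding independence built into the definition \eqref{eq:BIOs} of $T$ (noted there via \eqref{JumpRelns1} and \eqref{JumpRelns4}); this ensures the constructed $v$ satisfies \eqref{b3} for every admissible $\chi$, not merely the one implicitly used to set up the BIO.
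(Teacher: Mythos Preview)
Your proof is correct and follows exactly the approach the paper intends: the paper itself omits the proof, stating it ``is almost identical to that of Theorem \ref{DirEquivThmm}'', and your argument is precisely the Neumann transcription of that proof (uniqueness from Theorem \ref{lem:uni}, existence by inverting the coercive operator $T$ via \eqref{rest2}--\eqref{invert} and setting $v=\cD\psi$, equivalence with the variational form from \eqref{rest2}, and the final sentence from Corollary \ref{cor:imp}). Your closing remark about independence of $\chi$ is a nice clarification but not strictly required, since condition \eqref{b3} only asks that the identity hold for \emph{some} $\chi\in\scrD_{1,\omS}$.
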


The following corollary is immediate from Theorem \ref{DirEquivThmm}, noting that $[\partial_\bn u^i]=0$ as observed in the proof of Lemma \ref{lem:equiv}. The exception is the penultimate sentence which is a restatement of Theorem \ref{thm:Dexist}, and the last sentence which follows from Lemma \ref{lem:satcl}.
\begin{cor}
\label{DirEquivThmm2}
Problem $\sS\sD(V^-)$ has a unique solution, which satisfies (where $u:=u^i+u^s$)
\begin{align}
\label{eqn:SLPRep2}
u(\bx)= u^i(\bx)-\cS\left[\partial_\bn u\right](\bx), \qquad\bx\in D,
\end{align}
with $[\partial_\bn u]\in V^{-}$ the unique solution of the BIE
\begin{align}
\label{BIE_sl2}
S\left[\partial_\bn u\right] = P_{V_*^{+}}\gamma^\pm(\chi u^i),
\end{align}
where $\chi\in \scrD_{1,\omS}$ is arbitrary.
Further,
\eqref{BIE_sl2} is equivalent to the variational problem: find $[\partial_\bn u]\in V^{-}$ such that
\begin{align}
\label{BIE_slw2}
a_S\left([\partial_\bn u],\psi\right) = \langle \gamma^\pm(\chi u^i),\psi\rangle_{H^{1/2}(\Gamma_\infty)\times H^{-1/2}(\Gamma_\infty)}, \quad \mbox{for all }\psi \in V^{-}.
\end{align}
If $V^-=H^{-1/2}_\omS$ then $\sS\sD(V^-)$ and \SDws have the same unique solution. For every $V^-$ satisfying \eqref{eq:selection}, the solution of $\sS\sD(V^-)$ is a solution of \SDcls if $(\Delta + k^2)u^i=0$ in a neighbourhood of $\omS$.
\end{cor}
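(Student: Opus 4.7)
The plan is to deduce everything directly from Theorem \ref{DirEquivThmm} applied to the scattered field $u^s$, which by definition of Problem $\sS\sD(V^-)$ satisfies Problem $\sD(V^-)$ with boundary datum $g_{\sD}=-P_{V^+_*}\gamma^\pm(\chi u^i)$. The only substantive translation required is to convert statements about $u^s$ into statements about the total field $u=u^i+u^s$, and this rests on the single observation that $u^i\in W^{1,\mathrm{loc}}(\R^n;\Delta)$ implies $[u^i]=0$ and $[\partial_\bn u^i]=0$, exactly as noted inside the proof of Lemma \ref{lem:equiv}. Consequently $[\partial_\bn u]=[\partial_\bn u^s]$, which is what allows the jump on $u$ (rather than on $u^s$) to appear in both \eqref{eqn:SLPRep2} and \eqref{BIE_sl2}.

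Concretely I would proceed in three steps. First, invoke Theorem \ref{DirEquivThmm} with the choice $g_{\sD}=-P_{V^+_*}\gamma^\pm(\chi u^i)$ to obtain existence and uniqueness of $u^s$ together with the representation $u^s=-\cS[\partial_\bn u^s]$ and the BIE $S[\partial_\bn u^s]=-g_{\sD}=P_{V^+_*}\gamma^\pm(\chi u^i)$. Second, use $[\partial_\bn u^i]=0$ to replace $[\partial_\bn u^s]$ by $[\partial_\bn u]$ throughout, and rearrange $u^s=-\cS[\partial_\bn u]$ as $u=u^i-\cS[\partial_\bn u]$, giving \eqref{eqn:SLPRep2} and \eqref{BIE_sl2}. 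Third, the equivalence of \eqref{BIE_sl2} with the variational problem \eqref{BIE_slw2} is inherited from the corresponding equivalence in Theorem \ref{DirEquivThmm}, since that equivalence uses only the identity \eqref{rest1} and the unitary identification of $V^+_*$ with $(V^-)^*$; the right-hand side then takes the form $\langle\gamma^\pm(\chi u^i),\psi\rangle_{H^{1/2}(\Gamma_\infty)\times H^{-1/2}(\Gamma_\infty)}$ by definition of the duality pairing \eqref{dualgen}, because $\psi\in V^-\subset H^{-1/2}_\omS$ annihilates the orthogonal complement to $V^+_*$.

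The penultimate sentence is simply a restatement of Theorem \ref{thm:Dexist}: for $V^-=H^{-1/2}_\omS$ the (now-established) unique solution of $\sS\sD(V^-)$ coincides with the unique solution of \SDws provided by Theorem \ref{thm:bvp_wwp}. The final sentence follows by applying Corollary \ref{lem:satcl} (which in turn rests on Lemma \ref{lem:satcl} via Lemma \ref{lem:equivclst}): under the selection principle \eqref{eq:selection} and the hypothesis that $(\Delta+k^2)u^i=0$ in a neighbourhood of $\omS$, any solution of $\sS\sD(V^-)$ is a solution of \SDcl. There is no genuine obstacle here; the only care needed is to check that the choice of $\chi\in\scrD_{1,\omS}$ is immaterial in \eqref{BIE_sl2}, which is immediate from the identity $\gamma^\pm((\chi_1-\chi_2)u^i)\in H^{1/2}_{\omS^c}\subset(V^-)^a$, so that $P_{V^+_*}$ annihilates the difference, exactly as in the remark on well-definedness of the BIOs in \eqref{eq:BIOs}.
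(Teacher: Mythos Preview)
Your proposal is correct and follows essentially the same approach as the paper: deduce everything from Theorem~\ref{DirEquivThmm} using $[\partial_\bn u^i]=0$ (from the proof of Lemma~\ref{lem:equiv}), then cite Theorem~\ref{thm:Dexist} for the penultimate sentence and Corollary~\ref{lem:satcl} for the last. One minor notational slip: in your final paragraph you write $\gamma^\pm((\chi_1-\chi_2)u^i)\in H^{1/2}_{\omS^c}$, but $\omS^c$ is open so the correct space is $\tH^{1/2}(\omS^c)\subset (V^-)^a$ (by \eqref{annclosed}), which is what you need.
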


Similarly, the following corollary follows from Theorem \ref{NeuEquivThmm}, with the penultimate sentence a consequence of Theorem \ref{thm:Dexist}, and the last sentence a consequence of Lemma \ref{lem:satcl}.
\begin{cor}
\label{NeuEquivThmm2}
Problem $\sS\sN(V^+)$ has a unique solution, and this solution satisfies (where $u:= u^i+u^s$)
\begin{align}
\label{eqn:DLPRep2}
 u(\bx ) = u^i(\bx) + \cD[u](\bx), \qquad\bx\in D,
\end{align}
with $[u]\in V^{+}$ the unique solution of the BIE
\begin{align}
\label{BIE_hyp2}
T[u] = -P_{V_*^{-}}\partial_\bn^\pm(\chi u^i),
\end{align}
where $\chi\in \scrD_{1,\omS}$ is arbitrary.
Further,
\eqref{BIE_hyp2} is equivalent to the variational problem: find $[u]\in V^{+}$ such that
\begin{align}
\label{BIE_hypw2}
a_T\left([u],\psi\right) = -\langle \partial_\bn^\pm(\chi u^i),\psi\rangle_{H^{-1/2}(\Gamma_\infty)\times H^{1/2}(\Gamma_\infty)}, \quad \mbox{for all }\psi \in V^{+}.
\end{align}
If $V^+=H^{1/2}_\omS$ then $\sS\sN(V^+)$ and \SNws have the same unique solution. For every $V^+$ satisfying \eqref{eq:selection}, the solution of $\sS\sN(V^+)$ is a solution of \SNcls if $(\Delta + k^2)u^i=0$ in a neighbourhood of $\omS$.
\end{cor}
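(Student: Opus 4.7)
The plan is to obtain this corollary by a near-mechanical translation of Theorem \ref{NeuEquivThmm} (applied to Problem $\sN(V^+)$) into the scattering setting of $\sS\sN(V^+)$, together with invocation of Theorem \ref{thm:Dexist} and Corollary \ref{lem:satcl} for the final two sentences. No new technical ingredient is needed beyond these already-proved results; the only substantive thing to verify is some bookkeeping for jumps and a projection-removal step in the duality pairing.

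First I would apply Theorem \ref{NeuEquivThmm} with boundary data
$g_\sN := -P_{V_*^{-}}\partial_\bn^\pm(\chi u^i) \in V_*^-$.
By the definition of $\sS\sN(V^+)$, $u^s$ solves $\sS\sN(V^+)$ exactly when it solves $\sN(V^+)$ with this $g_\sN$. Theorem \ref{NeuEquivThmm} then yields existence and uniqueness of $u^s$, the representation $u^s = \cD[u^s]$ in $D$, and the unique solvability of $T[u^s] = g_\sN$ in $V^+$, together with equivalence to the associated variational problem for $a_T$. Next, writing $u := u^i + u^s$, I would observe that $[u^i] = 0$ and $[\partial_\bn u^i] = 0$: this is because $u^i \in W^{1,\mathrm{loc}}(\R^n;\Delta) = H^{2,\mathrm{loc}}(\R^n) \subset C(\R^n)$ by \eqref{surp}, so that the upper and lower traces and normal derivatives of $u^i$ on $\Gamma_\infty$ coincide (exactly the point used at the start of the proof of Lemma \ref{lem:equiv}). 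Consequently $[u] = [u^s]$ and $[\partial_\bn u] = [\partial_\bn u^s]$, and the representation and BIE for $u^s$ rearrange to \eqref{eqn:DLPRep2} and \eqref{BIE_hyp2} respectively.

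To obtain the variational form \eqref{BIE_hypw2} I would note that $V^-_*$ is by construction the unitary realisation $\big((V^+)^a\big)^\perp$ of $(V^+)^*$ via the duality pairing \eqref{dualgen}, so that for any $\eta \in H^{-1/2}(\Gamma_\infty)$ and $\psi \in V^+$ we have $\langle P_{V^-_*}\eta,\psi\rangle_{H^{-1/2}(\Gamma_\infty)\times H^{1/2}(\Gamma_\infty)} = \langle \eta,\psi\rangle_{H^{-1/2}(\Gamma_\infty)\times H^{1/2}(\Gamma_\infty)}$, since $\eta - P_{V^-_*}\eta \in (V^+)^a$ annihilates $\psi$. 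Applied with $\eta = \partial_\bn^\pm(\chi u^i)$, this lets me drop the projection on the right-hand side of $T[u] = g_\sN$ when testing against $\psi \in V^+$; combined with \eqref{rest2}, this yields \eqref{BIE_hypw2} as the weak form of \eqref{BIE_hyp2}.

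For the penultimate sentence: in the case $V^+ = H^{1/2}_\omS$, the Neumann half of Theorem \ref{thm:Dexist} already asserts that $\sS\sN(V^+)$ and \SNws have the same unique solution, so there is nothing further to prove. For the final sentence, I would invoke Corollary \ref{lem:satcl} directly: whenever \eqref{eq:selection} holds and $(\Delta + k^2)u^i = 0$ in a neighbourhood of $\omS$, any solution of $\sS\sN(V^+)$ is a fortiori a solution of \SNcl. The only step of any genuine content is the jump identity $[u] = [u^s]$ and the projection-removal step in the duality pairing; both are routine, so I do not anticipate any real obstacle.
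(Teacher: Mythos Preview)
Your proposal is correct and follows essentially the same approach as the paper: deduce the corollary from Theorem \ref{NeuEquivThmm} (using $[u^i]=[\partial_\bn u^i]=0$ as in the proof of Lemma \ref{lem:equiv}), then invoke Theorem \ref{thm:Dexist} for the penultimate sentence and Corollary \ref{lem:satcl} for the last. Your explicit treatment of the projection-removal step in the duality pairing is a helpful elaboration that the paper leaves implicit.
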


\section{When is the scattered field just $u^s=0$?} \label{sec:zero}

From a variety of perspectives, including that of inverse scattering, a fundamental question is: does the incident field `see' the screen, by which we mean simply: is $u^s\neq 0$? %

We first note from Corollaries \ref{DirEquivThmm2} and \ref{NeuEquivThmm2} that a necessary condition for the solution of $\sS\sD(V^-)$ ($\sS\sN(V^+)$) to be non-zero is $V^-\neq\{0\}$ ($V^+\neq \{0\}$). And, trivially, there exists a subspace $V^\pm$ of $H^{\pm 1/2}_\omS$ with $V^\pm\neq \{0\}$ if and only if $H^{\pm 1/2}_\omS\neq \{0\}$. So one relevant question is: for which compact sets $K\subset \Gamma_\infty$ is $H^{\pm 1/2}_K= \{0\}$?  That is, using the terminology introduced below \eqref{HsTdef}: for which compact sets $K$ is $K$ $\pm 1/2$-null? We address this question in Theorem \ref{thm:null},  which will be a key tool in much of our later analysis, using results from \cite{HewMoi:15}.

Before stating the theorem we note that, as will be of no surprise to readers familiar with potential theory (e.g., \cite{Kellogg}, and \cite[Theorem 2.7.4]{AdHe}),
for the Dirichlet problem a key role is played by the capacity, defined for a compact set $F\subset \R^n$ by $\mathrm{cap}(F) := \inf \{\|u\|^2_{H^1(\R^n)}\}$, where the infimum is over all $u\in \scrD(\R^n)$ such that $u\geq 1$ in a neighbourhood of $F$. For an open set $\Omega\subset \R^n$, and for an arbitrary Borel set $E\subset \R^n$,
\begin{align*}
\label{}
\mathrm{cap}(\Omega):=\sup_{\substack{F\subset \Omega\\ F \textrm{ compact}}} \mathrm{cap}(F), \quad \mathrm{cap}(E):=\inf_{\substack{\Omega\supset E\\ \Omega \textrm{ open}}} \mathrm{cap}(\Omega).
\end{align*}
This last definition for arbitrary Borel sets applies, in particular, in the cases $E$ compact and $E$ open, for which it coincides with the immediately preceding definitions for these cases (as shown e.g.\ in  \cite[\S3]{HewMoi:15}). Also, we note that in Theorem \ref{thm:null} and the rest of the paper we use the notation $m(E)$ to denote the $(n-1)$-dimensional Lebesgue measure of $E$, for measurable $E\subset \Gamma_\infty$.
Finally, we remark that illustrations of the last sentence of the theorem are given in Examples \ref{ex:sier}-\ref{ex:swiss}.

\begin{thm} \label{thm:null} Let $E$ be a Borel subset of $\Gamma_\infty$. Then:
\begin{enumerate}
  \item[(a)] $E$ is $-1/2$-null if and only if $\mathrm{cap}(E)=0$;
  \item[(b)] if $E$ is closed then $E$ is $-1/2$-null if and only if $W^1_0(\R^n\setminus E)=W^1(\R^n)$;
  \item[(c)] if $E^\circ$ is non-empty then $E$ is not $\pm 1/2$-null;
  \item[(d)] if $\dimH(E) < n-2$ then $E$ is $-1/2$-null, and if  $\dimH(E) > n-2$ then $E$ is not $-1/2$-null;
  \item[(e)] if $m(E)=0$ or $E$ is $-1/2$-null, then $E$ is $1/2$-null;
  \item[(f)] if $E=\partial \Omega$ and $\Omega$ is in the algebra of subsets of $\R^n$ generated by all $C^0$ open sets, then $E$ is $1/2$-null: if $\Omega$ is in the algebra generated by all Lipschitz open sets, then $E$ is $-1/2$-null;
  \item[(g)] if $E$ is countable, then $E$ is $\pm 1/2$-null; if $E$ is a countable union of Borel $-1/2$-null sets, then $E$ is $-1/2$-null;
  \item[(h)] if $E$ is $1/2$-null, $F\subset \Gamma_\infty$ is $1/2$-null, and $F$ has no limit points in $E\setminus F$ (this holds, in particular, if $F$ is closed), then $E\cup F$ is $1/2$-null.
\end{enumerate}
Further, there exists a compact set $F\subset \Gamma_\infty$ with $F^\circ=\emptyset$ and $F$ not $1/2$-null, and a set $F$ with $m(F)=0$ and $F$ not $-1/2$-null.
\end{thm}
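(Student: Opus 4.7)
The statement contains two existence claims, which I would treat separately.

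For the set $F$ with $m(F) = 0$ and $F$ not $-1/2$-null, my plan is to invoke part~(d): it suffices to exhibit a compact $F\subset\Gamma_\infty$ with $\dimH(F) > n-2$ and $m(F) = 0$. When $n=2$, the standard middle-thirds Cantor set in $\R = \Gamma_\infty$ already works, having $\dimH = \log_3 2 > 0 = n-2$ and zero one-dimensional Lebesgue measure. When $n=3$, I would take $F := C\times C \subset \R^2 = \Gamma_\infty$ where $C\subset\R$ is any Cantor set with $\dimH C > 1/2$, so that $\dimH F = 2\dimH C > 1 = n-2$ while $m(F) = 0$. Part~(d) then gives the conclusion.

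For the compact $F\subset \Gamma_\infty$ with $F^\circ = \emptyset$ and $F$ not $1/2$-null, part~(e) forces $m(F) > 0$, so the natural candidate is a \emph{fat Cantor set}: a compact, totally disconnected subset of $\Gamma_\infty$ of positive Lebesgue measure, produced via a Smith--Volterra--Cantor-type procedure (or a planar self-similar analogue when $n=3$, e.g.\ a product of two such sets in $\R$). Total disconnectedness immediately gives $F^\circ = \emptyset$.

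The substantive step is showing such an $F$ is not $1/2$-null. I would pass to the dual formulation: by \eqref{annclosed}, $(H^{1/2}_F)^a = \tH^{-1/2}(F^c)$, so $H^{1/2}_F\neq\{0\}$ is equivalent to $\tH^{-1/2}(F^c)\neq H^{-1/2}(\Gamma_\infty)$, i.e.\ to $\scrD(F^c)$ failing to be dense in $H^{-1/2}(\Gamma_\infty)$. I would test this density against the explicit candidate $\phi := \chi_F$, which lies in $L^2(\Gamma_\infty)\subset H^{-1/2}(\Gamma_\infty)$ and is nonzero since $m(F) > 0$, and bound the $H^{-1/2}$-distance from $\phi$ to $\scrD(F^c)$ from below. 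To execute this bound I would exploit the self-similar scale structure of $F$ together with capacity estimates of the type developed in \cite{HewMoi:15}, reducing the problem to an estimate on prefractal approximants and passing to the limit.

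The main obstacle is precisely this last $1/2$-non-nullity verification. For $-1/2$-nullity, part~(a) reduces everything to classical Newtonian capacity; no comparably elementary capacitary criterion is available on the $+1/2$ side, so the argument necessarily exploits finer geometric features of the specific $F$ constructed. Concrete self-similar realisations illustrating both scenarios are then deferred to the examples of \S\ref{sec:fract}, which also serve to confirm the abstract existence claimed here.
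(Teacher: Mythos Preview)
Your proposal addresses only the final ``Further'' existence claims and says nothing about parts (a)--(h). In the paper, parts (a)--(e), (g), (h) and the ``Further'' sentence are all handled by citation (to \cite{ChaHewMoi:13}, \cite{Maz'ya}, and \cite{HewMoi:15}); the only part proved \emph{in situ} is (f), via a short argument showing that the class $\cI^\pm$ of sets $A$ with $\partial A$ $\pm1/2$-null is an algebra (closure under complement is trivial since $\partial A=\partial(A^c)$; closure under union uses (h) for $\cI^+$ and (g) for $\cI^-$, together with $\partial(A\cup B)\subset\partial A\cup\partial B$). Your proposal omits this entirely.

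On the two existence claims themselves: your treatment of the $m(F)=0$, not $-1/2$-null case via Cantor sets and part (d) is fine and matches what the paper does (pointing forward to Example~\ref{ex:cantor}).

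Your treatment of the compact $F$ with $F^\circ=\emptyset$ and $F$ not $1/2$-null has a real gap. The paper (via \cite{HewMoi:15}, and illustrated in Example~\ref{ex:swiss}) uses a \emph{Swiss cheese} construction after Polking \cite{Po:72a}, with an explicit quantitative smallness condition on the removed radii guaranteeing non-$1/2$-nullity. You instead propose a fat Cantor set and plan to ``exploit the self-similar scale structure'' of $F$. Two problems: first, Smith--Volterra--Cantor sets are \emph{not} self-similar (the removal ratios vary with level), so your stated tool does not apply; second, and more importantly, as the paper explicitly warns in Example~\ref{ex:swiss}, $m(F)>0$ is necessary but \emph{not sufficient} for non-$1/2$-nullity. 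Your duality reformulation $H^{1/2}_F\neq\{0\}\iff \tH^{-1/2}(F^c)\neq H^{-1/2}(\Gamma_\infty)$ and the test function $\chi_F$ are reasonable starting points, but ``bound the $H^{-1/2}$-distance from below using capacity estimates'' is where all the content lies, and you have not given an argument. Without a concrete quantitative condition on the construction and an actual estimate, this part of the proposal is a restatement of the difficulty rather than a proof.
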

\begin{proof} A proof of (a) can be found in \cite[\S4]{ChaHewMoi:13}; part (b) follows from (a) and results in \cite[\S13.2]{Maz'ya}; and the remaining results apart from (f) are proved in \cite{HewMoi:15}. Let $\cI^\pm$ denote the set of subsets $A\subset \R^n$ for which $\partial A$ is $\pm 1/2$-null. Note that $\R^n\in \cI^+$; and $\partial A=\partial (A^c)$, so that $A^c\in \cI^+$ if $A\in \cI^+$; and $A\cup B\in \cI^+$ if $A,B\in \cI$ by (h), as $\partial(A\cup B)\subset (\partial A)\cup(\partial B)$. Thus $\cI^+$ is an algebra of subsets of $\R^n$. As \cite{HewMoi:15} $\Omega\in \cI^+$ if $\Omega$ is $C^0$, the algebra $\cI^+$ contains that generated by the $C^0$ open sets. Similarly, but using (g) in place of (h), we see that $\cI^-$ is an algebra, and that $\cI^-$ contains the algebra generated by the Lipschitz open sets as these sets are in $\cI^-$ by \cite{HewMoi:15}.
\end{proof}

Another approach to the question `Is $u^s=0$?' is to observe, since $\sS\sD(V^-)$ and $\sS\sN(V^+)$ are well-posed, that the solutions to $\sS\sD(V^-)$ and $\sS\sN(V^+)$ are $u^s=0$ if and only if  $g_\sD$ and $g_\sN$, given by \eqref{gd} and \eqref{gn}, vanish. This clearly happens for some choices of incident field $u^i$: for example,  $g_\sD=g_\sN=0$ if $u^i=0$ in a neighbourhood of $\omS$. But we will see, in Theorems \ref{cor:Vm} and \ref{strength}, that for a large class of incident fields of interest, including the plane wave \eqref{eq:pw}, this does not happen, as long as $V^\pm\neq \{0\}$.

We first prove two preliminary lemmas. In both we assume that $V$ is a closed subspace of $ H^s(\Gamma_\infty)$, for some $s\in\R$, and, as usual, we denote by $V^a$ the annihilator of $V$ in $H^{-s}(\Gamma_\infty)$ given by \eqref{ann}, by $V^*$ the dual space realisation $V^*=(V^a)^\perp\subset H^{-s}(\Gamma_\infty)$ (with duality pairing \eqref{dualgen}), and by $P_{V^*}$ orthogonal projection in $H^{-s}(\Gamma_\infty)$ onto $V^*$.
In proving the lemmas we will use the fact that, if $s\in \R$ and $u\in H^{s}(\Gamma_\infty)$ is compactly supported, then $\hat u$ is an entire function and
\begin{equation} \label{eq:FT}
(2\pi)^{(n-1)/2} \hat u(\bxi) = \overline{\left\langle \chi e_{\bxi},u \right\rangle_{H^{-s}(\Gamma_\infty)\times H^{s}(\Gamma_\infty)}} = \left\langle u,\chi e_{\bxi}\right\rangle_{H^{s}(\Gamma_\infty)\times H^{-s}(\Gamma_\infty)}, \qquad \bxi\in \Gamma_\infty,
\end{equation}
for every $\chi\in \scrD(\Gamma_\infty)$ such that $\chi=1$ in a neighbourhood of $\supp{u}$, where $e_{\bxi}\in C^\infty(\Gamma_\infty)$ is defined by $e_{\bxi}(\bx) = \exp(\ri \bxi\cdot  \bx)$, for $\bx,\bxi\in \Gamma_\infty$.

\begin{lem} \label{lem:projV*1} Suppose that $s\in\R$ and that $V\subset H^s(\Gamma_\infty)$ is a closed subspace of $H^s_\omS$. Suppose that there exists $v\in V$, with $v\neq 0$ and $\chi v \in V$ for all $\chi\in \scrD(\Gamma_\infty)$. Suppose further that $g\in \scrD(\Gamma_\infty)$ and that $g(\bx)\neq 0$ for $\bx\in \omS$. Then $P_{V^*}g\neq 0$.
\end{lem}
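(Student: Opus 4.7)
The plan is to argue by contradiction: suppose $P_{V^*}g = 0$, which (since $V^* = (V^a)^\perp$) is equivalent to $g \in V^a$, i.e.\ $\langle g, w\rangle_{H^{-s}(\Gamma_\infty)\times H^{s}(\Gamma_\infty)} = 0$ for every $w \in V$. I would aim to derive from this that $v = 0$, contradicting the hypothesis. The key leverage is the hypothesis that $V$ is closed under multiplication by arbitrary $\chi \in \scrD(\Gamma_\infty)$, which lets us plug in $w = \chi v$ for every $\chi$.

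The central algebraic identity I would establish first is
\begin{equation*}
\langle g, \chi v\rangle_{H^{-s}(\Gamma_\infty)\times H^{s}(\Gamma_\infty)} = \langle \bar\chi g, v\rangle_{H^{-s}(\Gamma_\infty)\times H^{s}(\Gamma_\infty)}, \qquad \chi\in\scrD(\Gamma_\infty).
\end{equation*}
For $v \in \scrD(\Gamma_\infty)$ this is just the second equality in \eqref{DualDef} (both sides equal $\int g\,\overline{\chi v}$); the general case $v \in H^s_\omS$ then follows by a standard density argument, using that $\scrD(\Gamma_\infty)$ is dense in $H^s(\Gamma_\infty)$ (cf.\ \eqref{eq:approx}), that multiplication by a smooth compactly supported function is continuous on $H^s(\Gamma_\infty)$, and that the duality pairing is continuous. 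Thus the assumption $P_{V^*}g = 0$ gives $\langle \bar\chi g, v\rangle = 0$ for every $\chi \in \scrD(\Gamma_\infty)$.

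Next, since $g$ is continuous and non-vanishing on the compact set $\omS$, there is an open neighbourhood $U$ of $\omS$ on which $g$ does not vanish, so $1/g$ is smooth on $U$. I would then show that, for any $\phi \in \scrD(\Gamma_\infty)$ with $\supp\phi \subset U$, one can realise $\phi$ as $\bar\chi g$ for a suitable $\chi \in \scrD(\Gamma_\infty)$: pick $\rho \in \scrD(\Gamma_\infty)$ with $\rho \equiv 1$ on $\supp\phi$ and $\supp\rho \subset U$, and set $\chi := \rho\,\bar\phi/\bar g$ on $U$, zero elsewhere. A direct check gives $\bar\chi g = \rho\phi = \phi$ on $\Gamma_\infty$. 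Hence $\langle \phi, v\rangle = 0$ for every $\phi \in \scrD(\Gamma_\infty)$ supported in $U$.

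Finally, I would upgrade this to $\langle \phi, v\rangle = 0$ for \emph{every} $\phi \in \scrD(\Gamma_\infty)$. Choose $\tilde\rho \in \scrD(\Gamma_\infty)$ with $\tilde\rho \equiv 1$ on a neighbourhood $W$ of $\omS$ and $\supp\tilde\rho \subset U$, and decompose $\phi = \tilde\rho\phi + (1-\tilde\rho)\phi$. The first term is supported in $U$, so pairs with $v$ to zero by the previous step; the second term is supported away from $W \supset \omS \supset \supp v$, so pairs with $v$ to zero because disjoint supports yield a vanishing distributional pairing. Therefore $\langle \phi, v\rangle = 0$ for all $\phi \in \scrD(\Gamma_\infty)$, which forces $v = 0$ as a distribution, contradicting $v \neq 0$. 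The only non-trivial step is the identity in the second paragraph, and that is an essentially routine density/continuity argument; the construction $\chi = \rho\bar\phi/\bar g$ is the concrete device that exploits the non-vanishing of $g$ on $\omS$.
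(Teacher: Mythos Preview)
Your proof is correct. It differs from the paper's in organization: the paper argues directly rather than by contradiction, using the Fourier-transform identity \eqref{eq:FT}. Since $v\neq 0$ has compact support, $\hat v$ is entire and nonzero at some $\bxi$; the paper then sets $\phi := (e_{-\bxi}\bar\chi/\bar g)\,v\in V$ (with $\chi\in\scrD_{1,\omS}$ chosen so that $g\neq 0$ on $\supp\chi$) and computes $\langle P_{V^*}g,\phi\rangle = \langle g,\phi\rangle = \langle e_\bxi\chi,v\rangle = (2\pi)^{(n-1)/2}\overline{\hat v(\bxi)}\neq 0$. Both arguments hinge on the same device---using the non-vanishing of $g$ on $\omS$ to divide by $g$---but the paper exploits this with a single well-chosen test element of $V$, while you test against all $\chi v\in V$ and deduce $v=0$ distributionally. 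Your route is slightly longer but more self-contained, avoiding the Fourier transform entirely; the paper's is essentially a one-line computation once \eqref{eq:FT} is available.
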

\begin{proof}
We have noted above that $\hat v$ is an entire function and, since $v\neq 0$, $\hat v(\bxi)\neq 0$, for some $\bxi\in \Gamma_\infty$.
Choose $\chi\in \scrD_{1,\omS}$ with $g\neq 0$ on the support of $\chi$, and define $\phi\in V$ by $\phi := (e_{-\bxi}\bar \chi/\bar g)v$.  Then, abbreviating $\left\langle \cdot, \cdot \right\rangle_{H^{-s}(\Gamma_\infty)\times H^{s}(\Gamma_\infty)}$ by $\left\langle \cdot, \cdot \right\rangle$,
$$
\langle P_{V^*}g,\phi\rangle = \langle g,\phi\rangle = \langle e_{\bxi} \chi, v\rangle = (2\pi)^{(n-1)/2}\overline{\hat v(\bxi)}\neq 0,
$$
so that $P_{V^*}g\neq 0$.
\end{proof}

The proof of the next lemma uses the result that if the zero set of an entire function of one complex variable has a limit point, then the function is identically zero. This implies that if the set of real zeros of an entire function $f(s,t)$ of two complex variables has positive two-dimensional Lebsegue measure, then the function is identically zero. (For if $\chi(s,t)$ is the characteristic function of the zero set in $\R^2$ and $\int\int \chi(s,t) \rd s\rd t >0$ then, by Tonelli's theorem, $\int \chi(s,t) \rd t >0$ for all $s\in E\subset \R$, with $E$ of positive (one-dimensional) measure. This implies that $f(s,\cdot) = 0$ for $s\in E$ by the one-dimensional result applied with $s$ fixed, and, by the one-dimensional result applied with $t$ fixed, we deduce that $f(s,t)\equiv 0$.)

\begin{lem} \label{lem:nonvanish} Suppose that $s\in\R$, and that $\{0\}\neq V$ is a closed subspace of $H^s_\omS$.
Let $\chi\in \scrD_{1,\omS}$ and define $g\in \scrD(\Gamma_\infty)$ by $g(\bx) = \chi(\bx) \exp(\ri k \bd \cdot \bx)$, for $\bx\in \Gamma_\infty$.
Then $P_{V^*}g\neq 0$ for almost all $\bd\in\mathbb{S}^n:= \{\bd\in \R^n:|\bd|=1\}$, indeed for all except finitely many $\bd$ if $n=2$.
\end{lem}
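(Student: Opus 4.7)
The plan is to reduce the condition $P_{V^*}g \neq 0$ to the non-vanishing of a single entire function at the point $k\tilde\bd$, where $\tilde\bd$ denotes the first $n-1$ components of $\bd$, and then invoke standard properties of zeros of entire functions.

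First, since $V^* = (V^a)^\perp$ in $H^{-s}(\Gamma_\infty)$, we have $P_{V^*}g = 0$ iff $g \in V^a$, i.e., iff $\langle g, v\rangle_{H^{-s}(\Gamma_\infty)\times H^s(\Gamma_\infty)} = 0$ for every $v \in V$. Identifying $\Gamma_\infty$ with $\R^{n-1}$ and writing $\tilde\chi(\tilde\bx) := \chi(\tilde\bx,0)$, one has $g = \tilde\chi e_{k\tilde\bd}$ with $\tilde\chi \in \scrD(\R^{n-1})$ equal to $1$ in a neighbourhood of $\widetilde\omS$. Since every $v \in V \subset H^s_\omS$ is supported in $\widetilde\omS$, \eqref{eq:FT} applies and gives
$$\langle g, v\rangle_{H^{-s}(\Gamma_\infty)\times H^s(\Gamma_\infty)} = (2\pi)^{(n-1)/2}\,\overline{\hat v(k\tilde\bd)},$$
so $P_{V^*}g \neq 0$ iff $\hat v(k\tilde\bd) \neq 0$ for some $v\in V$.

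Next, I fix any nonzero $v_0 \in V$, available because $V \neq \{0\}$. Since $v_0$ is a compactly supported tempered distribution on $\R^{n-1}$, its Fourier transform extends to an entire function on $\C^{n-1}$ (Paley--Wiener), and this extension is not identically zero (otherwise $v_0 = 0$ by injectivity of the Fourier transform on tempered distributions). Setting $F := \{\bd \in \mathbb{S}^n : \hat{v_0}(k\tilde\bd) = 0\}$, the exceptional set $\{\bd : P_{V^*}g = 0\}$ is contained in $F$, so it suffices to show that $F$ has the claimed smallness.

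Finally, for $n = 2$, $\hat{v_0}$ is a not-identically-zero entire function of one complex variable, so its zero set is discrete in $\C$; its intersection with the compact image $\{kd_1 : \bd \in \mathbb{S}^2\} = k[-1,1]$ is therefore finite, and pulling back by the at-most-two-to-one projection $\bd \mapsto d_1$ keeps $F$ finite. For $n = 3$, I would apply the two-variable fact recalled in the paragraph preceding the lemma: the real zero set in $\R^2$ of a not-identically-zero entire function of two complex variables has planar Lebesgue measure zero. Parametrising each open hemisphere of $\mathbb{S}^3$ over the open unit disk in $\R^2$ by $\bd \mapsto \tilde\bd = (d_1, d_2)$, the surface measure on $\mathbb{S}^3$ pulls back to a measure with density $(1-|\tilde\bd|^2)^{-1/2}$ with respect to planar Lebesgue measure (and the equator has surface measure zero), so preimages of planar null sets are surface-measure null, giving that $F$ has surface measure zero on $\mathbb{S}^3$. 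The main analytic ingredient, and the step specifically flagged by the authors in the preamble, is the two-variable entire-function argument; the remaining reduction is routine Fourier-duality combined with a standard measure-theoretic pullback.
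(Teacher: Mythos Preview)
Your proof is correct and follows essentially the same route as the paper: both reduce the question, via \eqref{eq:FT}, to the smallness of the zero set of the entire Fourier transform of a compactly supported element of $V$, and both invoke the same one- and two-variable entire-function facts (the latter recalled in the paragraph before the lemma). The only differences are cosmetic: the paper argues by contradiction and deduces that \emph{every} $v\in V$ vanishes, whereas you argue directly using a single fixed $v_0\neq 0$; and you spell out the measure-theoretic pullback from $\mathbb{S}^n$ to the disk explicitly, while the paper leaves this step implicit.
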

\begin{proof} We prove the result by contradiction. Where $g(\bx) := \chi(\bx) \exp(\ri k \bd \cdot \bx)$, for $\bx\in \Gamma_\infty$, suppose that $P_{V^*}g=0$  for all $\bd$ in some  subset of $\mathbb{S}^n$ of positive surface measure, which implies that $P_{V^*}(\chi e_{\bxi})=0$ for all $\bxi$ in some $\Xi\subset \{\bxi\in \Gamma_\infty:|\bxi|\leq k\}$ which has positive Lebesgue measure. Then, for all $v\in V$ and all $\bxi\in \Xi$, it follows from \eqref{eq:FT} that
\[
(2\pi)^{(n-1)/2}\overline{\hat v(\bxi)} = \left\langle \chi e_{\bxi},v\right\rangle_{H^{-s}(\Gamma_\infty)\times H^{s}(\Gamma_\infty)} =
\left\langle P_{V^*}(\chi e_{\bxi}),v\right\rangle_{H^{-s}(\Gamma_\infty)\times H^{s}(\Gamma_\infty)} = 0,
\]
so that $\hat v=0$ (and hence $v=0$) since $\hat v$ is entire. Thus $V=\{0\}$, a contradiction. In the case $n=2$ we achieve the same contradiction just assuming that $P_{V^*}g=0$  for infinitely many  $\bd\in\mathbb{S}^n$.
\end{proof}

\begin{rem} \label{rem:all are subspaces} Regarding the application of the above lemmas, it is important to note that any closed subspace $W\subset H^{-s}(\Gamma_\infty)$ can be written (uniquely) as $W=V^*:=(V^a)^\perp$, for some closed subspace $V\subset H^s(\Gamma_\infty)$, and that $V\neq  \{0\}$ if $W\neq \{0\}$. Explicitly, $V=j^{-1}(W)$, where $j:H^s(\Gamma_\infty)=H^s(\R^{n-1})\to H^{-s}(\Gamma_\infty) = H^{-s}(\R^{n-1})$ is the unitary isomorphism introduced above \eqref{ann}.
\end{rem}

Lemma \ref{lem:nonvanish} implies that, for any non-zero $V^\pm$ (even where $V^\pm$ is only one-dimensional), the solutions of $\sS\sD(V^-)$  and $\sS\sN(V^+)$ are non-zero for almost all incident plane waves.

\begin{thm} \label{cor:Vm} Let $V^-\neq \{0\}$ ($V^+\neq \{0\}$), and let $u^s$ be the solution to $\sS\sD(V^-)$ ($\sS\sN(V^+)$), with $g_\sD$ given by \eqref{gd} ($g_\sN$ given by \eqref{gn}), where $u^i$ is the plane wave \eqref{eq:pw}. Then, for almost all plane wave directions $\bd\in\mathbb{S}^n$ (all except finitely many if $n=2$), it holds that $u^s\neq 0$.
\end{thm}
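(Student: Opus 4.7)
The plan is to combine the BIE reformulations of Corollaries \ref{DirEquivThmm2} and \ref{NeuEquivThmm2} with the invertibility of $S$ and $T$ (from \eqref{invert}) in order to reduce each claim ``$u^s\neq 0$'' to a non-vanishing statement for the corresponding BIE data $g_\sD$ or $g_\sN$, and then to invoke Lemma \ref{lem:nonvanish} applied to the appropriate trace of the plane wave.

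In more detail, for the Dirichlet case with $V^-\neq \{0\}$, Corollary \ref{DirEquivThmm2} gives $u^s=-\cS[\partial_\bn u]$ with $[\partial_\bn u]\in V^-$ the unique solution of $S[\partial_\bn u]=-g_\sD$ and $g_\sD=-P_{V^+_*}\gamma^\pm(\chi u^i)$. Since $S:V^-\to V^+_*$ is an isomorphism, $u^s = 0$ forces $[\partial_\bn u] = 0$ and hence $g_\sD = 0$, so it is enough to show $g_\sD\neq 0$ for generic $\bd$. For the plane wave \eqref{eq:pw} and any $\chi\in\scrD_{1,\omS}$, the restriction to $\Gamma_\infty\cong\R^{n-1}$ takes the form
\[
\gamma^\pm(\chi u^i)(\widetilde\bx)=\chi(\widetilde\bx,0)\,e^{\ri k\bd\cdot(\widetilde\bx,0)},
\]
which is exactly the datum $g$ of Lemma \ref{lem:nonvanish}. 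I would apply that lemma with $V:=V^-$ and $s:=-1/2$ (so that $V^*=((V^-)^a)^\perp=V^+_*$) to conclude $g_\sD\neq 0$ for almost all $\bd\in\mathbb{S}^n$, and for all but finitely many when $n=2$.

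For the Neumann case with $V^+\neq \{0\}$, Corollary \ref{NeuEquivThmm2} together with the invertibility of $T:V^+\to V^-_*$ reduces the claim analogously to showing $g_\sN=-P_{V^-_*}\partial_\bn^\pm(\chi u^i)\neq 0$ for generic $\bd$. The snag here is that the product rule gives $\partial_\bn(\chi u^i)=\chi\,\partial_\bn u^i+(\partial_\bn\chi)u^i$, and the second summand is not of the form demanded by Lemma \ref{lem:nonvanish}. I would dispose of this using the gauge freedom in $\chi$: noting that the definition of $\sS\sN(V^+)$ makes $g_\sN$ independent of the choice of $\chi\in\scrD_{1,\omS}$, I take $\chi(\bx):=\chi_0(\widetilde\bx)\eta(x_n)$ with $\chi_0\in\scrD(\R^{n-1})$ equal to $1$ near $\widetilde\omS$ and $\eta\in\scrD(\R)$ equal to $1$ near $0$, so that $\eta'(0)=0$ and
\[
\partial_\bn^\pm(\chi u^i)(\widetilde\bx)=\ri k d_n\,\chi_0(\widetilde\bx)\,e^{\ri k\bd\cdot(\widetilde\bx,0)}.
\]
After discarding the surface-measure-zero set $\{d_n=0\}\subset\mathbb{S}^n$ (which is finite when $n=2$), applying Lemma \ref{lem:nonvanish} with $V:=V^+$ and $s:=1/2$ (so $V^*=V^-_*$) then gives $g_\sN\neq 0$ for almost every remaining $\bd$.

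The step I expect to be the main obstacle is the Neumann case, precisely because $\partial_\bn(\chi u^i)$ deviates from ``plane wave times plateau function'' through the boundary-correction term $(\partial_\bn\chi)u^i$. The tensor-product choice of $\chi$ proposed above handles this cleanly; an equally valid alternative, which avoids choosing a special $\chi$, would be to observe that $(\partial_\bn\chi)u^i$ is supported in $\Gamma_\infty\setminus\widetilde N$ for some open neighbourhood $\widetilde N$ of $\widetilde\omS$, hence lies in the annihilator of $V^+\subset H^{1/2}_\omS$ by a standard disjoint-support argument (using the approximation \eqref{eq:approx} to reduce to the smooth compactly supported case), so that $P_{V^-_*}$ kills it automatically.
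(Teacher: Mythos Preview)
Your proposal is correct and follows essentially the same route as the paper: reduce $u^s\neq 0$ to $g_\sD\neq 0$ (respectively $g_\sN\neq 0$) using well-posedness, then invoke Lemma~\ref{lem:nonvanish}. The only difference is cosmetic: the paper writes $g_\sN=-\ri k d_n\,P_{V^-_*}g$ directly, implicitly using the annihilator argument you offer as an alternative (the term $(\partial_\bn\chi)u^i|_{\Gamma_\infty}$ is supported away from $\omS$ since $\chi\equiv 1$ near $\omS$, hence lies in $(V^+)^a$ and is killed by $P_{V^-_*}$), rather than choosing a tensor-product $\chi$.
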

\begin{proof} If $u^i$ is given by \eqref{eq:pw} and $g_\sD$ and $g_\sN$ by \eqref{gd} and \eqref{gn}, then $g_\sD=-P^+_*g$ and $g_\sN = -\ri k d_n P^-_*g$, where $g$ is as given in Lemma \ref{lem:nonvanish} and $d_n$ is the component of $\bd$ in the $x_n$ direction. It follows from Lemma \ref{lem:nonvanish} that $g_\sD\neq 0$ and $g_\sN\neq 0$ for almost all $\bd\in\mathbb{S}^n$ (all except finitely many if $n=2$), and the result follows.
\end{proof}

The above result can be strengthened, using Lemma \ref{lem:projV*1}, in the case $V^\pm = H^{\pm 1/2}_\omS$, indeed in the case that $V^\pm$ satisfies \eqref{eq:selection}, provided in this latter case that $\mS^\circ \neq \emptyset$.

\begin{thm} \label{strength} Suppose that $u^i$ is $C^\infty$ in a neighbourhood of $\omS$, and let $u^s$ be the solution of \SDws (\SNw). Then, if $\omS$ is not $-1/2$-null and $u^i(\bx)\neq 0$ for all $\bx\in \omS$ (not $1/2$-null and $\frac{\partial u^i}{\partial x_n}(\bx)\neq 0$ for $\bx\in \omS$), it holds that $u^s\neq 0$. In particular, these conditions on $u^i$ are satisfied by the incident plane wave \eqref{eq:pw} (provided $d_n\neq 0$ for the Neumann problem \SNw), and by the incident cylindrical/spherical wave \eqref{eq:cw} (provided $y_n\neq 0$ for the Neumann problem \SNw). Further, if $\mS^\circ \neq \emptyset$, and $V^\pm$ satisfies \eqref{eq:selection}, then the above statements hold with \SDws and \SNws replaced by $\sS\sD(V^-)$ and $\sS\sN(V^+)$, respectively.
\end{thm}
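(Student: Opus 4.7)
The plan is to use the BIE reformulations of Corollaries \ref{DirEquivThmm2} and \ref{NeuEquivThmm2} to reduce the claim $u^s\neq 0$ to a non-vanishing statement about the right-hand side of the BIE, and then to deploy Lemma \ref{lem:projV*1} to obtain that non-vanishing. For the reduction: if the unique solution of $\sS\sD(V^-)$ has $u^s=0$, then $u=u^i$ in $D$, and since $u^i\in W^{1,\mathrm{loc}}(\R^n;\Delta)$ gives $[\partial_\bn u^i]=0$, we have $[\partial_\bn u]=0$; substituting into \eqref{BIE_sl2} and using invertibility of $S$ (cf.~\eqref{invert}) yields $g_\sD=0$. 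The analogous argument for $\sS\sN(V^+)$ using \eqref{BIE_hyp2} reduces the Neumann case to showing $g_\sN\neq 0$. The statement for the weak formulations \SDws and \SNws is the special case $V^-=H^{-1/2}_\omS$, $V^+=H^{1/2}_\omS$ via Theorem \ref{thm:Dexist}.

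Next, fix $\chi\in\scrD_{1,\omS}$ with $\supp\chi$ contained in a neighbourhood of $\omS$ on which $u^i$ is $C^\infty$, so that $\chi u^i\in\scrD(\R^n)$. Then $g:=\gamma^\pm(\chi u^i)=(\chi u^i)|_{\Gamma_\infty}\in\scrD(\Gamma_\infty)$ satisfies $g(\bx)=u^i(\bx)\neq 0$ for $\bx\in\omS$, by the Dirichlet hypothesis. Similarly, since $\partial_\bn\chi$ vanishes on a neighbourhood of $\omS$ where $\chi\equiv 1$, the function $\tilde g:=\partial^\pm_\bn(\chi u^i)\in\scrD(\Gamma_\infty)$ coincides with $\partial u^i/\partial x_n$ on $\omS$, and is thus non-zero there under the Neumann hypothesis. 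It remains to verify the hypotheses of Lemma \ref{lem:projV*1} with $V=V^\pm$ and $s=\mp 1/2$, i.e.\ to exhibit a non-zero $v\in V^\pm$ with $\chi' v\in V^\pm$ for every $\chi'\in\scrD(\Gamma_\infty)$. In the weak case $V^\pm=H^{\pm 1/2}_\omS$, multiplication by $\chi'\in\scrD(\Gamma_\infty)$ is bounded on $H^{\pm 1/2}(\Gamma_\infty)$ and preserves support in $\omS$, so $V^\pm$ is itself multiplier-stable in this sense, and is non-trivial precisely under the $\pm 1/2$-non-nullness hypothesis on $\omS$. In the general case with $V^\pm$ satisfying only \eqref{eq:selection} and $\mS^\circ\neq\emptyset$, $V^\pm$ contains $\tH^{\pm 1/2}(\mS^\circ)$, which is non-trivial by Theorem \ref{thm:null}(c) since $\mS^\circ$ is a non-empty open subset of $\Gamma_\infty$; moreover multiplication by $\chi'\in\scrD(\Gamma_\infty)$ maps $\scrD(\mS^\circ)$ into itself and extends by continuity to a bounded self-map of $\tH^{\pm 1/2}(\mS^\circ)$, so any non-zero $v\in\tH^{\pm 1/2}(\mS^\circ)\subset V^\pm$ is the required element. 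Lemma \ref{lem:projV*1} then gives $P_{V^+_*}g\neq 0$ and $P_{V^-_*}\tilde g\neq 0$, i.e.\ $g_\sD\neq 0$ and $g_\sN\neq 0$, as desired.

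Finally, the verification of the hypotheses on $u^i$ for plane and point-source incidence is a direct calculation: for $u^i(\bx)=\exp(\ri k\bd\cdot\bx)$ one has $u^i\neq 0$ everywhere, and $\partial u^i/\partial x_n=\ri k d_n u^i$ is non-zero on $\omS$ iff $d_n\neq 0$; for $u^i(\bx)=C\Phi(\bx,\by)$ with $\by\in D$ and $C\neq 0$, one has $\Phi(\bx,\by)\neq 0$ for $\bx\neq\by$ (in particular on $\omS\subset D^c$), while by the chain rule applied to $r=|\bx-\by|$ the normal derivative at $\bx\in\Gamma_\infty$ acquires the factor $x_n-y_n=-y_n$ multiplied by the radial derivative of $\re^{\ri k r}/(4\pi r)$ (for $n=3$) or of $\tfrac{\ri}{4}H_0^{(1)}(kr)$ (for $n=2$), both of which are nowhere zero for $r>0$. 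The only mildly delicate step in the whole argument is the multiplier closure in the case of a general $V^\pm$ satisfying \eqref{eq:selection}: one cannot expect $V^\pm$ itself to be closed under multiplication by elements of $\scrD(\Gamma_\infty)$, and so the lemma must be applied with a carefully chosen $v\in\tH^{\pm 1/2}(\mS^\circ)$ rather than an arbitrary element of $V^\pm$.
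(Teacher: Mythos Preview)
Your proof is correct and follows essentially the same route as the paper: both reduce to showing $g_\sD\neq 0$ (resp.\ $g_\sN\neq 0$) and then invoke Lemma~\ref{lem:projV*1}, checking multiplier-stability via $H^{\pm 1/2}_\omS$ in the weak case and via the subspace $\tH^{\pm 1/2}(\mS^\circ)\subset V^\pm$ in the general case. Your version is more explicit about the reduction step (which the paper leaves implicit), and two very minor points: the invertibility of $S$ is not actually needed for the implication $[\partial_\bn u]=0\Rightarrow g_\sD=0$, and in the point-source calculation the chain-rule factor is $-y_n/r$ rather than $-y_n$, though the non-vanishing conclusion is unaffected.
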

\begin{proof} If $v\in H^{\pm1/2}_\omS$ then $\chi v\in H^{\pm1/2}_\omS$, for all $\chi\in \scrD(\Gamma_\infty)$, so that the first part of this result follows from Lemma \ref{lem:projV*1}. Clearly the conditions on $u^i$ are satisfied by \eqref{eq:pw} and \eqref{eq:cw} (recalling that the Hankel function $H_\nu^{(1)}(t)\neq 0$ for $t>0$ and $\nu=0,1$). If $\mS^\circ\neq \emptyset$ and \eqref{eq:selection} holds then $V^\pm \supset\tH^{\pm 1/2}(\mS^\circ)\neq \{0\}$ and if $v\in \tH^{\pm 1/2}(\mS^\circ)\subset V^\pm$ then $\chi v\in \tH^{\pm 1/2}(\mS^\circ)\subset V^\pm$, for all $\chi\in \scrD(\Gamma_\infty)$, so that the rest of the result also follows from Lemma \ref{lem:projV*1}.
\end{proof}

The following theorem summarises, and/or follows immediately from the results in, Theorems \ref{thm:null}, \ref{cor:Vm} and \ref{strength}. We defer discussion of examples illustrating this theorem until \S\ref{sec:fract}.

\begin{thm} \label{thm:main} $\omS$ is $-1/2$-null if and only if $\mathrm{cap}(\omS)=0$, which holds if and only if $W_0^1(D)=W^1(\R^n)$. If $\omS$ is $-1/2$-null, which holds in particular if $n=3$ and $\dimH(\omS) < 1$, then the solution to \SDws and to $\sS\sD(V^-)$ is $u^s=0$. If $\omS$ is not $-1/2$-null, equivalently $\mathrm{cap}(\omS)>0$, which holds in particular if $\dimH(\omS) > n-2$, and certainly if $\omS^\circ$ is non-empty, then: (i) if $u^i$ satisfies the conditions of Theorem \ref{strength} for the Dirichlet case, in particular if $u^i$ is the incident plane wave \eqref{eq:pw} or the cylindrical/spherical wave \eqref{eq:cw},  then the solution $u^s$ to \SDws does not vanish, and nor does the solution to $\sS\sD(V^-)$ as long as $V^-$ satisfies \eqref{eq:selection} and $\mS^\circ$ is non-empty; (ii) if $u^i$ is the plane wave \eqref{eq:pw} and $V^-\neq \{0\}$ then the solution $u^s$ to $\sS\sD(V^-)$ is non-zero for almost all incident directions $\bd$ (all but finitely many directions $\bd$ if $n=2$).

Similarly, if $\omS$ is $1/2$-null, in particular if $m(\omS)=0$, then the solution to \SNws and to $\sS\sN(V^+)$ is $u^s=0$. If $\omS$ is not $1/2$-null, in particular if $\omS^\circ$ is non-empty (though this is not necessary, see Example \ref{ex:swiss}),  then: (i) if $u^i$ satisfies the conditions of Theorem \ref{strength} for the Neumann case,  then the solution $u^s$ to \SNws does not vanish, and nor does the solution to $\sS\sN(V^+)$ as long as $V^+$ satisfies \eqref{eq:selection} and $\mS^\circ$ is non-empty; (ii) if $u^i$ is the plane wave \eqref{eq:pw} and $V^+\neq \{0\}$ then the solution $u^s$ to $\sS\sN(V^+)$ is non-zero for almost all incident directions $\bd$ (all but finitely many directions if $n=2$).
\end{thm}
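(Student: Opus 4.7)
This theorem is a compilation of results already established in Theorems \ref{thm:null}, \ref{cor:Vm}, and \ref{strength}, so the plan is essentially to assemble them and verify that no gap is introduced by the way the various conclusions are packaged. I expect no serious obstacle; the only point requiring attention is reconciling the weak formulation \SDws with the family $\sS\sD(V^-)$ (and the Neumann analogue), which is done via Theorem \ref{thm:Dexist}.

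First I would handle the purely set-theoretic equivalences concerning $\omS$. The equivalence of $\omS$ being $-1/2$-null with $\ccap(\omS)=0$ is part (a) of Theorem \ref{thm:null}, and, since $\omS$ is closed, equivalence with $W^1_0(D)=W^1(\R^n)$ is part (b). The Hausdorff-dimension criteria ($\dimH(\omS)<n-2$ sufficient for $-1/2$-nullity, $\dimH(\omS)>n-2$ sufficient for non-nullity) come from part (d). The sufficient condition $\omS^\circ\neq\emptyset$ for not being $\pm 1/2$-null is part (c). For the Neumann side, $m(\omS)=0$ implies $1/2$-nullity by the first clause of part (e).

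Next, for the assertions that $u^s=0$: if $\omS$ is $-1/2$-null then by definition $H^{-1/2}_\omS=\{0\}$, so every closed subspace $V^-\subset H^{-1/2}_\omS$ is trivial. By Corollary \ref{DirEquivThmm2} the jump $[\partial_\bn u]\in V^-$ must vanish, whence the representation \eqref{eqn:SLPRep2} gives $u^s=-\cS[\partial_\bn u]=0$. In particular, by Theorem \ref{thm:Dexist}, the solution of \SDws (which is $\sS\sD(V^-)$ with $V^-=H^{-1/2}_\omS$) is also zero. The Neumann analogue is identical, using $1/2$-nullity of $\omS$, Corollary \ref{NeuEquivThmm2}, \eqref{eqn:DLPRep2}, and Theorem \ref{thm:Dexist}.

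Finally, for the non-vanishing statements under the assumption that $\omS$ is not $\pm 1/2$-null: claim (i) is exactly the content of Theorem \ref{strength}, whose hypotheses are met by the plane wave \eqref{eq:pw} and the cylindrical/spherical wave \eqref{eq:cw} (the restriction $d_n\neq 0$ / $y_n\neq 0$ being needed only for the Neumann problem so that $\partial u^i/\partial x_n$ does not vanish identically on $\omS$); the additional assumption $\mS^\circ\neq\emptyset$ ensures, via \eqref{eq:selection}, that $\tH^{\pm 1/2}(\mS^\circ)\subset V^\pm$ is a non-trivial translation-invariant (under multiplication by $\scrD$-functions) subspace, which is what Lemma \ref{lem:projV*1} requires. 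Claim (ii) is Theorem \ref{cor:Vm} verbatim, applied to the non-trivial space $V^\pm$. Assembling these yields the theorem.
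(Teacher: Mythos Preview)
Your proposal is correct and takes essentially the same approach as the paper, which in fact offers no proof beyond the remark that the theorem ``summarises, and/or follows immediately from the results in, Theorems \ref{thm:null}, \ref{cor:Vm} and \ref{strength}''. You have supplied precisely the assembly the paper leaves implicit, and your invocation of Theorem \ref{thm:Dexist} (equivalently Corollaries \ref{DirEquivThmm2} and \ref{NeuEquivThmm2}) to identify \SDws and \SNws with the cases $V^\pm=H^{\pm 1/2}_\omS$ is the right bridge.
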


\section{Do all our formulations have the same solution?} \label{subsec:embarr}

We focus in this section on the formulations $\sD(V^-)$, $\sN(V^+)$, $\sS\sD(V^-)$, and $\sS\sN(V^+)$ introduced in \S\ref{subsec:bvpsnovel}, with $V^\pm$ satisfying the physical selection principle \eqref{eq:selection}, and on the standard formulations introduced in \S\ref{subsec:bvps}, addressing the question of the section title. We will show that either the formulations $\sS\sD(V^-)$ and $\sS\sN(V^+)$ satisfying \eqref{eq:selection} coincide for all choices of $V^+$ and $V^-$ satisfying \eqref{eq:selection}, or, in each case, the cardinality of the set of distinct formulations is that of the continuum. Further we show that, in the specific case of plane wave incidence, for almost all directions of incidence, there are, in both the sound-soft and sound-hard cases, infinitely many distinct solutions $u^s$ to these formulations.

The main results of the section are Theorems \ref{thm:S61} and \ref{thm:s62}. For their proof we shall appeal to a number of preliminary results. The first, an approximation lemma, is used to prove Lemma \ref{lem:bigspace}, and is a special case of \cite[Lemma 3.22]{ChaHewMoi:13}.

\begin{lem} \label{lem:vj} Suppose that $N\in \N$ and $\bx_1,...,\bx_N\in \R^n$ are distinct.
Then there exists a family $(v_j)_{j\in\N}\subset C^\infty(\R^n)$ such that: for all $j\in \N$,
$v_j(\bx) = 0$, if $|\bx-\bx_i|< 1/j$ for some $i\in \{1,...,N\}$; for all $\phi\in H^s(\R^n)$ with $|s|\leq 1/2$, $\|v_j\phi-\phi\|_{H^s(\R^n)} \to 0$ as $j\to\infty$.
\end{lem}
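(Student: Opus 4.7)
The plan is to construct $v_j$ as a product of smooth radial cutoffs centred at the $\bx_i$, reduce the approximation claim to a single-point, rescaled statement by the triangle inequality and translation invariance, and then tackle the key $|s|=\tfrac12$ case via the Slobodeckij seminorm characterisation of $H^{1/2}(\R^n)$, exploiting that finite subsets of $\R^n$ are $\pm 1/2$-null by Theorem~\ref{thm:null}(g).

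Fix $\rho\in C^\infty([0,\infty))$ with $0\leq\rho\leq 1$, $\rho(t)=0$ for $t\leq 1$ and $\rho(t)=1$ for $t\geq 2$, and define $v_j(\bx):=\prod_{i=1}^{N}\rho(j|\bx-\bx_i|)$, which is smooth, takes values in $[0,1]$, and satisfies $v_j(\bx)=0$ whenever $|\bx-\bx_i|<1/j$ for some $i$, verifying the first conclusion. For $j$ large enough that the balls $\overline{B_{2/j}(\bx_i)}$ are pairwise disjoint, $1-v_j=\sum_{i}\tau_{i,j}$ with $\tau_{i,j}\in\scrD(\R^n)$ supported in $\overline{B_{2/j}(\bx_i)}$, equal to $1$ on $\overline{B_{1/j}(\bx_i)}$, and bounded by $1$. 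By the triangle inequality and translation invariance of the $H^s(\R^n)$ norm it suffices to show that, for the rescaled bump $\tau_j(\bx):=\tau(j\bx)$ obtained from a fixed $\tau\in\scrD(\R^n)$ supported in $B_2$ with $\tau\equiv 1$ on $\overline{B_1}$, one has $\|\tau_j\phi\|_{H^s(\R^n)}\to 0$ for every $\phi\in H^s(\R^n)$ with $|s|\leq\tfrac12$. The case $s=0$ is immediate from dominated convergence via $|\tau_j\phi|^2\leq\mathbf{1}_{B_{2/j}(\boldsymbol{0})}|\phi|^2$.

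The main obstacle is the case $s=\tfrac12$. I would apply the Slobodeckij characterisation
$$
\|u\|_{H^{1/2}(\R^n)}^2\sim\|u\|_{L^2}^2+\iint_{\R^n\times\R^n}\frac{|u(\bx)-u(\by)|^2}{|\bx-\by|^{n+1}}\,\rd\bx\,\rd\by,
$$
together with the decomposition $\tau_j(\bx)\phi(\bx)-\tau_j(\by)\phi(\by)=\tau_j(\bx)(\phi(\bx)-\phi(\by))+(\tau_j(\bx)-\tau_j(\by))\phi(\by)$. The first piece contributes at most $\iint_{|\bx|<2/j}|\phi(\bx)-\phi(\by)|^2/|\bx-\by|^{n+1}\,\rd\bx\,\rd\by$, which vanishes as $j\to\infty$ by dominated convergence. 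The second and delicate piece, $\iint|\tau_j(\bx)-\tau_j(\by)|^2|\phi(\by)|^2/|\bx-\by|^{n+1}\,\rd\bx\,\rd\by$, I would bound by splitting the $\by$-integration into regions $|\by|\leq 4/j$, $4/j<|\by|\leq 1$ and $|\by|>1$, using the Lipschitz bound $|\tau_j(\bx)-\tau_j(\by)|\leq jC|\bx-\by|$ on small scales together with the pointwise bound $|\tau_j|\leq\mathbf{1}_{B_{2/j}(\boldsymbol{0})}$ on large scales, and combining these with the Hardy-type inequality $\int|\phi(\by)|^2/|\by|\,\rd\by\leq C\|\phi\|_{H^{1/2}(\R^n)}^2$, valid for $n\geq 2$, which is the range of interest here. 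The Lipschitz seminorm of $\tau_j$ blows up like $j$, so the required cancellation must come from pairing the small support (of volume $\sim j^{-n}$) against the $H^{1/2}$ regularity of $\phi$ afforded by the Hardy inequality; this is the quantitative manifestation of the fact that a single point has zero $H^{1/2}$-capacity in $\R^n$ for $n\geq 2$.

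The case $s=-\tfrac12$ then follows by combining the uniform multiplier bound $\|M_{\tau_j}\|_{H^{1/2}\to H^{1/2}}\leq C$ (extracted from the preceding pointwise convergence via the uniform boundedness principle) with the identification $\|M_{\tau_j}\|_{H^{-1/2}\to H^{-1/2}}=\|M_{\tau_j}\|_{H^{1/2}\to H^{1/2}}$ coming from $\tau_j$ being real-valued, and density of $\scrD(\R^n\setminus\{\bx_1,\ldots,\bx_N\})$ in $H^{-1/2}(\R^n)$---a consequence of $\{\bx_1,\ldots,\bx_N\}$ being $1/2$-null by Theorem~\ref{thm:null}(g). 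On the dense subspace $\tau_j\phi$ eventually vanishes, while on all of $H^{-1/2}$ the uniform bound propagates the dense-subspace convergence. Intermediate cases $|s|<\tfrac12$ follow by complex interpolation between the endpoints.
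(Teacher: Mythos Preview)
The paper does not prove this lemma; it simply records it as a special case of \cite[Lemma~3.22]{ChaHewMoi:13}. So there is no in-paper argument to compare against, and your direct construction is a genuine addition.

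Your approach is sound for the lemma as literally stated. The product-of-cutoffs construction and reduction to a single rescaled bump $\tau_j$ are clean, and the $s=0$ case is immediate. For $s=\tfrac12$ your Slobodeckij decomposition is the right one: writing $g_j(\by):=\int|\tau_j(\bx)-\tau_j(\by)|^2/|\bx-\by|^{n+1}\,\rd\bx=j\,g(j\by)$ with $g$ bounded and $g(\by)\sim|\by|^{-(n+1)}$ at infinity, the fractional Hardy inequality in $\R^n$ for $n\geq 2$ supplies the dominating function $|\phi(\by)|^2/|\by|$ needed to push dominated convergence through on both the near-origin and intermediate annuli. The passage to $s=-\tfrac12$ via Banach--Steinhaus, duality (a real-valued multiplier has equal operator norms on $H^{\pm1/2}$), and density is correct, as is interpolation for intermediate~$s$.

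Two caveats. First, your citation of Theorem~\ref{thm:null}(g) is slightly off target: that theorem is stated for subsets of $\Gamma_\infty\cong\R^{n-1}$, not $\R^n$; the underlying fact (finite sets are $\pm\tfrac12$-null in any dimension) is true, so this is cosmetic. Second, and more substantively, the Hardy inequality $\int|\phi|^2/|\by|\,\rd\by\lesssim\|\phi\|_{H^{1/2}}^2$ fails precisely at $n=1$, so your argument covers only $n\geq 2$. This matches the lemma \emph{as stated} (the paper's $n$ is $2$ or $3$), but note that the lemma is actually invoked in the proof of Lemma~\ref{lem:bigspace} on Sobolev spaces over $\Gamma_\infty\cong\R^{n-1}$, so when the ambient dimension is $2$ one really wants the one-dimensional instance. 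Covering that would require replacing Hardy by a logarithmic estimate reflecting that a point in $\R$ still has zero $H^{1/2}$-capacity, albeit only just.
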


\begin{lem} \label{lem:bigspace} If $|s|\leq 1/2$ and $\tH^{s}(\mS^\circ)\neq H^{s}_\omS$, then the quotient space $H^{s}_\omS/\tH^{s}(\mS^\circ)$ is an infinite-dimensional separable Hilbert space.
\end{lem}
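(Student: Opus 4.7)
The plan is to handle the separable Hilbert structure via standard functional analysis, and to establish infinite-dimensionality by contradiction using Lemma \ref{lem:vj} together with a localization argument on the set of ``singular points'' of a fixed representative. Since $H^s(\R^{n-1})$ is a separable Hilbert space and $\tH^s(\mS^\circ)\subset H^s_\omS$ are both closed subspaces of it, the quotient is isometrically isomorphic to the orthogonal complement $\tH^s(\mS^\circ)^\perp \cap H^s_\omS$, a closed subspace of $H^s(\R^{n-1})$, hence separable Hilbert.

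For infinite-dimensionality I argue by contradiction: suppose $\dim\bigl(H^s_\omS / \tH^s(\mS^\circ)\bigr) = N$ for some finite $N \geq 1$, and fix $u \in H^s_\omS \setminus \tH^s(\mS^\circ)$. Call a point $\bx \in \R^{n-1}$ \emph{singular for $u$} if every neighbourhood $U$ of $\bx$ contains some $\rho \in \scrD(U)$ with $\rho u \notin \tH^s(\mS^\circ)$, so that the non-singular points form an open subset of $\R^{n-1}$. A partition-of-unity argument on the compact set $\mathrm{supp}\, u$ shows that $u$ has at least one singular point (otherwise $u$ would decompose as a finite sum of elements of $\tH^s(\mS^\circ)$). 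The key step, which I expect to be the main obstacle, is to show that $u$ has at most $N$ singular points. Given $N+1$ distinct singular points $\bx_1,\ldots,\bx_{N+1}$, I pick small disjoint balls $B_i \ni \bx_i$ and $\rho_i \in \scrD(B_i)$ with $\rho_i u \notin \tH^s(\mS^\circ)$, and claim the classes $[\rho_1 u],\ldots,[\rho_{N+1} u]$ are linearly independent in the quotient. The crucial observation is that multiplication by any $\chi \in \scrD(\R^{n-1})$ preserves $\tH^s(\mS^\circ)$ (since $\chi\phi \in \scrD(\mS^\circ)$ for $\phi \in \scrD(\mS^\circ)$, extended by density and by continuity of multiplication on $H^s(\R^{n-1})$). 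So for any relation $\sum c_i \rho_i u \in \tH^s(\mS^\circ)$, choosing $\widetilde\rho_i \in \scrD(\R^{n-1})$ equal to $1$ on $\mathrm{supp}\,\rho_i$ and $0$ on $\bigcup_{j\neq i} B_j$, the disjointness of the balls gives $c_i \rho_i u = \widetilde\rho_i \sum_j c_j \rho_j u \in \tH^s(\mS^\circ)$, forcing $c_i = 0$. This produces $N+1$ linearly independent classes, contradicting $\dim = N$.

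So $u$ has only finitely many singular points $\by_1,\ldots,\by_m$ with $m \leq N$, and the partition-of-unity argument applied on the open set $\R^{n-1}\setminus\{\by_1,\ldots,\by_m\}$ of non-singular points yields $\chi u \in \tH^s(\mS^\circ)$ for every $\chi \in \scrD(\R^{n-1}\setminus\{\by_1,\ldots,\by_m\})$. I then apply Lemma \ref{lem:vj} (in $\R^{n-1}$) to the points $\by_1,\ldots,\by_m$ to obtain $v_j \in C^\infty(\R^{n-1})$ vanishing in $B_{1/j}(\by_i)$ with $v_j u \to u$ in $H^s$. Fixing any $\psi \in \scrD(\R^{n-1})$ equal to $1$ on a neighbourhood of $\omS$, I have $v_j u = (v_j \psi) u$ with $v_j \psi \in \scrD(\R^{n-1}\setminus\{\by_1,\ldots,\by_m\})$ for all sufficiently large $j$, so $v_j u \in \tH^s(\mS^\circ)$; closedness of $\tH^s(\mS^\circ)$ then gives $u \in \tH^s(\mS^\circ)$, the desired contradiction. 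The restriction $|s|\leq 1/2$ enters only through Lemma \ref{lem:vj}, which guarantees the approximation $v_j u \to u$ precisely in this range.
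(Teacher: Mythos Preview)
Your proof is correct and uses essentially the same ingredients as the paper's: the notion of ``singular points'' of a representative $u\in H^s_\omS\setminus\tH^s(\mS^\circ)$, a partition-of-unity argument combined with Lemma~\ref{lem:vj} to show that finitely many singular points forces $u\in\tH^s(\mS^\circ)$, and disjoint bump functions at distinct singular points to produce linearly independent classes in the quotient. The only difference is organizational: the paper first proves directly that the set of singular points is infinite (deriving a contradiction from finiteness via Lemma~\ref{lem:vj}), and then uses any $N$ of them to exhibit $N$ independent classes; you instead assume $\dim=N<\infty$, use the independence argument to bound the number of singular points by $N$, and then invoke the Lemma~\ref{lem:vj} step for the contradiction. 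Both routes are equivalent and equally clean.
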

\begin{proof} It is standard that the quotient space $H^{s}_\omS/\tH^{s}(\mS^\circ)$ is a Hilbert space (e.g., \cite[\S2.1]{ChaHewMoi:13}), and it is separable as $H^{s}(\R^n)\supset H^{s}_\omS$ is separable.

Suppose now that $|s|\leq 1/2$ and $\tH^{s}(\mS^\circ)\neq H^{s}_\omS$. We show first that for every $v\in H^{s}_\omS \setminus \tH^{s}(\mS^\circ)$ there exists a set $K\subset \omS$ with at least countably many points such that each point $\bx\in K$ has the following property:
\begin{equation} \label{eq:proper}
\mbox{for all $\epsilon>0$ there exists $\chi\in \scrD(B_\epsilon(\bx))$ with $\chi v\in H^{s}_\omS \setminus \tH^{s}(\mS^\circ)$.}
\end{equation}

For suppose this is not true. Then, for some $v\in H^{s}_\omS \setminus \tH^{s}(\mS^\circ)$ the set $X$ of points $\bx\in \omS$ with this property is finite.  Choose a sequence $(v_\ell)_{\ell=1}^\infty\subset C^\infty(\R^n)$ as follows: set $v_\ell\equiv 1$ for $\ell\in \N$ if $X= \emptyset$; if $X=\{\bx_1,...,\bx_N\}$, for distinct $\bx_1,...,\bx_N\in \omS$, choose $(v_\ell)$ to have the properties of Lemma \ref{lem:vj}. For every $\bx\in K_- :=\omS\setminus X$ there exists $\epsilon(\bx)>0$ such that  $\chi v\in \tH^{s}(\mS^\circ)$ for all $\chi\in \scrD(B_{\epsilon(\bx)}(\bx))$. Now, for each $\ell\in \N$, $\{B_{\epsilon(\bx)}(\bx):\bx\in K_-\}$ is an open cover for $\supp(v_\ell v)$ which has a finite subcover $\{B_{\epsilon(\by_j)}(\by_j):j\in \{1,...,M\}\}$, for some finite subset $\{\by_1,...,\by_M\}\subset K_-$. Let $\chi_1,..., \chi_M$ be a partition of unity for $\supp(v_\ell v)$ such that $\chi_j \in \scrD(B_{\epsilon(\by_j)}(\by_j))$, for $j=1,...,M$; such a partition of unity exists by  \cite[Theorem 2.17]{grubb}. Then $\chi_jv_\ell v \in \tH^{s}(\mS^\circ)$, for $j=1,...,M$, so that $v_\ell v= \sum_{j=1}^M \chi_j v_\ell v \in \tH^{s}(\mS^\circ)$. But this implies, taking the limit $\ell\to\infty$, since $\tH^{s}(\mS^\circ)$ is closed, that $v\in \tH^{s}(\mS^\circ)$, a contradiction.

Now suppose that $v\in H^{s}_\omS \setminus \tH^{s}(\mS^\circ)$ and let $K = \{\bx_1,\bx_2,...\}\subset \omS$, with distinct points $\bx_j$, be such that each point $\bx\in K$ has the property \eqref{eq:proper}. Let
$$
W := \{\chi v + \tH^{s}(\mS^\circ): \chi\in \scrD(\R^n)\}.
$$
Then $W$ is a linear subspace of $H^{s}_\omS/\tH^{s}(\mS^\circ)$. Further, for every $N\in \N$, choosing $\epsilon>0$ such that $\epsilon < |\bx_j-\bx_\ell|/3$, for $j,\ell=1,...,N$, and $\chi_j\in\scrD(B_\epsilon(\bx_j))$ with $\chi_j v\in H^{s}_\omS \setminus \tH^{s}(\mS^\circ)$ for $j=1,...,N$, it is clear that
$$
\{\chi_1v+ \tH^{s}(\mS^\circ),...,\chi_Nv+ \tH^{s}(\mS^\circ)\}\subset W
$$
is linearly independent, for if $w:= \sum_{j=1}^N \alpha_j \chi_j v \in \tH^{s}(\mS^\circ)$, for some $\alpha_1,...,\alpha_N\in \C$, then, for $\ell\in \{1,...,N\}$, choosing $\chi\in \scrD(\R^n)$ such that $\chi=1$ in a neighbourhood of $B_\epsilon(\bx_\ell)$ and $\supp(\chi)\subset B_{2\epsilon}(\bx_\ell)$, we see that $\tH^{s}(\mS^\circ)\ni\chi w = \alpha_\ell \chi \chi_\ell v = \alpha_\ell \chi_\ell v$, so $\alpha_\ell=0$ since $\chi_\ell v\not\in \tH^{s}(\mS^\circ)$. Thus $\dim(W) \geq N$, for each $N$, so that $H^{s}_\omS/\tH^{s}(\mS^\circ)$ is infinite-dimensional.
\end{proof}

The following lemma uses cardinal arithmetic from ZFC set theory \cite{Halmos}.
\begin{lem} \label{lem:card} Let $H$ be a separable Hilbert space of dimension at least two. Then the cardinality of the set of closed subspaces of $H$ is $\mathfrak{c}$, the cardinality of $\R$.
\end{lem}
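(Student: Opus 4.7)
The plan is to prove the two inequalities $|\mathcal{L}(H)|\leq \mathfrak{c}$ and $|\mathcal{L}(H)|\geq \mathfrak{c}$, where $\mathcal{L}(H)$ denotes the set of closed subspaces of $H$. Throughout, I use the cardinal arithmetic identities $\mathfrak{c}^{\aleph_0}=\mathfrak{c}$ and $\aleph_0\cdot\mathfrak{c}=\mathfrak{c}$, together with the fact that $|H|=\mathfrak{c}$ (since a separable Hilbert space over $\C$ of dimension $\geq 1$ is either finite-dimensional or isometrically isomorphic to $\ell^2$, both of cardinality $\mathfrak{c}$).

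For the upper bound, I would use that every closed subspace $V\subset H$ is itself separable, since $H$ is separable and separability is inherited by subsets of metric spaces. Hence $V$ admits a countable dense subset $D_V\subset V$, and $V=\overline{\mathrm{span}\,D_V}$. Thus the map $V\mapsto D_V$ injects $\mathcal{L}(H)$ into the set $\mathcal{P}_{\leq\aleph_0}(H)$ of at-most-countable subsets of $H$. The cardinality of this latter set is
\[
|\mathcal{P}_{\leq\aleph_0}(H)| \;\leq\; \sum_{n=0}^{\infty}|H|^{n}+|H|^{\aleph_0}\;=\;\aleph_0\cdot\mathfrak{c}+\mathfrak{c}^{\aleph_0}\;=\;\mathfrak{c},
\]
so $|\mathcal{L}(H)|\leq\mathfrak{c}$.

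For the lower bound, I would exploit the hypothesis $\dim H\geq 2$ to pick two linearly independent vectors $e_1,e_2\in H$, and consider the family of one-dimensional (hence closed) subspaces
\[
V_\lambda \;:=\; \mathrm{span}\{e_1+\lambda e_2\}, \qquad \lambda\in\R.
\]
These are pairwise distinct: if $V_\lambda=V_\mu$ then $e_1+\lambda e_2=\alpha(e_1+\mu e_2)$ for some scalar $\alpha$, and linear independence of $e_1,e_2$ forces $\alpha=1$ and then $\lambda=\mu$. This exhibits $\mathfrak{c}$ distinct elements of $\mathcal{L}(H)$, completing the argument.

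The proof is entirely routine; the only step that requires any care is verifying the cardinality estimate $|\mathcal{P}_{\leq\aleph_0}(H)|=\mathfrak{c}$, but this follows immediately from standard cardinal arithmetic once one notes that $|H|=\mathfrak{c}$.
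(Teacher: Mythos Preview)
Your proof is correct and follows essentially the same approach as the paper's: for the lower bound you both exhibit a continuum-indexed family of one-dimensional subspaces inside a two-dimensional subspace (the paper uses $\cos\theta\,v_1+\sin\theta\,v_2$ with $v_1\perp v_2$, you use $e_1+\lambda e_2$), and for the upper bound you both observe that a closed subspace is determined by a countable subset of $H$ (the paper takes a countable orthonormal basis, you take a countable dense set) and then bound the number of such subsets by $|H|^{\aleph_0}=\mathfrak{c}^{\aleph_0}=\mathfrak{c}$.
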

\begin{proof}
Suppose $H$ is a separable Hilbert space. Then its cardinality is at least $\mathfrak{c}$ if its dimension is at least two, for if $v_1,v_2\in H$ are orthogonal, $\{\alpha(\cos(\theta)\, v_1 + \sin(\theta)\, v_2):\alpha\in \C\}$ is a distinct closed subspace of $H$ for each $0\leq \theta<\pi$. Further, $H$ has a countable orthonormal basis, so that the cardinality of $H$ itself is no larger than that of the set of sequences of complex numbers, which is $\mathfrak{c}$, as $|\C^{\N}| = (2^{\aleph_0})^{\aleph_0}=2^{\aleph_0}=\mathfrak{c}$.  Finally, since there is an injection $H\to \R$,  and each closed subspace $V$ is characterised by a countable set of orthonormal basis vectors in $H$, the cardinality of the set of closed subspaces of $H$ is no larger than $|\R^{\N}| = \mathfrak{c}$.
\end{proof}

\begin{lem} \label{lem:cardc} If $\tH^{\pm 1/2}(\mS^\circ)\neq H^{\pm 1/2}_\omS$, then the set of closed subspaces $V^\pm$ satisfying \eqref{eq:selection} has cardinality $\mathfrak{c}$.
\end{lem}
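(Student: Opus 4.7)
The plan is to establish a bijection between the set of closed subspaces $V^\pm$ satisfying \eqref{eq:selection} and the set of closed subspaces of the quotient Hilbert space $Q^\pm := H^{\pm 1/2}_\omS / \tH^{\pm 1/2}(\mS^\circ)$, and then count the latter using the two preceding lemmas.

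First, I would invoke Lemma \ref{lem:bigspace} to assert that $Q^\pm$ is an infinite-dimensional separable Hilbert space, and in particular of dimension at least two. Lemma \ref{lem:card} then immediately gives that the set of closed subspaces of $Q^\pm$ has cardinality $\mathfrak{c}$.

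Second, I would set up the correspondence explicitly. Let $\pi : H^{\pm 1/2}_\omS \to Q^\pm$ denote the canonical quotient map, a continuous surjection with $\ker \pi = \tH^{\pm 1/2}(\mS^\circ)$. Define $\Phi(V^\pm) := \pi(V^\pm)$ and $\Psi(W) := \pi^{-1}(W)$. Since $\pi$ is continuous, $\Psi(W)$ is closed in $H^{\pm 1/2}_\omS$, and $\Psi(W) \supset \ker \pi = \tH^{\pm 1/2}(\mS^\circ)$, so \eqref{eq:selection} holds for $\Psi(W)$. Conversely, if $V^\pm$ satisfies \eqref{eq:selection} then, because $V^\pm \supset \ker \pi$, we have the saturation identity $\pi^{-1}(\pi(V^\pm)) = V^\pm$; by the defining property of the quotient topology, $\pi(V^\pm)$ is closed in $Q^\pm$ iff $\pi^{-1}(\pi(V^\pm))$ is closed in $H^{\pm 1/2}_\omS$, and the right-hand side equals $V^\pm$, which is closed by assumption. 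The identities $\Phi \circ \Psi = \mathrm{id}$ and $\Psi \circ \Phi = \mathrm{id}$ are then routine from these saturation formulas.

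Combining the two steps gives the desired cardinality. The only non-routine point is the verification that $\Phi(V^\pm)$ is closed in $Q^\pm$, but as sketched above this reduces to the fact that $V^\pm$ is saturated with respect to $\ker \pi$, so there is no real obstacle beyond correctly invoking the quotient topology.
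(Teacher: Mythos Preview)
Your proof is correct and follows essentially the same approach as the paper: both establish a bijection between the closed subspaces $V^\pm$ satisfying \eqref{eq:selection} and the closed subspaces of the quotient $H^{\pm 1/2}_\omS/\tH^{\pm 1/2}(\mS^\circ)$, then invoke Lemmas \ref{lem:bigspace} and \ref{lem:card}. The only cosmetic difference is that the paper phrases the bijection via the orthogonal complement $\mathcal{W}$ of $\tH^{\pm 1/2}(\mS^\circ)$ in $H^{\pm 1/2}_\omS$ (unitarily isomorphic to the quotient), writing the map as $L\mapsto \tH^{\pm 1/2}(\mS^\circ)\oplus L$, whereas you work directly with the quotient map and a saturation/quotient-topology argument.
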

\begin{proof}
$H^{\pm 1/2}_\omS/\tH^{\pm 1/2}(\mS^\circ)$ is unitarily isomorphic to $\mathcal{W}$, the orthogonal complement of $\tH^{\pm 1/2}(\mS^\circ)$ in $H^{\pm 1/2}_\omS$, so the set of closed subspaces of $\mathcal{W}$ has cardinality $\mathfrak{c}$ by Lemmas \ref{lem:bigspace} and \ref{lem:card}. The result follows as there is a one-to-one correspondence between the set of closed subspaces of $\mathcal{W}$ and the closed subspaces $V^\pm$ of $H^{\pm 1/2}_\omS$ that satisfy \eqref{eq:selection}, given by $L\mapsto \tH^{\pm 1/2}(\mS^\circ)\oplus L$, for $L$ a closed subspace of $\mathcal{W}$.
\end{proof}

We are now ready to state and prove the main theorems in this section.

\begin{thm} \label{thm:S61} If $\tH^{-1/2}(\mS^\circ)= H^{-1/2}_\omS$ and $V^-$ satisfies \eqref{eq:selection} then $V^-=H^{-1/2}_\omS$, so that there is only one formulation $\sD(V^-)$, and one formulation $\sS\sD(V^-)$, with $V^-$ satisfying \eqref{eq:selection}. Further, in this case $\sS\sD(V^-)$ and \SDws have the same unique solution. If $\tH^{-1/2}(\mS^\circ)\neq H^{-1/2}_\omS$ then the set of subspaces $V^-$  satisfying \eqref{eq:selection} has cardinality $\mathfrak{c}$. Further, if $u^i$ is the incident plane wave \eqref{eq:pw}, then:

(a) if $V_1\neq V_2$ are any two subspaces $V^-$
and, for $j=1,2$, $u^s_j$ is the solution to $\sS\sD(V_j)$, it holds that $u^s_1\neq u^s_2$ for almost all incident directions $\bd\in \mathbb{S}^n$ (all but finitely many $\bd$ if $n=2$);

(b) for almost all $\bd\in \mathbb{S}^n$ (all but countably many $\bd$ if $n=2$) there are infinitely many distinct  solutions $u^s$ to the set of scattering problems $\sS\sD(V^-)$ with $V^-$ satisfying \eqref{eq:selection}.
\end{thm}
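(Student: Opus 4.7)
The first assertion follows immediately from \eqref{eq:selection}: when $\tH^{-1/2}(\mS^\circ)= H^{-1/2}_\omS$ the two bounds on $V^-$ coincide, forcing $V^-=H^{-1/2}_\omS$, so Theorem \ref{thm:Dexist} identifies the unique solution of $\sS\sD(V^-)$ with that of \SDw. The cardinality statement is exactly Lemma \ref{lem:cardc}.

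For (a), the plan is to fix distinct $V_1,V_2$ satisfying \eqref{eq:selection} and, for each $g\in H^{1/2}(\Gamma_\infty)$, let $\phi_j(g)\in V_j$ be the unique element with $a_S(\phi_j(g),\psi)=\langle g,\psi\rangle_{H^{1/2}(\Gamma_\infty)\times H^{-1/2}(\Gamma_\infty)}$ for all $\psi\in V_j$ (well-defined by the coercivity of $a_S$ in Theorem \ref{thm:ContCoerc1}). For the plane wave $u^i=e^{\ri k\bd\cdot\bx}$, Corollary \ref{DirEquivThmm2} yields $u_j^s=-\cS\phi_j(g_\bd)$ with $g_\bd:=\gamma^\pm(\chi u^i)=\chi_0 e^{\ri k\bd'\cdot(\cdot)}$, $\chi_0:=\chi(\cdot,0)$, $\bd':=(d_1,\ldots,d_{n-1})$; injectivity of $\cS$ on $H^{-1/2}_\omS$ (via \eqref{JumpRelns2}) then gives $u_1^s=u_2^s$ iff $\tilde\phi(g_\bd)=0$, where $\tilde\phi:H^{1/2}(\Gamma_\infty)\to V_1+V_2\subset H^{-1/2}_\omS$ is the bounded linear map $g\mapsto \phi_1(g)-\phi_2(g)$. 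The easy step is $\tilde\phi\not\equiv 0$: relabelling if needed, pick $v_0\in V_1\setminus V_2$ and set $g_0:=S_\ddag v_0$; by \eqref{eq:sesD}, $\langle g_0,\psi\rangle=a_S(v_0,\psi)$ for $\psi\in V_1\subset\tH^{-1/2}(\Gamma_\dag)$, so $\phi_1(g_0)=v_0\notin V_2\ni\phi_2(g_0)$.

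The main obstacle is to leverage $\tilde\phi\not\equiv 0$ to show $\tilde\phi(g_\bd)\ne 0$ for almost all $\bd$. Parametrising by $\boldeta\in\C^{n-1}$ via $g^\circ_\boldeta:=\chi_0 e^{\ri\boldeta\cdot(\cdot)}$ (so $g_\bd=g^\circ_{k\bd'}$), the map $\boldeta\mapsto g^\circ_\boldeta$ is entire $\C^{n-1}\to H^{1/2}(\Gamma_\infty)$, hence $F(\boldeta):=\tilde\phi(g^\circ_\boldeta)$ is entire into the Hilbert space $H^{-1/2}_\omS$. If $F$ vanished on a set $E\subset\R^{n-1}$ of positive measure, then pairing with each element of a countable dense subset of $H^{-1/2}_\omS$ and applying Fubini together with the one-variable identity theorem (an entire $\C\to\C$ function vanishing on a positive-measure subset of $\R$ is identically zero) would force $F\equiv 0$ on $\C^{n-1}$. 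To derive the contradiction, I would first observe that $\tilde\phi(g)=\tilde\phi(\chi_0 g)$ for every $g\in H^{1/2}(\Gamma_\infty)$, because $(1-\chi_0)g$ is supported away from $\omS$ while every $\psi\in V_j\subset H^{-1/2}_\omS$ is supported in $\omS$, so $\langle(1-\chi_0)g,\psi\rangle=0$; second, for $\psi\in\scrD(\R^{n-1})$ I would use Fourier inversion to write the Bochner representation $\chi_0\psi=(2\pi)^{-(n-1)/2}\int_{\R^{n-1}}\hat\psi(\boldeta)\,g^\circ_\boldeta\,\rd\boldeta$ in $H^{1/2}(\Gamma_\infty)$ (convergent as $\hat\psi\in\scrS$ and $\|g^\circ_\boldeta\|_{H^{1/2}}$ grows polynomially), so that by continuity and linearity $\tilde\phi(\chi_0\psi)=(2\pi)^{-(n-1)/2}\int\hat\psi(\boldeta)\,F(\boldeta)\,\rd\boldeta=0$; density of $\scrD(\R^{n-1})$ in $H^{1/2}(\R^{n-1})$ then yields $\tilde\phi\equiv 0$, contradicting the easy step. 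Thus the bad set in $\R^{n-1}$ has measure zero, and via the smooth two-sheeted projection $\bd\in\mathbb{S}^n\mapsto\bd'\in\{|\bd'|\leq 1\}$ lifts to a measure-zero exceptional subset of $\mathbb{S}^n$. For $n=2$, $F$ is entire on $\C$ into a Hilbert space, and (pairing with ambient vectors and invoking the scalar identity theorem in its isolated-zeros form) its zero set is discrete, hence finite on the unit circle $\mathbb{S}^1$.

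For (b), the hypothesis $\tH^{-1/2}(\mS^\circ)\ne H^{-1/2}_\omS$ and Lemma \ref{lem:bigspace} yield that $H^{-1/2}_\omS/\tH^{-1/2}(\mS^\circ)$ is infinite-dimensional, so I would inductively pick $v_1,v_2,\ldots\in H^{-1/2}_\omS$ with linearly independent images in this quotient, and form the strictly ascending chain of closed subspaces $V_j^-:=\tH^{-1/2}(\mS^\circ)+\mathrm{span}\{v_1,\ldots,v_j\}$, each satisfying \eqref{eq:selection}. By (a), for each pair $i<j$ the set $B_{ij}\subset\mathbb{S}^n$ of directions on which the solutions of $\sS\sD(V_i^-)$ and $\sS\sD(V_j^-)$ coincide has measure zero (is finite for $n=2$), so $\bigcup_{i<j}B_{ij}$ has measure zero (is countable for $n=2$), and outside this exceptional set the sequence $(u_j^s)$ consists of pairwise distinct functions.
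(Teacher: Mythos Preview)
Your proof is correct, and part (a) takes a genuinely different route from the paper's.

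The paper argues (a) structurally through the boundary integral operator: writing $S_j:V_j\to V_j^*$ for the invertible operator of \eqref{eq:BIOs}, it first treats the inclusion case $V_1\subset V_2$ by noting $V_2^*=S_2(V_1)\oplus W$ with $W$ nontrivial, then invokes Lemma~\ref{lem:nonvanish} (via Remark~\ref{rem:all are subspaces}) to show that the orthogonal projection of the plane-wave datum onto $W$ is nonzero for almost all $\bd$, forcing $\phi_2\notin V_1$; the general case is reduced to this by intersecting. You instead bypass both the operator-theoretic decomposition and Lemma~\ref{lem:nonvanish}, working directly with the difference map $\tilde\phi:g\mapsto\phi_1(g)-\phi_2(g)$: you show $\tilde\phi\not\equiv0$ by an explicit choice $g_0=S_\ddag v_0$, then prove that $\boldeta\mapsto\tilde\phi(g^\circ_\boldeta)$ is a nonzero Hilbert-valued entire function, so its real zero set is null (discrete for $n=2$). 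The Bochner/Fourier-inversion step---writing $\chi_0\psi$ as a superposition of the $g^\circ_\boldeta$ and exploiting $\tilde\phi(g)=\tilde\phi(\chi_0 g)$---is the device that converts ``$F\equiv0$ on $\R^{n-1}$'' into ``$\tilde\phi\equiv0$'', playing the role that the explicit Fourier-transform identity \eqref{eq:FT} plays in the paper's Lemma~\ref{lem:nonvanish}. Your approach buys a uniform treatment of arbitrary $V_1\neq V_2$ without the inclusion/general-case split; the paper's approach is shorter because it reuses the already-proved Lemma~\ref{lem:nonvanish}.

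For (b) and the opening assertions your argument coincides with the paper's, though you are slightly more explicit in invoking Lemma~\ref{lem:bigspace} to build the strictly increasing chain of subspaces.
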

\begin{proof} The first sentence is clear, and the second is a restatement of a result in Corollary \ref{DirEquivThmm2}. If $\tH^{-1/2}(\mS^\circ)\neq H^{-1/2}_\omS$, that the set of subspaces $V^-$  satisfying \eqref{eq:selection} has cardinality $\mathfrak{c}$ is Lemma \ref{lem:cardc}.

To see (a), suppose that $V_1\neq V_2$ are any two such subspaces and, for $j=1,2$, let $u^s_j$ be the solution to $\sS\sD(V_j)$, given explicitly by \eqref{eqn:SLPRep2} as $u^s_j = -\cS\phi_j$, where $\phi_j$ is the unique solution of \eqref{BIE_sl2}, i.e. $S_j\phi_j = P_jg$ where $g := \gamma^\pm (\chi u^i)$, and $S_j$, $V^*_j$, and $P_j$ denote $S$, $V^+_*$, and $P_{V^+_*}$, respectively, when $V^-=V_j$. Now, for $j=1,2$, $S_j:V_j\to V^*_j$ is an isomorphism, so that, if $V_1\subset V_2$, $V^*_2 = S_2(V_1)\oplus W$, where $W$ is the non-empty orthogonal complement of $S_2(V_1)$ in $V^*_2$. Let $P$ denote orthogonal projection onto $W$ in $H^{1/2}(\Gamma_\infty)$. Then, by Lemma \ref{lem:nonvanish} and Remark \ref{rem:all are subspaces}, $Pg\neq 0$ for almost all  $\bd\in \mathbb{S}^n$ (all but finitely many $\bd$ if $n=2$), which implies that $\phi_2\not\in V_1$, so that $\phi_1\neq \phi_2$. In the general case that $V_1\neq V_2$, it holds for $j=1,2$ that $V_j = V\oplus W_j$, for some orthogonal subspaces $V$, $W_1$, and $W_2$ of $H^{-1/2}_\omS$, with at least one of $W_1$ and $W_2$ non-trivial. Then the argument just made shows that, if $W_j$ is non-trivial, then $\phi_j\not\in V$, which implies that $\phi_1\neq \phi_2$, for almost all  $\bd\in \mathbb{S}^n$ (all but finitely many $\bd$ if $n=2$). But if $\phi_1\neq \phi_2$ then $u^s_1\neq u^s_2$ by \eqref{JumpRelns2}.

If $\{V_j:j\in \Z\}$ is a countably infinite set of subspaces $V^-$ satisfying \eqref{eq:selection}, with $V_i\neq V_j$ for $i\neq j$, then, since a countable union of sets of measure zero has measure zero, and a countable union of finite sets is countable, statement (b) follows from (a).
\end{proof}

The proof of the following corresponding theorem for the sound-hard case is essentially identical to that of Theorem \ref{thm:S61} for the sound-soft case and is omitted.

\begin{thm} \label{thm:s62} If $\tH^{1/2}(\mS^\circ)= H^{1/2}_\omS$ and $V^+$ satisfies \eqref{eq:selection} then $V^+=H^{1/2}_\omS$, so that there is only one formulation $\sN(V^+)$, and one formulation $\sS\sN(V^+)$, with $V^+$ satisfying \eqref{eq:selection}. Further, $\sS\sN(V^+)$ and \SNws have the same unique solution. If $\tH^{1/2}(\mS^\circ)\neq H^{1/2}_\omS$ then the set of subspaces $V^+$  satisfying \eqref{eq:selection} has cardinality $\mathfrak{c}$. Further, if $u^i$ is the incident plane wave \eqref{eq:pw}, then:

(a) if $V_1\neq V_2$ are any two subspaces $V^+$ and, for $j=1,2$, $u^s_j$ is the solution to $\sS\sN(V_j)$, it holds that $u^s_1\neq u^s_2$ for almost all incident directions $\bd\in \mathbb{S}^n$ (all but finitely many $\bd$ if $n=2$);

(b) for almost all $\bd\in \mathbb{S}^n$ (all but countably many $\bd$ if $n=2$) there are infinitely many distinct  solutions $u^s$ to the set of scattering problems $\sS\sN(V^+)$ with $V^+$ satisfying \eqref{eq:selection}.
\end{thm}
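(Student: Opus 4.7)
The plan is to mirror the proof of Theorem \ref{thm:S61} throughout, with Dirichlet objects replaced by their Neumann counterparts: the single-layer potential $\cS$ by the double-layer potential $\cD$, the operator $S$ by the hypersingular operator $T$, the Dirichlet trace $\gamma^\pm$ by the Neumann trace $\partial_\bn^\pm$, the jump relation \eqref{JumpRelns2} by \eqref{JumpRelns3}, and $V^-\subset H^{-1/2}_\omS$ by $V^+\subset H^{1/2}_\omS$. The first two sentences of the theorem are immediate from \eqref{eq:selection} and Corollary \ref{NeuEquivThmm2}, and the cardinality statement is Lemma \ref{lem:cardc}, which is stated uniformly in the sign $\pm 1/2$.

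For part (a), given $V_1\neq V_2$ satisfying \eqref{eq:selection}, I write $\psi_j := [u^s_j]$ and let $T_j$, $V^-_{j,*}$ and $P_j$ denote the hypersingular operator $T$, the dual realisation $V^-_*$, and the projection $P_{V^-_*}$ associated to $V^+=V_j$. By Corollary \ref{NeuEquivThmm2}, $u^s_j=\cD\psi_j$, where $\psi_j\in V_j$ is the unique solution of $T_j\psi_j = -P_j h$ with $h := \partial_\bn^\pm(\chi u^i)$ and $\chi\in\scrD_{1,\omS}$ fixed. Reducing first to the sub-case $V_1\subsetneqq V_2$, one has $V^-_{2,*}=T_2(V_1)\oplus W$ for some non-trivial closed $W\subset V^-_{2,*}$; letting $P$ be orthogonal projection onto $W$ in $H^{-1/2}(\Gamma_\infty)$, the inclusion $W\subset V^-_{2,*}$ gives $PP_{V_{2,*}^-}=P$, so $\psi_2\in V_1$ if and only if $Ph=0$, and $Ph\neq 0$ then forces $\psi_1\neq\psi_2$ and, by \eqref{JumpRelns3}, $u^s_1=\cD\psi_1\neq \cD\psi_2=u^s_2$. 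The general case $V_1\neq V_2$ will then follow verbatim from the Dirichlet proof by decomposing $V_j=V\oplus W_j$ into mutually orthogonal summands of $H^{1/2}_\omS$.

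The crux of the argument is to show $Ph\neq 0$ for almost all $\bd\in\mathbb{S}^n$. Here a small technical wrinkle appears that is absent in the Dirichlet case: for the plane wave \eqref{eq:pw}, the Neumann trace of $\chi u^i$ on $\Gamma_\infty$ equals
\[
h = (\partial_{x_n}\chi)\big|_{\Gamma_\infty}\,e_{k\bd} + \ri k d_n\,\chi|_{\Gamma_\infty}\,e_{k\bd},
\]
which is not literally of the form $\chi e_{k\bd}$ required by Lemma \ref{lem:nonvanish}. I deal with this by noting that, since $\chi\equiv 1$ in a neighbourhood of $\omS$, $(\partial_{x_n}\chi)|_{\Gamma_\infty}$ has support disjoint from $\omS$ within $\Gamma_\infty$, and hence lies in the annihilator $(H^{1/2}_\omS)^a\subset (V_2)^a$ (cf.\ \eqref{annclosed}). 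Consequently the first term is annihilated by $P_{V^-_{2,*}}$, and hence by $P$, giving $Ph = \ri k d_n\,P(\chi|_{\Gamma_\infty}\,e_{k\bd})$. Lemma \ref{lem:nonvanish} applied to the non-trivial closed subspace $W$ (via Remark \ref{rem:all are subspaces}) then yields $P(\chi|_{\Gamma_\infty}\,e_{k\bd})\neq 0$ for almost all $\bd\in\mathbb{S}^n$ (all but finitely many if $n=2$), and together with the condition $d_n\neq 0$ (a further measure-zero exclusion) this delivers $Ph\neq 0$ as required.

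Part (b) follows from (a) exactly as in Theorem \ref{thm:S61}: by the cardinality statement one can choose a countably infinite family $(V_j)_{j\in\Z}$ of pairwise distinct subspaces satisfying \eqref{eq:selection}, and a countable union of null (respectively finite) subsets of $\mathbb{S}^n$ remains null (respectively countable). The main obstacle will be the wrinkle isolated in the previous paragraph, namely handling the extra term $(\partial_{x_n}\chi)|_{\Gamma_\infty}\,e_{k\bd}$ in $h$; every other step of the Dirichlet proof transfers without modification.
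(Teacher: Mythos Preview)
Your proposal is correct and follows essentially the same route as the paper, which simply remarks that the proof is ``essentially identical'' to that of Theorem~\ref{thm:S61} and omits it. Your explicit treatment of the extra term $(\partial_{x_n}\chi)|_{\Gamma_\infty}\,e_{k\bd}$ in the Neumann trace---showing it lies in $(H^{1/2}_\omS)^a\subset(V_2)^a$ and is therefore killed by $P$---is a detail the paper glosses over here (though the same observation is implicit in the proof of Theorem~\ref{cor:Vm}, where $g_\sN=-\ri k d_n P_{V^-_*}g$ is stated without justification).
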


\subsection{When is $\tH^{\pm 1/2}(\mS^\circ)= H^{\pm 1/2}_\omS$?} \label{subsec:spacesequal}

To complete the picture from Theorems \ref{thm:S61} and \ref{thm:s62}, we examine the question: for which screens $\mS$ does it hold that $\tH^{\pm 1/2}(\mS^\circ)= H^{\pm 1/2}_\omS$?  Many of our results are expressed in terms of the $\pm 1/2$-nullity of Borel subsets of $\Gamma_\infty$, which was defined below \eqref{HsTdef} and related to other set properties in Theorem \ref{thm:null}.
 Our starting point is the following two theorems. These, when combined with Theorem \ref{thm:null}, also lead to explicit examples of distinct subspaces $V^\pm$ satisfying \eqref{eq:selection}.

\begin{thm}[{\cite[Proposition 2.11]{HewMoi:15}}]
\label{thm:Hs_equality_closed}
Suppose that $\mS^\circ \subset F_j\subset \omS$ and $F_j$ is closed, for $j=1,2$. Then $\tH^{\pm 1/2}(\mS^\circ)\subset H^{\pm 1/2}_{F_j} \subset H^{\pm 1/2}_\omS$, for $j=1,2$, and $H^{\pm 1/2}_{F_1}=H^{\pm 1/2}_{F_2}$ if and only if the symmetric difference $F_1\ominus F_2$ is $\pm 1/2$-null.
\end{thm}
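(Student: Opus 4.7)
The plan is to prove the inclusions in the first assertion directly from the definitions, and then to establish the equivalence by two short closed-set/cut-off arguments, handling $+1/2$ and $-1/2$ in parallel. Throughout write $s=\pm 1/2$.

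For the inclusions: any $\phi\in\scrD(\mS^\circ)$ is supported in $\mS^\circ\subset F_j$, and since the support of a distribution is closed and is preserved under passage to limits in $H^s(\R^{n-1})$ (if $u_n\to u$ in $H^s$ and $\supp u_n\subset F_j$ with $F_j$ closed, then $\supp u\subset F_j$, as one sees by testing against $\phi\in\scrD(F_j^c)$ using the continuity of the $H^{-s}$–$H^s$ duality), the closure $\tH^s(\mS^\circ)$ lies in $H^s_{F_j}$. The second inclusion $H^s_{F_j}\subset H^s_\omS$ is immediate from $F_j\subset\omS$.

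For the forward direction of the equivalence, assume $H^s_{F_1}=H^s_{F_2}$ and, seeking a contradiction, suppose there exists a non-zero $u\in H^s(\R^{n-1})$ with $\supp u\subset F_1\ominus F_2$. Since $\supp u$ is non-empty and contained in $(F_1\setminus F_2)\cup(F_2\setminus F_1)$, by the symmetry of the hypothesis we may assume $\supp u$ meets $F_1\setminus F_2$ at some point $\bx_0$. Because $F_2$ is closed and $\bx_0\notin F_2$, there is an open neighbourhood $U$ of $\bx_0$ with $U\cap F_2=\emptyset$. Since $\bx_0\in\supp u$, we can choose $\chi\in\scrD(U)$ with $\chi u\neq 0$; multiplication by $\chi$ maps $H^s(\R^{n-1})$ to itself, and $\supp(\chi u)\subset U\cap F_1\subset F_1$, so $\chi u\in H^s_{F_1}=H^s_{F_2}$. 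But then $\supp(\chi u)\subset F_2\cap U=\emptyset$, forcing $\chi u=0$, a contradiction.

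For the reverse direction, assume $F_1\ominus F_2$ is $s$-null; by symmetry it suffices to prove $H^s_{F_1}\subset H^s_{F_2}$. Let $u\in H^s_{F_1}$ and fix any $\chi\in\scrD(F_2^c)$. Then $\chi u\in H^s(\R^{n-1})$ with
\begin{equation*}
\supp(\chi u)\subset\supp\chi\cap F_1\subset F_2^c\cap F_1=F_1\setminus F_2\subset F_1\ominus F_2,
\end{equation*}
so $\chi u=0$ by $s$-nullity. Consequently $\langle u,\phi\rangle=\langle u,\chi\phi\rangle=\langle\chi u,\phi\rangle=0$ for every $\phi\in\scrD(F_2^c)$ (choosing $\chi\equiv 1$ on $\supp\phi$), meaning $u$ vanishes as a distribution on the open set $F_2^c$; thus $\supp u\subset F_2$ and $u\in H^s_{F_2}$.

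The only subtle point is the forward direction, where one must localise at a point of $\supp u\cap(F_1\setminus F_2)$; the rest is a routine cut-off and distributional-support argument. No appeal is needed to coercivity or layer-potential theory, only to the fact that $\scrD$-multiplication is continuous on $H^{\pm 1/2}(\R^{n-1})$ and that the support of an $H^s$ distribution is determined by its action on test functions.
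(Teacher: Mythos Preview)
Your proof is correct. The paper does not actually prove this theorem itself; it simply cites it as \cite[Proposition 2.11]{HewMoi:15}, so there is no ``paper's own proof'' to compare against. Your argument supplies a clean, self-contained proof using only the definitions of $H^s_F$, $s$-nullity, distributional support, and the standard fact that multiplication by $\scrD$ functions is continuous on $H^s(\R^{n-1})$.

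One small clarification worth making explicit in your forward direction: when you write $\supp(\chi u)\subset U\cap F_1$, the justification is that $\supp(\chi u)\subset\supp\chi\cap\supp u\subset U\cap(F_1\ominus F_2)$, and since $U\cap F_2=\emptyset$ we have $U\cap(F_2\setminus F_1)=\emptyset$, whence $U\cap(F_1\ominus F_2)=U\cap(F_1\setminus F_2)\subset F_1$. You state the conclusion correctly but the intermediate step (ruling out the $F_2\setminus F_1$ part via the choice of $U$) is implicit; spelling it out would make the argument airtight. Otherwise the proof is complete and the approach is the natural one.
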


\begin{thm}[{\cite[Theorem 3.12]{ChaHewMoi:13}}]
\label{thm:Hs_equality_open}
Suppose that $\mS^\circ\subset \Gamma_j\subset \omS$, for $j=1,2$, with $\Gamma_j$ open. Then $\tH^{\pm 1/2}(\mS^\circ)\subset \tH^{\pm 1/2}(\Gamma_j) \subset H^{\pm 1/2}_\omS$, for $j=1,2$, and  $\tH^{\pm 1/2}(\Gamma_1)=\tH^{\pm 1/2}(\Gamma_2)$ if and only if $\Gamma_1\ominus\Gamma_2$ is $\mp 1/2$-null.
\end{thm}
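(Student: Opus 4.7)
The inclusions $\tH^{\pm 1/2}(\mS^\circ)\subset\tH^{\pm 1/2}(\Gamma_j)\subset H^{\pm 1/2}_\omS$ are immediate from the definition \eqref{eq:HstildeDef}: $\scrD(\mS^\circ)\subset\scrD(\Gamma_j)$ since $\mS^\circ\subset\Gamma_j$, and every $\phi\in\scrD(\Gamma_j)$ is supported in $\omS$, so lies in the closed subspace $H^{\pm 1/2}_\omS$; closing in $H^{\pm 1/2}(\Gamma_\infty)$ gives both containments.

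For the main equivalence, my plan is to pass to annihilators, exploiting the duality formula \eqref{annopen}. Since each $\tH^{\pm 1/2}(\Gamma_j)$ is a closed subspace of $H^{\pm 1/2}(\Gamma_\infty)$, two such subspaces coincide iff their annihilators under the pairing \eqref{DualDef} coincide, and by \eqref{annopen} these annihilators are $H^{\mp 1/2}_{\Gamma_j^c}$, with complements taken in $\Gamma_\infty$. Since $\Gamma_1^c\ominus\Gamma_2^c=\Gamma_1\ominus\Gamma_2$, the problem reduces to the closed-set statement: for closed $F_1,F_2\subset\Gamma_\infty$, one has $H^{s}_{F_1}=H^{s}_{F_2}$ iff $F_1\ominus F_2$ is $s$-null (applied with $s=\mp 1/2$, $F_j=\Gamma_j^c$). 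This is a mild generalisation of Theorem \ref{thm:Hs_equality_closed} beyond sets sandwiching $\mS^\circ$.

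For the forward direction of the closed-set claim, given $u\in H^s_{F_1}$ and any $\chi\in\scrD(F_2^c)$, the product $\chi u\in H^s(\Gamma_\infty)$ is supported in $F_1\cap F_2^c\subset F_1\ominus F_2$, which is $s$-null, forcing $\chi u=0$; varying $\chi$ over a countable partition of unity on the open set $F_2^c$ yields $\supp u\cap F_2^c=\emptyset$, so $u\in H^s_{F_2}$, and the reverse containment follows by symmetry. For the converse, non-$s$-nullity of $F_1\ominus F_2$ supplies by definition a non-zero $u\in H^s(\Gamma_\infty)$ with $\supp u\subset F_1\ominus F_2=(F_1\setminus F_2)\cup(F_2\setminus F_1)$, and from $u$ I must extract a non-zero distribution supported in a single one of these disjoint pieces, which will then lie in one of $H^s_{F_1},H^s_{F_2}$ but not the other.

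The main technical obstacle is the extraction step in the converse, because the two pieces $F_1\setminus F_2$ and $F_2\setminus F_1$ need not be positively separated, and neither need be closed. My plan is to localise $u$ by a smooth cutoff $\chi\in\scrD(\Gamma_\infty)$ concentrated near a carefully chosen point $\bx_0\in\supp u$; using the approximation in \eqref{eq:approx} one can make the support of $\chi u$ lie in an arbitrarily small neighbourhood of $\bx_0$. Since $\bx_0$ lies in exactly one of the two disjoint pieces, say $F_1\setminus F_2$, and since $F_2\setminus F_1\subset F_2$ with $F_2$ closed, the cutoff can be arranged so that $\supp(\chi u)\cap F_2=\emptyset$. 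The only remaining question is whether some such $\chi$ yields $\chi u\neq 0$; if not, then $u$ is in fact supported entirely in the other piece $F_2\setminus F_1$, and $u$ itself serves as the required witness.
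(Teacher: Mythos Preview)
The paper does not supply its own proof of this theorem; it is quoted verbatim from \cite[Theorem~3.12]{ChaHewMoi:13}. Your duality argument---passing to annihilators via \eqref{annopen} to reduce the question to the closed-set criterion $H^s_{F_1}=H^s_{F_2}\Leftrightarrow F_1\ominus F_2$ is $s$-null, and then proving that criterion by smooth localisation---is correct and is essentially the route taken in \cite{ChaHewMoi:13}.

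One small clean-up in your final paragraph: the appeal to \eqref{eq:approx} is unnecessary, and the fallback clause at the end is never invoked. If $\bx_0\in\supp u$ and $B$ is any open neighbourhood of $\bx_0$, then by the definition of support there exists $\phi\in\scrD(B)$ with $\langle u,\phi\rangle\neq 0$; taking $\chi\in\scrD(B)$ with $\chi=1$ on $\supp\phi$ gives $\langle\chi u,\phi\rangle=\langle u,\phi\rangle\neq 0$, so $\chi u\neq 0$ automatically. Choosing $B$ disjoint from the closed set $F_2$ (possible since $\bx_0\in F_1\setminus F_2$) then yields $0\neq\chi u\in H^s_{F_1}\setminus H^s_{F_2}$ directly.
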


Applying Theorems \ref{thm:Hs_equality_closed} and \ref{thm:Hs_equality_open} with $F_1=\overline{\mS^\circ}$, $F_2=\omS$, and $\Gamma_1=\mS^\circ$, $\Gamma_2=\omS^\circ$, we obtain:
\begin{cor} \label{lem:equalityNullity} Let $\mS\subset \Gamma_\infty$. Then:
\begin{enumerate}[(i)]
\item $H^{\pm 1/2}_{\overline{\mS^\circ}}= H^{\pm 1/2}_{\omS}$ if and only if $\omS\setminus \overline{\mS^\circ}$ is $\pm 1/2$-null.
\item If
$\omS^\circ\setminus \mS^\circ$ is not $(\mp 1/2)$-null, then $\tH^{\pm 1/2}(\mS^\circ)\neq H^{\pm 1/2}_{\omS}$. If
 $\tH^{\pm 1/2}(\omS^\circ) = H^{\pm 1/2}_{\omS}$,
then $\tH^{\pm 1/2}(\mS^\circ) = H^{\pm 1/2}_{\omS}$ if and only if $\omS^\circ\setminus \mS^\circ$ is $(\mp 1/2)$-null.
\end{enumerate}
\end{cor}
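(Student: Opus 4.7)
The plan is to apply Theorems \ref{thm:Hs_equality_closed} and \ref{thm:Hs_equality_open} directly to the indicated pairs of sets, after checking that the hypotheses hold and computing the relevant symmetric differences.

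For part (i), I would apply Theorem \ref{thm:Hs_equality_closed} with $F_1 = \overline{\mS^\circ}$ and $F_2 = \omS$. Both sets are closed, and clearly $\mS^\circ \subset \overline{\mS^\circ} \subset \omS$ (the last inclusion because $\mS^\circ \subset \omS$ and $\omS$ is closed), so the hypothesis $\mS^\circ \subset F_j \subset \omS$ is satisfied. Since $\overline{\mS^\circ}\subset \omS$, the symmetric difference simplifies to $F_1 \ominus F_2 = \omS \setminus \overline{\mS^\circ}$, and Theorem \ref{thm:Hs_equality_closed} yields the stated equivalence.

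For part (ii), I would apply Theorem \ref{thm:Hs_equality_open} with $\Gamma_1 = \mS^\circ$ and $\Gamma_2 = \omS^\circ$. Both are open with $\mS^\circ \subset \omS^\circ \subset \omS$, and the symmetric difference reduces to $\Gamma_1 \ominus \Gamma_2 = \omS^\circ \setminus \mS^\circ$. The theorem then gives
\[
\tH^{\pm 1/2}(\mS^\circ) = \tH^{\pm 1/2}(\omS^\circ) \iff \omS^\circ \setminus \mS^\circ \text{ is } (\mp 1/2)\text{-null},
\]
together with the chain of inclusions
\[
\tH^{\pm 1/2}(\mS^\circ) \subset \tH^{\pm 1/2}(\omS^\circ) \subset H^{\pm 1/2}_{\omS}.
\]

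Now the two assertions in (ii) follow readily. For the first, if $\omS^\circ \setminus \mS^\circ$ is not $(\mp 1/2)$-null, then the displayed equivalence gives a strict inclusion $\tH^{\pm 1/2}(\mS^\circ) \subsetneq \tH^{\pm 1/2}(\omS^\circ) \subset H^{\pm 1/2}_{\omS}$, so $\tH^{\pm 1/2}(\mS^\circ) \neq H^{\pm 1/2}_{\omS}$. For the second, under the assumption $\tH^{\pm 1/2}(\omS^\circ) = H^{\pm 1/2}_{\omS}$, the equality $\tH^{\pm 1/2}(\mS^\circ) = H^{\pm 1/2}_{\omS}$ is equivalent to $\tH^{\pm 1/2}(\mS^\circ) = \tH^{\pm 1/2}(\omS^\circ)$, which by Theorem \ref{thm:Hs_equality_open} is equivalent to $\omS^\circ \setminus \mS^\circ$ being $(\mp 1/2)$-null.

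There is no real obstacle: the corollary is essentially a bookkeeping exercise that specialises the two cited theorems. The only things to verify are the inclusion conditions on $F_j$ and $\Gamma_j$ and the computation of the symmetric differences, both of which are immediate from the set-theoretic relations $\mS^\circ \subset \overline{\mS^\circ} \subset \omS$ and $\mS^\circ \subset \omS^\circ \subset \omS$.
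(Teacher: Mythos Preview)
Your proof is correct and follows exactly the approach indicated in the paper, which simply states that the corollary is obtained by applying Theorems \ref{thm:Hs_equality_closed} and \ref{thm:Hs_equality_open} with $F_1=\overline{\mS^\circ}$, $F_2=\omS$, and $\Gamma_1=\mS^\circ$, $\Gamma_2=\omS^\circ$. You have merely spelled out the routine verifications of the hypotheses and the symmetric differences that the paper leaves implicit.
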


The next theorem provides sufficient conditions on $\mS^\circ$ ensuring that $\tH^{\pm 1/2}(\mS^\circ) = H^{\pm 1/2}_{\omS}$.
Here, and henceforth, when we say that the open set $\Omega\subset \R^n$ is $C^0$ except at countably many points $P\subset \partial \Omega$, we mean that its  boundary $\partial\Omega$ can at each point in $\partial \Omega\setminus P$ be locally represented as the graph (suitably rotated) of a $C^0$ function from $\R^{n-1}$ to $\R$, with $\Omega$ lying only on one side of $\partial\Omega$.
(In more detail we mean that $\Omega$ satisfies the conditions of \cite[Definition 1.2.1.1]{Gri}, but for every $\bx \in \partial \Omega \setminus P$ rather than for every $\bx \in \partial \Omega$.) Examples of such sets $\Omega$ include prefractal appoximations to the Sierpinski triangle (Figure \ref{fig:Sierpinksi}).
That the theorem holds when $\mS^\circ$ is $C^0$ is well known (e.g.\ \cite[Theorem 3.29]{McLean}), and that it holds in the generality stated here follows from \cite[Theorem 3.24]{ChaHewMoi:13}.

\begin{thm} \label{thm:C0} If $\mS^\circ$ is $C^0$, or is $C^0$ except at countably many points $P\subset \partial \Omega$, with $P$ having only a finite set of limit points, then
 $\tH^{\pm 1/2}(\mS^\circ)= H^{\pm 1/2}_{\overline{\mS^\circ}}$.
\end{thm}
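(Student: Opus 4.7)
The plan is to prove the non-trivial inclusion $H^{\pm 1/2}_{\overline{\mS^\circ}}\subset \tH^{\pm 1/2}(\mS^\circ)$; the reverse inclusion is immediate from the definition \eqref{eq:HstildeDef}. When $\mS^\circ$ is $C^0$ (i.e., the exceptional set $P$ is empty), this is the classical result \cite[Theorem 3.29]{McLean}, which can simply be invoked. For the general case, the strategy is to reduce to the $C^0$ situation by systematically cutting away neighbourhoods of the exceptional points, using Lemma \ref{lem:vj} as the main tool, and then gluing together local $C^0$ approximations via a partition of unity.

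Fix $u\in H^{\pm 1/2}_{\overline{\mS^\circ}}$ and $\eps>0$. Let $L=\{\bl_1,\ldots,\bl_M\}$ denote the (finite) set of limit points of $P$. Applying Lemma \ref{lem:vj} to $L$ yields cutoff functions $v_j\in C^\infty(\R^{n-1})$, vanishing in $(1/j)$-neighbourhoods of each $\bl_m$, with $v_j u \to u$ in $H^{\pm 1/2}(\R^{n-1})$; choose $j$ so large that $\|v_j u - u\|_{H^{\pm 1/2}(\R^{n-1})}<\eps/3$. The support of $v_j u$ is then a compact subset of $\overline{\mS^\circ}$ bounded away from $L$. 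Because $P$ is countable and $L$ is its entire set of limit points, any closed set disjoint from $L$ meets $P$ in a finite set, so $\supp(v_j u)\cap P=\{\bp_1,\ldots,\bp_K\}$ is finite. A second application of Lemma \ref{lem:vj} to this finite set produces $\tilde v_\ell\in C^\infty(\R^{n-1})$ vanishing near each $\bp_k$ with $\tilde v_\ell v_j u \to v_j u$ in $H^{\pm 1/2}(\R^{n-1})$, and we choose $\ell$ so that $\|\tilde v_\ell v_j u - v_j u\|_{H^{\pm 1/2}(\R^{n-1})}<\eps/3$.

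Set $w:=\tilde v_\ell v_j u$. Its support is compact, contained in $\overline{\mS^\circ}$, and uniformly separated from $L\cup\{\bp_1,\ldots,\bp_K\}$; hence every point of $\supp(w)$ is either an interior point of $\mS^\circ$ or a boundary point where $\mS^\circ$ is locally $C^0$. Cover $\supp(w)$ by finitely many open balls $B_1,\ldots,B_R$, each small enough that $\mS^\circ\cap B_r$ is (after rotation) the subgraph of a $C^0$ function, or $B_r\subset\mS^\circ$. Choose a subordinate partition of unity $\{\chi_r\}$ and write $w=\sum_r \chi_r w$. For each $r$, I would construct a bounded $C^0$ open set $\Omega_r\subset\mS^\circ$ such that $\supp(\chi_r w)\cap\overline{\mS^\circ}\subset\overline{\Omega_r}$, by extending the local $C^0$ graph to a global one outside a slightly larger ball. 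Then $\chi_r w\in H^{\pm 1/2}_{\overline{\Omega_r}}=\tH^{\pm 1/2}(\Omega_r)\subset\tH^{\pm 1/2}(\mS^\circ)$ by the classical $C^0$ case. Summing gives $w\in\tH^{\pm 1/2}(\mS^\circ)$; combining the two cutoff errors shows $\mathrm{dist}(u,\tH^{\pm 1/2}(\mS^\circ))<\eps$, and closedness of $\tH^{\pm 1/2}(\mS^\circ)$ in $H^{\pm 1/2}(\R^{n-1})$ then forces $u\in\tH^{\pm 1/2}(\mS^\circ)$.

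The main obstacle is the construction of the auxiliary $C^0$ sets $\Omega_r$ in the last step: one must locally modify the $C^0$ boundary of $\mS^\circ$ to obtain a globally $C^0$ open subset of $\mS^\circ$ containing $\supp(\chi_r w)\cap\mS^\circ$, so that the classical theorem applies. The double-cutoff mechanism itself is clean because the hypothesis on $P$ is precisely what guarantees that two successive applications of Lemma \ref{lem:vj}, each to a finite set, suffice to reduce to a situation in which only $C^0$ boundary behaviour is relevant on the support of the approximand.
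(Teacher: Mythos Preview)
The paper does not give a self-contained proof of this theorem; the sentence preceding the statement simply notes that the $C^0$ case is \cite[Theorem~3.29]{McLean} and that the extension to countably many exceptional points with finitely many limit points is \cite[Theorem~3.24]{ChaHewMoi:13}. So there is no in-paper argument to compare yours against.

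Your sketch is essentially correct and is, in outline, what one expects the proof in \cite{ChaHewMoi:13} to look like: two rounds of cutoff via Lemma~\ref{lem:vj} (first to excise the finite set $L$ of limit points of $P$, then the finitely many remaining points of $P$ meeting the compact support), followed by a local reduction to the $C^0$ case via a partition of unity. The key structural observation --- that a compact set disjoint from $L$ meets $P$ in only finitely many points --- is exactly why the hypothesis on $P$ is the right one, and you use it correctly.

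Regarding your flagged obstacle, you can sidestep the construction of the auxiliary global $C^0$ sets $\Omega_r$ entirely. The proof of \cite[Theorem~3.29]{McLean} is already local: given $\chi_r w$ supported in a ball $B_r$ in which $\mS^\circ\cap B_r$ is (after rotation) the strict subgraph of a continuous function, translate $\chi_r w$ a small distance $\delta$ in the inward graph direction so that its support lies compactly in $\mS^\circ\cap B_r\subset\mS^\circ$, then mollify. Translation is strongly continuous on $H^{\pm 1/2}(\R^{n-1})$ and mollification converges in $H^{\pm 1/2}(\R^{n-1})$, so $\chi_r w\in\tH^{\pm 1/2}(\mS^\circ)$ directly, without ever building a global $C^0$ domain $\Omega_r$. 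If you prefer to keep the $\Omega_r$ route, your idea works too (extend the local graph continuously by Tietze, bound below by a hyperplane, cap the sides), but the translation argument is cleaner and closer to what is standard.
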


By combining the results in Corollary~\ref{lem:equalityNullity}, Theorem \ref{thm:C0} and Theorem \ref{thm:null}, we can derive the following corollary, which explores the case where $\omS^\circ$ is $C^0$, except perhaps at countably many points, but where $\mS^\circ \subsetneqq \omS^\circ$. We illustrate this corollary with examples where $\mS$ is open, so that $\mS^\circ = \mS$ and $\overline{\mS^\circ}=\omS$, but $\mS^\circ \neq \omS^\circ$, in Examples \ref{ex:openminuscantor} and \ref{ex:openminuscheese}.
\begin{cor}
\label{prop:TildeSubscript}
Suppose that $\mS^\circ\subsetneqq \omS^\circ$ and that $\omS^\circ$ is  $C^0$, or $C^0$ except at a countable set of points $P$ that has only finitely many limit points. Then:
\begin{enumerate}[(i)]

\item if $\omS^\circ \setminus \mS^\circ$ has interior points then $\tH^{\pm 1/2}(\mS^\circ) \subsetneqq H^{\pm 1/2}_{\omS}$;
\item \label{ts1}
if $\omS^\circ\setminus \mS^\circ$ is countable, or $\omS^\circ\setminus \mS^\circ\subset \partial \Omega$, where $\Omega\subset \Gamma_\infty$ is a Lipschitz open set,
then $\tH^{\pm 1/2}(\mS^\circ) = H^{\pm 1/2}_{\omS}$;

\item if $\tH^{1/2}(\mS^\circ)=H^{1/2}_{\omS}$ or $m(\omS^\circ\setminus \mS^\circ)=0$, then $\tH^{-1/2}(\mS^\circ)=H^{-1/2}_{\omS}$;

\item $\tH^{1/2}(\mS^\circ)=H^{1/2}_{\omS}$ if and only if $\mathrm{cap}(\omS^\circ \setminus \mS^\circ) = 0$;

\item if $d:=\dimH(\omS^\circ\setminus \mS^\circ)<n-2$ then $\tH^{1/2}(\mS^\circ) = H^{1/2}_{\omS}$, while $\tH^{1/2}(\mS^\circ) \subsetneqq H^{1/2}_{\omS}$ for  $d>n-2$.

\end{enumerate}
\end{cor}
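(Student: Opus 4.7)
My plan is to reduce every assertion to a $\pm 1/2$-nullity statement about $\omS^\circ \setminus \mS^\circ$ via the identities and nullity toolkit already in hand. The first sentence of Corollary~\ref{lem:equalityNullity}(ii) is unconditional and directly gives (i): if $\omS^\circ \setminus \mS^\circ$ has interior points then by Theorem~\ref{thm:null}(c) it is neither $+1/2$- nor $-1/2$-null, so $\tH^{\pm 1/2}(\mS^\circ) \neq H^{\pm 1/2}_\omS$, and strict inclusion follows from the always-valid containment $\tH^{\pm 1/2}(\mS^\circ) \subset H^{\pm 1/2}_\omS$.

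For the remaining parts I would first establish the key reduction
\begin{equation*}
\tH^{\pm 1/2}(\mS^\circ) = H^{\pm 1/2}_\omS \iff \omS^\circ \setminus \mS^\circ \text{ is } \mp 1/2\text{-null},
\end{equation*}
namely the full two-sided form of Corollary~\ref{lem:equalityNullity}(ii) with $\omS$ in the role of the enveloping closed set. This requires showing $\tH^{\pm 1/2}(\omS^\circ) = H^{\pm 1/2}_\omS$, which I would obtain by applying Theorem~\ref{thm:C0} to the set $\omS^\circ$ (permitted by the standing $C^0$-except-at-countably-many-points hypothesis) to get $\tH^{\pm 1/2}(\omS^\circ) = H^{\pm 1/2}_{\overline{\omS^\circ}}$, and then verifying that $\omS \setminus \overline{\omS^\circ}$ is $\pm 1/2$-null so that Corollary~\ref{lem:equalityNullity}(i) closes the gap to $H^{\pm 1/2}_\omS$. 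With the reduction in hand, (ii)--(v) all follow from Theorem~\ref{thm:null}: for (ii) the countable case uses (g) and the Lipschitz case uses (f) combined with (e), so that $\omS^\circ \setminus \mS^\circ$ is $\mp 1/2$-null for both signs simultaneously; for (iii) either hypothesis forces $+1/2$-nullity of $\omS^\circ \setminus \mS^\circ$, the first by routing through the reduction to obtain $-1/2$-nullity and then applying (e), the second by a direct application of (e); for (iv), part (a) identifies $-1/2$-nullity of $\omS^\circ \setminus \mS^\circ$ with $\mathrm{cap}(\omS^\circ \setminus \mS^\circ) = 0$; and for (v), part (d) handles both dimension regimes through the reduction.

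The main obstacle is the preliminary claim that $\omS \setminus \overline{\omS^\circ}$ is $\pm 1/2$-null under the hypothesis on $\omS^\circ$, since this residual set is not automatically negligible without additional regularity on how $\mS$ sits inside $\omS$ away from the bulk $\omS^\circ$; the $C^0$-except-at-countably-many-points hypothesis is what ultimately confines this residual to a countable or otherwise thin subset of $\partial \omS$, which is then handled by Theorem~\ref{thm:null}(g) or an analogous invocation. Once this is secured, the body of the argument is a clean translation between nullity statements using Theorem~\ref{thm:null}, with no further analytic content beyond the already-cited density and capacity theory.
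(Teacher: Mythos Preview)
Your approach is precisely the one the paper intends: the paper's proof is the single sentence that the corollary follows by combining Corollary~\ref{lem:equalityNullity}, Theorem~\ref{thm:C0}, and Theorem~\ref{thm:null}, and your write-up makes that combination explicit and in the right order. Part (i) via the unconditional half of Corollary~\ref{lem:equalityNullity}(ii) together with Theorem~\ref{thm:null}(c) is exactly right, and the reduction you set up for (ii)--(v) is the correct mechanism.

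Where you go wrong is in the resolution of the obstacle you (rightly) flag. The hypothesis that $\omS^\circ$ is $C^0$ except at countably many points constrains only $\omS^\circ$ and its boundary; it says nothing about the residual $\omS\setminus\overline{\omS^\circ}$. Concretely, for $n=3$ take $\mS=\big((0,1)^2\setminus\{p\}\big)\cup F$ with $p\in(0,1)^2$ and $F$ a Sierpinski triangle disjoint from $[0,1]^2$. Then $\mS^\circ=(0,1)^2\setminus\{p\}$, $\omS=[0,1]^2\cup F$, and $\omS^\circ=(0,1)^2$ is Lipschitz, so the standing hypothesis holds and $\omS^\circ\setminus\mS^\circ=\{p\}$ is countable. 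Yet $\tH^{-1/2}(\mS^\circ)=H^{-1/2}_{[0,1]^2}\subsetneqq H^{-1/2}_{[0,1]^2\cup F}=H^{-1/2}_\omS$, since $F$ is not $-1/2$-null (Example~\ref{ex:sier}). Thus the residual set is not forced to be thin by the $C^0$ hypothesis alone, and your claimed mechanism for dispatching it fails.

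What is actually needed is the tacit hypothesis $\overline{\omS^\circ}=\omS$, i.e.\ that $\omS$ is a regular closed set. This holds automatically whenever $\mS$ is open (since then $\mS\subset\omS^\circ$ gives $\omS=\overline{\mS}\subset\overline{\omS^\circ}\subset\omS$), which is the setting the paper explicitly says it has in mind just before the corollary and in every application (Examples~\ref{ex:openminuscantor} and~\ref{ex:openminuscheese}, Theorem~\ref{thm:wpst}). Under that extra assumption the residual is empty, Theorem~\ref{thm:C0} gives $\tH^{\pm1/2}(\omS^\circ)=H^{\pm1/2}_\omS$ directly, and your argument for (ii)--(v) goes through cleanly.
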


\section{Well-posedness of standard formulations}\label{subsec:nonuni}

In this section we investigate for which screens $\mS$ the standard formulations \SDcl, \SNcl, \Dst, and \Nsts are well-posed. Since we know already that these formulations have at least one solution (Corollary \ref{cor:atleastS} and Theorems \ref{DirEquivThmm} and \ref{NeuEquivThmm}), we need only consider the question of uniqueness. By Corollary \ref{lem:uniequiv} it is enough to consider this question for \Dsts and \Nst.

We noted in Remark \ref{rem:acsuperfl} that $\sD(V^-)$ with $V^-=\tH^{-1/2}(\mS^\circ)$ is equivalent to \Dst, augmented by the additional constraints \eqref{a1} and \eqref{a2}, and that $\sN(V^+)$ with $V^+ = \tH^{1/2}(\mS^\circ)$ is equivalent to \Nsts augmented by \eqref{b1} and \eqref{b2}.
Since $\sD(V^-)$ and $\sN(V^+)$ are well-posed (Theorem \ref{DirEquivThmm}), asking whether \Dsts has more than one solution is equivalent to asking whether \eqref{a1} and \eqref{a2} are superfluous, i.e.\ whether $\sD(V^-)$ remains well-posed if \eqref{a1} and \eqref{a2} are deleted. Similarly, asking whether \Nsts has more than one solution is equivalent to asking whether \eqref{b1} and \eqref{b2} are superfluous, i.e.\ whether $\sN(V^+)$ remains well-posed if \eqref{b1} and \eqref{b2} are deleted.

\begin{thm} \label{thm:superlf} Equation \eqref{a2} is superfluous in $\sD(V^-)$
if and only if $V^-=H^{-1/2}_\omS$.
Similarly, \eqref{b2} is superfluous in $\sN(V^+)$
if and only if $V^+=H^{1/2}_\omS$.
Equation \eqref{a1} is superfluous in $\sD(V^-)$ if and only if $(V^+_*)^\perp\cap H^{1/2}_\omS = \{0\}$. Similarly, \eqref{b1} is superfluous in $\sN(V^+)$
if and only if $(V^-_*)^\perp\cap H^{-1/2}_\omS = \{0\}$.
In particular, if $V^\pm$ satisfies \eqref{eq:selection} then \eqref{a1} is superfluous if $\partial \mS$ is $1/2$-null, and \eqref{b1} is superfluous if $\partial \mS$ is $-1/2$-null.
If $V^\pm = \tH^{\pm 1/2}(\mS^\circ)$ then \eqref{a1} is superfluous if and only if $\partial \mS$ is $1/2$-null, and \eqref{b1} is superfluous if and only if $\partial \mS$ is $-1/2$-null.

\end{thm}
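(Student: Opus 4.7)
The plan is to treat each of the four iff statements in turn and then deduce the specialized consequences. A preliminary point of interpretation: in the absence of \eqref{a1} or \eqref{b1}, the boundary conditions \eqref{a3} and \eqref{b3} must be read as asserting the stated equation for \emph{both} signs $\pm$ (when \eqref{a1} or \eqref{b1} does hold the two traces coincide and the distinction is immaterial); it is this two-sided reading that makes the question of whether \eqref{a1} or \eqref{b1} is superfluous non-trivial.

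For \eqref{a2} and \eqref{b2} the forward directions are immediate: for any $v$ satisfying the Helmholtz equation in $D$ we have $v\in W^{1,\mathrm{loc}}(D;\Delta)$, so $[\partial_\bn v]\in H^{-1/2}_\omS$ and $[v]\in H^{1/2}_\omS$ automatically by \eqref{jumpu}. For the converses, in the Dirichlet case I would take $\phi\in H^{-1/2}_\omS\setminus V^-$ and set $v:=-\cS\phi$; by Theorem \ref{LayerPotRegThm} together with \eqref{JumpRelns1}--\eqref{JumpRelns2}, this $v$ satisfies the Helmholtz equation, the Sommerfeld condition, $[v]=0$, and $[\partial_\bn v]=\phi\notin V^-$, so all the hypotheses of $\sD(V^-)$ hold with $g_\sD := P_{V^+_*}\gamma^\pm(\chi v)$ except \eqref{a2} (and $\gamma^\pm$ is unambiguous here because $[v]=0$). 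The Neumann case is exactly analogous, using $v:=\cD\psi$ for $\psi\in H^{1/2}_\omS\setminus V^+$ together with \eqref{JumpRelns3}--\eqref{JumpRelns4}.

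The heart of the proof is the equivalences for \eqref{a1} and \eqref{b1}. For \eqref{a1}: if $v\in W^{1,\mathrm{loc}}(D;\Delta)$ satisfies \eqref{a3} for both signs, subtracting the two equations gives $P_{V^+_*}[v]=0$, i.e.\ $[v]\in (V^+_*)^\perp\cap H^{1/2}_\omS$; hence the condition forces $[v]=0$, giving superfluity. Conversely, given any $0\neq\psi\in(V^+_*)^\perp\cap H^{1/2}_\omS$, I would put $v:=\cD\psi$: the jump relations \eqref{JumpRelns3}--\eqref{JumpRelns4} give $[\partial_\bn v]=0$ and $P_{V^+_*}\gamma^\pm(\chi v)=\pm\tfrac12 P_{V^+_*}\psi=0$, so $v$ satisfies every condition of $\sD(V^-)$ with $g_\sD=0$ except \eqref{a1}. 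The argument for \eqref{b1} is symmetric, with $\cS\phi$ replacing $\cD\psi$ for $\phi\in(V^-_*)^\perp\cap H^{-1/2}_\omS$ and the roles of $H^{\pm 1/2}$ interchanged.

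To finish, I would identify $(V^+_*)^\perp\cap H^{1/2}_\omS$ concretely via the identity $(V^+_*)^\perp=(V^-)^a$ (orthogonal complement in $H^{1/2}(\Gamma_\infty)$), which follows from the definition $V^+_*=((V^-)^a)^\perp$ and the fact that $(W^\perp)^\perp=W$ for closed $W$. The physical selection principle \eqref{eq:selection} gives $V^-\supset\tH^{-1/2}(\mS^\circ)$, hence $(V^-)^a\subset (\tH^{-1/2}(\mS^\circ))^a=H^{1/2}_{(\mS^\circ)^c}$ by \eqref{annopen}; intersecting with $H^{1/2}_\omS$ and using $\omS\cap(\mS^\circ)^c=\omS\setminus\mS^\circ=\partial\mS$ yields $(V^+_*)^\perp\cap H^{1/2}_\omS\subset H^{1/2}_{\partial\mS}$, which is trivial whenever $\partial\mS$ is $1/2$-null. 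When in particular $V^-=\tH^{-1/2}(\mS^\circ)$ the inclusion becomes an equality, giving the full iff, and the Neumann analogues are identical with $1/2$ replaced by $-1/2$. The only genuine subtlety in the whole argument is fixing the two-sided interpretation of \eqref{a3} and \eqref{b3}; thereafter everything is a bookkeeping exercise using Theorem \ref{LayerPotRegThm}, \eqref{annopen}, and the annihilator identities.
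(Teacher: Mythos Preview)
Your proof is correct and, for the superfluity of \eqref{a1} and \eqref{b1} and for the specialized consequences, it follows essentially the same path as the paper. The only real difference is in the converse direction for \eqref{a2} and \eqref{b2}.

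For that converse, the paper argues indirectly: assuming $V^-\neq H^{-1/2}_\omS$, it invokes Theorem~\ref{thm:S61}(a) to produce a plane-wave direction $\bd$ for which the solutions $u^s_1$ of $\sS\sD(V^-)$ and $u^s_2$ of $\sS\sD(H^{-1/2}_\omS)$ differ, and then observes that $v:=u^s_1-u^s_2\neq 0$ solves the relaxed problem $\sD(V^-)$ (with \eqref{a2} deleted) for $g_\sD=0$, giving a nonzero homogeneous solution. Your approach is more direct and more elementary: you simply take any $\phi\in H^{-1/2}_\omS\setminus V^-$ and observe that $v:=-\cS\phi$ satisfies every hypothesis of $\sD(V^-)$ for the data $g_\sD:=P_{V^+_*}\gamma^\pm(\chi v)$ except \eqref{a2}; since the full problem $\sD(V^-)$ with this data has a (unique) solution which \emph{does} satisfy \eqref{a2}, you get two distinct solutions of the relaxed problem. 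This avoids the machinery of Theorem~\ref{thm:S61}(a) (which itself rests on Lemma~\ref{lem:nonvanish} and an analyticity argument) and is entirely self-contained using only Theorem~\ref{LayerPotRegThm}. The paper's route has the minor advantage of exhibiting non-uniqueness specifically at $g_\sD=0$, but for the statement of the theorem your argument suffices and is cleaner. It would strengthen the write-up to make the non-uniqueness step explicit (i.e.\ spell out that the well-posed problem supplies a second, distinct solution with the same data), but the logic is sound.
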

\begin{proof} If $v\in W^{1,\mathrm{loc}}(D)$ and satisfies \eqref{eq:he} in $D$, so that $v\in W^{1,\mathrm{loc}}(D;\Delta)$, then $[v]\in H^{1/2}_\omS$ and $[\partial_\bn v]\in H^{-1/2}_\omS$. Thus \eqref{a2} and \eqref{b2} are superfluous if $V^\pm = H^{\pm 1/2}_\omS$. If also $v$ satisfies \eqref{a3} then $P_{V^+_*} [v]=0$, i.e.\ $[v]\in (V^+_*)^\perp$, so that $[v]\in (V^+_*)^\perp\cap H^{1/2}_\omS$. Thus \eqref{a1} is superfluous if $(V^+_*)^\perp\cap H^{1/2}_\omS = \{0\}$. Similarly, if $v$ satisfies \eqref{b3}, then $[\partial_\bn v]\in (V^-_*)^\perp\cap H^{-1/2}_\omS$, so \eqref{b1} is superfluous if $(V^-_*)^\perp\cap H^{-1/2}_\omS = \{0\}$.

Conversely, suppose that $V_1\neq V_2$, where $V_1=V^-$ and $V_2=H^{-1/2}_\omS$. Then, by Theorem \ref{thm:S61}(a), there exists an incident wave direction $\bd$ such that $u^s_1\neq u^s_2$, where $u^s_j$ is the solution to $\sS\sD(V_j)$ for $j=1,2$. Since $V_1\subset V_2$, $u^s_2$ satisfies all the conditions of $\sS\sD(V^-)$ except for \eqref{a2}. So $v:= u^s_1-u^s_2\neq 0$ is a solution to $\sD(V^-)$ with $g_{\sD}=0$ and \eqref{a2} deleted. Thus uniqueness fails for $\sD(V^-)$ if \eqref{a2} is deleted and $V^-\neq H^{-1/2}_\omS$. Similarly, arguing using Theorem \ref{thm:s62}(a), uniqueness fails for $\sN(V^+)$ if \eqref{b2} is deleted and $V^+\neq H^{1/2}_\omS$.

Next, suppose that $(V^+_*)^\perp\cap H^{1/2}_\omS \neq \{0\}$, choose a non-zero $\psi \in(V^+_*)^\perp\cap H^{1/2}_\omS$, and set $v:= \cD\psi$. Then, by Theorem \ref{LayerPotRegThm}, except that \eqref{a1} is not satisfied as $[v]=\psi$, $v$ satisfies $\sD(V^-)$ with $g_{\sD}=0$. Thus uniqueness fails for $\sD(V^-)$ if \eqref{a1} is deleted and $(V^+_*)^\perp\cap H^{1/2}_\omS \neq \{0\}$. Similarly, uniqueness fails for $\sN(V^+)$ if \eqref{b1} is deleted and $(V^-_*)^\perp\cap H^{-1/2}_\omS \neq \{0\}$, arguing in this case by defining $v:= \cS\phi$, where $0\neq \phi \in(V^-_*)^\perp\cap H^{-1/2}_\omS$.

If $V^\pm=\tH^{\pm 1/2}(\mS^\circ)$, then (recall \eqref{eq:duals})  $V_*^\mp = \left(H^{\mp 1/2}_{(\mS^\circ)^c}\right)^\perp$ and $(V^+_*)^\perp\cap H^{\mp 1/2}_\omS = H^{\mp 1/2}_{\partial \mS}$, so that $\{0\} = (V^+_*)^\perp\cap H^{\mp 1/2}_\omS$ if and only if $\partial \mS$ is $\mp1/2$-null. If $V^\pm$ satisfies \eqref{eq:selection}, then $V^\pm\supset \tH^{\pm 1/2}(\mS^\circ)$ and, as noted below \eqref{dualgen}, $V^\mp_* \supset \left(H^{\mp 1/2}_{(\mS^\circ)^c}\right)^\perp$, so that $(V^+_*)^\perp\cap H^{\mp 1/2}_\omS \subset H^{\mp 1/2}_{\partial \mS}$, so that $\{0\} = (V^+_*)^\perp\cap H^{\mp 1/2}_\omS$ if $\partial \mS$ is $\mp1/2$-null.
\end{proof}

By applying Theorem \ref{thm:superlf} with $V^\pm=\tH^{\pm 1/2}(\mS^\circ)$, and recalling Theorem \ref{thm:null}, and the results of \S\ref{subsec:spacesequal}, we can now clarify when the standard scattering and BVP formulations are well-posed.

\begin{thm} \label{thm:wpst} (a) Problems \SDcls and \Dsts are well-posed if and only if $\tH^{-1/2}(\mS^\circ)$ $= H^{-1/2}_\omS$ and $\partial \mS$ is $1/2$-null. In particular, any of the following conditions is sufficient to ensure that \SDcls and \Dsts are well-posed:
\begin{enumerate}[(i)]
\item \label{cond1} $\mS$ is open and $C^0$, or $C^0$ except at a countable set of points that has only finitely many limit points.
\item Condition \rf{cond1} holds for $\mS^\circ$,
and $\mathrm{cap}(\omS\setminus \overline{\mS^\circ})=0$. The latter condition holds, when $n=2$, for example if $\omS\setminus \overline{\mS^\circ}$ is countable and, when $n=3$, for example if $\dimH(\omS\setminus \overline{\mS^\circ})<1$ or if $\omS\setminus \overline{\mS^\circ}\subset \cup_{j=1}^M \partial\Omega_j$, with each $\Omega_j\subset\Gamma_\infty$ a Lipschitz open set.
\item Condition \rf{cond1} holds for $\omS^\circ$,
and $m(\partial \mS)=0$.
\end{enumerate}
On the other hand, \SDcls and \Dsts are {\em not} well-posed if $\omS^\circ\setminus \mS^\circ$ is not $1/2$-null, or if $\omS\setminus \overline{\mS^\circ}$ is not $-1/2$-null, in particular if $\dimH(\omS\setminus \overline{\mS^\circ})>n-2$.

(b) Problems \SNcls and \Nsts are well-posed if and only if $\tH^{1/2}(\mS^\circ)= H^{1/2}_\omS$ and $\partial \mS$ is $-1/2$-null. In particular, \SNcls and \Nsts are well-posed if $\mS$ is a Lipschitz open set, or if $\mS$ is $C^0$ except at a countable set of points that has only finitely many limit points and also $\partial \mS\subset \cup_{j=1}^\infty \partial \Omega_j$, with each $\Omega_j\subset\Gamma_\infty$ a Lipschitz open set.  On the other hand, \SNcls and \Nsts fail to be well-posed if $\mathrm{cap}(\partial \mS)>0$, in particular if $\dimH(\partial \mS)>n-2$.
\end{thm}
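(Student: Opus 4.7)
The plan is to reduce uniqueness of the four standard formulations to the superfluity statements already characterised in Theorem \ref{thm:superlf}, then unpack the listed sufficient and necessary conditions via Theorems \ref{thm:C0}, \ref{thm:null} and Corollaries \ref{lem:equalityNullity}, \ref{prop:TildeSubscript}.

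First I would use Corollary \ref{lem:uniequiv} to reduce \SDcls and \SNcls to \Dsts and \Nsts respectively. Since existence for these is already in hand (Theorems \ref{DirEquivThmm}, \ref{NeuEquivThmm} with $V^\pm=\tH^{\pm 1/2}(\mS^\circ)$, combined with Corollary \ref{cor:imp}), only uniqueness needs characterising. By Remark \ref{rem:acsuperfl}, \Dsts is obtained from $\sD(\tH^{-1/2}(\mS^\circ))$ by deleting \eqref{a1} and \eqref{a2}, so uniqueness for \Dsts is equivalent to both \eqref{a1} and \eqref{a2} being superfluous in the sense of Theorem \ref{thm:superlf}. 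The forward direction is immediate: a witness to the failure of superfluity of either equation has $g_\sD=0$ and, by Lemma \ref{lem:imp}, satisfies \Dsts with $\widetilde g_\sD=0$, contradicting uniqueness. For the converse, if $\tH^{-1/2}(\mS^\circ)=H^{-1/2}_\omS$ then $[\partial_\bn v]\in H^{-1/2}_\omS=\tH^{-1/2}(\mS^\circ)$ holds automatically for every $v\in W^{1,\mathrm{loc}}(D;\Delta)$, so \eqref{a2} is not a real constraint; any \Dsts-solution with zero data thus solves $\sD(\tH^{-1/2}(\mS^\circ))$ with only \eqref{a1} deleted, and the superfluity of \eqref{a1} forces it to vanish. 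Substituting the two characterisations of Theorem \ref{thm:superlf} (noting that with $V^\pm=\tH^{\pm 1/2}(\mS^\circ)$ one has $(V^+_*)^\perp\cap H^{1/2}_\omS=H^{1/2}_{\partial\mS}$) yields the main ``iff'' of (a); part (b) is identical with $\pm 1/2$ interchanged.

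For the sufficient conditions in (a), I would verify $\tH^{-1/2}(\mS^\circ)=H^{-1/2}_\omS$ by combining Theorem \ref{thm:C0} (which yields $\tH^{-1/2}(\mS^\circ)=H^{-1/2}_{\overline{\mS^\circ}}$ under the stated $C^0$ hypothesis, with $\overline{\mS^\circ}=\omS$ whenever $\mS$ is open) with Corollary \ref{lem:equalityNullity}(i) or Corollary \ref{prop:TildeSubscript} to bridge $H^{-1/2}_{\overline{\mS^\circ}}$ and $H^{-1/2}_\omS$ in cases (ii) and (iii); the capacity, Hausdorff-dimension, or measure-zero hypotheses translate into nullity of $\omS\setminus\overline{\mS^\circ}$ via Theorem \ref{thm:null}(a),(d),(e). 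The $1/2$-nullity of $\partial\mS$ is obtained by applying Theorem \ref{thm:null}(f) on each $C^0$ or Lipschitz piece and assembling via Theorem \ref{thm:null}(g)/(h); concretely for (ii), write $\partial\mS=\partial\mS^\circ\cup(\omS\setminus\overline{\mS^\circ})$, with $\partial\mS^\circ$ closed (hence eligible for (h)) and $1/2$-null by (f), and $\omS\setminus\overline{\mS^\circ}$ upgraded from $-1/2$-null to $1/2$-null via (e). The Neumann sufficient conditions in (b) follow by the same method, now using that the algebra generated by Lipschitz open sets lies in the ``$-1/2$-null boundary'' class by Theorem \ref{thm:null}(f), together with the countable-union stability of $-1/2$-nullity from Theorem \ref{thm:null}(g) for the $\partial\mS\subset\bigcup_j\partial\Omega_j$ hypothesis. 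The ``not well-posed'' implications are the contrapositive of the main characterisation: Corollary \ref{lem:equalityNullity} converts non-nullity of $\omS^\circ\setminus\mS^\circ$ and $\omS\setminus\overline{\mS^\circ}$ into failure of $\tH^{-1/2}(\mS^\circ)=H^{-1/2}_\omS$, and Theorem \ref{thm:null}(a),(d) converts the capacity and Hausdorff hypotheses into failures of the relevant nullity.

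The main obstacle is the ``joint superfluity equals individual superfluities'' step, since a priori removing both \eqref{a1} and \eqref{a2} at once could expose solutions not detected by removing either alone. The resolution is that the two conditions from Theorem \ref{thm:superlf} decouple cleanly: $\tH^{-1/2}(\mS^\circ)=H^{-1/2}_\omS$ renders \eqref{a2} automatic rather than a genuine restriction, so the joint-deletion problem collapses to the single-deletion problem governed by \eqref{a1}, whose superfluity is exactly the $1/2$-nullity of $\partial\mS$ (respectively $-1/2$-nullity in the Neumann case).
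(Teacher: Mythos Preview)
Your proposal is correct and follows essentially the same route as the paper: reduce to uniqueness of \Dsts/\Nsts via Corollary \ref{lem:uniequiv}, invoke the remarks before Theorem \ref{thm:superlf} to identify \Dsts/\Nsts with $\sD(\tH^{-1/2}(\mS^\circ))$/$\sN(\tH^{1/2}(\mS^\circ))$ minus \eqref{a1}--\eqref{a2}/\eqref{b1}--\eqref{b2}, then read off the characterisations from Theorem \ref{thm:superlf} and unpack the explicit conditions via Theorems \ref{thm:C0}, \ref{thm:null} and Corollaries \ref{lem:equalityNullity}, \ref{prop:TildeSubscript}. Your explicit treatment of the ``joint deletion versus individual deletion'' issue is a worthwhile clarification that the paper leaves implicit; the paper simply asserts that the first sentences of (a) and (b) follow from the remarks before Theorem \ref{thm:superlf} together with Theorem \ref{thm:superlf}, whereas you spell out why the two superfluity conditions decouple.
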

\begin{proof}
The first sentences of (a) and (b) follow from the remarks before Theorem \ref{thm:superlf} and Theorem \ref{thm:superlf} applied with  $V^\pm = \tH^{\pm 1/2}(\mS^\circ)$. The remainder of (b) follows from Theorems \ref{thm:C0} and \ref{thm:null}(a)(d), (f) and (g). Part (a)(i) follows from Theorems \ref{thm:C0} and \ref{thm:null}(f)-(h), part (a)(ii) additionally from Theorem \ref{thm:null}(d), and part (a)(iii) from Corollary \ref{prop:TildeSubscript}(iii) and Theorem \ref{thm:null}(e). The remainder of (a) follows from Corollary \ref{lem:equalityNullity}, and from Theorems \ref{thm:C0} and \ref{thm:null}(d).
\end{proof}

\begin{rem}
We note that if \Dsts is well-posed then its solution coincides with that of $\sD(V^-)$, and that if \Nsts is well-posed then its solution coincides with that of $\sN(V^+)$, this for all $V^\pm$ satisfying \eqref{eq:selection} (which means in fact for $V^\pm = \tH^{\pm1/2}(\mS^\circ)= H^{\pm1/2}_\omS$), as a consequence of Theorems \ref{DirEquivThmm} and \ref{NeuEquivThmm}. Similarly, as long as $(\Delta + k^2)u^i=0$ in a neighbourhood of $\omS$, if \SDcls is well-posed then its solution coincides with that of \SDw, and if \SNcls is well-posed then its solution coincides with that of \SNw, by Corollaries \ref{DirEquivThmm2} and \ref{NeuEquivThmm2}.
\end{rem}

A further case of Theorem \ref{thm:superlf} worthy of note is $V^\pm = H^{\pm 1/2}_\omS$. In this case, recent results in \cite{ChaHewMoi:13} reveal that the question of whether or not \eqref{a1} and \eqref{b1} are superfluous can be expressed in terms of properties of the space $H^s_0(\Omega)$ (defined in \rf{eq:W10}).
\begin{thm} If $V^\pm = H^{\pm 1/2}_\omS$,
 then \eqref{a1}, \eqref{a2} and \eqref{b2} are superfluous in $\sD(V^-)$ and $\sN(V^+)$. Further \eqref{b1} is superfluous in $\sN(V^+)$ if and only if $H^{1/2}_0(\omS^c) = H^{1/2}(\omS^c)$, which holds if $\partial \omS$ is $-1/2$-null (equivalently if $\mathrm{cap}(\partial \omS)=0$), and certainly if
 $\partial \omS \subset \cup_{j=1}^M \partial\Omega_j$, with each $\Omega_j\subset\Gamma_\infty$ a Lipschitz open set.
\end{thm}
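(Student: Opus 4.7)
My strategy is to reduce every assertion to Theorem~\ref{thm:superlf}, which already turns superfluity of each of \eqref{a1}--\eqref{b2} into an intersection condition on closed subspaces of $H^{\pm1/2}(\Gamma_\infty)$, together with two elementary ingredients: a direct $L^2$-argument in the Dirichlet case and a quotient-duality identification in the Neumann case. The sufficient conditions for \eqref{b1} are then read off from Theorem~\ref{thm:null}.

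The superfluity of \eqref{a2} for $V^-=H^{-1/2}_\omS$ and of \eqref{b2} for $V^+=H^{1/2}_\omS$ is immediate from the opening two sentences of Theorem~\ref{thm:superlf}. For \eqref{a1}, the annihilator formula \eqref{annclosed} gives $(V^+_*)^\perp=\tH^{1/2}(\omS^c)$, so Theorem~\ref{thm:superlf} reduces the task to proving $\tH^{1/2}(\omS^c)\cap H^{1/2}_\omS=\{0\}$. I would argue directly: any $\psi$ in the intersection lies in $H^{1/2}(\R^{n-1})\subset L^2(\R^{n-1})$; since $\supp\psi\subset\omS$ and $\psi$ is an $L^2$ function, $\psi=0$ a.e.\ on the open set $\omS^c$; and since $\psi$ is an $H^{1/2}$-limit (hence $L^2$-limit) of functions in $\scrD(\omS^c)$, all of which vanish on $\omS$, $\psi=0$ a.e.\ on $\omS$ too, so $\psi=0$.

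For the equivalence involving \eqref{b1}, the same reduction via Theorem~\ref{thm:superlf} and \eqref{annclosed} brings us to the condition $\tH^{-1/2}(\omS^c)\cap H^{-1/2}_\omS=\{0\}$; the $L^2$-trick now fails since $H^{-1/2}(\R^{n-1})$ does not embed into $L^2(\R^{n-1})$, so I would instead use a quotient-duality identification. Writing $H^{1/2}(\omS^c)=H^{1/2}(\R^{n-1})/H^{1/2}_\omS$, the standard identification of the dual of a quotient with an annihilator gives $(H^{1/2}(\omS^c))^*\cong(H^{1/2}_\omS)^a=\tH^{-1/2}(\omS^c)$ (the second equality by \eqref{annclosed}), with pairing $\langle\phi,u\rangle:=\langle\phi,U\rangle_{H^{-1/2}(\R^{n-1})\times H^{1/2}(\R^{n-1})}$ for any $H^{1/2}(\R^{n-1})$-extension $U$ of $u\in H^{1/2}(\omS^c)$, extension-independence following from $\phi\in(H^{1/2}_\omS)^a$. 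Testing such a $\phi$ against $u=\psi|_{\omS^c}$ for $\psi\in\scrD(\omS^c)$ shows that $\phi$ annihilates the closed subspace $H^{1/2}_0(\omS^c)$ iff it vanishes on every such $\psi$, iff it vanishes as a distribution on $\omS^c$, iff $\supp\phi\subset\omS$; so the annihilator of $H^{1/2}_0(\omS^c)$ in $\tH^{-1/2}(\omS^c)$ is precisely $\tH^{-1/2}(\omS^c)\cap H^{-1/2}_\omS$. The required equivalence then follows since a closed subspace of a Hilbert space coincides with the whole space iff its annihilator in the dual is trivial.

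The sufficient conditions are then quick consequences of Theorem~\ref{thm:null}. If $\partial\omS$ is $-1/2$-null then $\tH^{-1/2}(\omS^c)\cap H^{-1/2}_\omS\subset H^{-1/2}_{\overline{\omS^c}\cap\omS}=H^{-1/2}_{\partial\omS}=\{0\}$, and the parenthetical equivalence with $\mathrm{cap}(\partial\omS)=0$ is Theorem~\ref{thm:null}(a). If $\partial\omS\subset\bigcup_{j=1}^M\partial\Omega_j$ with each $\Omega_j\subset\Gamma_\infty$ Lipschitz, then each $\partial\Omega_j$ is $-1/2$-null by Theorem~\ref{thm:null}(f), the finite union is $-1/2$-null by Theorem~\ref{thm:null}(g), and $-1/2$-nullity passes to subsets directly from the definition below \eqref{HsTdef}. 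The main technical step is the annihilator identification in the third paragraph; with that in place, the rest is a compilation of Theorem~\ref{thm:superlf}, Theorem~\ref{thm:null}, and the annihilator formula \eqref{annclosed}.
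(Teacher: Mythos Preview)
Your proof is correct and complete, but it takes a genuinely different route from the paper's own argument. The paper's proof outsources the two key facts to \cite[Corollary~3.29]{ChaHewMoi:13}: that reference is cited both for the equivalence $\tH^{-1/2}(\omS^c)\cap H^{-1/2}_{\partial\omS}=\{0\}\Longleftrightarrow H^{1/2}_0(\omS^c)=H^{1/2}(\omS^c)$ and for the fact that $H^{-1/2}_0(\omS^c)=H^{-1/2}(\omS^c)$ always (which handles \eqref{a1}), as well as for the sufficient condition that $\partial\omS$ be $-1/2$-null. You instead prove everything within the paper's own framework: for \eqref{a1} you give an elementary $L^2$ argument exploiting $H^{1/2}\subset L^2$ (bypassing the $H^s_0$ characterisation entirely), and for the \eqref{b1} equivalence you supply the annihilator identification directly via the quotient realisation $H^{1/2}(\omS^c)\cong H^{1/2}(\R^{n-1})/H^{1/2}_\omS$ and its dual $\tH^{-1/2}(\omS^c)$. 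Your sufficient-condition argument also goes straight to the intersection condition $\tH^{-1/2}(\omS^c)\cap H^{-1/2}_\omS\subset H^{-1/2}_{\partial\omS}$ rather than through the $H^{1/2}_0$ statement, though combined with your equivalence this of course yields the same implication. The net effect is that your proof is self-contained, while the paper's is shorter by leaning on the external reference; your \eqref{a1} argument in particular is more transparent than the black-box citation.
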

\begin{proof}
If $V^\pm = H^{\pm 1/2}_\omS$ then the fact that \eqref{a2} and \eqref{b2} are superfluous was stated in Theorem \ref{thm:superlf}.
Further, recalling \eqref{eq:duals}, $V_*^\mp = \left(\tH^{\mp 1/2}(\omS^c)\right)^\perp$ so that $(V^\mp_*)^\perp\cap H^{\mp 1/2}_\omS = \tH^{\mp 1/2}(\omS^c)\cap H^{\mp 1/2}_{\partial \omS}$. Then \eqref{a2} and \eqref{b2} and superfluous if and only if $\{0\} = (V^\mp_*)^\perp\cap H^{\mp 1/2}_\omS$, which holds if and only if $\{0\} = \tH^{\mp 1/2}(\omS^c)\cap H^{\mp 1/2}_{\partial \omS}$,  and, by \cite[Corollary 3.29]{ChaHewMoi:13}, this holds if and only if $H^{\pm 1/2}_0(\omS^c) = H^{\pm 1/2}(\omS^c)$. Further, since \cite[Corollary 3.29]{ChaHewMoi:13} $H^{-1/2}_0(\omS^c) = H^{-1/2}(\omS^c)$, we conclude that \eqref{a1} is always superfluous. Finally, we note that \cite[Corollary 3.29]{ChaHewMoi:13} proves that also $H^{1/2}_0(\omS^c) = H^{1/2}(\omS^c)$ (so that \eqref{b1} is superfluous) if $\partial \omS$ is $-1/2$-null; by Theorem \ref{thm:null} this holds if and only if $\mathrm{cap}(\partial \omS)=0$, which certainly holds if $\partial \omS \subset \cup_{j=1}^M \partial\Omega_j$, with each $\Omega_j\subset\Gamma_\infty$ a Lipschitz open set.
\end{proof}

\section{Dependence on domain and limiting geometry principles} \label{sec:domdep}

In \S\ref{subsec:bvpsnovel} we introduced novel families of formulations for the screen scattering problems, each family member well-posed by Theorems \ref{DirEquivThmm} and \ref{NeuEquivThmm} and Corollaries \ref{DirEquivThmm2} and \ref{NeuEquivThmm2}. If we constrain these formulations by our physical selection principle \eqref{eq:selection} and $\tH^{\pm1/2}(\mS^\circ) = H^{\pm 1/2}_\omS$, then these formulations collapse onto single formulations. By Theorem \ref{thm:C0} this happens, in particular, if $\mS$ is $C^0$ or is $C^0$ except at a countable set of points having only finitely many limit points.

However, for an arbitrary screen $\mS$ (simply a bounded subset of $\Gamma_\infty$), the results of \S\ref{subsec:spacesequal} make clear that, in general, $\tH^{\pm1/2}(\mS^\circ) \neq H^{\pm 1/2}_\omS$, even if we constrain $\mS$ further, by requiring that $\mS$ is open or closed. And if indeed $\tH^{\pm1/2}(\mS^\circ) \neq H^{\pm 1/2}_\omS$, then, by Theorems \ref{thm:S61} and \ref{thm:s62}, there are infinitely many distinct formulations (with cardinality $\mathfrak{c}$) satisfying \eqref{eq:selection}, and, at least for the case of plane wave incidence and almost all incident directions, infinitely many corresponding solutions to the scattering problems.

One approach to selecting the ``physically correct'' solution
from this multitude is to think of $\mS$ as the limit of a sequence of screens $(\mS_j)_{j\in \N}$, where each screen $\mS_j$ is sufficiently regular so that the correct choice of solution is clear. This is a natural approach for recursively generated fractal structures. For example the open set $\mS$ whose boundary is the Koch snowflake is usually generated as the limit of a sequence $\mS_1\subset \mS_2\subset ...$ (see Figure \ref{fig:koch}), where each $\mS_j$ is a Lipschitz open set. Likewise the closed set $\mS$ which is the Sierpinski triangle is usually generated as the limit of a sequence of closed sets $\mS_1\supset \mS_2\supset ...$ (see Figure \ref{fig:Sierpinksi}), where, for each $j$, $\mS_j^\circ$ is $C^0$ (indeed Lipschitz) except at a finite set of points. Our first theorem (cf.\ \cite[Theorem 4.3 and Proposition 4.5]{ChaHewMoi:13}) deals with these and related cases.

\begin{thm} \label{thm:limits} (a) Suppose that $\mS_j\subset \Gamma_\infty$ is open for $j\in \N$, that $\mS_1\subset \mS_2 \subset ...$, and that $\mS = \cup_{j=1}^\infty \mS_j$ is bounded. Further (as usual) let $D:= \R^n\setminus \omS$. Let $u^s$ denote the solution to $\sS\sD(V^-)$ with $V^-=\tH^{-1/2}(\mS)$, and $u^s_j$ the solution to $\sS\sD(V^-)$ with $V^-=\tH^{-1/2}(\mS_j)$. Then, for every $\chi\in \scrD(\R^n)$,
\begin{equation} \label{eq:conv}
\|\chi(u^s-u^s_j)\|_{W^1(\R^n)} \to 0 \quad \mbox{ as } \quad j\to\infty.
\end{equation}
Similarly, if $u^s$ denotes the solution to $\sS\sN(V^+)$ with $V^+=\tH^{1/2}(\mS)$, and $u^s_j$ the solution to $\sS\sN(V^+)$ with $V^+= \tH^{1/2}(\mS_j)$, then, for every $\chi\in \scrD(\R^n)$, $\|\chi(u^s-u^s_j)\|_{W^1(D)} \to 0$ as $j\to\infty$.

(b)
Suppose that $\mS_j\subset \Gamma_\infty$ is compact for $j\in \N$, that $\mS_1\supset \mS_2 \supset ...$, and that $\mS$ is given by $\mS = \cap_{j=1}^\infty \mS_j$. Let $u^s$ denote the solution to $\sS\sD(V^-)$ with $V^-=H^{-1/2}_\mS$, and $u^s_j$ the solution to $\sS\sD(V^-)$ with $V^-= H^{-1/2}_{\mS_j}$. Then \eqref{eq:conv} holds for every $\chi\in \scrD(\R^n)$. Similarly,
let $u^s$ denote the solution to $\sS\sN(V^+)$ with $V^+=H^{1/2}_\mS$, and $u^s_j$ the solution to $\sS\sN(V^+)$ with $V^+= H^{1/2}_{\mS_j}$. Then, for every $\chi\in \scrD(\R^n)$ and every open $\Omega\subset \Gamma_\infty$ with $\mS\subset \Omega$, $\|\chi(u^s-u^s_j)\|_{W^1(\widetilde{D})} \to 0$ as $j\to\infty$, where $\widetilde{D} := \R^n \setminus \overline{\Omega}$.
\end{thm}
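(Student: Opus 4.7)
The plan is to use the BIE reformulations from Corollaries \ref{DirEquivThmm2} and \ref{NeuEquivThmm2} to reduce each convergence claim to convergence of the jump data in a fractional Sobolev norm, and then to exploit the Galerkin structure the problem acquires in this setting. In each of the four subcases, the jump $\phi_j = [\partial_\bn u_j]$ or $\psi_j = [u_j]$ solves a variational problem: find $\phi_j\in V_j$ with $a(\phi_j,\xi)=\langle g,\xi\rangle$ for all $\xi\in V_j$, where the sesquilinear form $a\in\{a_S,a_T\}$ and the datum $g\in\{\gamma^\pm(\chi u^i),-\partial_\bn^\pm(\chi u^i)\}$ are fixed independently of $j$, and only the closed subspace $V_j\subset H^{\mp 1/2}(\Gamma_\infty)$ changes. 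Once convergence of jumps is established in $H^{\mp 1/2}(\Gamma_\infty)$, the representation formulas $u^s=-\cS\phi$ and $u^s=\cD\psi$ together with Theorem \ref{LayerPotRegThm}(ii) deliver the required $W^1$ convergence after cutting off with $\chi\in\scrD(\R^n)$. The only slightly delicate bookkeeping arises in the Neumann part of (b), where $\cD\psi_j$ is defined on $\R^n\setminus\mS_j$ rather than on $D$: given an open $\Omega\supset\mS$, the finite intersection property applied to the decreasing sequence of compact sets $\mS_j\cap\Omega^c$ (whose intersection is empty) forces $\mS_j\subset\Omega$ for all $j$ sufficiently large, so that $\widetilde D\subset\R^n\setminus\mS_j$ eventually, and the mapping property of $\cD$ extended from $H^{1/2}_{\overline\Omega}$ to $W^1(\widetilde D)$ supplies the uniform bound $\|\chi\cD(\psi-\psi_j)\|_{W^1(\widetilde D)}\leq C\|\psi-\psi_j\|_{H^{1/2}(\Gamma_\infty)}$.

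For case (a), the subspaces $V_j=\tH^{\pm 1/2}(\mS_j)$ are nested increasing inside $V:=\tH^{\pm 1/2}(\mS)$, and $\bigcup_j V_j$ is dense in $V$. Indeed, any $u\in\scrD(\mS)$ has compact support $K\subset\mS=\bigcup_j\mS_j$; extracting a finite subcover and using monotonicity of the $\mS_j$ gives $K\subset\mS_j$ for some $j$, so $\scrD(\mS)=\bigcup_j\scrD(\mS_j)$, and the density follows from the definition $V=\overline{\scrD(\mS)}^{H^{\pm 1/2}(\R^n)}$. The $\phi_j$ (or $\psi_j$) are thus genuine Galerkin approximations of the limit, and Galerkin orthogonality $a(\phi-\phi_j,\xi)=0$ for $\xi\in V_j$ combined with C\'ea's lemma, using the continuity and coercivity bounds of Theorem \ref{thm:ContCoerc1}, yields
\[
\|\phi-\phi_j\|_{H^{\mp 1/2}(\Gamma_\infty)}\leq \frac{C}{c}\inf_{\xi\in V_j}\|\phi-\xi\|_{H^{\mp 1/2}(\Gamma_\infty)}\to 0.
\]

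For case (b), the subspaces $V_j=H^{\pm 1/2}_{\mS_j}$ are nested decreasing with $\bigcap_j V_j=V:=H^{\pm 1/2}_\mS$, the equality following from the elementary observation that $\supp u\subset\mS_j$ for all $j$ iff $\supp u\subset\bigcap_j\mS_j=\mS$. Here I use a Mosco-type argument: coercivity provides a uniform bound $\|\phi_j\|\leq c^{-1}\|g\|$, so a subsequence converges weakly in $H^{\mp 1/2}(\Gamma_\infty)$ to some $\phi^*$, and weak closedness of the tail subspaces forces $\phi^*\in\bigcap_j V_j=V$. For each fixed $\xi\in V$ one has $\xi\in V_j$ for every $j$, so passing to the weak limit in $a(\phi_j,\xi)=\langle g,\xi\rangle$ yields $a(\phi^*,\xi)=\langle g,\xi\rangle$ for all $\xi\in V$, and by uniqueness (Theorems \ref{DirEquivThmm} and \ref{NeuEquivThmm}) $\phi^*=\phi$ for the full sequence. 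Strong convergence then follows from the Galerkin orthogonality $a(\phi_j-\phi,\xi)=0$ for $\xi\in V$ (valid since $\phi\in V\subset V_j$): writing
\[
a(\phi_j-\phi,\phi_j-\phi)=a(\phi_j,\phi_j)-a(\phi,\phi_j)=\langle g,\phi_j\rangle-a(\phi,\phi_j),
\]
both terms tend to $\langle g,\phi\rangle=a(\phi,\phi)$ by weak convergence of $\phi_j$ (the second because $a(\phi,\cdot)$ is continuous in the weak topology, being an antilinear functional on $H^{\mp 1/2}(\Gamma_\infty)$), so $a(\phi_j-\phi,\phi_j-\phi)\to 0$ and coercivity delivers $\|\phi_j-\phi\|\to 0$.

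I expect the main technical obstacle to be the Neumann subcase of (b), where one must reconcile the $j$-dependent propagation domain with a single $W^1$ target space $W^1(\widetilde D)$; this is why the statement requires $\widetilde D$ to be strictly separated from $\mS$, and the argument hinges on the enlargement trick above. Everything else reduces to Galerkin theory, the coercivity of $a_S$ and $a_T$ provided by Theorem \ref{thm:ContCoerc1}, and elementary support arguments in the Sobolev spaces $\tH^s$ and $H^s_F$.
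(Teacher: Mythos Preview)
Your proof is correct and follows essentially the same strategy as the paper: reduce to convergence of the jump data via the coercive variational problems (Corollaries \ref{DirEquivThmm2} and \ref{NeuEquivThmm2}), invoke C\'ea's lemma and density of $\bigcup_j \tH^{\pm 1/2}(\mS_j)$ in $\tH^{\pm 1/2}(\mS)$ for part (a), and then use Theorem \ref{LayerPotRegThm}(ii) to pass to $W^1$ convergence. For part (b) the paper simply cites \cite[Lemma 2.4]{ChaHewMoi:13} for the convergence of solutions on a decreasing family of closed subspaces, whereas you supply the argument directly via weak compactness, identification of the weak limit using $\bigcap_j H^{\pm 1/2}_{\mS_j}=H^{\pm 1/2}_\mS$, and the coercivity upgrade to strong convergence; this is precisely the content of that lemma, so the two proofs coincide once the reference is unpacked. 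Your handling of the Neumann subcase of (b), using the finite intersection property to get $\mS_j\subset\Omega$ eventually and then the boundedness of $\chi\cD:H^{1/2}_{\overline\Omega}\to W^1(\widetilde D)$, matches the paper's brief remark.
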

\begin{proof} Part (a). In the first case, by Corollary \ref{DirEquivThmm2}, $u^s=-\cS[\partial_\bn u]$ and $u^s_j =  -\cS[\partial_\bn u_j]$, where $[\partial_\bn u]\in V^-$ is the unique solution of \eqref{BIE_slw2}
with $V^-=\tH^{-1/2}(\mS)$, and $[\partial_\bn u_j]$ the unique solution with $V^-=\tH^{-1/2}(\mS_j)$. Since, by Theorem \ref{thm:ContCoerc1}, $a_S$ is coercive  and, by \cite[Proposition 3.33]{ChaHewMoi:13},
$$
\tH^{\pm 1/2}(\mS) = \overline{\bigcup_{j=1}^\infty \tH^{\pm 1/2}(\mS_j)},
$$
it follows from C\'ea's lemma (cf.\ \cite[(8)]{ChaHewMoi:13}) that $\|[\partial_\bn u]-[\partial_\bn u_j]\|_{\tH^{-1/2}(\mS)}\to 0$ as $j\to\infty$, and then \eqref{eq:conv} follows from Theorem \ref{LayerPotRegThm}(ii). The rest of part (a) follows similarly, using Corollary \ref{NeuEquivThmm2} and the coercivity of $a_T$ from Theorem \ref{thm:ContCoerc1}.

Part (b). In the first case, arguing as in part (a), $u^s=-\cS[\partial_\bn u]$ and $u^s_j =  -\cS[\partial_\bn u_j]$, where $[\partial_\bn u]\in V^-$ is the unique solution of \eqref{BIE_slw2}
with $V^-=H^{-1/2}_\mS$, and $[\partial_\bn u_j]$ the unique solution with $V^-=H^{-1/2}_{\mS_j}$. Since $a_S$ is coercive  and (e.g.\ \cite[Proposition 3.34]{ChaHewMoi:13})
$$
H^{\pm 1/2}_\mS = \bigcap_{j=1}^\infty H^{\pm 1/2}_{\mS_j},
$$
it follows from \cite[Lemma 2.4]{ChaHewMoi:13} that $\|[\partial_\bn u]-[\partial_\bn u_j]\|_{H^{-1/2}(\Gamma_\infty)}\to 0$ as $j\to\infty$, and then \eqref{eq:conv} follows from Theorem \ref{LayerPotRegThm}(ii). The rest of part (b) follows similarly, using Corollary \ref{NeuEquivThmm2} and the coercivity of $a_T$, and noting that, if $\Omega\subset \Gamma_\infty$ is open and $\mS\subset \Omega$, then $\mS_j\subset \Omega$ for all $j$ sufficiently large.
\end{proof}

\begin{rem} \label{rem:pw} We note that, if $u^s_j\to u^s$ in any of the senses indicated in the above theorem, then also, by elliptic regularity arguments, $u^s_j\to u^s$ uniformly on compact subsets of $D$. To see this, let $F\subset D$ be any such compact subset, choose $\chi\in \scrD(D)$ with $\chi = 1$ in a neighbourhood of $F$, and let $v_j:= \chi(u^s-u^s_j)$. Then $(\Delta + k^2)v_j = f_j\in L^{2}_\mathrm{comp}(\R^n)$, which implies that \eqref{eq:ui} holds with $u^i$ and $f$ replaced by $v_j$ and $f_j$. From this, noting also that $\|f_j\|_{L^2(\R^n)}\to0$ as $j\to 0$ and $\supp(f_j)\subset \supp(\chi)$, we see that, uniformly for $\bx\in F$, $|u^s(\bx)-u^s_j(\bx)|=|v_j(\bx)|\to 0$ as $j\to \infty$.
\end{rem}

Theorem \ref{thm:limits} and Remark \ref{rem:pw} suggest the following {\em limiting geometry} criteria for selecting physically appropriate solutions for bounded screens $\mS$ that are open and closed, respectively. \footnote{We note that this general approach, defining the solution to a BVP for an irregular domain $\Omega$ by taking the limit of the solutions for a sequence $(\Omega_j)_{j=1}^\infty$ of regular domains, is familiar from potential theory, dating back to Wiener \cite{Wi:24} (and see \cite[p.317]{Kellogg}, \cite{Bjorn:07}). In that context the approximating sequence $\Omega_j$ approximates $\Omega$ in the sense that $\Omega_1\subset \Omega_2 \subset \ldots$ with $\Omega = \cup_{j=1}^\infty \Omega_j$, just as in Definition \ref{def:lgopen}.}

\begin{defn}[Limiting Geometry Solution for an Open Screen] \label{def:lgopen}If $\mS\subset \Gamma_\infty$ is bounded and open, we call the scattered field $u^s$ a {\em limiting geometry solution for $\mS$ for sound-soft (sound-hard) scattering} if there exists a sequence $(\mS_j)_{j\in \N}$ of open subsets of $\Gamma_\infty$ such that: (i) $\mS_1\subset \mS_2 \subset ...$ and $\mS = \cup_{j=1}^\infty \mS_j$; (ii) for $j\in \N$, $\tH^{\pm 1/2}(\mS_j) = H^{\pm 1/2}_{\overline{\mS_j}}$, so that the formulations $\sS\sD(V^-)$ ($\sS\sN(V^+)$) satisfying \eqref{eq:selection} collapse to a single formulation with a well-defined unique solution $u^s_j$; (iii) for $\bx\in D:= \R^n\setminus \mS$,
$u^s(\bx)= \lim_{j\to\infty} u^s_j(\bx)$.
\end{defn}

\begin{defn}[Limiting Geometry Solution for a Closed Screen] \label{def:lgclosed}If $\mS\subset \Gamma_\infty$ is compact, call the scattered field $u^s$ a {\em limiting geometry solution for $\mS$ for sound-soft (sound-hard) scattering} if there exists a sequence $(\mS_j)_{j\in \N}$ of compact subsets of $\Gamma_\infty$ such that: (i) $\mS_1\supset \mS_2 \supset ...$ and $\mS = \cap_{j=1}^\infty \mS_j$; (ii) for $j\in \N$, $\tH^{\pm 1/2}(\mS_j^\circ) = H^{\pm 1/2}_{\mS_j}$, so that the formulations $\sS\sD(V^-)$ ($\sS\sN(V^+)$) satisfying \eqref{eq:selection} collapse to a single formulation with a well-defined unique solution $u^s_j$; (iii) for $\bx\in D:= \R^n\setminus \mS$,
$u^s(\bx)= \lim_{j\to\infty} u^s_j(\bx)$.
\end{defn}

The existence and uniqueness of such limiting geometry solutions is the subject of the following corollary, which is a consequence of Theorem \ref{thm:limits} and Remark \ref{rem:pw}.

\begin{cor} \label{cor:lgs} (a) For every bounded open screen $\mS\subset \Gamma_\infty$ there exists a unique limiting geometry solution for sound-soft (sound-hard) scattering by $\mS$, and this solution is the unique solution $u^s$ to $\sS\sD(V^-)$ ($\sS\sN(V^+)$) with $V^-=\tH^{-1/2}(\mS)$ ($V^+=\tH^{1/2}(\mS)$).

(b) For every compact screen $\mS\subset \Gamma_\infty$ there exists a unique limiting geometry solution for sound-soft (sound-hard) scattering by $\mS$, and this solution is the unique solution $u^s$ to \SDws (\SNw), equivalently the unique solution to $\sS\sD(V^-)$ ($\sS\sN(V^+)$) with $V^-=H^{-1/2}_\mS$ ($V^+=H^{1/2}_\mS$).
\end{cor}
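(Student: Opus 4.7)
My plan is to derive both halves of the corollary directly from Theorem~\ref{thm:limits} together with Remark~\ref{rem:pw}, the only thing left to supply being the existence of at least one admissible approximating sequence in each case. Once that is in hand, existence of a limiting geometry solution follows by building $u^s_j$ from that sequence and appealing to Theorem~\ref{thm:limits} for $W^{1,\mathrm{loc}}$ convergence to the claimed $u^s$, then to Remark~\ref{rem:pw} for pointwise convergence on $D$; uniqueness then follows because the same applications of Theorem~\ref{thm:limits} force \emph{any} admissible sequence to yield prefractal solutions converging to this same $u^s$. The only nontrivial technical point, which I view as the main (mild) obstacle, is that the sequences I build must satisfy condition (ii) of Definitions~\ref{def:lgopen}--\ref{def:lgclosed} uniformly in $j$; I handle this by using dyadic cube approximations and invoking the version of Theorem~\ref{thm:C0} that allows finitely many non-$C^0$ corner points, since a finite union of cubes is invariably non-$C^0$ at such vertices.

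For part (a), given a bounded open $\Gamma\subset\Gamma_\infty$, I would take $\Gamma_j$ to be the interior of the union of those closed $(n-1)$-dimensional dyadic cubes of side $2^{-j}$ in $\Gamma_\infty$ whose closures lie in $\Gamma$. This sequence is monotone increasing and satisfies $\bigcup_j\Gamma_j=\Gamma$, because any $x\in\Gamma$ admits a small ball in $\Gamma$ and hence for all sufficiently large $j$ every dyadic cube containing $x$ is contained in $\Gamma$. Each $\Gamma_j$ is open, with boundary consisting of finitely many flat pieces meeting at finitely many corner points, so it is $C^0$ except at finitely many points and Theorem~\ref{thm:C0} gives $\tH^{\pm1/2}(\Gamma_j)=H^{\pm1/2}_{\overline{\Gamma_j}}$, verifying condition (ii) of Definition~\ref{def:lgopen}. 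Taking $u^s_j$ to be the unique solution associated with $\Gamma_j$, Theorem~\ref{thm:limits}(a) gives $\chi(u^s-u^s_j)\to 0$ in $W^1(\R^n)$ (respectively $W^1(D)$) for every $\chi\in\scrD(\R^n)$, where $u^s$ is the unique solution of $\sS\sD(V^-)$ with $V^-=\tH^{-1/2}(\Gamma)$ (respectively of $\sS\sN(V^+)$ with $V^+=\tH^{1/2}(\Gamma)$). Remark~\ref{rem:pw} then yields pointwise convergence on $D$, showing that this $u^s$ is a limiting geometry solution; uniqueness holds because Theorem~\ref{thm:limits}(a), applied to any other admissible sequence, forces its prefractal solutions to deliver the same pointwise limit.

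For part (b) the mirror-image construction is to let $\Gamma_j$ be the union of the closed dyadic cubes of side $2^{-j}$ that meet $\Gamma$; this is a decreasing sequence of compact sets with $\bigcap_j\Gamma_j=\Gamma$, and its interior $\Gamma_j^\circ$ is a finite union of open cubes, $C^0$ except at finitely many corners, with $\overline{\Gamma_j^\circ}=\Gamma_j$. Theorem~\ref{thm:C0} again yields condition (ii) of Definition~\ref{def:lgclosed}. Theorem~\ref{thm:limits}(b) then gives $W^{1,\mathrm{loc}}$ convergence of $u^s_j$ to the unique solution of $\sS\sD(V^-)$ with $V^-=H^{-1/2}_{\Gamma}$ (respectively of $\sS\sN(V^+)$ with $V^+=H^{1/2}_{\Gamma}$), which by Theorem~\ref{thm:Dexist} coincides with the unique solution of \SDw{} (respectively \SNw). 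In the Neumann case, to extract pointwise convergence on $D$ via Remark~\ref{rem:pw} from the $W^1(\widetilde D)$ statement of Theorem~\ref{thm:limits}(b), I note that for any compact $F\subset D$ I can choose the open $\Omega\subset\Gamma_\infty$ in that theorem to be a sufficiently thin neighborhood of $\Gamma$ in $\Gamma_\infty$ that $F$ lies at positive $\R^n$-distance from $\overline\Omega$, and hence in $\widetilde D$. Uniqueness follows as in (a).
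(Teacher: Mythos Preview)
Your proof is correct and follows essentially the same approach as the paper: deduce both existence and uniqueness from Theorem~\ref{thm:limits} and Remark~\ref{rem:pw}, with the only work being to exhibit one admissible approximating sequence in each case, and your dyadic-cube constructions (interiors of unions of closed dyadic cubes whose closures lie in $\Gamma$ for (a), unions of closed dyadic cubes meeting $\Gamma$ for (b)) are exactly those used in the paper. Your extra care in part (b), choosing $\Omega$ thin enough that a given compact $F\subset D$ lies in $\widetilde D$ before invoking Remark~\ref{rem:pw} in the Neumann case, is a detail the paper leaves implicit.
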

\begin{proof}
Part (a) follows from Theorem \ref{thm:limits}(a) and Remark \ref{rem:pw}, provided there exists a sequence $(\mS_j)$ satisfying the conditions of Definition \ref{def:lgopen}. One such sequence can be constructed as follows. For $j\in \N$ and $\mathbf{\ell}\in \Z^{n-1}$ let $\Gamma_{\ell,j}:= \{\bx \in\Gamma_\infty: \ell_i<2^j x_i < \ell_{i}+1, \mbox{ for } i=1,...,n-1\}$, and let $I_j:= \{\mathbf{\ell}\in \Z^{n-1}: \overline{\Gamma_{\ell,j}}\subset \mS\}$. Then $\mS_j$, defined to be the interior of the set $\cup_{\ell\in I_j} \overline{\Gamma_{\ell,j}}$,
satisfies the required conditions. In particular, $\mS_j$ is $C^0$ except at a finite number of points, so that $\tH^{\pm 1/2}(\mS_j) = H^{\pm 1/2}_{\overline{\mS_j}}$ by Theorem \ref{thm:C0}.

Similarly, part (b) follows from Theorem \ref{thm:limits}(b) and Remark \ref{rem:pw}, provided there exists a sequence $(\mS_j)$ satisfying the conditions of Definition \ref{def:lgclosed}. One such sequence can be constructed as follows. Let $J_j:= \{\mathbf{\ell}\in \Z^{n-1}: \mS\cap \overline{\Gamma_{\ell,j}}\neq \emptyset \}$. Then
$\mS_j := \cup_{\ell\in J_j} \overline{\Gamma_{\ell,j}}$
satisfies the required conditions.
\end{proof}

\begin{rem} \label{rem:otherscreens} The limiting geometry principles in Definitions \ref{def:lgopen} and \ref{def:lgclosed} provide criteria for selecting physically relevant solutions when $\Gamma$ is either compact, or bounded and open. Other cases can also be considered. For example, suppose that $\mS = F\cup \Omega$, where $F$, $\Omega\subset \Gamma_\infty$ have disjoint closures,
$F$ is compact with empty interior, and $\Omega$ is bounded and open. We can construct a limiting geometry solution for $\mS^\circ$ according to Definition \ref{def:lgopen}, which corresponds by Corollary \ref{cor:lgs}(a) to computing the solution to $\sS\sD(V^-)$ or $\sS\sN(V^+)$ with $V^\pm = \tH^{\pm 1/2}(\mS^\circ)$, but this solution ignores the component $F$ since $\mS^\circ = \Omega$. Alternatively, we can construct a limiting geometry solution for $\omS = F \cup \overline{\Omega}$ according to Definition \ref{def:lgclosed}, which corresponds by Corollary \ref{cor:lgs}(b) to computing the solution to $\sS\sD(V^-)$ or $\sS\sN(V^+)$ with $V^\pm = H^{\pm 1/2}_\omS$, but this ignores the difference between $\Omega$ and $\overline{\Omega}$, which is important if $\tH^{\pm 1/2}(\Omega)\neq H^{\pm 1/2}_{\overline{\Omega}}$ (cf.\ Examples \ref{ex:openminuscantor} and \ref{ex:openminuscheese} below).

It may be that a more appropriate notion of a limiting geometry solution in this case can be constructed by thinking of $\mS$ as the limit of a sequence of screens $\mS_j:= F_j\cup \Omega_j$, with $F_1 \supset F_2 ...$, $\Omega_1\subset \Omega_2 ...$, each $F_j$ compact, each $\Omega_j$ open and bounded, and $F=\cap_{j=1}^\infty F_j$, $\Omega = \cup_{j=1}^\infty \Omega_j$. And we might conjecture that the limiting geometry solutions this gives rise to are the solutions to $\sS\sD(V^-)$ or $\sS\sN(V^+)$ with $V^\pm = H^{\pm 1/2}_F + \tH^{\pm 1/2}(\Omega)$, which spaces satisfy $\tH^{\pm 1/2}(\mS^\circ) \subsetneqq V^\pm \subsetneqq H^{\pm 1/2}_\omS$ if $F$ is not $\pm 1/2$-null and $\tH^{\pm 1/2}(\Omega)\neq H^{\pm 1/2}_{\overline{\Omega}}$. But we leave full analysis of these and other cases to future work.
\end{rem}

\begin{rem} \label{rem:useful} For a bounded screen $\mS\subset \Gamma_\infty$ with $\tH^{\pm 1/2}(\mS^\circ) \neq H^{\pm 1/2}_\omS$,
there are infinitely many (with cardinality $\mathfrak{c}$) distinct formulations $\sS\sD(V^-)$ and $\sS\sN(V^+)$ satisfying \eqref{eq:selection} (Theorems \ref{thm:S61} and \ref{thm:s62}). Definition \ref{def:lgopen} gives physical meaning to the solutions for $V^\pm = \tH^{\pm 1/2}(\mS^\circ)$ as the limiting geometry solutions for the open set $\mS^\circ$. Similarly, Definition \ref{def:lgclosed} gives physical meaning to the solutions for $V^\pm = H^{\pm 1/2}_\omS$ as the limiting geometry solutions for the closed set $\omS$. But what physical meaning, if any, do the other solutions have?

If $\Gamma_0$ is open and $\mS^\circ \subset \Gamma_0 \subset \omS$, then $V^\pm = \tH^{\pm 1/2}(\Gamma_0)$ satisfies \eqref{eq:selection} and the corresponding solutions are limiting geometry solutions for $\Gamma_0$ in the sense of Definition \ref{def:lgopen}: further, by Theorem \ref{thm:Hs_equality_open} and Corollary \ref{lem:equalityNullity}, $V^\pm$ is different to both $\tH^{\pm 1/2}(\mS^\circ)$ and $H^{\pm 1/2}_\omS$ if $\Gamma_0\setminus \mS^\circ$ and $\omS^\circ\setminus \Gamma_0$ are both not $\mp 1/2$-null. Similarly, if $\Gamma_0$ is closed and $\mS^\circ \subset \Gamma_0 \subset \omS$, then $V^\pm = H^{\pm 1/2}_{\Gamma_0}$ satisfies \eqref{eq:selection} and the corresponding solutions are limiting geometry solutions for $\Gamma_0$ in the sense of Definition \ref{def:lgclosed}: further, by Theorems \ref{thm:Hs_equality_closed} and \ref{thm:Hs_equality_open}, $V^\pm$ is different to both $\tH^{\pm 1/2}(\mS^\circ)$ and $H^{\pm 1/2}_\omS$ if $\Gamma_0^\circ\setminus \mS^\circ$ is not $\mp 1/2$-null and $\omS\setminus \Gamma_0$ is not $\pm 1/2$-null.

It may also well be the case -- see Remark \ref{rem:otherscreens} -- that solving $\sS\sD(V^-)$ or $\sS\sN(V^+)$ with some $V^\pm$ intermediate between $\tH^{\pm 1/2}(\mS^\circ)$ and $H^{\pm 1/2}_\omS$ is of physical interest as a limiting geometry solution in some sense different from that of Definitions \ref{def:lgopen} or \ref{def:lgclosed}.
\end{rem}

\section{Scattering by fractals and other examples} \label{sec:fract}

In this final section we illustrate the results of the previous sections by some concrete examples, including a number of examples where the screen is fractal or has a fractal boundary.

Our first three examples consider scattering by screens that are compact sets with empty interior. While the standard formulations \SDws and \SNws are physically relevant in this case, in particular are limiting geometry solutions in the sense of Definition \ref{def:lgclosed}, the formulations \SDcls and \SNcls lack boundary conditions: equations \eqref{eq:bc} are empty. In Examples \ref{ex:sier} and \ref{ex:cantor} the screen has zero surface measure and the incident field fails to see the screen for sound-hard scattering, while Example \ref{ex:swiss} is a screen with empty interior but positive surface measure where the scattered field, defined as a limiting geometry solution by Definition \ref{def:lgclosed}, is non-zero for both sound-soft and sound-hard scattering.

\begin{figure}[!t]
\begin{center}
\includegraphics[height=25mm]{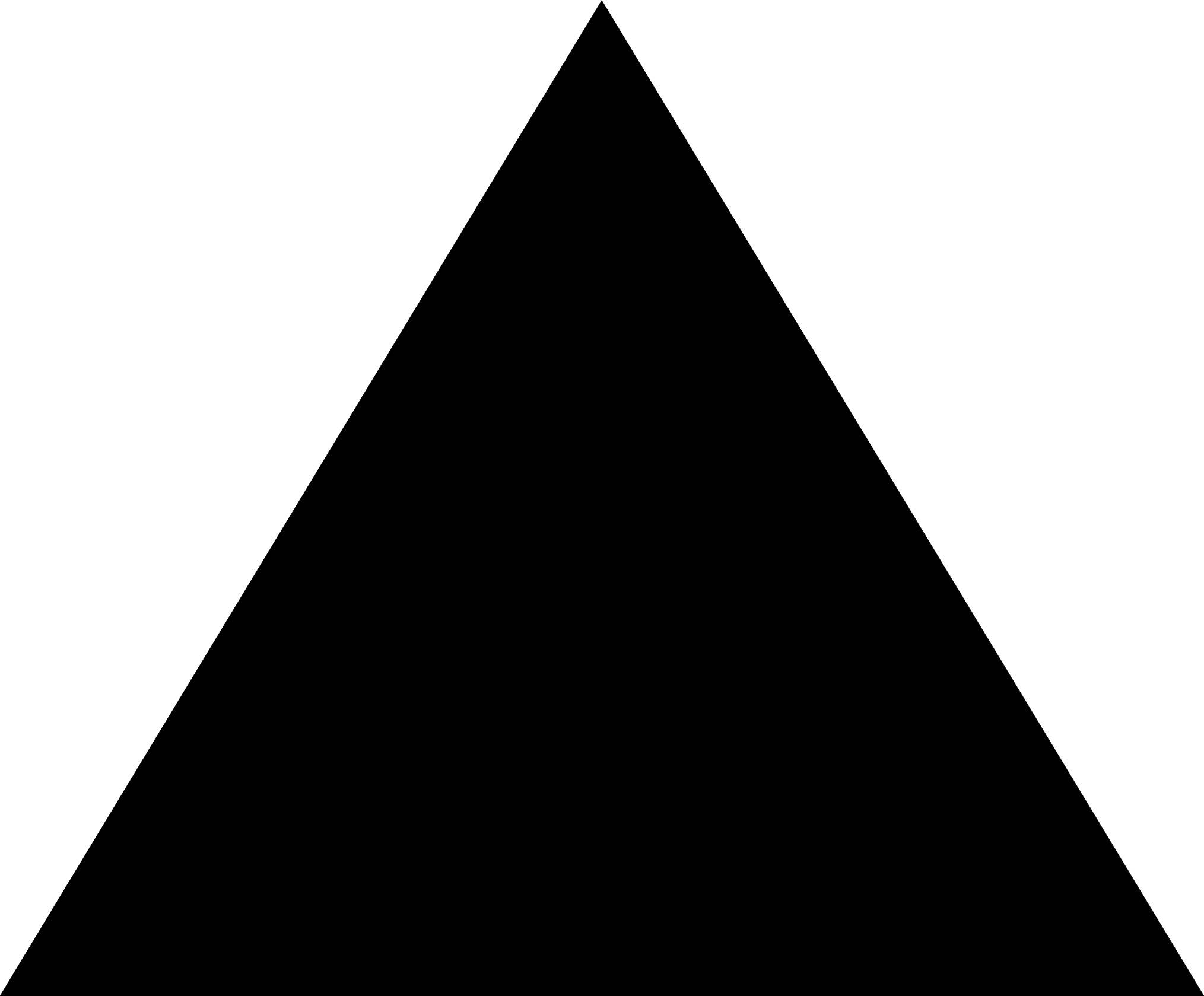}\hs{2}
\includegraphics[height=25mm]{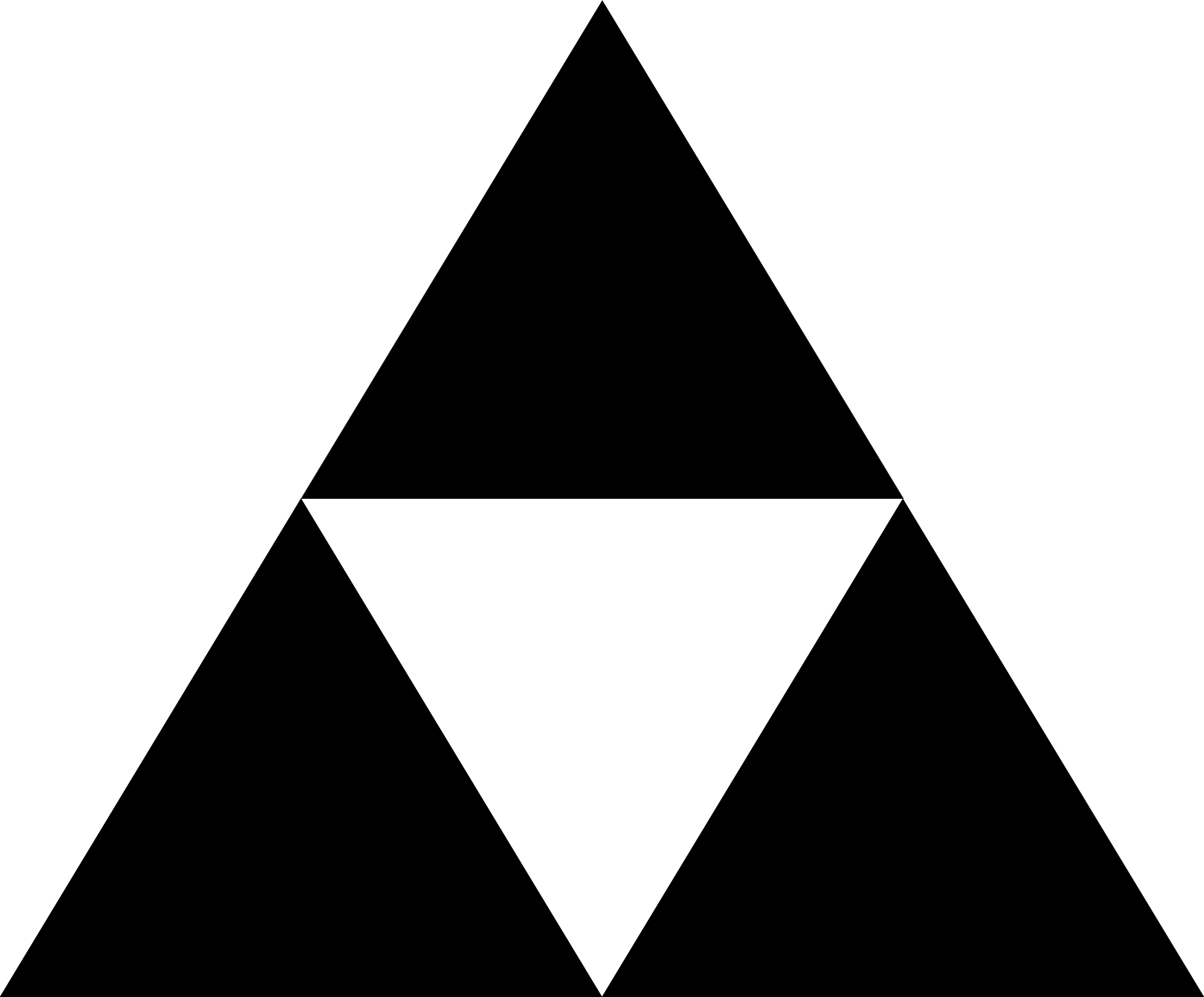}\hs{2}
\includegraphics[height=25mm]{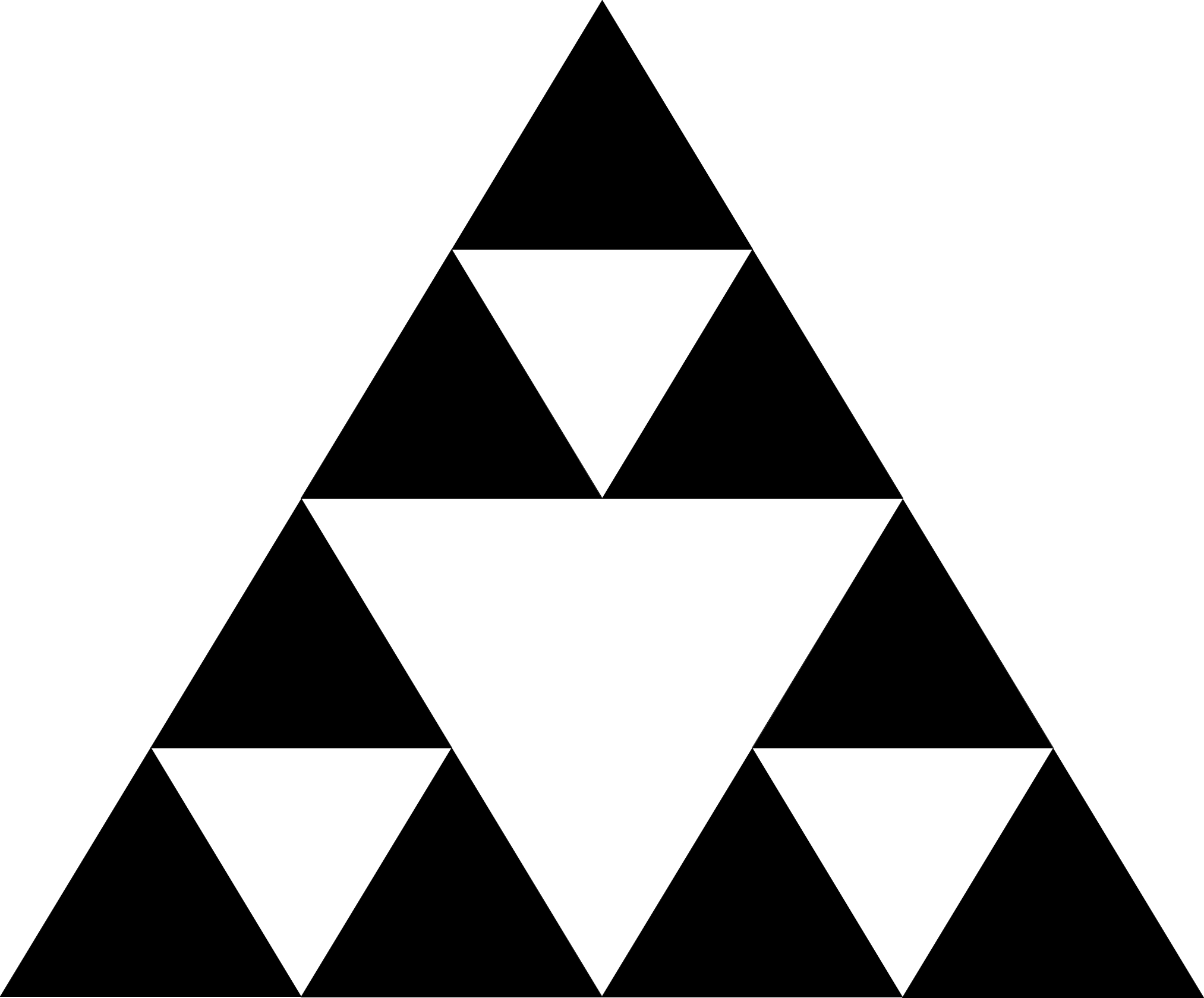}\hs{2}
\includegraphics[height=25mm]{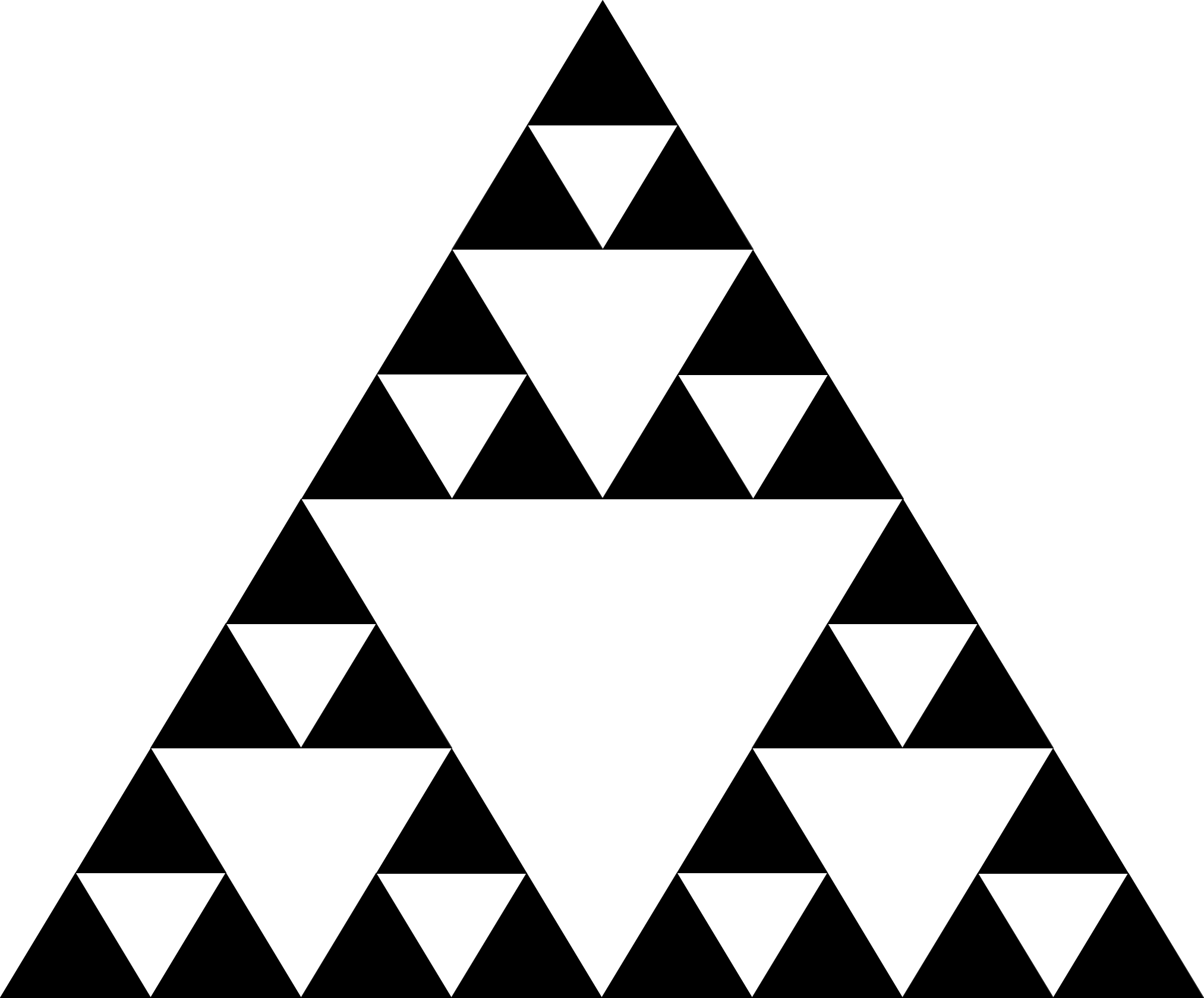}
\end{center}
\caption{\label{fig:Sierpinksi} The first four prefractal approximations to the Sierpinksi triangle.}
\end{figure}

\begin{example}[Scattering by a Sierpinski triangle and its prefractal approximations] \label{ex:sier} Suppose that $n=3$ and $\mS\subset \Gamma_\infty$ is a Sierpinski triangle, the compact set defined by $\mS = \cap_{j=1}^\infty \mS_j$ with $\mS_1\supset \mS_2\supset ...$ the standard sequence of (closed) prefractal approximations to $\mS$, the first four of these shown in Figure \ref{fig:Sierpinksi}. It is clear that $\mS^\circ = \emptyset$, indeed that $m(\mS)=0$; further \cite[Example 9.4]{Fal} $\dimH(\mS)=\log 3/\log 2 >1$. Thus,  by Theorem \ref{thm:null}(d) and (e), $\mS$ is $1/2$-null, i.e.\ $H^{1/2}_\mS=\{0\}$, while $\mS$ is not $-1/2$-null, i.e.\ $H^{-1/2}_\mS \neq \{0\}$. Each prefractal $\mS_j$ is $C^0$ except at finitely many points, in the terminology of Theorem \ref{thm:C0}, and $\mS_j^\circ$ is a union of finitely many Lipschitz open sets.

As $V^+\subset H^{1/2}_\omS=\{0\}$, the family of formulations $\sS\sN(V^+)$ for the sound-hard scattering problem collapses to a single formulation with the trivial solution $u^s=0$. By Corollary \ref{cor:lgs}(b), this is also the solution to \SNws and the limiting geometry solution in the sense of Definition \ref{def:lgclosed}.   Let $u^s_j$ denote the solution to \SNws with $\mS$ replaced by the prefractal $\mS_j$. By Theorems \ref{thm:s62} and \ref{thm:C0}, $u^s_j$ is also the solution to $\sS\sN(V^+)$ for $V^+=H^{1/2}_{\mS_j}=\tH^{1/2}(\mS_j^\circ)$, and (assuming $\Delta u^i + k^2 u^i = 0$ in a neighbourhood of $\mS$) $u^s_j$ also satisfies \SNcls by Lemma \ref{lem:wcl}; further \SNcls is well-posed by Theorem \ref{thm:wpst}(b). Thus all the formulations we have discussed are well-posed and have the same unique solution for the prefractal $\mS_j$. By Theorem \ref{thm:limits} and Remark \ref{rem:pw}, $u^s_j\to 0$ as $j\to \infty$ uniformly on compact subsets of $D:= \R^3\setminus \mS$, and locally in $W^1$ norm.

As $\tH^{-1/2}(\mS^\circ)=\{0\}\neq H^{-1/2}_\mS$, there are, by Lemma \ref{lem:cardc}, infinitely many (with cardinality $\mathfrak{c}$) distinct formulations $\sS\sD(V^-)$ for the sound-soft scattering problem that satisfy \eqref{eq:selection}. We have shown in Theorem \ref{thm:S61} for the case of an incident plane wave \eqref{eq:pw} that, for almost all incident directions $\bd$,  there are infinitely many distinct solutions to these formulations.  By Corollary \ref{cor:lgs}(b), the solution $u^s$ to the particular formulation $\sS\sD(V^-)$ with $V^-= H^{-1/2}_\mS$ is the limiting geometry solution in the sense of Definition \ref{def:lgclosed}, and this is also the unique solution of \SDw.  This solution $u^s$ is, by Theorem \ref{strength}, non-zero if $u^i$ is $C^\infty$ in a neighbourhood of $\mS$ and $u^i(\bx)\neq 0$ for $\bx \in \mS$, for example if $u^i$ is an incident plane wave. Let $u^s_j$ denote the solution to \SDws with $\mS$ replaced by $\mS_j$. Equivalently, by Theorems \ref{thm:S61} and \ref{thm:C0}, $u^s_j$ is the solution to $\sS\sD(V^-)$ for $V^-=H^{-1/2}_{\mS_j}=\tH^{-1/2}(\mS_j^\circ)$, and (assuming $\Delta u^i + k^2 u^i = 0$ in a neighbourhood of $\mS$) $u^s_j$ also satisfies \SDcls by Lemma \ref{lem:wcl}; further \SDcls is well-posed by Theorem \ref{thm:wpst}(a). By Theorem \ref{thm:limits} and Remark \ref{rem:pw},
$u^s_j\to u^s$ as $j\to \infty$ uniformly on compact subsets of $D:= \R^3\setminus \mS$, and locally in $W^1$ norm.
\end{example}

In the case of sound-soft scattering and $n=3$, some of the results of the following example can be found in \cite[Example 4.4]{ChaHewMoi:13} for the case when the wavenumber $k$ is complex, with $\Im(k)>0$.

\begin{figure}[!t]
{\includegraphics[scale=0.31]{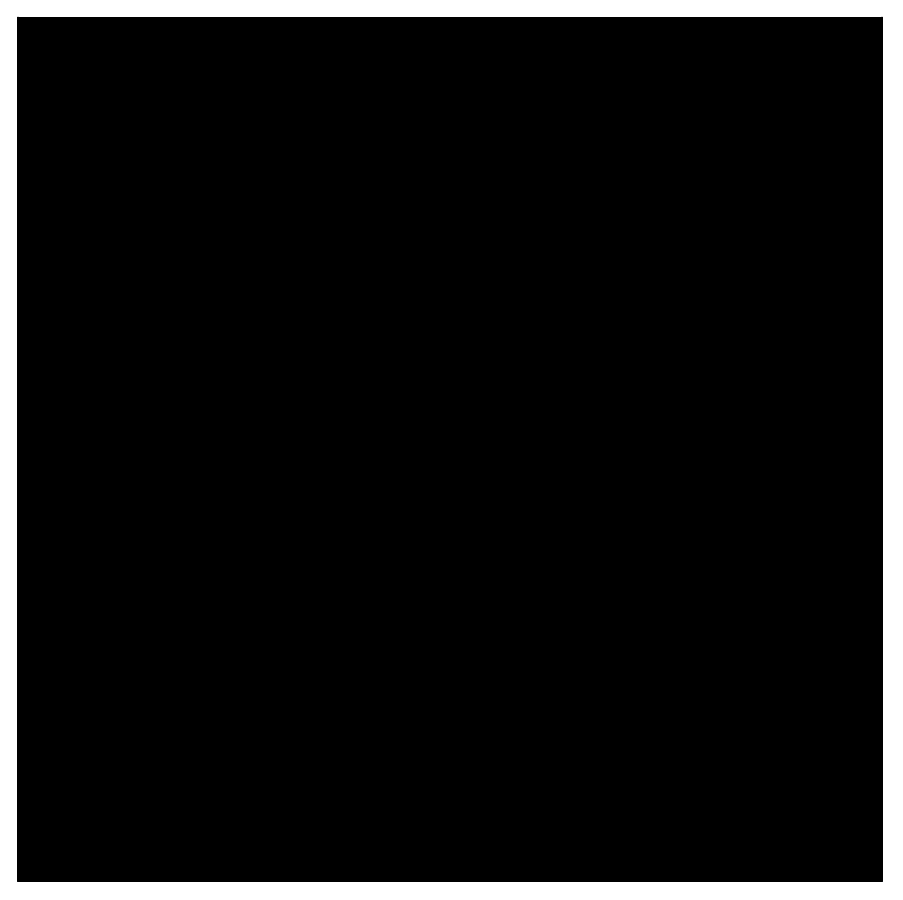}
\hs{4}\includegraphics[scale=0.31]{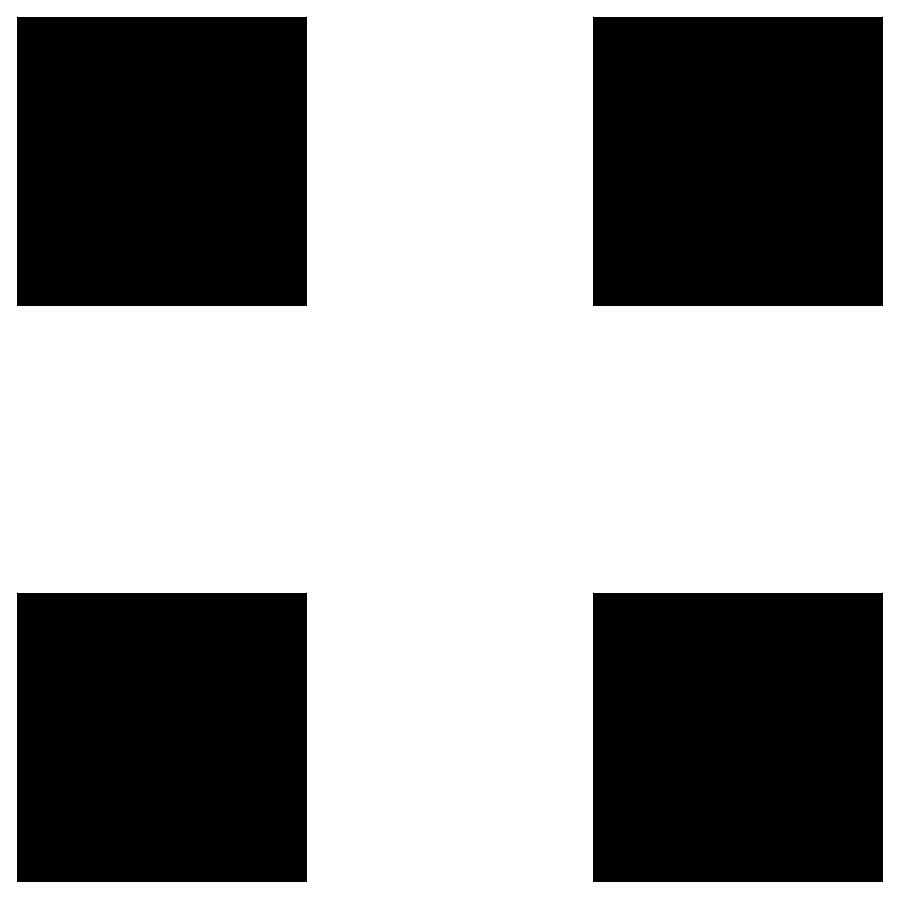}\hs{4}\includegraphics[scale=0.31]{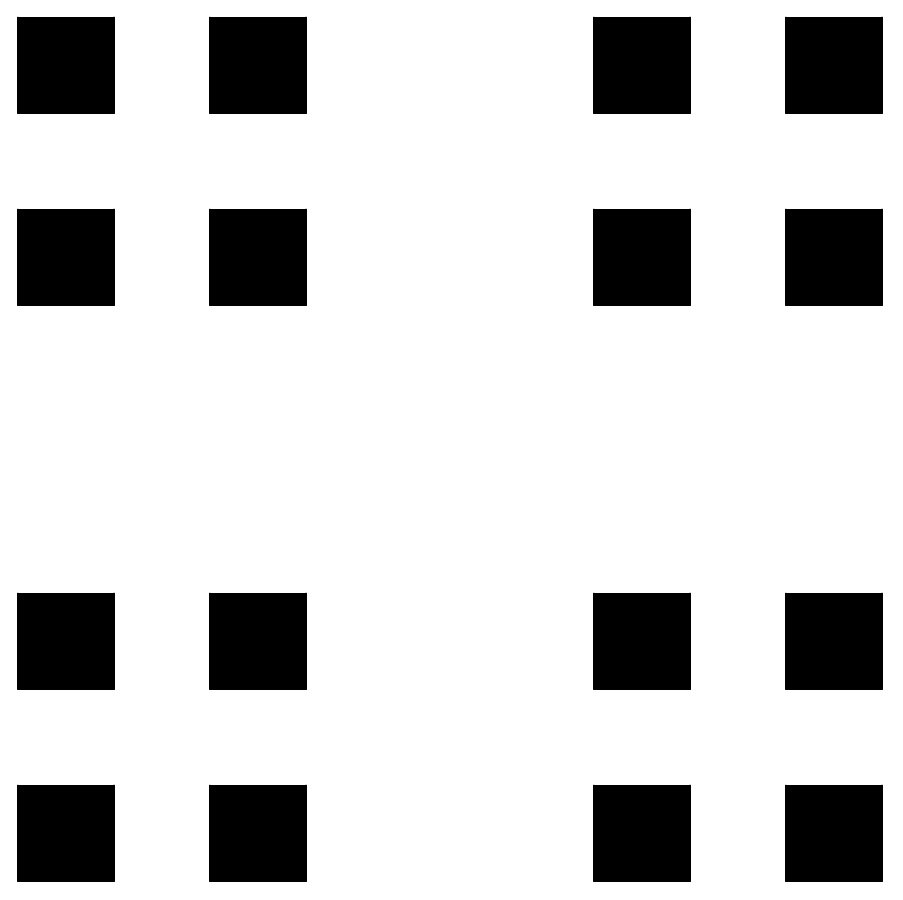}\hs{4}\includegraphics[scale=0.31]{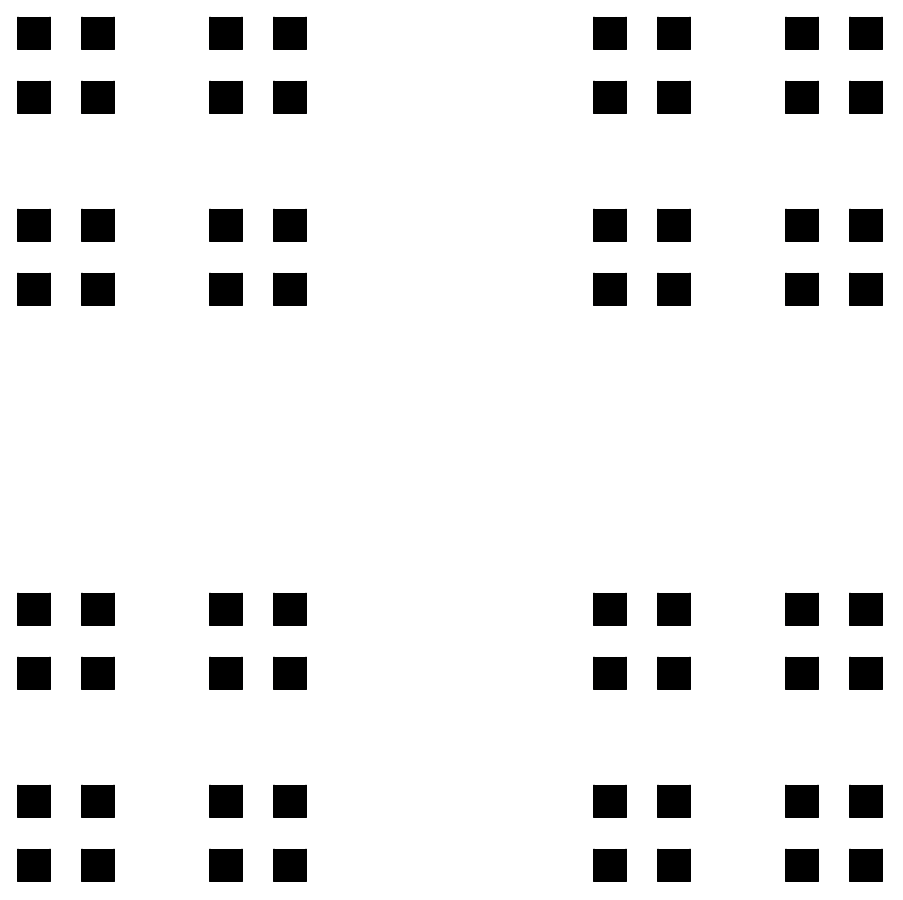}\hs{4}
}
\centering
\caption{The first four prefractal approximations to the standard two-dimensional middle-third Cantor set (or Cantor dust).}%
\label{fig:CantorDust}
\end{figure}

\begin{example}[Scattering by a Cantor set or Cantor dust] \label{ex:cantor} Suppose that $n=2$ or $3$ and, for $j\in \N$, let $\mS_j:= C_j$, where
\begin{equation*}%
C_j := \big\{(\tilde \bx, 0):\tilde \bx\in E_{j-1}^{n-1}\big\}\subset \Gamma_\infty,
\end{equation*}
with  $\R\supset E_0\supset E_1\supset\ldots$ the standard recursive sequence generating the one-dimensional ``middle-$\lambda$'' Cantor set, for some $0<\lambda<1$ \cite[Example 4.5]{Fal}. Where $\alpha=(1-\lambda)/2\in (0,1/2)$, explicitly $E_0=[0,1]$, $E_1=[0,\alpha]\cup[1-\alpha,1]$, $E_2=[0,\alpha^2]\cup [\alpha-\alpha^2,\alpha]\cup [1-\alpha,1-\alpha+\alpha^2]\cup[1-\alpha^2,1]$, $...$, so that $E_j\subset \R$ is the closure of a Lipschitz open set that is the union of $2^j$ open intervals of length $l_j=\alpha^j$, while $E_j^2\subset \R^2$ is the closure of a Lipschitz open set that is the union of $4^j$ squares of side-length $l_j$. Figure \ref{fig:CantorDust} visualises $E_0^2,\ldots, E_3^2$ in the classical ``middle third'' case $\alpha =\lambda = 1/3$.

Define the compact set $\mS\subset \Gamma_\infty$ by $\mS = \cap_{j=1}^\infty \mS_j$.  If $n=2$, $\mS$ is the (one-dimensional) middle-$\lambda$ Cantor set, with \cite[Example 4.5]{Fal} $\dimH(\mS) = \log(2)/\log(1/\alpha)$ $>0$. If $n=3$ then $\mS$ is the associated two-dimensional Cantor set (or ``Cantor dust''), with \cite[Example 4.5, Corollary 7.4]{Fal} $\dimH(\mS) = 2\log(2)/\log(1/\alpha)\in(0,2)$. Thus, by Theorem \ref{thm:null}(e), $\mS$ is not $-1/2$-null if $n=2$, or if $n=3$ and $\alpha > 1/4$, but is $-1/2$-null if $n=3$ and $\alpha<1/4$; a more detailed analysis \cite[Theorem 4.5]{HewMoi:15} establishes that $\mS$ is $-1/2$-null also for $n=3$ and $\alpha = 1/4$. For both $n=2$ and $3$, $m(\mS)=0$ and $\mS^\circ = \emptyset$, so that $\mS$ is $1/2$-null by Theorem \ref{thm:null}(e).

The second paragraph of Example \ref{ex:sier} applies verbatim also in this case: in particular, the solution to the sound-hard scattering problem $\sS\sN(V^+)$ with $V^+=H^{1/2}_\mS$ is again $u^s=0$.

The third paragraph of Example \ref{ex:sier} also applies verbatim if $n=2$, or $n=3$ and $\alpha>1/4$: in particular, $u^s$, the solution to $\sS\sD(V^-)$ with $V^-=H^{-1/2}_\mS$, or equivalently to \SDw, is non-zero as long as $u^i$ is $C^\infty$ in a neighbourhood of $\mS$ and is non-zero everywhere on $\mS$.  The statements in the third paragraph about $u^s_j$ and its convergence to $u^s$ apply also when $n=3$ and $\alpha\leq 1/4$, but, since $\mS$ is $-1/2$-null in this case, every $V^-=\{0\}$ so that the formulations $\sS\sD(V^-)$ collapse to a single formulation with the trivial solution $u^s=0$. By Corollary \ref{cor:lgs}(b) this is also the solution to \SDws and is also the limiting geometry solution of Definition \ref{def:lgclosed}.
\end{example}

Our next example is a screen with empty interior but positive surface measure which is not $1/2$-null. Our ``Swiss cheese'' construction follows that of Polking, who used it in \cite[Theorem 4]{Po:72a} to construct explicitly a compact set $F\subset \R^n$ with empty interior  that is not $n/2$-null.

\begin{example}[Scattering by a ``Swiss cheese'' screen] \label{ex:swiss} By a {\em Swiss cheese} screen we mean, for $n=2,3$, a compact subset $\mS$ of $\Gamma_\infty$ constructed as follows: take a bounded open set $\Omega\subset \Gamma_\infty$, and sequences $(\bx_j)_{j=1}^\infty \subset \Omega$ and $(r_j)_{j=1}^\infty\subset (0,\infty)$, and define $\mS_j:= F_j$ for $j\in \N$, where
$$
F_j := \overline{\Omega} \setminus \bigcup_{m=1}^j B_{r_m}(\bx_m),
$$
and set $\mS := \cap_{j=1}^\infty \mS_j$.
If the sequence $(\bx_m)_{m=1}^\infty$ is dense in $\Omega$, then $\mS^\circ$ is empty. But $\mS$ need not be empty: indeed,
if the radii $r_m$ are sufficiently small and decrease sufficiently rapidly, then $\mS$ has positive measure, since
$$
m(\mS) \geq m(\overline{\Omega}) - 2(\pi/2)^{n-1}\sum_{j=1}^\infty r_m^{n-1}, \quad \mbox{ for } n=2,3.
$$
The condition $m(\mS)>0$ is necessary (Theorem \ref{thm:null}(e)), but is not sufficient to ensure that $\mS$ is not $1/2$-null. But if the radii are small enough and decrease sufficiently rapidly then, indeed, $\mS$ is not $1/2$-null. It is shown in \cite[Theorem 4.6]{HewMoi:15} that for every open $\Omega\subset \Gamma_\infty$ there exists $\epsilon>0$ such that $\mS$ is not $1/2$-null provided $\sum_{m=1}^\infty r_m \leq \epsilon$ if $n=3$, provided each $r_m<2$ and  $\sum_{m=1}^\infty [\log(2/r_m)]^{-1} \leq \epsilon$ if $n=2$. The choice $r_m = 6\epsilon/(\pi m)^2$ works for $n=3$, the choice $r_m = 2\exp(-\pi^2m^2/(6\epsilon))$ if $n=2$.

So let us assume that $(\bx_m)$ is chosen to be dense in $\Omega$, so that $\mS^\circ$ is empty, and also that the radii $(r_m)$ are chosen so that $\mS$ is not $1/2$-null, which implies (Theorem \ref{thm:null}(e)) that $\mS$ is not $-1/2$-null, that $m(\mS)>0$ and that $\mS$ is non-empty.
Let us also assume that $\Omega$ is a Lipschitz open set. This implies that $\mS_j$ is in the algebra of subsets of $\R^n$ generated by the Lipschitz open sets so that, by Theorem \ref{thm:null}(f), $\partial \mS_j$ is $\pm1/2$-null. Further, it ensures that $\overline{\mS^\circ_j}=\mS_j$ and that  $\mS^\circ_j$ is $C^0$ except at a finite number of points. %

With the above assumptions, the third paragraph of Example \ref{ex:sier} applies verbatim to this case. The comments in the second paragraph of Example \ref{ex:sier} about the well-posedness and equivalence of all formulations for the sound-hard problem when $\mS$ is replaced by $\mS_j$ also apply here. But, as $H^{1/2}_\mS\neq \{0\}=\tH^{1/2}(\mS^\circ)$ for this Swiss cheese screen, there are, by Lemma \ref{lem:cardc}, infinitely many (with cardinality $\mathfrak{c}$) distinct formulations $\sS\sN(V^+)$ for the sound-hard problem that satisfy \eqref{eq:selection}. Further, by Theorem \ref{thm:s62}, for the case of an incident plane wave \eqref{eq:pw} and for almost all incident directions $\bd$,  there are infinitely many distinct solutions to these formulations.  By Corollary \ref{cor:lgs}(a), the solution $u^s$ to the particular formulation $\sS\sN(V^+)$ with $V^+=H^{1/2}_\mS$ is the limiting geometry solution in the sense of Definition \ref{def:lgclosed}, and this is also the unique solution of \SNw.  This solution is non-zero, by Theorem \ref{strength}, as long as $u^i$ is $C^\infty$ in a neighbourhood of $\mS$ and $\partial u^i/\partial x_n$ is non-zero everywhere on $\mS$. By Theorem \ref{thm:limits}(b),  $u^s_j\to u^s$, uniformly on compact subsets of $D$ and locally in $W^1$ norm, as $j\to\infty$.
\end{example}

Our remaining four examples consider screens $\mS$ that are non-$C^0$ open sets. In Example \ref{ex:irr} the screen is not $C^0$ but its boundary is sufficiently well-behaved so that \SDw, $\sS\sD(V^-)$  subject to \eqref{eq:selection}, and \SDcls all have the same unique solution in the sound-soft case, and \SNw, $\sS\sN(V^+)$ subject to \eqref{eq:selection}, and \SNcls all have the same unique solution in the sound-hard case. In  Examples \ref{ex:openminuscantor} and \ref{ex:openminuscheese} the classical formulations \SDcls and \SNcls both fail to be well-posed. In each case $\omS$ is an interval for $n=2$, a square for $n=3$, and the sound-hard scattered field for $\mS$ in the limiting geometry sense, equivalently the solution to our new formulation $\sS\sN(V^+)$ with $V^+=\tH^{1/2}(\mS)$, is different from the scattered field for $\omS$, which satisfies \SNw. In Example \ref{ex:openminuscheese} the same effects are seen for the sound-soft case.

 \begin{figure}
{\includegraphics[scale=0.2]{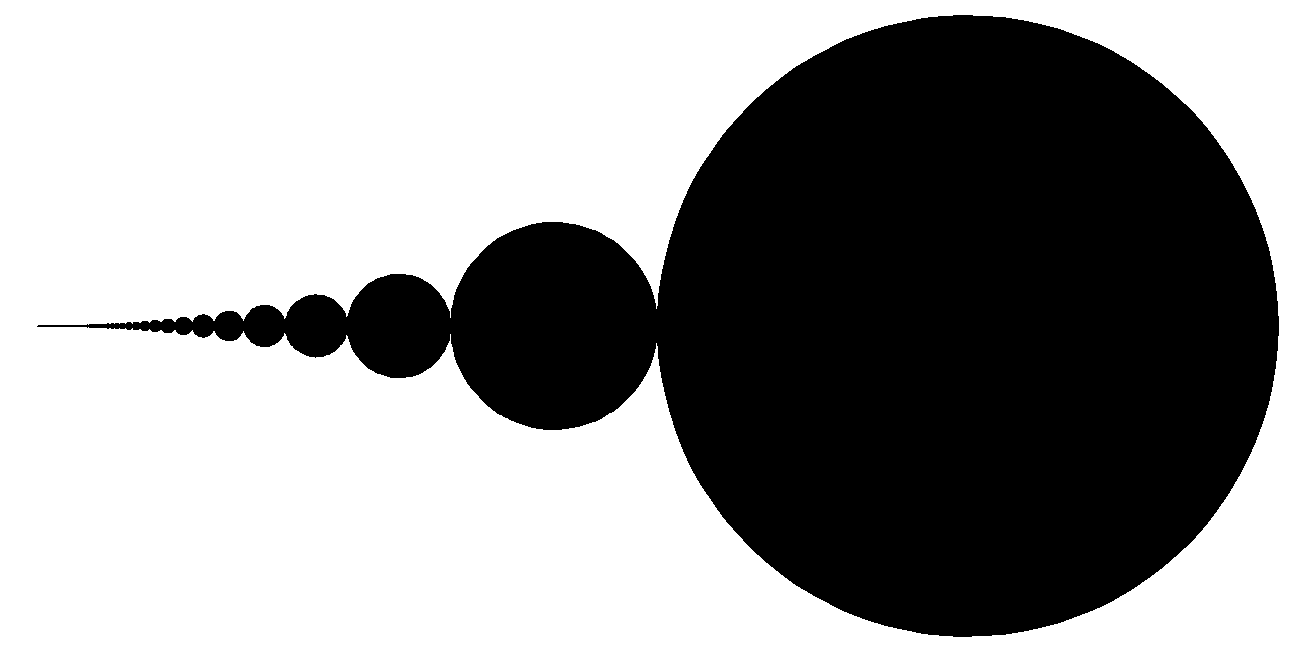}}
\centering
\caption{The irregular screen, consisting of a countable number of circles, of Example \ref{ex:irr}, this an example of a non-Lipschitz (indeed non-$C^0$) screen for which the classical formulations remain well-posed and equivalent to the weak formulations and to the standard BIEs \eqref{eq:bie}.}
\label{fig:irregular}
\end{figure}

\begin{example}[An irregular screen where all formulations are well-posed and coincide] \label{ex:irr} Suppose that $n=3$ and, for $j\in \N$, let $s_j := (2j+1)/(2j(j+1))$, $r_j:= 1/(2j(j+1))$, $\bx_j := (s_j,0,0)$, and (see Figure \ref{fig:irregular})
$$
\mS:= \Gamma_\infty \cap \bigcup_{j=1}^\infty B_{r_j}(\bx_j), \quad \mbox{ so that } \quad \partial \mS= \{\boldsymbol{0}\}\cup \left(\Gamma_\infty \cap \bigcup_{j=1}^\infty \partial B_{r_j}(\bx_j)\right).
$$
Then $\mS$ is not $C^0$, but is $C^0$ except at the countable set of points $\{\boldsymbol{0}\}\cup \{(1/(j+1),0,0):j\in \N\}$, which has the single limit point $\boldsymbol{0}$. Clearly also $\partial \mS \subset \cup_{j=1}^\infty \partial E_j$ with each $E_j$ a Lipschitz open subset of $\Gamma_\infty$. Thus $\tH^{\pm 1/2}(\mS) = H^{\pm 1/2}_\omS$ by Theorem \ref{thm:C0}, and also $\partial \mS$ is $\pm1/2$-null by Theorem \ref{thm:null}(e)-(g).

The solution $u^s$ to the sound-hard scattering problem \SNws is, by Theorem \ref{thm:s62}, also the solution to $\sS\sN(V^+)$ for $V^+=\tH^{1/2}(\mS)=H^{1/2}_{\omS}$, and (assuming $\Delta u^i + k^2 u^i = 0$ in a neighbourhood of $\omS$) $u^s$ also satisfies \SNcls by Lemma \ref{lem:wcl}; further \SNcls is well-posed by Theorem \ref{thm:wpst}(b).  The solution $u^s$ to the sound-soft scattering problem \SDws is, by Theorem \ref{thm:S61}, also the solution to $\sS\sD(V^-)$ for $V^-=\tH^{-1/2}(\mS)=H^{-1/2}_{\omS}$, and (assuming $\Delta u^i + k^2 u^i = 0$ in a neighbourhood of $\omS$) $u^s$ also satisfies \SDcls by Lemma \ref{lem:wcl}; further \SDcls is well-posed by Theorem \ref{thm:wpst}(a).
\end{example}

\begin{example}[Scattering by a solid screen with a Cantor set or dust removed] \label{ex:openminuscantor} Suppose that $n=2$ or $3$ and let $\mS_0:= \{\bx=(x_1,...,x_{n-1},0)\in \Gamma_\infty: 0<x_m<1, m = 1,...,n-1\}$. For $j\in \N$, let $\mS_j := \mS_0\setminus C_j$, where $C_j$ is as defined in Example \ref{ex:cantor}, so that $\mS_j$ is a Lipschitz open set. Let $\mS := \cup_{j=1}^\infty \mS_j = \mS_0 \setminus C$, where $C:= \cap_{j=1}^\infty C_j$ is the screen studied in Example \ref{ex:cantor}, i.e.\ the middle-$\lambda$ Cantor set for $n=2$, the corresponding Cantor dust for $n=3$.

We note that  $\mS^\circ = \mS$, and that $C\setminus \partial \mS_0 \subset \omS^\circ\setminus \mS\subset C$. Since $m(\omS^\circ\setminus \mS)\leq m(C) = 0$ and $\omS^\circ=\mS_0$ is a Lipschitz open set, $\tH^{-1/2}(\mS^\circ)= H^{-1/2}_\omS$ by Corollary \ref{prop:TildeSubscript}(iii). Further $\tH^{\pm 1/2}(\omS^\circ)= H^{\pm 1/2}_\omS$ by Theorem \ref{thm:C0}. As noted in Example \ref{ex:cantor}, $C$ is $-1/2$-null if $n=3$ and $\alpha=(1-\lambda)/2\leq 1/4$, so that  $\omS^\circ \setminus \mS^\circ \subset C$ is $-1/2$-null, and $\tH^{1/2}(\mS^\circ)= H^{1/2}_\omS$ by Corollary \ref{lem:equalityNullity}. As also noted in Example \ref{ex:cantor}, if $n=2$ then $\dimH(C)>0$ so that $\dimH(\omS^\circ\setminus \mS^\circ)> \dimH(C\setminus \partial \mS_0)>0$ since $\dimH(\partial \mS_0)=0$, while if $n=3$ and $\alpha>1/4$ then $\dimH(C)>1$ so that $\dimH(\omS^\circ\setminus \mS^\circ)> \dimH(C\setminus \partial \mS_0)>1$ since $\dimH(\partial \mS_0)=1$. Thus, by Corollary \ref{prop:TildeSubscript}(iii), $\tH^{1/2}(\mS^\circ)\neq H^{1/2}_\omS$ if $n=2$, or if $n=3$ and $\alpha>1/4$.

Since $\tH^{-1/2}(\mS^\circ)= H^{-1/2}_\omS$, the formulations $\sS\sD(V^-)$ for sound-soft scattering by $\mS$ that satisfy \eqref{eq:selection} collapse onto a single formulation with a single unique solution $u^s$, and this solution, by Corollary \ref{cor:lgs}(a), is also the unique solution of \SDw, and the limiting geometry solution in the sense of Definition \ref{def:lgopen}.  In particular, if $u^s_j$ denotes the solution, when $\mS$ is replaced by $\mS_j$, to \SDws or \SDcls (all formulations have the same solution for the screen $\mS_j$, as in the other examples above),  $u^s_j\to u^s$, uniformly on compact subsets of $D$ and locally in $W^1$ norm as $j\to\infty$, by Theorem \ref{thm:limits}(a).

Since $\mS_0$ is Lipschitz, $\mS_0=\omS^\circ$, and $\tH^{-1/2}(\omS^\circ)= H^{-1/2}_\omS$, $u^s$ is also the unique solution to (any of the formulations) for sound-soft scattering by $\mS_0$, including \SDws and \SDcl. Thus the limiting geometry solution for sound-soft scattering by the screen $\mS = \mS_0\setminus C$ is the same as the solution for the screen $\mS_0$: the fractal ``hole'' $C$ in $\mS$ does not have any effect.

Similar remarks apply in the sound-hard case if $n=3$ and $\alpha\leq 1/4$, for then  $\tH^{1/2}(\mS^\circ)= H^{1/2}_\omS$, and the limiting geometry solution of Definition \ref{def:lgopen} for $\mS$ is just the solution for scattering by the square screen $\mS_0$. But if $n=2$, or $n=3$ and $\alpha> 1/4$, then $\tH^{1/2}(\mS^\circ)\neq H^{1/2}_\omS$ so that, by Lemma \ref{lem:cardc}, there are infinitely many (with cardinality $\mathfrak{c}$) distinct formulations $\sS\sN(V^+)$ that satisfy \eqref{eq:selection}. Further, by Theorem \ref{thm:s62}, for an incident plane wave \eqref{eq:pw} and almost all incident directions $\bd$,  there are infinitely many distinct solutions to these formulations.  By Corollary \ref{cor:lgs}(a), the solution $u^s$ to the formulation $\sS\sN(V^+)$ with $V^+=\tH^{1/2}(\mS)$ is the solution that is the limiting geometry solution in the sense of Definition \ref{def:lgopen}. In particular, if $u^s_j$ denotes the solution, when $\mS$ is replaced by $\mS_j$, to \SNws or \SNcls (all formulations have the same solution for the screen $\mS_j$),  $u^s_j\to u^s$, uniformly on compact subsets of $D$ and locally in $W^1$ norm as $j\to\infty$, by Theorem \ref{thm:limits}(b).

Let $\widetilde u^s$ denote the solution to $\sS\sN(V^+)$ with $V^+= H^{1/2}_\omS$, equivalently, by Corollary \ref{NeuEquivThmm2}, the solution to \SNw. As $\mS_0$ is Lipschitz, $\mS_0=\omS^\circ$, and $\tH^{1/2}(\omS^\circ)= H^{1/2}_\omS$, $\widetilde u^s$ is also the unique solution to (any of the formulations) for sound-hard scattering by $\mS_0$. We expect, generically, that $u^s\neq \widetilde u^s$, and have shown that this is true in Theorem \ref{thm:s62} for plane wave incidence for almost all incident directions. Thus, at least for plane wave incidence and almost all incident directions, the limiting geometry scattered field for $\mS= \mS_0\setminus C$ is different from that for $\mS_0$: the fractal ``hole'' $C$ has an effect, this surprising given that $m(C)=0$ and that $\overline{\mS}=\overline{\mS_0}$, so that the domain $D$ in which \eqref{eq:he} holds is the same for both screens.
\end{example}

\begin{example}[Scattering by a solid screen with a ``Swiss cheese'' set removed] \label{ex:openminuscheese} Suppose that $n=2$ or $3$ and let $\mS_0$ be defined as in the previous example. For $j\in \N$, let $\mS_j := \mS_0\setminus F_j$, where $F_j$ is as defined in Example \ref{ex:swiss}, making the specific choice $\Omega = \mS_0$ so that $F_j\subset \overline{\mS_0}$. Let $\mS := \cup_{j=1}^\infty \mS_j = \mS_0 \setminus F$, where $F:= \cap_{j=1}^\infty F_j$ is the Swiss cheese screen studied in Example \ref{ex:cantor}. Then $\mS = \mS_0\cap \bigcup_{j=1}^\infty B_{r_j}(\bx_j)$, with $(\bx_j)\subset \mS_0$ and $(r_j)\subset (0,\infty)$, and $\mS_j = \mS_0\cap \bigcup_{m=1}^j B_{r_m}(\bx_m)$, for $j\in \N$. We choose $(\bx_j)$ and $(r_j)$ as in the second paragraph of Example \ref{ex:swiss}, in other words so that $(\bx_j)$ is dense in $\mS_0$, which implies that $\overline{\mS} = \overline{\mS_0}$, and so that $F$ is not $1/2$-null. This implies that $m(F)>0$, and also that $\omS^\circ \setminus \mS^\circ =F\setminus \partial \mS_0$ is not $1/2$-null (for otherwise  $F \subset \partial \mS_0 \cup (F\setminus \partial \mS_0)$ is $1/2$-null by Theorem \ref{thm:null}(f) and (h)), and so $\omS^\circ \setminus \mS^\circ$ is not $-1/2$-null. Thus $\tH^{\pm 1/2}(\mS^\circ)\neq H^{\pm 1/2}_\omS$ by Corollary \ref{lem:equalityNullity}.

The behaviour in the sound-hard scattering case is essentially identical to that exhibited, for $n=2$, and for $n=3$ with $\alpha>1/4$,  in the previous example, as, once again, $\tH^{1/2}(\mS^\circ)\neq H^{1/2}_\omS$.
We will not repeat the detail here. The difference between this example and the previous one is in the sound-soft case. In this example,
by Lemma \ref{lem:cardc}, there are infinitely many (with cardinality $\mathfrak{c}$) distinct formulations for the scattering problem $\sS\sD(V^-)$ that satisfy \eqref{eq:selection}. Further, by Theorem \ref{thm:S61}, for an incident plane wave \eqref{eq:pw} and almost all incident directions $\bd$,  there are infinitely many distinct solutions to these formulations.  By Corollary \ref{cor:lgs}(a), the solution $u^s$ to the formulation $\sS\sD(V^-)$ with $V^-=\tH^{-1/2}(\mS)$ is the limiting geometry solution in the sense of Definition \ref{def:lgopen}. In particular, if $u^s_j$ denotes the solution, when $\mS$ is replaced by $\mS_j$, to \SDws or \SDcls (all formulations have the same solution for $\mS_j$),  $u^s_j\to u^s$, uniformly on compact subsets of $D$ and locally in $W^1$ norm as $j\to\infty$, by Theorem \ref{thm:limits}(b).

Let $\widetilde u^s$ denote the solution to $\sS\sD(V^-)$ with $V^-= H^{-1/2}_\omS$, equivalently, by Corollary \ref{DirEquivThmm2}, the solution to \SDw. As $\mS_0$ is Lipschitz, $\mS_0=\omS^\circ$, and $\tH^{-1/2}(\omS^\circ)= H^{-1/2}_\omS$,
$\widetilde u^s$ is also the unique solution to (any of the formulations) for sound-soft scattering by $\mS_0$. For plane wave incidence for almost all incident directions, $u^s\neq \widetilde u^s$, by Theorem \ref{thm:S61}, so that the limiting geometry scattered field for $\mS= \mS_0\setminus F$ is different from that for $\mS_0$: the Swiss cheese ``hole'' $F$ has an effect.  We note that $m(F)>0$, which is suggestive that removing $F$ should have an effect. But on the other hand $\overline{\mS}=\overline{\mS_0}$, so that the domain $D$ in which \eqref{eq:he} holds is the same for the screen $\mS$ as for the screen $\mS_0$. Speaking colloquially, the ``hole'' $F$ in $\mS$ provides no ``clear passageway'' through which a wave can propagate.
\end{example}

Our final example is an open set which is the interior of a Jordan curve, but which is not $C^0$, so that Theorem \ref{thm:C0} does not apply. This is representative of the many examples for which there remain open questions.

\begin{figure}[!t]
{\includegraphics[scale=0.24]{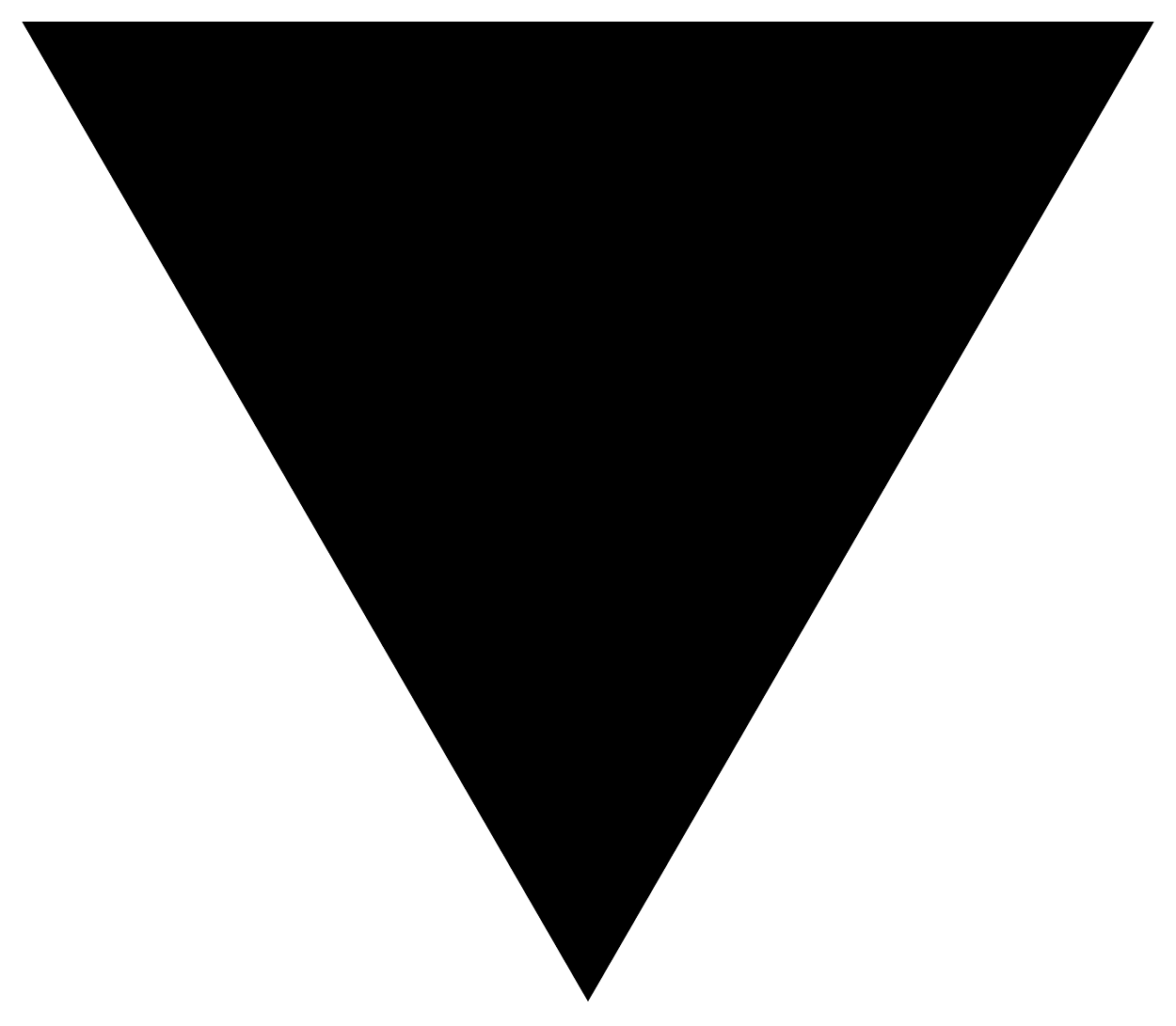}
\hs{2}\includegraphics[scale=0.24]{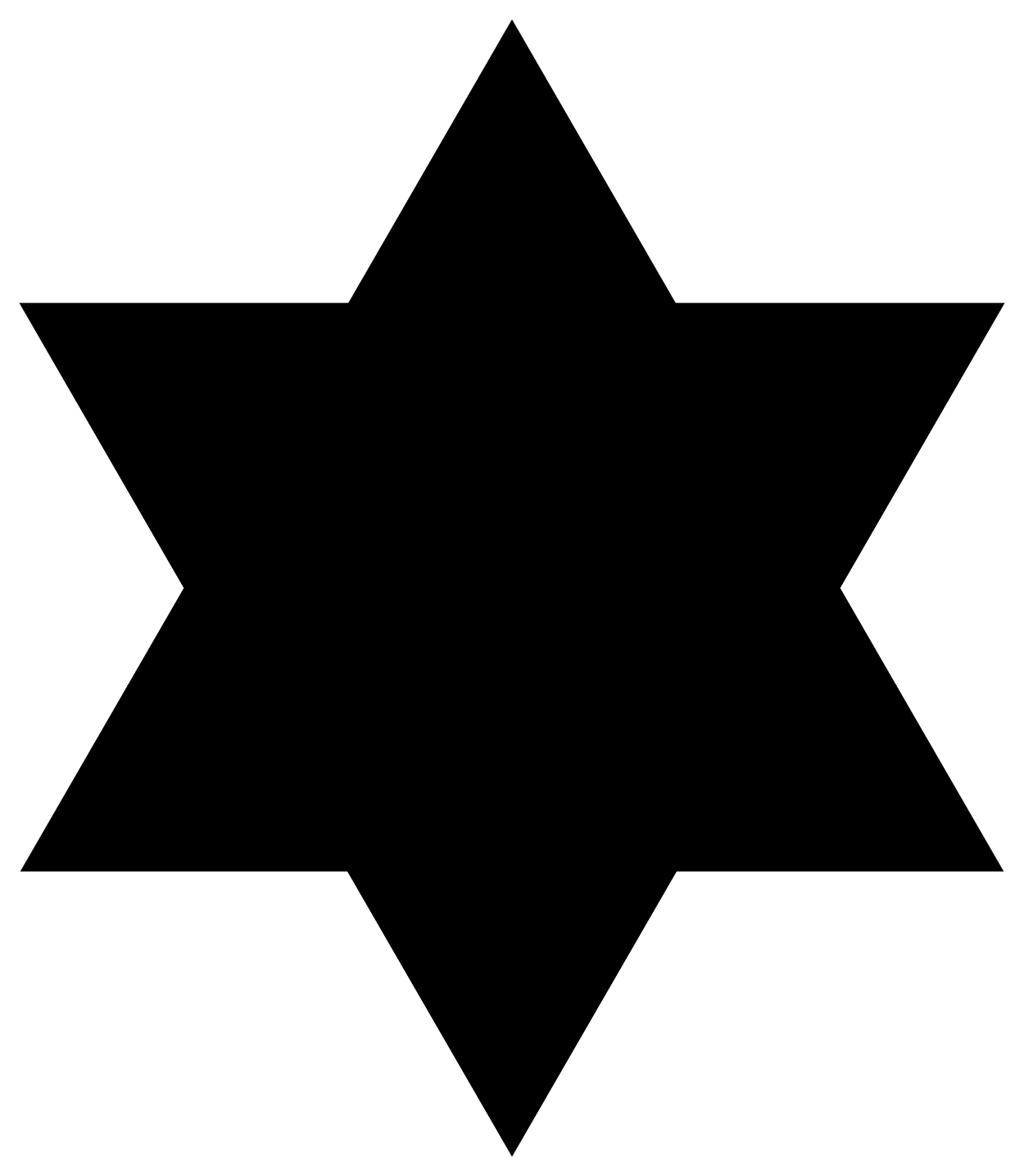}\hs{2}\includegraphics[scale=0.24]{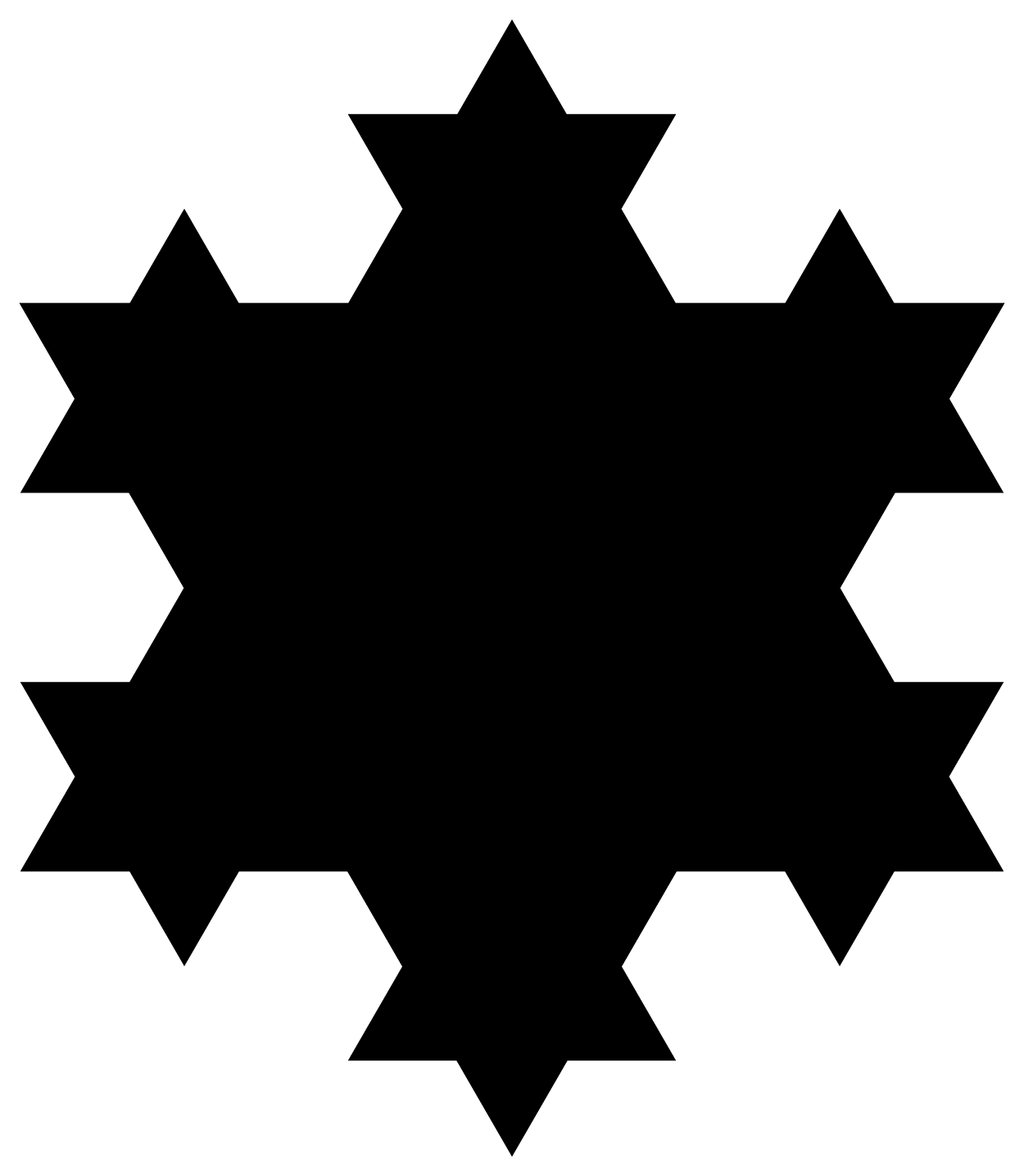}\hs{2}\includegraphics[scale=0.24]{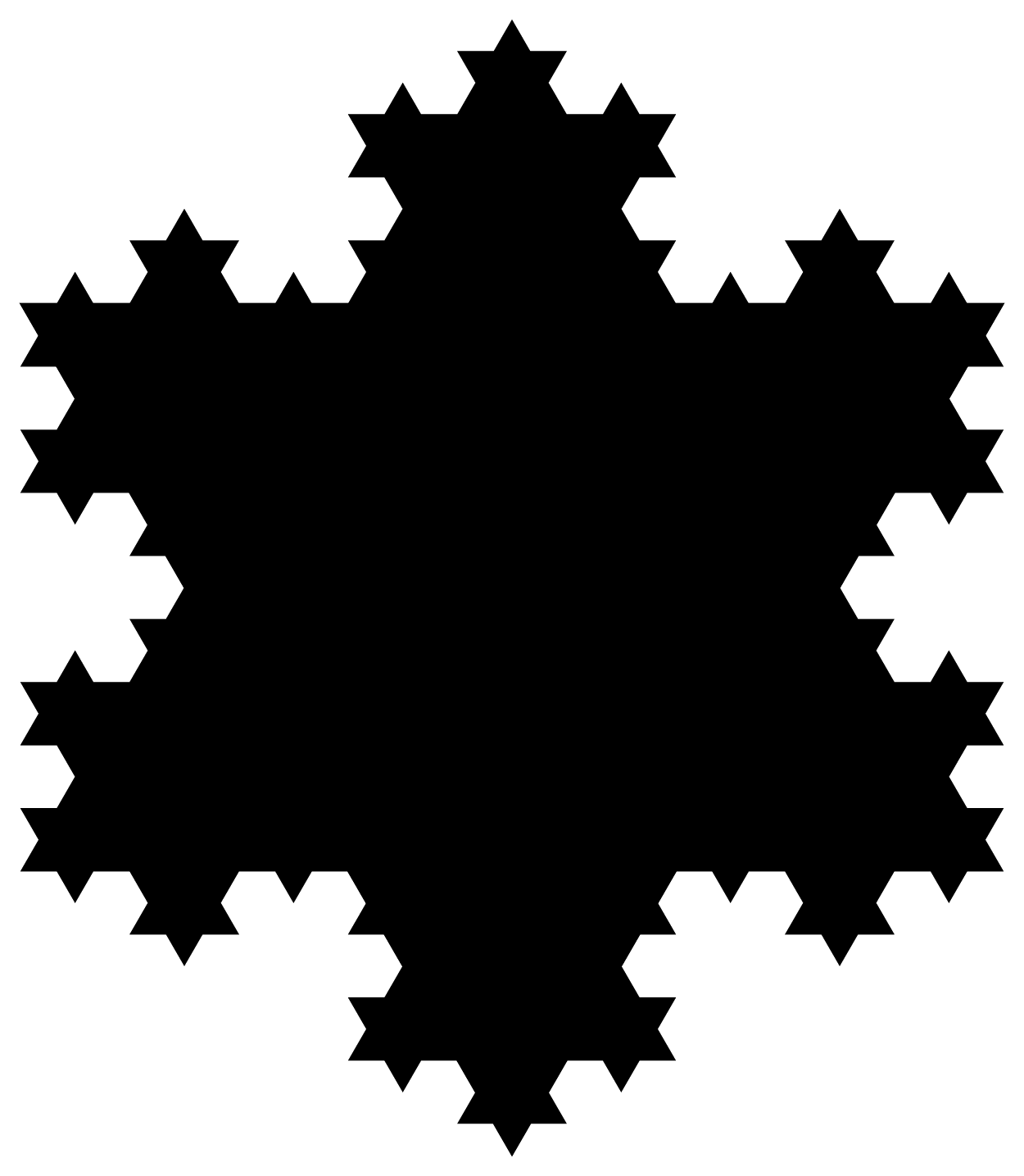}}
\centering
\caption{The first four prefractal approximations to the Koch snowflake.}
\label{fig:koch}
\end{figure}

\begin{example}[Scattering by a Koch snowflake] Let $n=3$ and let $\mS\subset \Gamma_\infty$ be the open set that is the interior of a Koch snowflake curve, defined by $\mS=\cup_{j=1}^\infty \mS_j$, where $\mS_j$ is the interior of the $j$th prefractal approximation to the Koch snowflake curve; see  \cite[Figure 0.2(b), Example 9.5]{Fal} and Figure \ref{fig:koch}. Then \cite[Example 9.5]{Fal} $\dimH(\partial \mS)=\log(4)/\log(3)>1$ and $m(\partial \mS)=0$, so that $\partial \mS$ is $1/2$-null but is not $-1/2$-null (Theorem \ref{thm:null}). Thus the formulations \SNcls and \Nsts are not well-posed in this case (Theorem \ref{thm:wpst}(b)). These formulations do have solutions, namely the solutions of the well-posed formulations $\sS\sN(V^+)$ and $\sN(V^+)$, respectively, for any $V^+$ satisfying \eqref{eq:selection}, in particular for $V^+=\tH^{1/2}(\mS)$ and $V^+=H^{1/2}_\omS$ (Theorem \ref{NeuEquivThmm} and Corollary \ref{NeuEquivThmm2}), but uniqueness does not hold.

An open problem in this case is whether or not $\tH^{1/2}(\mS)=H^{1/2}_\omS$. If this does hold then the formulations for sound-hard scattering, $\sS\sN(V^+)$ with $V^+$ satisfying \eqref{eq:selection}, collapse to a single formulation, but if this does not hold then there are infinitely many of these formulations (with cardinality $\mathfrak{c}$), with infinitely many distinct solutions (Theorem \ref{thm:s62}). In particular the solution to $\sS\sN(V^+)$ with $V^+= \tH^{1/2}(\mS)$ is the limiting geometry solution in the sense of Definition \ref{def:lgopen}, in particular is the limit as $j\to\infty$ of $u^s_j$, the solution to \SNw, or equivalently \SNcl, for the Lipschitz open set that is the $j$th prefractal $\mS_j$. The solution for $V^+=H^{1/2}_\omS$ is the limiting geometry solution for the closed von Koch snowflake $\omS$ in the sense of Definition \ref{def:lgclosed}, equivalently the solution to \SNws (Corollary \ref{cor:lgs}). It is an open question whether these solutions are the same: all we can currently say is that they are the same if $\tH^{1/2}(\mS)=H^{1/2}_\omS$, and that if $\tH^{1/2}(\mS)\neq H^{1/2}_\omS$ then these solutions are different for plane wave incidence for almost all incident wave directions (Theorem \ref{thm:s62}).

Similar remarks apply for sound-soft scattering. It is also an open problem whether or not $\tH^{-1/2}(\mS)=H^{-1/2}_\omS$. \SDcls and \Dsts are well-posed if and only if equality holds (Theorem \ref{thm:wpst}), and this also determines whether the formulations $\sS\sD(V^-)$ and $\sD(V^-)$ with $V^-$ satisfying \eqref{eq:selection} collapse to a single formulation, or whether there are infinitely many formulations with cardinality $\mathfrak{c}$ (Theorem \ref{thm:S61}). Similarly to the sound-hard case the solution to $\sS\sD(V^-)$ with $V^-= \tH^{-1/2}(\mS)$ is the limiting geometry solution in the sense of Definition \ref{def:lgopen}, in particular is the limit as $j\to\infty$ of $u^s_j$, the solution to \SDw, or equivalently \SDcl, for the Lipschitz open set that is the $j$th prefractal $\mS_j$. The solution for $V^-=H^{-1/2}_\omS$ is the limiting geometry solution for the screen $\omS$ in the sense of Definition \ref{def:lgclosed}, equivalently the solution to \SNws (Corollary \ref{cor:lgs}). We do not know whether these solutions are the same: this depends on whether or not $\tH^{-1/2}(\mS)=H^{-1/2}_\omS$ (Theorem \ref{thm:S61}).

\end{example}

\noindent
{\bf Acknowledgements.}
The authors are grateful to A.\ Moiola (Reading, UK) for many stimulating discussions in relation to this work.
\bibliography{BEMbib_short2014a}%

\begin{thebibliography}{10}

\bibitem{Achdou16}
{\sc Y.~Achdou and T.~Deheuveis}, {\em A transmission problem across a fractal
  self-similar interface}, Multiscale Model. Simul., 14 (2016), pp.~708--736.

\bibitem{Achdou06}
{\sc Y.~Achdou, C.~Sabot, and N.~Tchou}, {\em Diffusion and propagation
  problems in some ramified domains with a fractal boundary}, ESAIM:
  Mathematical Modelling and Numerical Analysis, 40 (2006), pp.~623--652.

\bibitem{Achdou06a}
\leavevmode\vrule height 2pt depth -1.6pt width 23pt, {\em A multiscale
  numerical method for {P}oisson problems in some ramified domains with a
  fractal boundary}, Multiscale Model. Simul., 5 (2006), pp.~828--860.

\bibitem{Achdou07}
\leavevmode\vrule height 2pt depth -1.6pt width 23pt, {\em Transparent boundary
  conditions for the {H}elmholtz equation in some ramified domains with a
  fractal boundary}, J. Comput. Phys., 220 (2007), pp.~712--739.

\bibitem{Achdou10}
{\sc Y.~Achdou and N.~Tchou}, {\em Trace theorems for a class of ramified
  domains with self-similar fractal boundaries}, SIAM J. Math. Anal., 42
  (2010), pp.~1449--1482.

\bibitem{AdHe}
{\sc D.~R. Adams and L.~I. Hedberg}, {\em Function Spaces and Potential
  Theory}, Springer, 1999.

\bibitem{Bagnerini13}
{\sc P.~Bagnerini, A.~Buffa, and E.~Vacca}, {\em Mesh generation and numerical
  analysis of a {G}alerkin method for highly conductive prefractal layers},
  Appl. Numer. Math., 65 (2013), pp.~63--78.

\bibitem{Berry79}
{\sc M.~V. Berry}, {\em Diffractals}, J. Phys. A: Math. Gen., 12 (1979),
  p.~781.

\bibitem{Berry79a}
\leavevmode\vrule height 2pt depth -1.6pt width 23pt, {\em Distribution of
  modes in fractal resonators}, in Structural Stability in Physics,
  W.~G\"uttinger and H.~Eikemeier, eds., Springer, 1979, pp.~51--53.

\bibitem{Berry80}
\leavevmode\vrule height 2pt depth -1.6pt width 23pt, {\em Some geometrical
  aspects of wave motion: Wavefront dislocations, diffraction catastrophes,
  diffractals}, in Geometry of the Laplace Operator, R.~Osserman and
  A.~Weinstein, eds., AMS, 1980, pp.~13--38.

\bibitem{Bjorn:07}
{\sc A.~Bj\"orn and J.~Bj\"orn}, {\em Approximations by regular sets and
  {W}iener solutions in metric spaces}, Comment. Math. Univ. Carolin., 48
  (2007), pp.~343--355.

\bibitem{Bouwkamp:54}
{\sc C.~J. Bouwkamp}, {\em Diffraction theory}, Rep. Prog. Phys., 17 (1954),
  pp.~35--100.

\bibitem{CapitanelliVivaldi15}
{\sc R.~Capitanelli and M.~A. Vivaldi}, {\em Weighted estimates on fractal
  domains}, Mathematika, 61 (2015), pp.~370--384.

\bibitem{Ch:13}
{\sc S.~N. Chandler-Wilde}, {\em Scattering by arbitrary planar screens}, in
  Computational Electromagnetism and Acoustics, Oberwolfach Report No. 03/2013,
  R.~Hiptmair, R.~H.~W. Hoppe, P.~Joly, and U.~Langer, eds., 2013,
  pp.~154--157.

\bibitem{ChGrLaSp:11}
{\sc S.~N. Chandler-Wilde, I.~G. Graham, S.~Langdon, and E.~A. Spence}, {\em
  Numerical-asymptotic boundary integral methods in high-frequency acoustic
  scattering}, Acta Numer., 21 (2012), pp.~89--305.

\bibitem{CoercScreen}
{\sc S.~N. Chandler-Wilde and D.~P. Hewett}, {\em Acoustic scattering by
  fractal screens: mathematical formulations and wavenumber-explicit continuity
  and coercivity estimates}.
\newblock Technical report. University of Reading preprint MPS-2013-17.

\bibitem{CoercScreen2}
\leavevmode\vrule height 2pt depth -1.6pt width 23pt, {\em Wavenumber-explicit
  continuity and coercivity estimates in acoustic scattering by planar
  screens}, Integr. Equat. Oper. Th., 82 (2015), pp.~423--449.

\bibitem{InterpolationCWHM}
{\sc S.~N. Chandler-Wilde, D.~P. Hewett, and A.~Moiola}, {\em Interpolation of
  {H}ilbert and {S}obolev spaces: quantitative estimates and counterexamples},
  Mathematika, 61 (2015), pp.~414--443.

\bibitem{ChaHewMoi:13}
\leavevmode\vrule height 2pt depth -1.6pt width 23pt, {\em {S}obolev spaces on
  non-{L}ipschitz subsets of $\mathbb{R}^n$ with application to boundary
  integral equations on fractal screens}, Integr. Equat. Operat. Th., 87
  (2017), pp.~179--224.

\bibitem{CoKr:83}
{\sc D.~L. Colton and R.~Kress}, {\em Integral Equation Methods in Scattering
  Theory}, Wiley, 1983.

\bibitem{Co:04}
{\sc M.~Costabel}, {\em Time-dependent problems with the boundary integral
  equation method}, Encyc. Comput. Mech., 1:25 (2004).

\bibitem{DavisChew08}
{\sc C.~P. Davis and W.~C. Chew}, {\em Frequency-independent scattering from a
  flat strip with {$TE_z$}-polarized fields}, IEEE Trans. Ant. Prop., 56
  (2008), pp.~1008--1016.

\bibitem{DLaWroPowMan:98}
{\sc L.~A. De~Lacerda, L.~C. Wrobel, H.~Power, and W.~J. Mansur}, {\em A novel
  boundary integral formulation for three-dimensional analysis of thin acoustic
  barriers over an impedance plane}, J. Acoust. Soc. Am., 104 (1998),
  pp.~671--678.

\bibitem{Fal}
{\sc K.~Falconer}, {\em Fractal Geometry: Mathematical Foundations and
  Applications}, Wiley, 3rd ed., 2014.

\bibitem{GilbargTrudinger}
{\sc D.~Gilbarg and N.~S. Trudinger}, {\em Elliptic Partial Differential
  Equations of Second Order}, Springer, 1998.

\bibitem{Gri}
{\sc P.~Grisvard}, {\em Elliptic Problems in Nonsmooth Domains}, Pitman, 1985.

\bibitem{grubb}
{\sc G.~Grubb}, {\em Distributions and Operators}, Springer, 2009.

\bibitem{Ha-Du:90}
{\sc T.~Ha-Duong}, {\em On the transient acoustic scattering by a flat object},
  Japan J. Indust. Appl. Math., 7 (1990), pp.~489--513.

\bibitem{Ha-Du:92}
\leavevmode\vrule height 2pt depth -1.6pt width 23pt, {\em On the boundary
  integral equations for the crack opening displacement of flat cracks},
  Integr. Equat. Oper. Th., 15 (1992), pp.~427--453.

\bibitem{Halmos}
{\sc P.~R. Halmos}, {\em Naive Set Theory}, Springer, 1974.

\bibitem{HeCh:13}
{\sc D.~P. Hewett and S.~N. Chandler-Wilde}, {\em Wavenumber-explicit
  coercivity estimates in scattering by screens}, in Proc.\ 11th Int.\ Conf.\
  on Mathematical and Numerical Aspects of Waves, Gammarth, Tunisia, 2013,
  pp.~253--254.

\bibitem{HewMoi:15}
{\sc D.~P. Hewett and A.~Moiola}, {\em On the maximal {S}obolev regularity of
  distributions supported by subsets of {E}uclidean space}, Anal. Appl., 15
  (2017), pp.~731--770.

\bibitem{SleemanHua95}
{\sc C.~Hua and B.~D. Sleeman}, {\em Fractal drums and the $n$-dimensional
  modified {W}eyl-{B}erry conjecture}, Commun. Math. Phys., 168 (1995),
  pp.~581--607.

\bibitem{SleemanHua01}
\leavevmode\vrule height 2pt depth -1.6pt width 23pt, {\em High-frequency
  estimates for the {N}eumann scattering phase in non-smooth obstacle
  scattering}, Inverse Prob., 17 (2001), pp.~1--12.

\bibitem{JoWa84}
{\sc A.~Jonsson and H.~Wallin}, {\em Function {S}paces on {S}ubsets of
  {${\mathbb R}^n$}}, Math. Rep., 2 (1984).

\bibitem{JoWa97}
\leavevmode\vrule height 2pt depth -1.6pt width 23pt, {\em Boundary value
  problems and brownian motion on fractals}, Chaos, Solitons and Fractals, 8
  (1997), pp.~191--205.

\bibitem{Kellogg}
{\sc O.~D. Kellogg}, {\em Foundations of Potential Theory}, Springer, 1929.

\bibitem{Lapidus91}
{\sc M.~L. Lapidus}, {\em Fractal drum, inverse spectral problems for elliptic
  operators and a partial resolution of the {W}eyl-{B}erry conjecture}, Trans.
  Am. Math. Soc., 325 (1991), pp.~465--529.

\bibitem{LevitinVassiliev96}
{\sc M.~Levitin and D.~Vassiliev}, {\em Spectral asymptotics, renewal theorem,
  and the {B}erry conjecture for a class of fractals}, Proc. Lond. Math. Soc.,
  s3-72 (1996), pp.~188--214.

\bibitem{Maz'ya}
{\sc V.~G. Maz'ya}, {\em Sobolev Spaces with Applications to Elliptic Partial
  Differential Equations}, Springer, 2nd~ed., 2011.

\bibitem{McLean}
{\sc W.~McLean}, {\em Strongly Elliptic Systems and Boundary Integral
  Equations}, CUP, 2000.

\bibitem{Meixner:49}
{\sc J.~Meixner}, {\em Die {K}antenbedingung in der {T}heorie der {B}eugung
  electromagnetischer {W}ellen an vollkommen leitenden ebenen {S}chirmen}, Ann.
  Phys., 6 (1949), pp.~1--9.

\bibitem{Mu:11}
{\sc A.~J. Mulholland, J.~W. Mackersie, R.~L. O'Leary, A.~Gachagan, A.~J.
  Walker, and N.~Ramadas}, {\em The use of fractal geometry in the design of
  piezoelectric ultrasonic transducers}, in 2011 IEEE International Ultrasonics
  Symposium (IUS), IEEE, 2011, pp.~1559--1562.

\bibitem{MuWa:11}
{\sc A.~J. Mulholland and A.~J. Walker}, {\em Piezoelectric ultrasonic
  transducers with fractal geometry}, Fractals, 19 (2011), pp.~469--479.

\bibitem{NeittaanmakiRoach:87}
{\sc P.~Neittaanmaki and G.~F. Roach}, {\em Weighted {S}obolev spaces and
  exterior problems for the {H}elmholtz equation}, Proc. R. Soc. Lond. A, 410
  (1987), pp.~373--383.

\bibitem{Neuberger06}
{\sc J.~M. Neuberger, N.~Sieben, and J.~W. Swift}, {\em Computing
  eigenfunctions on the {K}och snowflake: a new grid and symmetry}, J. Comp.
  Appl. Math., 191 (2006), pp.~126--142.

\bibitem{Po:72a}
{\sc J.~C. Polking}, {\em Leibniz formula for some differentiation operators of
  fractional order}, Indiana U. Math. J., 21 (1972), pp.~1019--1029.

\bibitem{PBaRomPouCar:98}
{\sc C.~Puente-Baliarda, J.~Romeu, R.~Pous, and A.~Cardama}, {\em On the
  behavior of the {S}ierpinski multiband fractal antenna}, IEEE T. Antenn.
  Propag., 46 (1998), pp.~517--524.

\bibitem{Rayleigh}
{\sc J.~W.~S. Rayleigh}, {\em The Theory of Sound, Volume II}, Macmillan, 2nd
  Ed., 1896.

\bibitem{SleemanHua92}
{\sc B.~D. Sleeman and C.~Hua}, {\em An analogue of {B}erry's conjecture for
  the phase in fractal obstacle scattering}, IMA J. Appl. Math., 49 (1992),
  pp.~193--202.

\bibitem{SriRanKri:11}
{\sc G.~Srivatsun, S.~S. Rani, and G.~S. Krishnan}, {\em A self-similar fractal
  {C}antor antenna for {MICS} band wireless applications}, Wireless Eng. Tech.,
  2 (2011), pp.~107--111.

\bibitem{Ste:87}
{\sc E.~P. Stephan}, {\em Boundary integral equations for screen problems in
  $\mathbb{R}^3$}, Integr. Equat. Oper. Th., 10 (1987), pp.~236--257.

\bibitem{StWe84}
{\sc E.~P. Stephan and W.~L. Wendland}, {\em An augmented {G}alerkin procedure
  for the boundary integral method applied to two-dimensional screen and crack
  problems}, Appl. Anal., 18 (1984), pp.~183--219.

\bibitem{WeSt:90}
{\sc W.~L. Wendland and E.~P. Stephan}, {\em A hypersingular boundary integral
  method for two-dimensional screen and crack problems}, Arch. Rational Mech.
  Anal., 112 (1990), pp.~363--390.

\bibitem{Wi:24}
{\sc N.~Wiener}, {\em Certain notions in potential theory}, J. Math. Phys., 3
  (1924), pp.~24--51.

\bibitem{wilcox:75}
{\sc C.~H. Wilcox}, {\em Scattering Theory for the d'{A}lembert Equation in
  Exterior Domains}, Springer, 1975.

\end{thebibliography}
\bibliographystyle{siam}

\end{document}